\algrenewcommand\alglinenumber[1]{\footnotesize #1. }  % turn on line numbering
\algrenewcommand\algorithmiccomment[1]{\hfill\textcolor{blue}{\(\triangleright\) #1}} % turn comments blue
\theoremstyle{plain}
\newtheorem{theorem}{Theorem}
\newtheorem{lemma}{Lemma}
\newtheorem{proposition}{Proposition}
\newtheorem{corollary}{Corollary}
\theoremstyle{definition}
\newtheorem{definition}{Definition}
\newtheorem{remark}{Remark}
\newtheorem{claim}{Claim}
\newtheorem{assumption}{Assumption}
\newif\ifhideproofs
\newif\ifdraft
\newcommand{\argmin}{\mathop{\arg\min}}
\newcommand{\termI}{\mathrm{(I)}}
\newcommand{\termII}{\mathrm{(II)}}
\newcommand{\stepa}[1]{\overset{\rm (a)}{#1}}
\newcommand{\stepb}[1]{\overset{\rm (b)}{#1}}
\newcommand{\stepc}[1]{\overset{\rm (c)}{#1}}
\newcommand{\reals}{{\mathbb{R}}}
\newcommand*{\medcap}{\mathbin{\scalebox{1.2}{\ensuremath{\cap}}}}
\newcommand*{\medcup}{\mathbin{\scalebox{1.2}{\ensuremath{\cup}}}}
\newcommand{\diff}{{\rm d}}
\newcommand{\toas}{\xrightarrow{{\rm a.s.}}}
\newcommand{\norm}[1]{\left\|{#1} \right\|}
\newcommand{\Prob}{\mathbb{P}}
\newcommand\independent{\protect\mathpalette{\protect\independenT}{\perp}}
\def\independenT#1#2{\mathrel{\rlap{$#1#2$}\mkern2mu{#1#2}}}
\newcommand{\Binom}{{\rm Binom}}
\newcommand{\Exp}{{\rm Exp}}
\newcommand{\Uniform}{\mathrm{Uniform}}
\newcommand{\ER}{ Erd\H{o}s--R\'{e}nyi \xspace }
\newcommand{\E}{\mathbb{E}}
\newcommand{\Expect}{\mathbb{E}}
\edef\csname t\@tempa\endcsname{
    {\noexpand\widetilde{\@tempa}}
  }
\edef\csname cal\@tempa\endcsname{
    \noexpand\mathcal{\@tempa}
  }
\edef\csname sf\@tempa\endcsname{
    \noexpand\mathsf{\@tempa}
  }
\newcommand{\OT}{\mathsf{OT}}        % SF in original
\newcommand{\Gap}{\mathsf{Gap}}      % SF in original
\newcommand{\InnerGap}{\mathsf{InnerGap}}      % SF in original
\newcommand{\NoSupply}{\mathsf{NoSupply}}      % SF in original
\newcommand{\HasRider}{\mathsf{HasRider}}      % SF in original
\newcommand{\drivervec}{\mathsf{S}} % SF in original
\newcommand{\ridervec}{\mathsf{D}} % SF in original
\newcommand{\servicevec}{\mathsf{R}} % SF in original
\newcommand{\sfX}{\mathsf{X}} % SF in original
\newcommand{\sfT}{\mathsf{T}} % SF in original
\newcommand{\driverset}{\mathcal{S}}
\newcommand{\riderset}{\mathcal{D}}
\newcommand{\driver}{s}
\newcommand{\rider}{d}
\newcommand{\service}{r}
\renewcommand{\u}{u}
\newcommand{\vf}{v^{\mathrm{F}}}
\newcommand{\vnf}{v^{\mathrm{NF}}}
\newcommand{\potential}{\boldsymbol{\psi}}
\newcommand{\potentialF}{\psi^{\mathrm{F}}}
\newcommand{\potentialNF}{\psi^{\mathrm{NF}}}
\newcommand{\base}{\service} % Base service radius
\newcommand{\extra}{b} % Excess service radius
\newcommand{\shift}{\tau}% shift (amount of reallocation)
\newcommand{\newrange}{\ell}
\newcommand{\smoothness}{\eta}
\newcommand{\rs}{s}
\newcommand{\betaPar}{\xi}  % rider density parameter (m = betaPar n)
\newcommand{\MPar}{\gamma}  % max service radius, i.e. \service_i \leq \Mpar for all i
\newcommand{\deltaM}{\delta_m}  % success probability
\newcommand{\muM}{\mu_m}  % expected matching size
\newcommand{\rhoS}[1]{\rho_m^{#1}} % connectivity probability
\newcommand{\alphaMB}{\beta_{\betaPar, \MPar, \smoothness}} % log/n prefactor
\newcommand{\CMB}{\alpha_{\betaPar, \MPar, \smoothness}} % main concavity constant
\newcommand{\dimension}{k}
\newcommand{\tcell}{\mathrm{cell}_{\mathrm{T}}}
\newcommand{\highdimalpha}{\alpha_{\dimension,\betaPar,\MPar}}
\newcommand{\highdimN}{N_{\dimension,\betaPar,\MPar}}
\newcommand{\highdimbeta}{\beta_{\dimension,\MPar} \, \varepsilon^{-\dimension}}
\newcommand{\patterncells}{\mathcal{A}}
\newcommand{\activepatterncells}{\patterncells^*\left[\riderset_+\right]}
\newcommand{\patternside}{w'_{\dimension, \MPar}}
\newcommand{\epsiloncondition}{w_{\dimension, \MPar}}
\newcommand{\alphagainpattern}{\alpha^{\ref{lmm:gainonepattern}}_{\MPar ,\dimension}}
\newcommand{\volsphere}{\kappa_\dimension}
\newcommand{\alphaproba}{\alpha^{\ref{lmm:lowerboundpatterna}}_{\dimension,\betaPar,\MPar }}
\begin{document}

\title{A uniformity principle for spatial matching}
\author{
    Taha Ameen\thanks{
        T. Ameen is with the Department of Electrical and Computer Engineering and the Coordinated Science Lab, University of Illinois, Urbana IL, USA, \texttt{tahaa3@illinois.edu}. T. Ameen is supported by NSF Grant CCF 19-00636.
    },~
    Flore Sentenac\thanks{
        F. Sentenac is with the Department of Information Systems and Operations Management, HEC Paris Business School, France, \texttt{sentenac@hec.fr}.
    } ~and 
    Sophie H.\ Yu\thanks{
        S.\ H.\ Yu is with the Operations, Information and Decisions Department,   the Wharton School of Business, University of Pennsylvania, Philadelphia PA, USA,  \texttt{hysophie@wharton.upenn.edu}.
    }
}
\date{}

\maketitle

\begin{abstract} 
    Platforms matching spatially distributed supply to demand face a fundamental design choice: given a fixed total budget of \emph{service range}, how should it be allocated across supply nodes ex ante, i.e. before supply and demand locations are realized, to maximize fulfilled demand? We model this problem using bipartite random geometric graphs where $n$ supply and $m$ demand nodes are uniformly distributed on $[0,1]^\dimension$ ($\dimension \ge 1$), and edges form when demand falls within a supply node's service region, the volume of which is determined by its service range. Since each supply node serves at most one demand, platform performance is determined by the expected size of a maximum matching. We establish a \emph{uniformity principle}: whenever one service range allocation is more uniform than the other, the more uniform allocation yields a larger expected matching. This principle emerges from diminishing marginal returns to range expanding service range, and limited interference between supply nodes due to bounded ranges naturally fragmenting the graph. For $\dimension=1$, we further characterize the expected matching size through a Markov chain embedding and derive closed-form expressions for special cases. Our results provide theoretical guidance for service-range allocation and incentive design in ride-hailing, on-demand labor markets, and drone delivery platforms, highlighting the benefits of reducing disparities in supply-side flexibility.
\end{abstract}

\tableofcontents
\maketitle

%%%%%%%%%%%%%%%%%%%%%%%%%%%%%%%%%%%%%%%%%%%%%%%%%%%%%%%%%%
% ---------------------  Main Body --------------------- %
%%%%%%%%%%%%%%%%%%%%%%%%%%%%%%%%%%%%%%%%%%%%%%%%%%%%%%%%%%

\newpage 

% ---------------------  Section --------------------- %

\section{Introduction} \label{sec-introduction}
Matching supply to demand underlies many service systems, from ride-hailing ~\cite{ozkan2020dynamic, bimpikis2019spatial, eom2025batching, wang2024demand,inbook} and on-demand labor platforms \cite{leung2018learning,kanoria2021facilitating,ashlagi2014stability} to emergency response~\cite{revelle1995integrated} and edge-computing networks~\cite{nygren2010akamai}. A common structural feature in these settings is \emph{locality}, i.e. supply can typically serve only demand that is ``close enough.''
In ride-hailing, this locality is literal -- a driver must be geographically close to a rider in order to provide service. In feature-based marketplaces such as TaskRabbit, Fiverr or Upwork, locality is induced by attributes instead of geography --  workers and jobs are compatible if they they share relevant characteristics such as skill type, price range, availability windows and so on. 

Motivated by locality, a substantial body of literature has studied \emph{spatial matching}, which embeds agents in a metric space and defines compatibilities or costs via distance. This framework has been applied to classical facility-location and coverage models~\cite{church1974maximal, revelle1995integrated}, dynamic ride-hailing and delivery systems with explicit spatial structure~\cite{ozkan2020dynamic, bimpikis2019spatial, eom2025batching, wang2024demand, inbook, chen2024courier}, and more general works on matching in metric or feature spaces~\cite{bansal2014randomized,gupta2019stochastic,Kanoria2025,Chen2025}, where distance may encode geography or attribute dissimilarity.

A central design lever is each supply node’s \emph{service range}, which determines the size of its service region and hence the demand nodes it can potentially match with. Operationally, the service range is a decision variable constrained by resources: expanding a supply node's service range implies higher costs, such as longer pickup times in ride-hailing~\cite{ozkan2020dynamic}, higher energy consumption for EVs or drones~\cite{murray2015flying}, and financial incentives for agents to accept less convenient tasks~\cite{campbell2006incentive}. Thus, a platform cannot simply maximize the service range of all agents simultaneously, and typically operate within an aggregate budget constraint on the total service range. 

In this paper, we consider systems where a unit of supply has capacity to serve at most one unit of demand: once demand-supply compatibilities are established, the platform decides how to optimally pair units of demand and supply. Therefore, the total service range represents the total capacity of the supply side to reach demand, while individual service ranges govern the local density of demand-supply compatibilities and hence the overall matching capacity of the system;  Figure~\ref{fig: intro-het-vs-hom} illustrates this. 
We ask a simple structural question: \textit{given a fixed total budget of service range, how should it be allocated across supply nodes ex ante, i.e. before the locations of supply and demand are realized, to maximize the fulfilled demand?}

To answer this question, we model the system as a bipartite random geometric graph~\cite{penrose2003rgg}. 
There are $n$ supply nodes and $m$ demand nodes embedded in a feature space, taken to be the unit hypercube with dimension $\dimension \geq 1$. The features of demand and supply nodes are drawn independently from the uniform distribution. A supply node is compatible to serve a demand node if the demand node lies within a ball centered at the supply node; this ball is the supply node’s service region with volume determined by its service range.
This construction induces a compatibility graph that is a bipartite geometric graph, with edges connecting each supply node to the demand nodes that lie within its service region.     
We assume each supply node can serve at most one demand node. We therefore measure the effectiveness of a service range allocation by the expected maximum number of demand nodes that can be served, equivalently the expected size of a maximum matching in the induced compatibility graph. 

Our main result establishes a \emph{uniformity principle}: for a fixed budget on the total service range, making the vector of service ranges more uniform (in the sense of majorization~\cite{marshall2011majorization}) increases the expected maximum matching size.\footnote{Our results assume a fixed total service range (linear cost), but extend to general settings where the budget constrains the total cost, provided the cost function for each supply node is convex in its service range (i.e., increasing marginal costs). In such settings, a uniform allocation maximizes the total achievable service range under a fixed budget, and hence amplifying the uniformity principle.} Equivalently, the expected matching is Schur-concave in the service range: reallocating a small amount of range from a high-range node to a low-range node improves performance, even though concentrating range among a few super-nodes may intuitively seem attractive for servicing isolated demand nodes that are far apart. This principle emerges from diminishing marginal returns to expanding the service range and limited interference from neighboring nodes when the compatibility graph is relatively sparse.

We show that the uniformity principle holds under uniform supply and demand distributions in all dimensions $\dimension \geq 1$. We further extend it to settings where supply and demand locations follow (possibly different) smooth, Lipschitz-continuous distributions, and we provide counterexamples showing that the principle can fail for general distributions. 
This principle augments existing literature on the idea that ``more uniform'' allocations can improve system performance, a phenomenon seen in stochastic allocation and resource-sharing problems in queueing~\cite{liyanage1993allocation,moyal2008convex,feng2018arrangement}. 

\begin{figure}
    \centering
    \subfigure[Non-uniform allocation]{
    \includegraphics[width=0.45\linewidth]{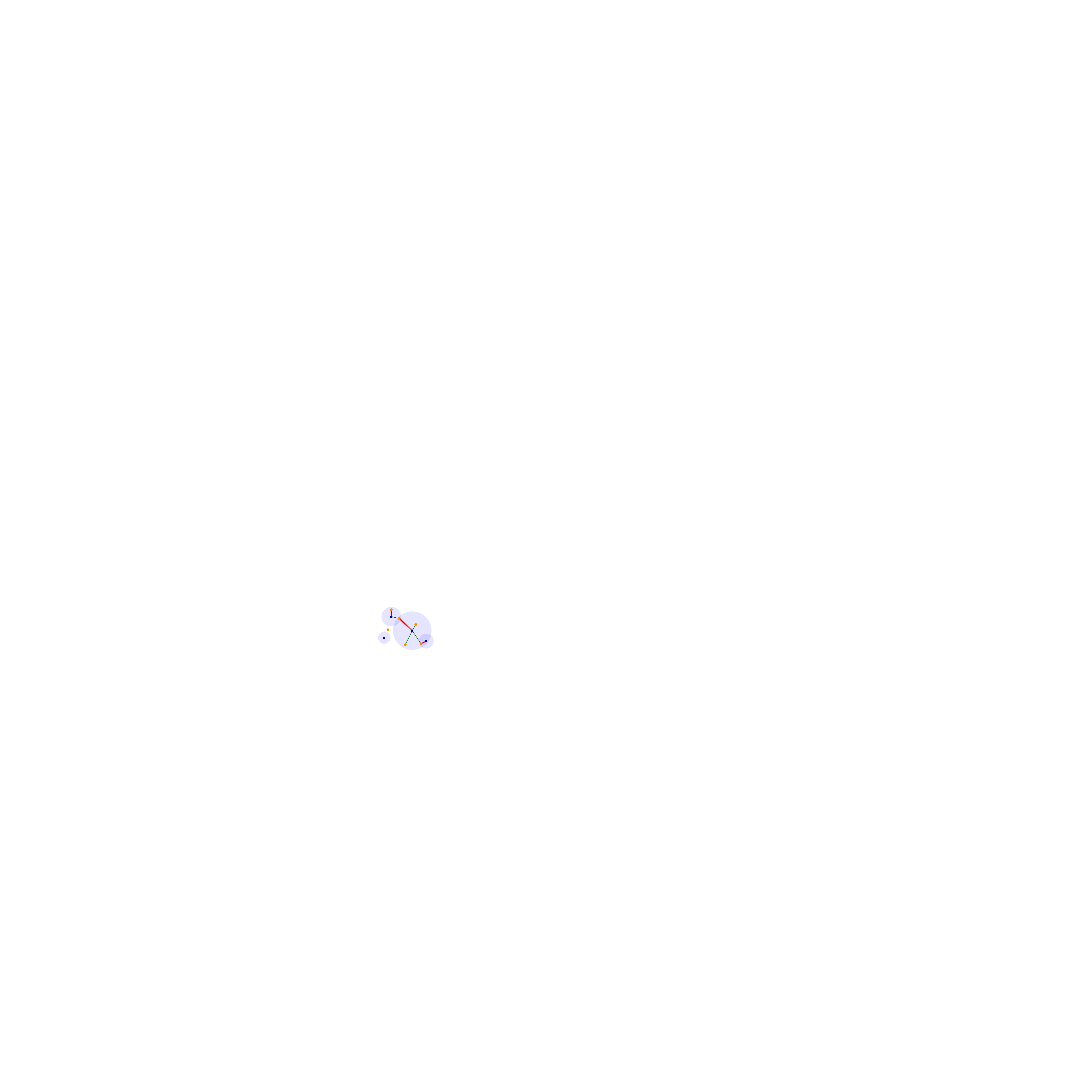}
    \label{fig: intro-het}
    }
    \hfill
    \subfigure[Uniform allocation]{
    \includegraphics[width=0.45\linewidth]{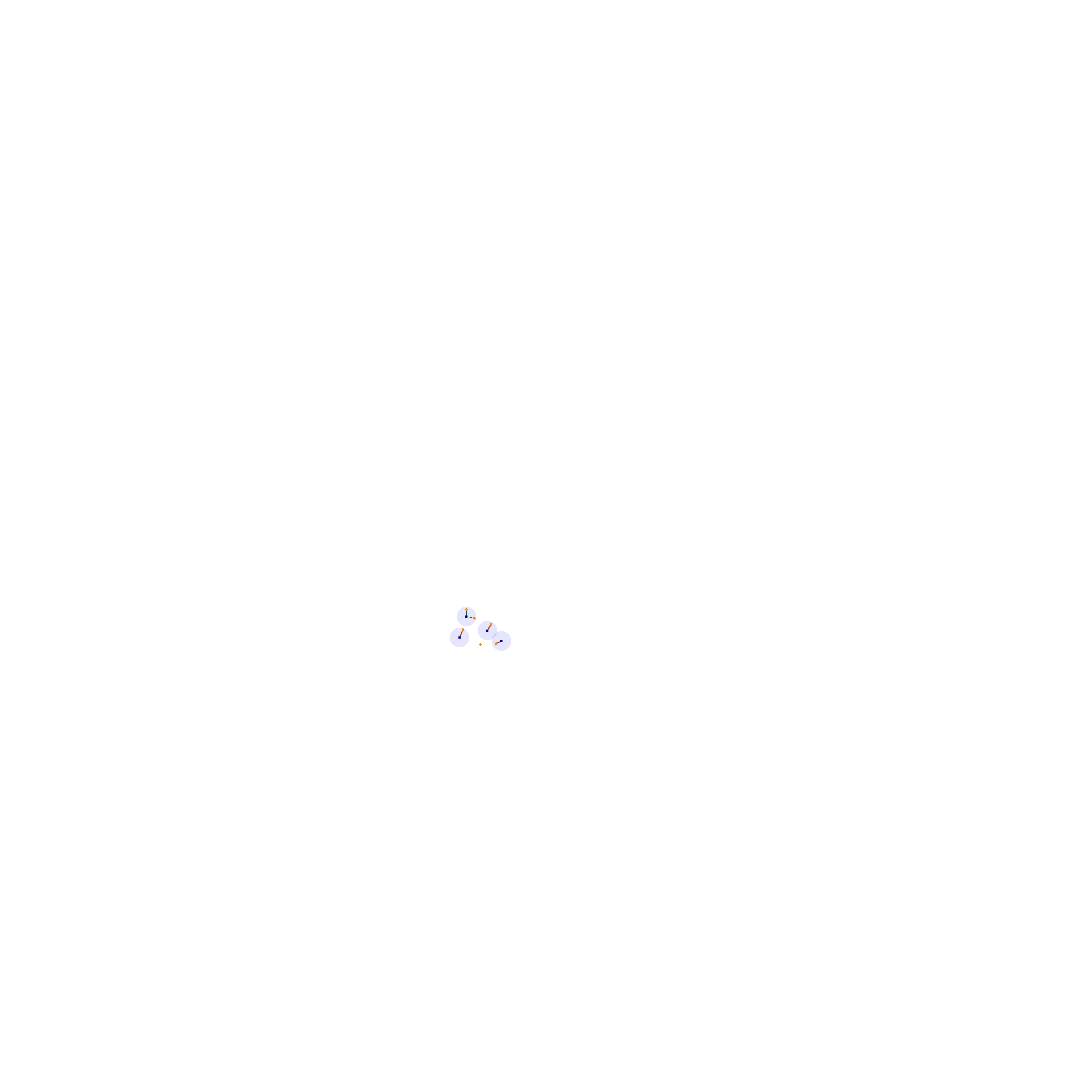}
    \label{fig: intro-hom}
    }
    \caption{Example of a non-uniform (a) and a uniform (b) allocation of service range. Orange circles denote demand nodes and blue squares denote supply nodes; shaded regions indicate supply service regions. Green segments show edges in the compatibility graph, and red segments highlight a maximum matching. The maximum matching size is $3$ in (a) and $4$ in (b). }
    \label{fig: intro-het-vs-hom}
\end{figure}

Our uniformity principle also provides insight into optimal assignment of \emph{flexibility} in matching systems. Flexibility is a longstanding theme in operations, classically engineered ex ante through centralized design choices such as chaining manufacturing plants~\cite{jordan1995principles}, cross-training workers~\cite{wallace2004resource},
or pooling resources~\cite{bassamboo2010flexibility}. In modern on-demand platforms, by contrast, flexibility is often decentralized and driven by contracts and incentives,  for example, by allowing supply units to self-schedule their availability or by combining different worker types, as well as by inducing riders to accept less convenient pickup options on the demand side~\cite{gurvich2019operations,dong2020managing,lobel2024frontiers,yan2025trading}. For example, Uber's Quest program offers bonuses to drivers who complete additional trips during a shift: by encouraging drivers to accept ``one more ride,'' it effectively expands the set of requests they are willing to consider, without changing the underlying geography. Similarly, bonus challenges can induce taskers on TaskRabbit to accept extra jobs, broadening the set of requests they are willing to serve and effectively expanding their service range (Figure~\ref{fig: taskrabbit}).

% To study the effect of flexibility, we also analyze a \emph{dual service range model} in which each supply node is either inflexible or flexible, so that the service range takes one of two values. Such a dual service range model has been considered previously~\cite{freundmartinzhao2024twosided}, although the focus there is on Erd\H{o}s-R\'{e}nyi graphs without spatial structure. 
% They also introduce a related spatial model, and numerically compare one-sided flexibility (only supply nodes can be flexible) versus two-sided flexibility (both demand and supply nodes can be flexible). In contrast, we theoretically analyze how the assignment of flexibility within one side impacts the expected maximum number of served demand nodes. Our result implies a simple principle: it is better to incentivize supply nodes to minimize the differences in their flexibilities.

To study the effect of flexibility, we also analyze a \emph{dual service-range model} in which each supply node is either inflexible or flexible, so that the service range takes one of two values. Related dual service-range models have been considered previously~\cite{freundmartinzhao2024twosided}. In particular, they explicitly compare one-sided flexibility (only supply nodes are flexible) with two-sided flexibility (both demand and supply nodes are flexible), while their primary analysis focuses on \ER random graphs without spatial structure. In contrast, we consider a spatial setting and provide a theoretical analysis of how the allocation of flexibility within the supply side affects the expected maximum number of served demand nodes. Our results yield a simple design principle: it is preferable to incentivize supply nodes so as to reduce disparities in flexibility across nodes.

Beyond this uniformity principle, we derive a characterization of the expected maximum matching in this model and quantify the gains from flexibility. In particular, we describe explicitly how the benefit scales with the service range of flexible and inflexible nodes, and the fraction of flexible nodes.

\begin{figure}[t]
    \centering
    \includegraphics[width=0.99\linewidth]{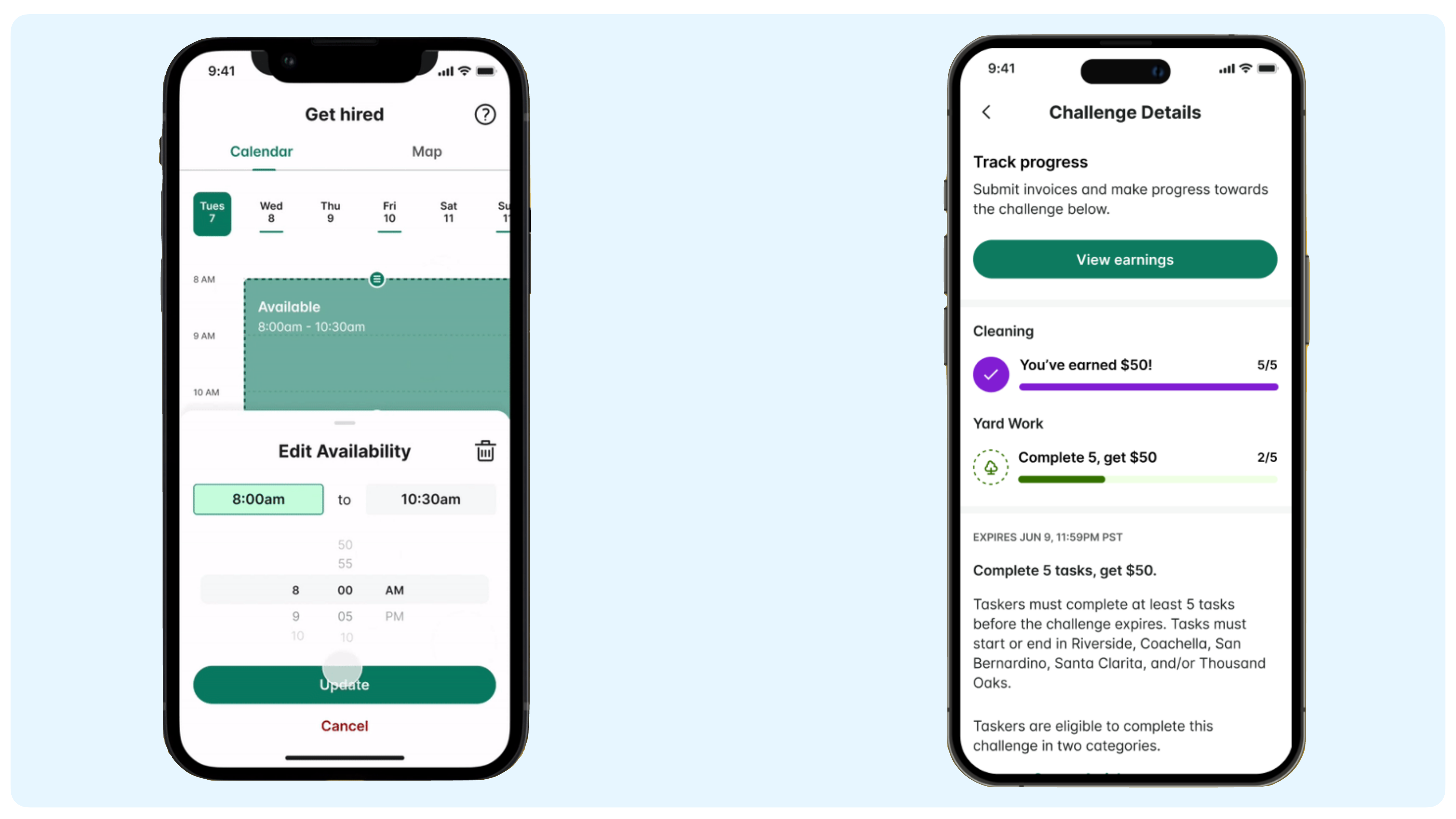}
    \caption{(Left) agents can choose their own level of flexibility. (Right) platform can provide incentives to influence service range. Images borrowed from TaskRabbit's blog post~\cite{blog_taskrabbit}.
    }
    \label{fig: taskrabbit}
\end{figure}

\subsection{Related literature}
 
\paragraph{Spatial matching.}
Literature on spatial matching spans classical models of facility location and coverage optimization \cite{church1974maximal,revelle1995integrated}, as well as more recent analyses of dynamic ride-hailing and delivery systems with explicit spatial structure \cite{ozkan2020dynamic,bimpikis2019spatial,eom2025batching}. This geometric perspective has increasingly been extended to dynamic matching in metric or feature spaces \cite{balkanski2023power,Kanoria2025,Chen2025,yang2025online,sentenac2025online}, where distance may represent either geography or attribute dissimilarity.
However, these dynamic spatial and metric matching papers typically take the service range as exogenous, and focus on online policies whose performance is evaluated against an offline benchmark, rather than on how the compatibility graph itself should be designed.
In contrast, we consider an offline problem: given a fixed snapshot of supply and demand (e.g., accumulated over a batching window), how does the ex-ante allocation of service range across supply nodes affect the maximum matching size?
In this sense, our results can be viewed as analyzing the performance of a batching policy within a fixed time window: by relating the expected matching size to the service range allocation, our results provide insight into optimizing the design of the compatibility graph itself.

\paragraph{Matchings on random graphs.}
    For Erd\H{o}s-R\'{e}nyi (ER) graphs, the seminal work of Karp and Sipser~\cite{KarpSipser} characterized the fraction of matched nodes in sparse regimes via a greedy algorithm; follow-up work on locally tree-like limits~\cite{bordenave2012matchingsinfinitegraphs} further describes asymptotic matching sizes via local weak convergence. In contrast, random geometric graphs (RGGs) are not locally tree-like and contain many short cycles, so techniques based on tree limits do not transfer. Thus, the maximum matching size in RGGs is less understood: \cite{sentenac2025online} obtain a formula for uniform RGGs with $n=m$ in one dimension; \cite{holroyd2008poissonmatching} study distance tails in Poisson perfect matchings; and \cite{gao2025randomized} give lower bounds on the size of maximum matching for Gaussian distributions. {We establish the uniformity principle for all dimensions $\dimension \geq 1$, proving that homogeneous service ranges provide an upper bound on the expected maximum matching size for any heterogeneous allocation with the same total budget.} For $\dimension =1$, we also develop a framework to compute exact expressions for heterogeneous service ranges via Markov chain embeddings.

% \paragraph{Optimal transport.}
% When service ranges are unconstrained on $[0,1]^k$, the bipartite matching problem reduces to Euclidean optimal transport (OT) between two i.i.d.\ uniform samples. Classical results show that the optimal total transportation cost (sum of Euclidean distances) grows with $n$ (e.g., $\Theta(\sqrt{n})$ for $\dimension=1$, $\Theta(\sqrt{n\log n})$ for $\dimension=2$, and $n^{1/(\dimension+1)}$ for $\dimension\ge 3$~\cite{ajtai1984optimal,talagrand1994transportation}.
% %
% In contrast, our model forces locality through the service range, yielding a total ``cost" of $\Theta(1)$.
% Rather than fulfilling all possible demand with unbounded transportation distances, we characterize how to optimally allocate a fixed budget to maximize expected matches, showing that uniform allocation dominates and that matching sizes increase with the total budget. 
% %
% This bounded-range regime captures a practical effect, where platforms let some demand go unfulfilled (or rolled on to the next batch), rather than serving them at a prohibitive cost.

\paragraph{Optimal transport.}
When service ranges are unconstrained on $[0,1]^{\dimension}$, the bipartite matching problem reduces to Euclidean optimal transport (OT) between two i.i.d.\ uniform samples. In particular, if the transportation cost is the sum of Euclidean distances raised to the power $\dimension$,
\(
\min_{\sigma\in S_n}\sum_{i=1}^n \|X_i-Y_{\sigma(i)}\|^{\dimension}
\;=\; n\,\OT_{n,\dimension},
\)
then classical results imply the \emph{expected optimal total cost} scales as
$\mathbb{E}\!\left[n\,\OT_{n,\dimension}\right]=\Theta(\sqrt{n})$ for $\dimension=1$,
$\mathbb{E}\!\left[n\,\OT_{n,\dimension}\right]=\Theta(\log n)$ for $\dimension=2$,
and $\mathbb{E}\!\left[n\,\OT_{n,\dimension}\right]=\Theta(1)$ for $\dimension\ge 3$~\cite{ajtai1984optimal,talagrand1994transportation}.
In contrast, our model forces locality through the service range, yielding a total ``cost" of $\Theta(1)$ for all $\dimension\ge 1$.
Rather than fulfilling all possible demand with unbounded transportation distances, we characterize how to optimally allocate a fixed budget to maximize expected matches, showing that uniform allocation dominates and that matching sizes increase with the total budget. 
This bounded-range regime captures a practical effect, where platforms let some demand go unfulfilled (or rolled on to the next batch), rather than serving them at a prohibitive cost.

\paragraph{Flexibility and resource allocation.} The introduction mentions foundational works on flexibility in operations management, either centralized \cite{jordan1995principles,wallace2004resource,bassamboo2010flexibility} or decentralized \cite{gurvich2019operations,dong2020managing,lobel2024frontiers,yan2025trading}. Among these works, the most closely related to ours \cite{freundmartinzhao2024twosided} studies the design of two-sided flexibility incentives on a matching platform, modeled as a sparse bipartite random graph. Nodes on each side are either regular or flexible, and edges appear independently with probabilities that depend on the types of their endpoints, so that pairs involving flexible nodes are more likely to be connected. Given a fixed flexibility budget, formalized as a constraint on the total fraction of flexible nodes, they ask whether the platform should invest flexibility only on one side or split it across both sides in order to maximize the expected size of a maximum matching, and characterize parameter regimes in which each allocation is optimal.

Their framework is non-spatial: compatibilities are modeled through independent edge probabilities and do not encode geometric or feature-based locality. Our work is complementary in that we study a spatially embedded matching environment in which supply and demand are located in a metric space and edges arise when demand points fall within the service ranges of nearby supply units, yielding a bipartite random geometric graph. This induces strong local correlations: nearby demand nodes share many candidate suppliers, short cycles are abundant, and the graph is far from locally tree-like, which makes the analysis more challenging.

\paragraph{Uniformity principle.}
The idea that ``more uniform'' allocations can improve system performance is present in the theory of majorization and Schur-concavity \cite{marshall2011majorization}, and has been applied to stochastic allocation and resource-sharing problems. In particular, \cite{liyanage1993allocation} show that when system performance is (stochastically) Schur-convex in a vector of allocated resources, balanced allocations are optimal, and subsequent work on arrangement-increasing and Schur-convex performance measures extends this perspective to a broad class of reliability and resource-allocation settings \cite{feng2018arrangement}. 
Similar themes arise in queueing systems: in real-time queues with deadlines, Schur-convexity comparisons can be used to show that service disciplines closer to the Earliest-Deadline-First policy improve lateness performance measures \cite{moyal2008convex}.
In flexible-service systems, well-spread flexibility (e.g., expander-like sparse connectivity between queues and servers) promotes effective resource pooling and prevents bottlenecks, leading to strong throughput and delay guarantees \cite{tsitsiklis2013queueing,tsitsiklis2017flexible}.
Related uniformity phenomena also appear in supply-chain capacity allocation, where uniform allocation rules can eliminate strategic order inflation and lead to well-behaved equilibria in certain regimes \cite{cachon1999allocation,cho2014capacity}. In a spatial ride-sharing context, \cite{bimpikis2019spatial} show that both platform profits and consumer surplus are maximized when the pattern of ride demand is more ``balanced'' across locations. Our uniformity principle contributes to this line of results by showing, in a random spatial matching environment with exogenous demand, that the expected maximum matching size is Schur-concave in the vector of service ranges, so that making supply-side flexibility more uniform across locations is optimal for a given total coverage budget.

\subsection{Model} \label{sec-model}

We study the spatial matching problem where supply nodes and demand nodes are distributed on the unit hypercube $[0,1]^\dimension$ of dimension $\dimension \geq 1$. Let $[n] \triangleq \{1,\ldots,n\}$ and $[m] \triangleq \{1,\ldots,m\}$ denote the index sets for supply and demand nodes respectively. Define $\betaPar \triangleq m/n$, which measures the demand-to-supply ratio or market imbalance. 
For each supply node $i \in [n]$, denote $(s_i, r_i)$ as its vector of characteristics, where $s_i \in [0,1]^\dimension$ is its location and $r_i \in \mathbb{R}_{\geq 0}$ is its service range parameter. For each demand node $j \in [m]$, denote $d_j \in [0,1]^\dimension$ as its location. Supply node $i$ can serve demand node $j$ if 
\begin{align*}
      \|s_i - d_j\|_2  \, \leq \,  \left( {r_i}/{n} \right)^{1/\dimension} \, ,
\end{align*}
Equivalently, supply node $i$ serves only demand nodes within a ball of volume $\kappa_{\dimension} \, r_i/n$ centered at $s_i$, where $\kappa_{\dimension} = \pi^{\dimension/2}/\Gamma(1+\dimension/2)$ is the volume of a unit ball in $\mathbb{R}^\dimension$, and the scaling by $n$ ensures operation in the sparse regime of random geometric graphs with $\Theta(1)$ degree per supply node on average. Let
\[
    \drivervec\triangleq \left(s_i\right)_{i=1}^n\,, \quad \ridervec \triangleq \left(d_j\right)_{j=1}^m\,, \quad  \servicevec \triangleq \left(r_i\right)_{i=1}^n
\]
denote the vectors of supply locations, demand locations, and service ranges, respectively. We also use $\mathcal{S} = \{s_i\}_{i=1}^n$ and $\mathcal{D} = \{d_j\}_{j=1}^m$ to denote the multisets of supply and demand locations. Given $\drivervec$, $\ridervec$, and $\servicevec$, let $G((\drivervec, \servicevec), \ridervec)$ denote the bipartite graph with vertex set $[n] \cup [m]$ and edge set
\[
    \mathcal{E} \triangleq \left\{(i,j)\in [n]\times[m] : \  \|s_i-d_j\|_2 \le \left( {r_i}/{n} \right)^{1/\dimension} \right\} \,.
\]
\begin{assumption}\label{assump:random-locations}
Supply and demand node locations are drawn independently and uniformly at random from the unit hypercube, i.e. 
\(
    \left\{s_i\right\}_{i=1}^n , \left\{d_j\right\}_{j=1}^m \stackrel{\text{i.i.d.}}{\sim} \Uniform([0,1]^\dimension) \,. 
\)
\end{assumption}
Given a service range vector $\servicevec \in \reals_{\geq 0}^n$, let $G \sim \mathbb{G}(m,\servicevec)$ be the random geometric bipartite graph with supply and demand locations drawn according to Assumption~\ref{assump:random-locations}. Define
\begin{align}\label{eq:objective}
    \mu_{m} (\servicevec) \triangleq  \Expect_{G\sim  \mathbb{G}(m , \servicevec) } \left[M(G) \right]
\end{align}
as the expected maximum matching size under the random model $\mathbb{G}(m, \servicevec)$, where $M(G)$ denotes the size of a maximum matching in $G$. 

\begin{remark}\label{rem:radiusvsrange}
In our model, the service range parametrizes the volume of the service region, which is proportional to the expected number of compatible demand nodes.
An alternative choice is to have the service range parametrize the \emph{radius} of the service region, so that supply node $i$ can serve demand node $j$ if $\|s_i-d_j\|_2 \le r_i/n^{1/\dimension}$, where the scaling by $n^{1/\dimension}$ ensures a $\Theta(1)$ degree per supply node on average. This interpretation is natural in applications such as ride-hailing, where distance has a direct operational meaning. Note that these two parametrizations are equivalent when $\dimension=1$. We study the radius model through simulation in Section~\ref{sec-simulations} and show that for $\dimension = 2$, it leads to qualitatively different behavior.
\end{remark}

\subsection{Main results}\label{sec:main-results}
\subsubsection{Uniformity principle}
\begin{definition}[Majorization]\label{def:maj}
For $\mathbf{x},\mathbf{y}\in\reals^n$, let $\mathbf{x}^\downarrow$ and $\mathbf{y}^{\downarrow}$ be their non-increasing rearrangements, so that $x^{\downarrow}_1 \geq x^{\downarrow}_2 \geq \cdots \geq x^{\downarrow}_n$ and $y^{\downarrow}_1 \geq y^{\downarrow}_2 \geq \cdots \geq y^{\downarrow}_n$. We say $\mathbf{x}$ \emph{majorizes} $\mathbf{y}$, and write $\mathbf{x}\succeq \mathbf{y}$, if
\[
    \sum_{i=1}^\ell x_i^\downarrow \ge \sum_{i=1}^\ell y_i^\downarrow\quad (\ell=1,\dots,n-1), \qquad \sum_{i=1}^n x_i^\downarrow=\sum_{i=1}^n y_i^\downarrow.
\]
\end{definition}

\begin{theorem}[Uniformity principle]\label{thm:uniformity}
Given any constant $\MPar \geq \max\{1,\betaPar^{-1}\}$, for any two service range vectors $\servicevec, \servicevec' \in [0, \MPar]^n$ satisfying $\servicevec \succeq \servicevec'$ and $\|\servicevec ^\downarrow - {\servicevec'}^\downarrow\|_1 > \theta n$, where 
\begin{enumerate}[label=$\mathrm{(U\arabic*)}$]
    \item\label{U:1} $(\dimension = 1)$ $\theta > C_{\betaPar,\MPar} \left(\frac{\log n}{n}\right)^{1/3}$ for some constant $C_{\betaPar,\MPar}>0$ that depends only on $\betaPar$ and $\MPar$,  
    \item\label{U:2} $(\dimension \ge 2)$ $\theta > C_{\betaPar,\MPar, \dimension} \left(\frac{1}{n}\right)^{\frac{1}{4(k+1)}}$ for some constant $C_{\betaPar,\MPar,\dimension}>0$ that depends only on $\betaPar$, $\MPar$ and $\dimension$,
\end{enumerate}
the expected matching size satisfies $ \mu_{m} (\servicevec') > \mu_{m} (\servicevec)$ for all sufficiently large $n$.
\end{theorem}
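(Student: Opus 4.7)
The plan is to reduce the majorization relation to a sequence of elementary \emph{Robin Hood transfers}, quantify the matching-size gain per transfer via a local pattern argument, and then sum over transfers. By standard majorization theory \cite{marshall2011majorization}, any $\servicevec \succeq \servicevec'$ admits a finite chain $\servicevec = \servicevec^{(0)} \succeq \servicevec^{(1)} \succeq \cdots \succeq \servicevec^{(K)} = \servicevec'$ in which consecutive vectors differ in exactly two coordinates $(i,j)$: a larger $r_i$ is decreased by some $\varepsilon_k > 0$, a smaller $r_j$ is increased by the same $\varepsilon_k$, with $r_i - \varepsilon_k \geq r_j + \varepsilon_k$. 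The chain can be arranged so that $\sum_k \varepsilon_k \geq \tfrac{1}{2}\|\servicevec^\downarrow - {\servicevec'}^\downarrow\|_1 > \tfrac{\theta n}{2}$, which reduces the theorem to a quantitative gain estimate for a single transfer.

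For one such transfer, I would construct an explicit \emph{gain pattern}: a geometric configuration on a small window around $s_j$ (and separately around $s_i$) under which the maximum matching strictly increases when $(r_i, r_j) \to (r_i - \varepsilon, r_j + \varepsilon)$, conditional on the window being matching-independent of the rest of the graph. The natural pattern places at least one demand point in the thin shell that $s_j$ newly covers, no demand in the shell that $s_i$ loses, and no other supply node inside the union of the two enlarged balls; the new demand then has $s_j$ as its unique compatible supplier, and an augmenting-path argument yields a deterministic $+1$ gain. Against uniform supply and demand densities this event has probability $\Omega(\varepsilon)$ per pair, uniformly in $(r_i,r_j) \in [0,\MPar]^2$; the diminishing-returns intuition, namely that gaining coverage at low-range $s_j$ helps more than losing the same coverage at high-range $s_i$, is made quantitative through the concavity of the local matching probability as a function of $r$.

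To enforce matching-independence globally, I would partition $[0,1]^\dimension$ into cells of side length $w \asymp n^{-1/(\dimension+1)}$, chosen so that a positive fraction of cells are \emph{isolated}, i.e.\ their $O(n^{-1/\dimension})$-neighborhoods contain only a bounded number of supply points; because $\MPar = O(1)$ the expected supply degree is $O(1)$ and this indeed holds. Averaging the per-transfer pattern probability over isolated cells and applying concentration of $\mu_m$ around its mean (via an Efron–Stein or bounded-difference martingale argument on the $n + m$ node coordinates) yields
\[
    \mu_m(\servicevec') - \mu_m(\servicevec) \;\geq\; c_{\betaPar,\MPar,\dimension} \sum_{k=1}^K \varepsilon_k \;-\; E(n),
\]
where $E(n)$ absorbs the boundary-cell contribution and the concentration error. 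Balancing the cell scale, the per-transfer gain rate, and $E(n)$ produces the threshold $\theta > C_{\betaPar,\MPar,\dimension}\,n^{-1/(3(\dimension+1))}$ in \ref{U:2}; in dimension one, the explicit Markov-chain embedding developed elsewhere in the paper eliminates some of this slack and yields the sharper rate $(\log n / n)^{1/3}$ stated in \ref{U:1}.

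The principal obstacle is the combination of the single-transfer gain lemma with interference control: one must isolate a local augmentation event whose conditional gain is deterministically $+1$ and whose probability is $\Omega(\varepsilon)$, while simultaneously certifying that essentially no supply node outside the window can disturb the event, so the local gain survives globally. Merely invoking Schur-concavity of $\mu_m$ (which follows softly from diminishing returns) would yield only a non-strict inequality; the quantitative bound uniform in $(r_i, r_j) \in [0,\MPar]^2$ and the concentration estimate needed to overwhelm $E(n)$ are the genuine technical hurdles, and they dictate the precise rate in $\theta$.
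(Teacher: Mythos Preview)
Your high-level skeleton (decompose $\servicevec\succeq\servicevec'$ into Robin Hood transfers, lower-bound the gain per transfer via a local pattern, sum) matches the paper exactly. The serious gap is your claim that a single transfer $(r_i,r_j)\to(r_i-\varepsilon,r_j+\varepsilon)$ yields expected gain $\Omega(\varepsilon)$.

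Your pattern ``a demand lands in the shell $s_j$ newly covers, no demand in the shell $s_i$ loses, no competing supply nearby'' indeed occurs with probability $\Theta(\varepsilon)$, but it does \emph{not} lower-bound the expected gain of the transfer. The symmetric event ``a demand lands in the shell $s_i$ loses, no demand in the shell $s_j$ gains'' also has probability $\Theta(\varepsilon)$ and on it the matching can \emph{drop} by $1$. These first-order contributions cancel: to leading order, adding volume $\varepsilon/n$ at $s_j$ and removing volume $\varepsilon/n$ at $s_i$ have the same effect, since both nodes sit in a statistically identical environment. What survives is precisely the concavity you allude to---the marginal value of extra range is strictly smaller at the larger $r_i$ than at the smaller $r_j$---and this is a higher-order effect. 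The paper makes this rigorous by proving \emph{strong concavity} of the single-node success probability $\delta_m(r,\servicevec)$: the gain per transfer is $\Omega(\tau^3)$, not $\Omega(\tau)$. Telescoping then uses Jensen,
\[
\sum_{k}\tau_k^{3}\;\ge\;\frac{(\sum_k\tau_k)^3}{K^2}\;\ge\;\frac{(\theta n/2)^3}{n^2}\;=\;\Theta(\theta^3 n),
\]
and it is exactly this cube that produces the exponent $1/3$ in both \ref{U:1} and \ref{U:2}. With your linear bound the threshold would come out strictly better than stated, which is a signal that the bound is too strong.

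A second, smaller issue: the $\dimension=1$ improvement does \emph{not} come from the Markov-chain embedding (that machinery is used only for the dual-service-range formulas). The sharper $(\log n/n)^{1/3}$ rate in \ref{U:1} comes from the fact that in one dimension the random geometric graph has no giant component---all components span intervals of length $O((\log n)/n)$---so the ``two transferred nodes land in the same component'' interference term is $O((\log n)/n)$ without any trimming. For $\dimension\ge 2$ a giant component exists, and the paper handles it by trimming edges across a coarse grid of side $\Theta(\varepsilon^{-1}n^{-1/\dimension})$, paying $\varepsilon n$ in matching size and then optimizing $\varepsilon$; this, not a universal $n^{-1/(\dimension+1)}$ cell scale, is what produces the $n^{-1/(3(\dimension+1))}$ threshold.
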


\begin{corollary}[Uniform allocation dominates]\label{cor:uniform-dominates}
Given a service range vector $\servicevec \in [0, \MPar]^n$, let $\overline{\servicevec} = (\overline{r}, \overline{r}, \ldots, \overline{r})$ denote the uniform allocation where $\overline{r} = \|\servicevec\|_1/n$. If $\|\servicevec - \overline{\servicevec}\|_1 > \theta n$ where either \ref{U:1} or \ref{U:2} hold, 
then for sufficiently large $n$, we have
\(
    \mu_{m} (\overline{\servicevec}) > \mu_{m} (\servicevec).
\)
\end{corollary}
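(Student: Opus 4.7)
The plan is to derive the corollary as an immediate consequence of Theorem~\ref{thm:uniformity} by recognizing that the uniform allocation $\overline{\servicevec}$ is the minimal element (with respect to majorization) among all nonnegative vectors with a fixed sum.

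First, I would verify the majorization relation $\servicevec \succeq \overline{\servicevec}$. This is a classical fact: for any vector $\servicevec \in \reals_{\geq 0}^n$ with mean $\overline{r} = \|\servicevec\|_1/n$, the partial sums satisfy $\sum_{i=1}^\ell r_i^\downarrow \ge \ell \, \overline{r} = \sum_{i=1}^\ell \overline{r}_i^\downarrow$ for every $\ell \in [n-1]$, with equality at $\ell = n$ by construction. Since both vectors lie in $[0,\MPar]^n$ (note that $\overline{r} \leq \MPar$ whenever $\servicevec \in [0,\MPar]^n$), the hypotheses of Theorem~\ref{thm:uniformity} on the ambient constraint set are satisfied.

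Next, I would match the $\ell_1$ gap condition. Because $\overline{\servicevec}$ has all entries equal, it coincides with its own non-increasing rearrangement, i.e. $\overline{\servicevec}^\downarrow = \overline{\servicevec}$. Moreover, rearranging $\servicevec$ into $\servicevec^\downarrow$ only permutes the coordinatewise differences $|r_i - \overline{r}|$, so
\begin{equation*}
    \|\servicevec^\downarrow - \overline{\servicevec}^\downarrow\|_1 \;=\; \|\servicevec^\downarrow - \overline{\servicevec}\|_1 \;=\; \|\servicevec - \overline{\servicevec}\|_1 \;>\; \theta n,
\end{equation*}
so the quantitative gap hypothesis of Theorem~\ref{thm:uniformity} carries over verbatim, with the same $\theta$ satisfying either \ref{U:1} (for $\dimension=1$) or \ref{U:2} (for $\dimension \geq 2$).

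Finally, I would invoke Theorem~\ref{thm:uniformity} with $\servicevec' = \overline{\servicevec}$ to conclude that $\mu_m(\overline{\servicevec}) > \mu_m(\servicevec)$ for all sufficiently large $n$. There is no substantive obstacle here, because the content of the corollary is entirely absorbed into the theorem; the only things to check are the two textbook facts above (that the constant vector is dominated in the majorization order, and that $\|\cdot\|_1$ is invariant under permutation of coordinates).
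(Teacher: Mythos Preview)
Your proposal is correct and follows exactly the same approach as the paper's proof: establish $\servicevec \succeq \overline{\servicevec}$ (since the constant vector is the majorization minimum for a fixed sum) and then invoke Theorem~\ref{thm:uniformity}. You are, in fact, slightly more careful than the paper in explicitly checking that $\overline{\servicevec}\in[0,\MPar]^n$ and that the $\ell_1$ gap hypothesis transfers because $\overline{\servicevec}^\downarrow=\overline{\servicevec}$.
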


\begin{proof}
Since $\overline{\servicevec}$ has all equal entries, it is majorized by any non-uniform vector $\servicevec$ with the same sum, 
{
    i.e.}
$\servicevec \succeq \overline{\servicevec}$. The result then follows from Theorem~\ref{thm:uniformity}.
\end{proof}

\begin{remark}[Beyond uniform distribution]\label{rmk:general}
The uniformity principle extends beyond uniform distributions. For $\dimension = 1$, our result is more general (see~Proposition~\ref{prop:schur-concavity}); it allows for distinct supply and demand distributions. Specifically,  it holds for distributions that are $\eta$-Lipschitz continuous for some constant $\eta > 0$ and have densities bounded below by a constant depending only on $\eta$. We conjecture that this extension also holds for $\dimension \geq 2$ and leave this as future work.

However, the uniformity principle does not hold for all distributions. Two examples are provided in Figure~\ref{fig:counterexample} with $\varepsilon, \ell_1, \ell_2 = \Theta(1)$: In panel~(a), given a total service range budget of $\Theta(1)$, uniform allocation yields $\service_i = \Theta(1/n)$ for all $i \in [n]$, resulting in a maximum matching of size $0$; In panel~(b), given a total service range budget less than $\ell_2$, uniform allocation yields $r_i < \ell_2$ for all $i \in [n]$, also resulting in a maximum matching of size $0$. Thus, the uniformity principle requires regularity conditions on the distributions; an open problem is to identify the minimal such conditions.
\end{remark}

\begin{figure}
    \centering
    \subfigure[Counterexample 1]{
    \includegraphics[width=0.45\linewidth]{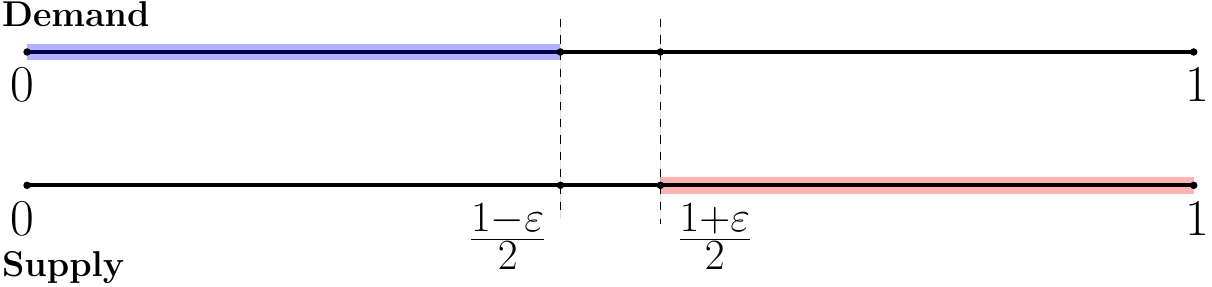}
    \label{fig: counterexample-1}
    }
    \hfill 
    \subfigure[Counterexample 2]{
    \includegraphics[width=0.45\linewidth]{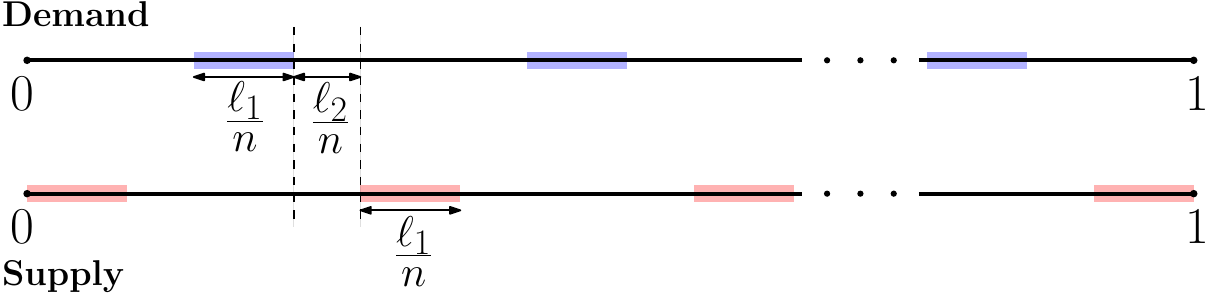}
    \label{fig: counterexample-2}
    }
    \caption{Demand and supply are uniformly distributed in their respective shaded regions.}
    \label{fig:counterexample}
\end{figure}

\subsubsection{Dual service range model}
We present a framework to compute the maximum matching size using an embedded Markov chain. We focus on a dual service range model when \( m = n \) and \(\dimension = 1\), where each supply node is either inflexible (has service range $\base$) or flexible (has service range $\base+\extra$). The vector of service ranges $\servicevec_p$ has $p$ fraction of flexible nodes, and $1-p$ fraction of inflexible nodes.\footnote{All such $\servicevec_p$ are statistically equivalent, since demand and supply locations are i.i.d. uniform.} 
Given $(\drivervec,\servicevec_p)$ and $\ridervec$, denote
\(
    G \sim \mathbb{G}(n,\servicevec_p)
\)
as the bipartite graph $G\big((\drivervec,\servicevec_p),\ridervec\big)$ under the compatibility rule $|\driver_i-\rider_j|\le \service_i/n$. This is the \emph{dual service range} model with parameters $\base,\extra,p$. Define
\[
    \nu_n(\base,\extra,p) \; \triangleq \; \Expect\big[\,|\calM(G)|\,\big],
\]
the expected size of a maximum matching in $G$. 

We compute $\nu_n(\base,\extra,p)$ via a Markov chain approximation of a greedy matching algorithm. To that end, Let $(u_t)_{t\in \mathbb{N}}$, $(v_t)_{t\in \mathbb{N}}$, and $(w_t)_{t\in \mathbb{N}}$ be mutually independent random processes with
\[
    u_t \stackrel{\text{i.i.d.}}{\sim} \Exp(p), \quad v_t \stackrel{\text{i.i.d.}}{\sim} \Exp(1-p), \quad w_t \stackrel{\text{i.i.d.}}{\sim} \Exp(1) \,.
\]
Consider a discrete-time Markov chain
\( 
    \potential(t) \triangleq  \begin{bmatrix} x(t), ~ y(t) \end{bmatrix}
\)
on $\reals^2$, with one-step transition $\nabla \potential(t) \triangleq \potential(t+1) - \potential(t)$ defined at each time $t$ using $u_t, \, v_t, \, w_t$ as shown in Figure~\ref{fig-Markov-Chain-Regions}. A formal description is provided in~\eqref{eq: markov-chain-dynamics}. The chain $\potential(t)$ convergences to a unique stationary distribution (\Cref{prop:psi-ergodic}). We use $\potential(t)$ to characterize the expected size of a maximum matching in this model.

\begin{figure}[t]
    \centering
    \includegraphics[width=0.8\linewidth]{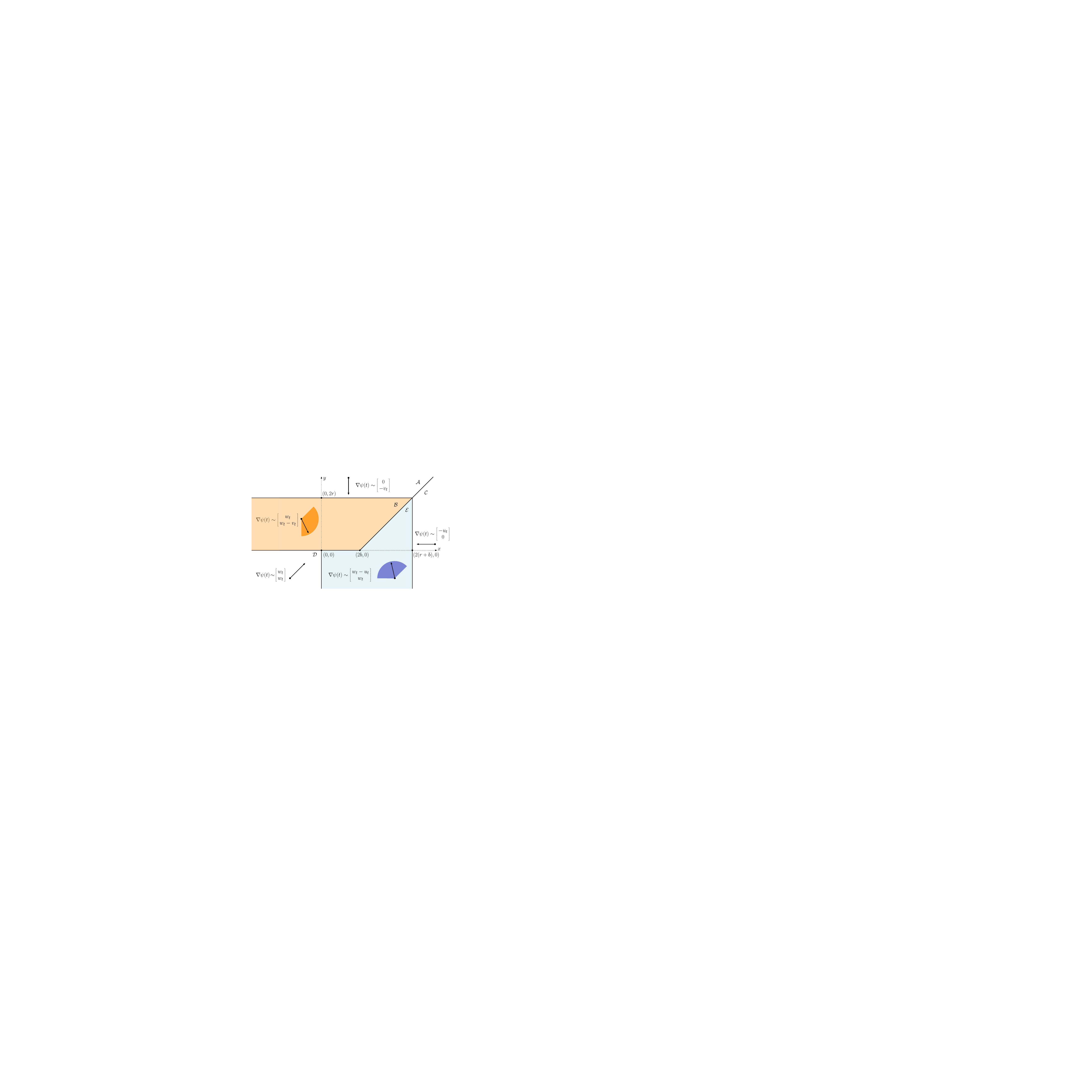}
    \caption{Markov dynamics of $\potential(t)$ with dots and arrows in vector field notation: a dot represents the current location of $\potential(t)$, and the arrow denotes its movement direction from that dot. For regions $\mathcal{B}$ and $\mathcal{E}$, the shaded sector represents the set of potential movement directions for $\potential$.} 
    \label{fig-Markov-Chain-Regions}
\end{figure}

\begin{theorem} \label{thm: sizes-of-maximum-matching}
    Let $\nu_n(\base, \extra, p)$, $\nu_n^{\mathrm{NF}}(\base, \extra, p)$, $\nu_n^{\mathrm{F}}(\base, \extra, p)$ respectively denote the expected number of matched supply, inflexible supply, and flexible supply nodes in the dual service range model. Then, 
    \begin{align}
        \frac 1n \,\nu_n(\base,\extra,p) &= \left( \frac{1 - 2\,F_{\calD}}{1 - F_{\calD}} \right) \,+\, o(1) \, , \label{eq: match-total}
        \\
        \frac 1 n \, \nu_n^{\mathrm{F}}(\base,\extra,p) &= \left(\frac{F_{\calE}}{ F_{\calE} + F_{\calC}}\right) p \,+\, o(1) \, , \label{eq: match-flex}\\
        \frac 1 n \, \nu_n^{\mathrm{NF}}(\base,\extra,p) &= \left( \frac{F_{\calB}}{ F_{\calB} + F_{\calA}} \right) (1-p) \,+\, o(1)\, ,  \label{eq: match-non-flex}      
    \end{align}
    where $F_{R}$ is the stationary measure of the Markov chain $\potential(t)$ on region $R \in \{\calA, \calB, \calC, \calD, \calE\}$.
\end{theorem}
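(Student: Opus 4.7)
My plan is to reduce the maximum matching problem on the line to an ergodic-average computation for the chain $\potential(t)$, via an asymptotically optimal sweep algorithm whose state dynamics converge to $\potential$. The first step is to recall that in $\dimension=1$ the maximum matching in a bipartite interval graph can be computed exactly by a left-to-right greedy/sweep procedure: process all points in increasing order of location, and whenever a newly encountered demand node has an available compatible supply node to its left (or vice versa), match them, using for instance an earliest-deadline-type rule. Because compatibility is defined by intervals of length at most $2\MPar/n$, the ``conflicts'' that could make a greedy policy suboptimal are local, and a short case analysis shows greedy attains the maximum matching up to an $o(n)$ additive error. This lets me work with the greedy output rather than the combinatorial optimum.

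Next I would rescale space by a factor $n$ so that the point pattern on $[0,n]$ becomes, in the limit, the superposition of three independent Poisson processes: flexible supply of rate $p$, inflexible supply of rate $1-p$, and demand of rate $1$. This is exactly where the three families of exponentials $u_t \sim \Exp(p)$, $v_t \sim \Exp(1-p)$, $w_t \sim \Exp(1)$ enter; they describe the inter-event spacings between consecutive nodes of each type. I would then define a state variable $\potential(t) = [x(t),y(t)]$ that summarizes the essential memory of the sweep, where $t$ indexes matching events. A natural choice (matched to the geometry of regions $\calA,\ldots,\calE$ in Figure~\ref{fig-Markov-Chain-Regions}) is to let $x(t)$ track the signed excess of unmatched supply versus demand still ``in range,'' and $y(t)$ track the residual compatibility window of the oldest unmatched node. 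The update $\nabla\potential(t)$ at event $t$ is determined by the identity of the next event (flexible supply, inflexible supply, or demand, with spacings $u_t,v_t,w_t$) and a deterministic decision (match or defer) depending on which region of $\reals^2$ the current state lies in. This yields the five-region dynamics in Figure~\ref{fig-Markov-Chain-Regions}, and makes $\potential(t)$ a genuine Markov chain.

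The next step is to establish ergodicity of $\potential(t)$ and existence of a unique stationary measure $\pi$; I would cite \Cref{prop:psi-ergodic} for this and verify that each region $R \in \{\calA,\calB,\calC,\calD,\calE\}$ has a well-defined stationary mass $F_R = \pi(R)$. With the chain in steady state, each visit to a region corresponds to a distinct matching decision: visits to $\calB$ correspond to successful matches of inflexible supply, visits to $\calE$ to matches of flexible supply, visits to $\calC$ (resp.\ $\calA$) to flexible (resp.\ inflexible) supply that is left unmatched, and visits to $\calD$ to demand that cannot be served. The ratios on the right-hand sides of~\eqref{eq: match-total}, \eqref{eq: match-flex}, and~\eqref{eq: match-non-flex} then arise from the long-run frequencies $F_R/(F_R+F_{R'})$, weighted by $p$ or $1-p$ to account for the event rates. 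The total match formula then follows from an elementary flow-balance identity relating $F_{\calD}$ (unmatched demand) to the overall matching fraction. Finally, I would invoke the renewal/ergodic theorem to average over the $n$ sweep steps and pass from the finite-$n$ greedy counts to the claimed limits, absorbing the discretization error, the $o(n)$ gap between greedy and optimum, and the Poisson approximation error into the $o(1)$ term.

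The main obstacle I anticipate is the first step: justifying rigorously that the left-to-right greedy/EDF-type sweep loses only $o(n)$ matches compared to the true maximum. This is a combinatorial optimality argument on an interval graph with finite but data-dependent horizon, and requires a careful local exchange argument showing that any suboptimal greedy decision can be repaired without propagating errors more than $O(\MPar)$ positions to the right, so that the total loss is controlled by the typical length of ``congested runs'' of nodes, which is $o(n)$ with high probability under the Poisson limit. The Markov-chain convergence and the stationary-measure bookkeeping are comparatively standard once the correspondence between regions and decisions is fixed.
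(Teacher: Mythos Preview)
Your overall strategy matches the paper's: couple the maximum matching to an earliest-deadline-first sweep, embed the sweep's state as the Markov chain $\potential$, invoke ergodicity to read off the long-run fractions $F_R$, and use a flow-balance identity (namely $F_{\calA}+F_{\calC}=F_{\calD}$) to obtain~\eqref{eq: match-total}. Two points deserve correction.

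First, your anticipated ``main obstacle'' dissolves entirely. For bipartite interval graphs --- which is exactly what the one-dimensional compatibility graph is --- the earliest-deadline-first greedy rule is \emph{exactly} optimal, not merely $o(n)$-suboptimal; this is classical (Glover; Lipski--Preparata) and the paper records it as Lemma~\ref{lem: greedy-is-optimal}. No local-exchange or congested-run argument is needed. The $o(1)$ error in the theorem does not come from greedy suboptimality at all; it comes from \emph{dePoissonization} (Proposition~\ref{prop:gen-emb-eq}): the generative process reveals Poisson numbers of demand, flexible, and inflexible points on $[0,1]$, and one must correct these counts to exactly $n$, $pn$, and $(1-p)n$, which perturbs the matching by $O(\sqrt{n\log n})$.

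Second, your description of the state $(x(t),y(t))$ as ``signed excess of unmatched supply'' and ``residual compatibility window'' is too loose to recover the five-region dynamics of Figure~\ref{fig-Markov-Chain-Regions}. The paper's state is concrete: $\potential(t)=[\potentialF(t),\potentialNF(t)]$ with $\potentialF(t)=n(u(t)-v^{\mathrm{F}}(t))+(\base+\extra)$ and $\potentialNF(t)=n(u(t)-v^{\mathrm{NF}}(t))+\base$, i.e.\ the rescaled signed distance from the current active demand to the leftmost point reachable by each of the two active supply nodes (see~\eqref{eq:psi-def}). With this choice the regions $\calA,\ldots,\calE$ are exactly the five cases of the priority/in-range logic, and the Markov property is immediate because only one demand, one flexible, and one inflexible point are ``active'' at a time.
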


When one of $\base, \extra$ or $p$ equal zero, we present a closed-form formula for the fraction of matched nodes, by explicitly computing the stationary distribution.

\begin{theorem} \label{thm: extreme-cases}
In the dual service range model, 
\begin{enumerate}
    \item[\( \mathrm{(i)} \)] If $\base = 0$, the expected size of a maximum matching satisfies
    \begin{align}
        \frac 1 n \, \nu_n(0, \extra, p) = 
            \frac{e^{2 p \extra} - e^{2\extra}}{e^{2 p \extra} - \frac 1 p e^{2\extra}}   \,+\, o(1) \, .
    \end{align}
    \item[\( \mathrm{(ii)} \)] If $\extra = 0$ or $p=0$, the expected size of a maximum matching satisfies
    \begin{align}
        \frac 1 n \, \nu_n(\base, 0 ,p) =  \, \nu_m(\base, \extra , 0) = \frac{\base}{\base + 1/2}  \,+\,  o(1) \, . \label{eq: size-of-max-matching-eps-equals-0}
    \end{align}
\end{enumerate}
\end{theorem}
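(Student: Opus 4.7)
The plan is to invoke \Cref{thm: sizes-of-maximum-matching}, which expresses $\nu_n(\base,\extra,p)/n$ as $(1 - 2F_{\calD})/(1 - F_{\calD}) + o(1)$, where $F_{\calD}$ is the stationary measure placed on region $\calD$ by the Markov chain $\potential(t)$. It therefore suffices to compute the relevant region masses explicitly in each extreme case, and then plug back into~\eqref{eq: match-total}.

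\textbf{Case (ii): $\extra = 0$ or $p = 0$.} In either situation the model collapses to the homogeneous 1D random geometric bipartite matching in which every supply node has the same range $\base/n$. When $\extra = 0$, the flexible and inflexible arrival streams (with gap laws $\Exp(p)$ and $\Exp(1-p)$) merge into a single $\Exp(1)$ supply stream, and when $p = 0$ there are simply no flexible arrivals. Either way, I would show that the 2D process $\potential(t) = (x(t), y(t))$ degenerates to a scalar chain tracking the signed running imbalance between supply and demand arrivals, with several of $\calA, \calB, \calC, \calE$ carrying zero stationary mass because the transitions bordering them are never triggered. The reduced chain admits an exponential stationary density whose rate is controlled by $\base$, and a short calculation of $F_{\calD}$ produces $\base/(\base + 1/2)$.

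\textbf{Case (i): $\base = 0$.} Here the inflexible supply nodes have vanishing service region and cannot be matched, so only the $p$-fraction of flexible nodes (with range $\extra/n$) actively serve demand, which itself remains at full density. Any transition in Figure~\ref{fig-Markov-Chain-Regions} whose step length depends on $\base$ degenerates, which reshapes the geometry of the five regions. On the surviving regions I would write down the stationary balance equations for the joint density of $\potential(t)$. Since the drivers $u_t \sim \Exp(p)$, $v_t \sim \Exp(1-p)$, and $w_t \sim \Exp(1)$ are all exponential, the stationary density has a piecewise form that is a linear combination of exponentials with rates $p$ and $1$; matching fluxes across the region boundaries produces a small linear system whose normalized solution involves $e^{p\extra}$ and $e^{\extra}$. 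Computing $F_{\calD}$ and substituting into~\eqref{eq: match-total} then yields $(e^{2p\extra} - e^{2\extra})/(e^{2p\extra} - e^{2\extra}/p)$.

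\textbf{Main obstacle.} The delicate step is correctly identifying which regions survive each degeneration and writing down the matching conditions between the piecewise-exponential pieces. Case (ii) is clean because symmetry forces a reduction to a scalar chain, but case (i) is harder because the chain remains genuinely two-dimensional, so the exponential ansatz must be pieced together across region boundaries consistently and the normalization must account for all surviving regions. Once the ansatz is in place, the rest is routine linear algebra in the normalization constants, and no probabilistic input beyond \Cref{thm: sizes-of-maximum-matching} and ergodicity of $\potential(t)$ is required.
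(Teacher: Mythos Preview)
Your high-level plan---compute $F_{\calD}$ from the stationary law of $\potential(t)$ and plug into \eqref{eq: match-total}---is exactly what the paper does, and your description of case~(i) matches the paper's argument (Lemma~\ref{lem-stationary-density-c-equal-0}): when $\base=0$ the strip $\calB_1\cup\calE_1$ collapses, the chain lives on $\calA_1,\calA_2,\calC,\calD,\calE_2$, and a piecewise-exponential ansatz with rates built from $p$ and $1$ solves the balance equations.

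The gap is in case~(ii). Your claim that $\potential(t)$ ``degenerates to a scalar chain'' with ``several of $\calA,\calB,\calC,\calE$ carrying zero stationary mass'' is incorrect. When $\extra=0$, the chain $\potential(t)$ as defined still tracks \emph{two} active supply nodes (one of each type) and remains genuinely two-dimensional: the strip $\calA_2\cup\calB_2\cup\calE_2$ collapses, but each of the five surviving regions $\calA_1,\calB_1,\calC,\calD,\calE_1$ carries strictly positive stationary mass (see the explicit density in Lemma~\ref{lem-stationary-density-eps-equal-0}). The reason is that the generative process still advances the flexible and inflexible coordinates with distinct $\Exp(p)$ and $\Exp(1-p)$ steps, so the two components of $\potential(t)$ do not coincide. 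What \emph{is} true is that the underlying matching problem becomes homogeneous (all supply nodes have range $\base$), but that does not mean the embedded 2D chain collapses.

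You have two ways to fix this. Either (a) do for $\extra=0$ exactly what you propose for $\base=0$: write down the piecewise-exponential stationary density on the five surviving regions, verify balance and normalization, and integrate over $\calD$---this is the paper's route and yields $F_{\calD}=1/(2(1+\base))$ directly. Or (b) argue separately that when ranges are equal the greedy rule depends only on $\min(\vf,\vnf)$, introduce a \emph{new} scalar chain tracking $n(u-\min(\vf,\vnf))+\base$, prove it has the right matching count, and compute its stationary law. Route (b) is cleaner intuitively but requires you to justify the reduction; it is not a statement about $\potential(t)$ itself.
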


\begin{remark}
The case $p = 0$ reduces to the setting where all supply nodes have the same service range $\base$, where \eqref{eq: size-of-max-matching-eps-equals-0} recovers~\cite[Theorem 1]{sentenac2025online}; When $\base = 0$, the setting is an imbalanced market with $n$ demand nodes and $m =\betaPar n$ supply nodes, with $\betaPar= p$. Note that a similar Markov chain embedding argument extends to a heterogeneous model with more than two service ranges. 
\end{remark}

\begin{remark}[Bounds for the dual service range model] \label{rmk: bounds-for-dual-service range}
    Some simple derivations, detailed in~\Cref{apx-remark}, show that our theorems entail the following bounds:
    \begin{itemize}
        \item (Upper bound) 
        \begin{align} \label{eq: upper-bound}
            \frac 1 n \, \nu_n(\base,\extra,p) \leq  \frac{\base + p\extra}{\base+ p \extra + 1/2} + o(1) \, .
        \end{align}
        \item (Lower bound) 
        \begin{align} 
            \frac 1 n \, \nu_n(\base,\extra,p) 
            \geq
            \sup_{p < q \leq 1} \!\left\{ 
                (1-q) \, \frac{ e^{2\base\frac{1-q}{1-p}} - e^{ 2\base} }{e^{2\base\frac{1-q}{1-p}} \!-\! \frac{1-p}{1-q}\cdot  e^{2\base} } \!+\! q \, \frac{ e^{2(\base+\extra)\frac{p}{q}} - e^{ 2(\base+\extra)} }{ e^{2(\base+\extra)\frac{p}{q}} \!-\! \frac{q}{p}\cdot e^{ 2(\base+\extra)} } 
            \right\}  \,+\, o(1) \, . \label{eq: lower-bound-with-q}
        \end{align}
        As a special case, letting $q \to p$ gives 
        \begin{align}
            \frac 1 n \, \nu_n(\base,\extra,p) \geq   (1-p) \, \frac{\base}{\base+ 1/2} + p \, \frac{\base+\extra}{\base+\extra+ 1/2}  + o(1) \, .   \label{eq: lower-bound-with-p}
        \end{align}
    \end{itemize}
\end{remark}

\subsection{Notation and paper organization}

For $x \in \reals$, denote $\max\{x,0\}$ as $x_+$. For a vector $\mathbf{v} \in \reals^\dimension$, and scalar $a \in \mathbb{R}$, let $\mathbf{v} \oplus a$ denote the augmented vector $(\mathbf{v}, a) \in \mathbb{R}^{\dimension+1}$ obtained by appending $a$ to $\mathbf{v}$. We use $B(x,r)$ to denote the $\ell^2$-ball of radius $r$ centered at $x$. We denote $X \sim \Exp(p)$ if $X$ has an exponential distribution with mean $1/p$. For any measurable set $A \subseteq \reals^\dimension$, we denote by $\mathrm{Vol}(A)$ its Lebesgue measure.
For two positive sequences $\{x_n\}$ and $\{y_n\}$, we write $x_n = O(y_n)$ if $x_n \le C \, y_n$ for an absolute constant $C$ and for all $n$; and we write $x_n = \Theta(y_n)$ if both $x_n = O(y_n)$ and $y_n = O(x_n)$. 

The rest of the paper is organized as follows. In Section~\ref{sec-uniformity-principle}, we establish the uniformity principle (Theorem~\ref{thm:uniformity}); the proof in one dimension is given in Section~\ref{sec-one-dimension}, and the extension to higher dimensions is developed in Section~\ref{sec-high-dimension}. Section~\ref{sec-markov-chain} then studies the dual-service range model and proves Theorems~\ref{thm: sizes-of-maximum-matching} and~\ref{thm: extreme-cases}. Simulation results are presented in Section~\ref{sec-simulations}, and concluding remarks appear in Section~\ref{sec-conclusion}. Proofs of supporting lemmas and auxiliary results are deferred to the appendices.

% ---------------------  Section --------------------- %

\section{Analysis for the uniformity principle} \label{sec-uniformity-principle}

In this section we prove the uniformity principle~(\Cref{thm:uniformity}), {with technical lemmas stated here whose proofs are deferred to~\Cref{sec-deferred-proofs}.} We analyze \( \dimension=1 \) and \( \dimension \geq 2 \) separately because the underlying RGGs exhibit a structural dichotomy: when $\dimension=1$, all connected components are of size \( O(\log n) \) with high probability, while for \( \dimension \geq 2 \) a giant component of size \( \Theta(n) \) emerges once the average degree crosses a constant threshold (\cite[Chs. 10–11]{penrose2003rgg}). 
The \( \dimension = 1 \) proof localizes to small components and yields sharper bounds, whereas the argument for \( \dimension \geq 2 \) controls long-range dependencies induced by the giant component, leading to a slightly loose bound. 

\subsection{Analysis for~\texorpdfstring{\( \dimension = 1 \)}{}} \label{sec-one-dimension}
For \( \dimension = 1 \), we establish the uniformity principle for supply and demand distributions on \( [0,1] \) whose densities are Lipschitz continuous and bounded from below. This covers the uniform distribution as a special instance. The following proposition implies~\Cref{thm:uniformity} for~\ref{U:1} and~\prettyref{rmk:general}.

\begin{proposition}[Uniformity principle, $\dimension =1$] 
\label{prop:schur-concavity}
Let
\(
    \left\{s_i\right\}_{i=1}^n \stackrel{\mathrm{i.i.d.}}{\sim}\mathbb{D}_1
\)
and
\(
    \left\{d_j\right\}_{j=1}^m \stackrel{\mathrm{i.i.d.}}{\sim} \mathbb{D}_2
\)
be mutually independent. Assume that \( \mathbb{D}_1 \) and \( \mathbb{D}_2 \) have densities $g_1$ and $g_2$ respectively on $[0,1]$, that are both $\smoothness$-Lipschitz and bounded in
\(
[1/\smoothness, \, \smoothness]
\)
for some $\smoothness \geq 1$. For any $\MPar \geq \max\{1,\betaPar^{-1}\}$ and any two service range vectors \( \servicevec , \, \servicevec' \in [0, \, \MPar]^{n} \) satisfying
\[
\servicevec \succeq \servicevec' ~~~~ \mbox{ and } ~~~~  \| {\servicevec}^{\downarrow} - {\servicevec'}^{\downarrow} \|_1> \theta n   ~~~~  \mbox{ for some } \theta > 0 \, ,
\]
the expected matching size\footnote{
    In this proposition and its proof, we slightly abuse notation by drawing the positions from \( \mathbb{D}_1 \) and \( \mathbb{D}_2 \) rather than \( \Uniform([0,1]) \) in all the relevant definitions (e.g. the definition of \( \muM\left( \servicevec\right) \) ).
} 
satisfies for all \( n \geq 8\MPar \), 
\[
    \muM ( \servicevec' ) - \, \muM ( \servicevec ) \, \geq \,  \CMB \, \theta^{3} n -  \alphaMB \,\log n \, ,
\] 
where
\begin{align}
    \CMB \triangleq \frac{7}{384} \left[1-\frac{\smoothness^4+1}{(\smoothness^2+1)^2}\right] e^{-(8  - 4 \betaPar)\MPar \smoothness} (\MPar \smoothness)^{-3} \,, 
    ~~~~ 
    \alphaMB \triangleq 8 e^{4\MPar\smoothness\betaPar} \, \MPar\smoothness  + 20 \MPar \smoothness  \, . \label{eq:def-CMB-NMB-alphaMB}
\end{align}
\end{proposition}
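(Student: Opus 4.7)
The plan is to establish quantitative Schur--concavity of $\muM$ in the service range vector, via a Hardy--Littlewood--P\'olya decomposition of the majorization $\servicevec \succeq \servicevec'$ into a sequence of T-transforms (Robin Hood transfers) and a quantitative per-step lemma.

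\textbf{Step 1 (T-transform chain).} By classical majorization theory, the relation $\servicevec \succeq \servicevec'$ is equivalent to the existence of a finite chain
\[
\servicevec = \mathbf{r}^{(0)} \to \mathbf{r}^{(1)} \to \cdots \to \mathbf{r}^{(T)} = \servicevec',
\]
where each step $t$ moves an amount $\delta_t > 0$ from a donor coordinate $i_t$ to a recipient coordinate $j_t$ with $\mathbf{r}^{(t)}_{i_t} > \mathbf{r}^{(t)}_{j_t}$. Let $\Delta_t := \mathbf{r}^{(t)}_{i_t} - \mathbf{r}^{(t)}_{j_t}$ denote the donor--recipient gap at step $t$. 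The hypothesis $\|\servicevec^{\downarrow} - {\servicevec'}^{\downarrow}\|_1 > \theta n$ provides a lower bound on the total ``transport effort'' of the chain; I would construct this chain carefully so that $T = O(n)$ and a constant fraction of the steps simultaneously satisfies $\delta_t = \Omega(\theta)$ and $\Delta_t = \Omega(\theta)$, using the fact that the ordered differences $(\servicevec^{\downarrow}_i - {\servicevec'}^{\downarrow}_i)$ have average absolute value at least $\theta$.

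\textbf{Step 2 (Per-step quantitative Schur-Ostrowski lemma).} Next I would prove: for each step of the chain,
\[
\muM(\mathbf{r}^{(t+1)}) - \muM(\mathbf{r}^{(t)}) \;\geq\; c(\MPar, \smoothness, \betaPar) \cdot \delta_t \cdot \delta_t \cdot \Delta_t - O(\log n / n),
\]
with a rate constant $c$ depending only on $\MPar, \smoothness, \betaPar$. Summing over the chain and using the construction in Step 1, the good steps contribute $\Omega(\theta \cdot \theta \cdot \theta)$ each; with $\Theta(n)$ such steps, the total is $\Omega(\theta^3 n)$, while the $O(\log n / n)$ per-step errors aggregate to $O(\log n)$, yielding the claimed bound $\CMB \theta^3 n - \alphaMB \log n$.

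\textbf{Step 3 (Coupling, augmenting paths, and 1D locality).} The per-step lemma is proved via a coupling. Fixing the supply positions $\drivervec$, demand positions $\ridervec$, and all range coordinates other than $r_{i_t}, r_{j_t}$, the graphs $G(\mathbf{r}^{(t)})$ and $G(\mathbf{r}^{(t+1)})$ differ only in the edges incident to $i_t$ and $j_t$: node $i_t$ loses edges to demand in a thin interval of length $\delta_t / n$ at the outer boundary of its old service region, while node $j_t$ gains edges in an analogous interval at the outer boundary of its new region. The change in maximum matching size is analyzed by exhibiting augmenting/alternating paths in the appropriate symmetric difference. The decisive one-dimensional structural feature is that with all ranges $r_i \leq \MPar$, the connected components of the bipartite RGG have size $O(\log n)$ with probability $1 - n^{-\Omega(1)}$ (see Penrose, Chapters 10--11), so augmenting paths are confined to $O(\log n)$-neighborhoods of $s_{i_t}$ and $s_{j_t}$ and decouple from the rest of the graph. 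Together with the Lipschitz continuity and density lower bound $g_1, g_2 \geq 1/\smoothness$, this localization yields the explicit rate $c(\MPar, \smoothness, \betaPar)$.

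\textbf{Main obstacle.} The technical heart is Steps 2--3: establishing the per-step lower bound with the correct dependence on $\delta_t$ and $\Delta_t$. The key difficulty is bookkeeping augmenting paths in bipartite RGGs -- proving that newly gained edges at $j_t$ generate augmenting paths with sufficient probability while lost edges at $i_t$ can be rerouted so the matching does not drop. The exponential factor $e^{-(8-4\betaPar)\MPar\smoothness}$ in $\CMB$ reflects the probability of specific local ``empty-interval'' configurations that guarantee augmenting paths of bounded length exist, and the $(\MPar\smoothness)^{-3}$ factor arises from the third-power scaling in the combined step-size/gap analysis. A secondary but unavoidable technicality is uniformly controlling the $O(\log n / n)$ per-step error across $T = O(n)$ steps via a union bound over ``bad'' local events (atypical component sizes or density deviations), which is the source of the $\alphaMB \log n$ slack in the final bound.
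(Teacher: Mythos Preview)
Your high-level scaffolding---a Hardy--Littlewood--P\'olya chain of $\sfT$-transforms, a per-step quantitative gain lemma, and one-dimensional locality via $O(\log n)$ component sizes---matches the paper's. The execution, however, differs in two places, and in one of them your plan has a genuine gap.

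\textbf{Per-step lemma.} The paper does not analyze the edge differences between $G(\mathbf{r}^{(t)})$ and $G(\mathbf{r}^{(t+1)})$ directly. Instead it introduces the \emph{single-node success probability}
\(
    \deltaM(r,\servicevec)=\Prob(\text{adding one supply node of range }r\text{ to }G\text{ increases the matching}),
\)
and decomposes the two-node gain as
\[
    \muM(\servicevec \oplus (r_1{+}\tau) \oplus (r_2{-}\tau)) - \muM(\servicevec \oplus r_1 \oplus r_2)
    \;\ge\;
    \bigl[\deltaM(r_1{+}\tau)+\deltaM(r_2{-}\tau)-\deltaM(r_1)-\deltaM(r_2)\bigr]
    - \rhoS{\servicevec}(r_1{+}\tau,r_2{-}\tau),
\]
where the last term is the probability that the two added nodes land in the same component. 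This reduces the problem to (i) strong concavity of the scalar function $r\mapsto\deltaM(r,\servicevec)$, proved by a ``gap pattern'' argument on demand locations, and (ii) a rare-connectivity bound $\rhoS{\servicevec}=O(\log n/n)$, which is exactly where the $O(\log n)$ component size enters. The resulting per-step bound is $8\CMB\,\tau^3 - \alphaMB\log(n)/n$, with \emph{no} dependence on the donor--recipient gap $\Delta_t$; your form $c\,\delta_t^2\,\Delta_t$ is not what the argument naturally produces.

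\textbf{Aggregation.} This is where your plan has a real gap. You claim one can construct the $\sfT$-transform chain so that a constant fraction of the $T=O(n)$ steps have $\delta_t=\Omega(\theta)$ and $\Delta_t=\Omega(\theta)$ simultaneously. This is not justified, and it is not obvious how to guarantee a lower bound on the gap $\Delta_t$ along the chain. The paper sidesteps the issue entirely: since its per-step bound depends only on $\tau_\ell$, it uses the Hardy--Littlewood--P\'olya identity $\sum_\ell \tau_\ell = \tfrac12\|\servicevec^\downarrow-{\servicevec'}^\downarrow\|_1 \ge \theta n/2$, the crude bound $T_\star\le n$, and Jensen on $x\mapsto x^3$ to get
\[
    \sum_{\ell=1}^{T_\star}\tau_\ell^3 \;\ge\; T_\star\Bigl(\tfrac{1}{T_\star}\sum_\ell\tau_\ell\Bigr)^3
    \;\ge\; \frac{(\theta n/2)^3}{n^2} \;=\; \frac{\theta^3 n}{8}.
\]
No careful chain construction is needed; this is both simpler and what makes the $\theta^3$ exponent fall out cleanly.
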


\begin{proof}[Proof]
First, we present the following proposition that shows \emph{reallocating} some service range from a larger-range supply node to a smaller-range one strictly increases the expected matching size.

\begin{proposition}[Gain from a pair of supply nodes]\label{prop:domination2drivers-shift}
Given any constant \( \MPar\geq \max\{1,\betaPar^{-1}\} \), let \( \servicevec\in[0,\MPar]^{\,n-2} \) be a vector of service ranges. For any two additional ranges \( \base_1, \,  \base_2\) with \( 0 \leq \base_1 \leq \base_2 \leq \MPar \), any  \( \shift\in[0,(\base_2 - \base_1)/2] \), and all \( n \geq 8 \MPar \),
\begin{align}
    \muM \left(\servicevec \oplus \base_1 + \shift \oplus \base_2 - \shift\right)
    -
    \muM \left(\servicevec\oplus \base_1 \oplus \base_2\right)
    \geq 
     {8} \CMB  \, \shift^3 - \alphaMB  \, \log(n)/n \, , \label{eq:two-driver-shift}
\end{align}
\end{proposition}
We now apply~\Cref{prop:domination2drivers-shift} iteratively along a sequence of \(\sfT\)-transforms that maps \( \servicevec \) to \( \servicevec' \), and then accumulate the resulting gains. Formally, a \(\sfT\)-transform is defined as follows.

\begin{definition}[\(\sfT\)-transform]\label{def:Ttransform-subsec}
    For indices \( i \neq j \) and \( t\in[0,1] \), the \(\sfT\)-transform \( T_{ij}(t) \) {of a vector \( \mathbf{z} \)} replaces \( (z_i,z_j) \) by \( \left((1-t)z_i+t z_j, (1-t)z_j+t z_i\right) \), leaving other coordinates unchanged. Equivalently, it increases (resp. decreases) the smaller (resp. larger) coordinate by \( \tau = t|z_i-z_j| \).
\end{definition}

Consider a \(\sfT\)-transform step acting on a pair \( (\base_1,\base_2)\in[0,\MPar]^2 \) with \( \base_1 \leq \base_2 \), replacing \( (\base_1,\base_2) \) by \( (\base_1+\tau,\base_2-\tau) \) while keeping other coordinates unchanged. By~\prettyref{prop:domination2drivers-shift},
\[
    \muM(\cdot\oplus \base_1+\tau\oplus \base_2-\tau) - \muM(\cdot\oplus \base_1\oplus \base_2)
    \geq  
    {8}\, \CMB  \, \tau^3 - \alphaMB \, {\log (n)}/{n}.
\]
We now chain these local shifts along a finite \(\sfT\)-transform decomposition of the total change; the next lemma supplies this decomposition and records the overall moved amount.

\begin{lemma} \label{lem:HLP}
Let \(\mathbf x,\, \mathbf y \in \reals_{\geq 0}^n \) have the same mean and \( \mathbf x\succeq \mathbf y \). Then:
\begin{itemize}
    \item[\( \mathrm{(a)} \)] (Hardy–Littlewood–Pólya) 
    There is a sequence of $T_\star$ steps of \(\sfT\)‑transforms mapping \(\mathbf x \) to \( \mathbf y \), where $T_{\star} \leq n-1$. 
    \item[\( \mathrm{(b)} \)] If \( \tau_r \) is the moved amount in step \( r \), then \( \sum_{r=1}^{T_{\star}} \tau_r =  \frac12 \|\mathbf x^\downarrow -\mathbf y^\downarrow \|_1 \). 
\end{itemize}
\end{lemma}
By \prettyref{lem:HLP}  and summing over  $T$ steps of \(\sfT\)‑transforms mapping \(\mathbf x \) to \( \mathbf y \), we have
\begin{align}
    \muM\left(\servicevec'\right) - \muM\left(\servicevec\right)
    \geq  {8} \,  \CMB  \sum_{\ell=1}^{T_*} \tau_\ell^3 - T_* \, \alphaMB  \frac{\log n}{n} 
    \geq  {8} \, \CMB  \sum_{\ell=1}^{T_*}  \tau_\ell^3 - \alphaMB \log n  \,, \label{eq:telescope_1}
\end{align}
where the last inequality holds by \prettyref{lem:HLP}(a). Since \( x\mapsto x^3 \) is convex and the total shift is fixed,
\begin{align}
    \sum_{\ell=1}^{T_*}\tau_\ell^{ 3} 
    \, \geq \,  T_*\left( \frac{\sum_{\ell}\tau_\ell}{T_*}\right)^{3}
    \, = \,    \frac{\left(\sum_{\ell}\tau_\ell\right)^{3}}{T_*^{2}} 
    \, \stepa{=} \,  \frac{ \big( \|\servicevec^\downarrow - \servicevec'^\downarrow \|/2  \big)^3 }{n^2} 
    \, \stepb{\geq} \, \frac{(n\theta)^{3}}{ {8} n^2}
    \, = \,    { \frac18 } \,  \theta^{3} n \,, \label{eq:tau_cube} 
\end{align}
where (a) uses~\prettyref{lem:HLP}(b) and (b) uses \( \norm{\servicevec^\downarrow - \servicevec'^\downarrow}_1 \geq \theta n \). 
By~\prettyref{eq:telescope_1} and~\prettyref{eq:tau_cube}, our result follows. 
\end{proof}

\subsubsection{Proof of~\prettyref{prop:domination2drivers-shift}}
\paragraph{\(\sfT\)-transform gain decomposition} 
We start by decomposing the expected matching gain from a given \(\sfT\)-transform into individual success probabilities and a connectivity correction term. Consider a graph \( G((\drivervec,\servicevec),\ridervec) \).
For a new supply node \( x = (\driver_x,\service_x)\in[0,1]\times\mathbb{R}_{\geq 0} \), the augmented graph \( G \oplus x \) is the bipartite graph induced by supply node locations \( \drivervec \oplus \driver_x \), service ranges \( \servicevec \oplus \service_x \), and demand node locations \( \ridervec \).  In the random geometric model, define the single-driver success probability
\[
    \deltaM(\service, \servicevec) \triangleq \Expect_{\driver \sim \mathbb{D}_1 \independent G \sim \mathbb{G}(m,\servicevec)}\, \left[M\left(G\oplus (s,r)\right)-M(G) \right],
\]
and the connectivity probability
\[
    \rhoS{\servicevec}(\service_x, \service_y) \triangleq 
    \Expect_{\driver_x,\driver_y \, \stackrel{\text{i.i.d.}}{\sim} \, \mathbb{D}_1 \independent G \sim \mathbb{G}(m,\servicevec)} \, \left[\mathds{1}\{(\driver_x, \service_x)\text{ and }(\driver_y, \service_y)\text{ connected in }G\oplus (\driver_x, \service_x)\oplus (\driver_y, \service_y)\}\right].
\]
The following lemma establishes the decomposition.

\begin{lemma}[Decomposition of \(\sfT\)-transform gain]\label{lem:main_decomposition-shift}Under the conditions of~\Cref{prop:domination2drivers-shift},
\begin{align}
    & \muM\left(\servicevec \oplus  (\base_1+\shift) \oplus (\base_2-\shift) \right)-\muM \left(\servicevec \oplus \base_1 \oplus  \base_2 \right) \nonumber \\
    & ~~~~  \geq \deltaM(\base_1+\shift,\servicevec) + \deltaM(\base_2-\shift,\servicevec) - \deltaM(\base_1,\servicevec) - \deltaM(\base_2,\servicevec) - \rhoS{\servicevec} \left(\base_1+\shift, \base_2-\shift\right). \label{eq:main_decomposition-shift}
\end{align}
\end{lemma}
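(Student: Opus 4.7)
The plan is to derive the decomposition by comparing the joint marginal $M(G \oplus x_1 \oplus x_2) - M(G)$ to the sum of single-node marginals via two deterministic inequalities, then couple the two graphs and take expectations.

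First, I would couple the two augmented graphs by fixing a single draw of $(\drivervec, \ridervec)$ together with two independent locations $\driver_1, \driver_2 \iidsim \mathbb{D}_1$, writing $x_i^{\mathrm{new}} = (\driver_i, r_i^{\mathrm{new}})$ and $x_i^{\mathrm{old}} = (\driver_i, r_i^{\mathrm{old}})$ with $(r_1^{\mathrm{new}}, r_2^{\mathrm{new}}) = (\base_1+\shift, \base_2-\shift)$ and $(r_1^{\mathrm{old}}, r_2^{\mathrm{old}}) = (\base_1, \base_2)$. Let $f(x \mid H) \triangleq M(H \oplus x) - M(H) \in \{0,1\}$ denote the single-supply-node marginal. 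Directly from the definitions of $\deltaM$ and $\rhoS{\servicevec}$, one has $\E[f(x_i^{\bullet} \mid G)] = \deltaM(r_i^{\bullet}, \servicevec)$ and $\E[\mathds{1}\{x_1^{\mathrm{new}} \leftrightarrow x_2^{\mathrm{new}} \text{ in } G \oplus x_1^{\mathrm{new}} \oplus x_2^{\mathrm{new}}\}] = \rhoS{\servicevec}(\base_1+\shift, \base_2-\shift)$, where $\leftrightarrow$ denotes lying in the same connected component.

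Second, I would prove the two deterministic inequalities
\[
    f(x_1 \mid G) + f(x_2 \mid G) - \mathds{1}\{x_1 \leftrightarrow x_2 \text{ in } G \oplus x_1 \oplus x_2\}
    \;\leq\;
    M(G \oplus x_1 \oplus x_2) - M(G)
    \;\leq\;
    f(x_1 \mid G) + f(x_2 \mid G).
\]
The upper bound rearranges to $M(G \oplus x_1 \oplus x_2) + M(G) \leq M(G \oplus x_1) + M(G \oplus x_2)$, i.e.\ submodularity of the maximum matching as a function of the supply-vertex set, which holds because this quantity is the rank function of a transversal matroid. The lower bound uses the chain identity $M(G \oplus x_1 \oplus x_2) - M(G) = f(x_1 \mid G) + f(x_2 \mid G \oplus x_1)$ combined with the observation that, whenever $x_1$ and $x_2$ lie in different components of $G \oplus x_1 \oplus x_2$, one has $f(x_2 \mid G \oplus x_1) = f(x_2 \mid G)$: a component-wise matching decomposition shows that both marginals reduce to $M(C_2) - M(C_2 \setminus \{x_2\})$, where $C_2$ is the component containing $x_2$. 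In the complementary case where $x_1$ and $x_2$ share a component, submodularity forces the gap $f(x_2 \mid G) - f(x_2 \mid G \oplus x_1)$ to be at most $1$, which is absorbed by the indicator. To conclude, I would apply the upper bound to $(x_1^{\mathrm{old}}, x_2^{\mathrm{old}})$ and the lower bound to $(x_1^{\mathrm{new}}, x_2^{\mathrm{new}})$, subtract, and take expectations; the $M(G)$ terms cancel and the remaining expectations match the $\deltaM$ and $\rhoS{\servicevec}$ quantities appearing in the lemma statement.

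The main obstacle will be the component-decomposition argument underlying the lower bound. The subtlety is that the disconnection event must be read in the doubly augmented graph $G \oplus x_1 \oplus x_2$: only under this hypothesis is one guaranteed that deleting $x_1$ and its incident edges neither alters $x_2$'s component nor changes its matching contribution, so that the marginal of $x_2$ is genuinely insensitive to the presence of $x_1$. Once this component-wise verification is carried out, the remainder of the argument is linear, and the identification of $\rhoS{\servicevec}$ with the expectation of the connectivity indicator is immediate.
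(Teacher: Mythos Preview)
Your proposal is correct and follows the same overall strategy as the paper: establish the two pointwise inequalities
\[
\Delta(x,G)+\Delta(y,G)-\mathbf{I}(x\sim y,G)\ \le\ M(G\oplus x\oplus y)-M(G)\ \le\ \Delta(x,G)+\Delta(y,G),
\]
take expectations, apply the lower bound to the pair $(\base_1+\shift,\base_2-\shift)$ and the upper bound to $(\base_1,\base_2)$, and subtract. The only difference is in how the two inequalities are argued: the paper proves both via Berge's lemma and a count of vertex-disjoint $\calM$-augmenting paths starting at $x$ or $y$, whereas you obtain the upper bound from submodularity of the transversal-matroid rank function and the lower bound from the chain identity plus a component decomposition. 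Both routes are standard and equally short; your matroid observation is a clean alternative to the augmenting-path count.
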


\paragraph{Strong concavity of success probability.} 
The core technical contribution, proved in~\Cref{sec:proof-strongconcave1D-shift}, establishes that the success probability function exhibits strong concavity up to a vanishing correction term. The proof relies on a geometric analysis of the node positions. 
    
\begin{proposition}[Strong concavity]\label{prop:strongconcave1D-shift}
Under the conditions of~\Cref{prop:domination2drivers-shift},
\begin{align*}
    \deltaM(\base_1+\shift,\servicevec) + \deltaM(\base_2-\shift,\servicevec) - \deltaM(\base_1,\servicevec) - \deltaM(\base_2,\servicevec)
    \geq {8\,} \CMB \, \tau^3 - {14 \smoothness \, \MPar }/{n} \, , 
\end{align*}
where $\CMB $ is defined in~\eqref{eq:def-CMB-NMB-alphaMB}.
\end{proposition}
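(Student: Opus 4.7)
The plan is to establish a quantitative concavity property for the map $r \mapsto \deltaM(r,\servicevec)$ via a localization argument around the added supply node, and then convert it into the desired $\shift^3$ bound by using $\shift \leq \MPar$ to downgrade a $\shift^2$ estimate.

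\emph{Step 1 --- Threshold representation.} For fixed randomness of $G \sim \mathbb{G}(m,\servicevec)$ and added location $X$, the event ``$(X,r)$ augments the matching'' is monotone non-decreasing in $r$, so we define the threshold $R^*(X,G) \triangleq \inf\{r \geq 0 : (X,r) \text{ augments}\}$ and write $\deltaM(r,\servicevec) = \Prob(R^* \leq r)$. Setting $a = (\base_2 - \base_1)/2$, the target second-difference rewrites as
\begin{align*}
    \int_{\base_1}^{\base_1 + \shift} \! p_{R^*}(t)\,\diff t \;-\; \int_{\base_2 - \shift}^{\base_2} \! p_{R^*}(t)\,\diff t
    \;=\; \int_0^{\shift} \left[\, p_{R^*}(\base_1 + s) \,-\, p_{R^*}(\base_2 - \shift + s) \,\right] \diff s \, ,
\end{align*}
where $p_{R^*}$ is the (defective) density of $R^*$. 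The two evaluation points are separated by $\base_2 - \base_1 - \shift \geq \shift$ for every $s \in [0,\shift]$, since $\shift \leq a$.

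\emph{Step 2 --- Isolation event and explicit conditional density.} Condition on the \emph{isolation event} $E(X)$ that no supply node in $\servicevec$ lies in the window $[X - 2\MPar/n,\, X + 2\MPar/n]$. Using the Lipschitz lower bound $g_1 \geq 1/\smoothness$, a direct computation gives $\Prob(E(X)) \geq \alpha_0 := \Omega\big(e^{-c\,\MPar\smoothness}\big)$ for an explicit $c > 0$ depending on $\smoothness,\betaPar$. On $E(X)$, the new supply node is disjoint from every existing service region, so $(X,r)$ augments if and only if at least one demand lies in $I_r(X) = [X - r/n,\, X + r/n]$. The conditional distribution of $R^*$ is then, up to an $O(\MPar\smoothness/n)$ Binomial-to-exponential correction, exponential with rate $2\betaPar\, g_2(X) \in [2\betaPar/\smoothness,\, 2\betaPar\smoothness]$; its density is strictly decreasing and
\[
    p_{R^*|E(X)}(\base_1 + s) - p_{R^*|E(X)}(\base_2 - \shift + s) \;\geq\; c_0 \, \shift \, , \qquad c_0 \;\triangleq\; c \, \betaPar^{2}\, \smoothness^{-2}\, e^{-2\,\MPar\betaPar\smoothness} \, ,
\]
uniformly in $s \in [0,\shift]$ and $X \in [0,1]$, by the mean-value theorem applied to a decreasing exponential.

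\emph{Step 3 --- Aggregation and downgrade to $\shift^3$.} Integrating the pointwise bound of Step~2 over $s \in [0,\shift]$, weighting by $\Prob(E(X)) \geq \alpha_0$, and averaging over $X$ yields a clean-event contribution of order $\alpha_0\, c_0\, \shift^{2}$. Since $\shift \leq \MPar$, this dominates $\alpha_0\, c_0\, \MPar^{-1}\, \shift^{3}$, which, after tracking the explicit dependence on $\MPar$, $\smoothness$, $\betaPar$ in $\alpha_0$ and $c_0$, recovers the prefactor $8\,\CMB$ defined in~\eqref{eq:def-CMB-NMB-alphaMB}. The additive $O(1/n)$ slack absorbs the Binomial-to-exponential approximation error (four such corrections, each of size $O(\MPar\smoothness/n)$).

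\emph{Main obstacle.} The crux is controlling the contribution from the bad event $E(X)^c$: since $\Prob(E(X)^c) = \Theta(1)$, the trivial bound $\lvert \mathbf{1}_A - \mathbf{1}_B \rvert \leq 1$ is far too weak. One must instead show that the conditional density of $R^*$ given $E(X)^c$ is \emph{also} nearly nonincreasing on $[0,\MPar]$, up to an $O(\MPar\smoothness/n)$ error, so that these configurations contribute nonnegatively (plus negligible slack) to the second-difference rather than cancelling the clean-event gain. This requires a refined geometric analysis of how augmenting paths propagate in the local neighborhood of $X$, leveraging the 1D structural fact that connected components of the underlying bipartite RGG are of size $O(\log n)$ with high probability.
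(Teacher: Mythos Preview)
Your threshold/density framing in Steps~1--2 is sound, and the isolation event $E(X)$ does produce a strictly concave contribution of order $\alpha_0 c_0\,\shift^2$ as you claim. The problem is the ``main obstacle'' you flag: you need the second difference to be nonnegative (up to $O(1/n)$) on $E(X)^c$, and your proposed fix --- showing that the conditional density $p_{R^*\mid E^c}$ is nearly nonincreasing --- is not established and is in fact not true in general. On $E(X)^c$ there is a supply node within $2\MPar/n$ of $X$, which tends to match nearby demand nodes and push $\riderset_+(G)$ away from $X$; this can make the conditional distribution of $R^*$ peak at a strictly positive value rather than at zero, so $p_{R^*\mid E^c}$ need not be monotone. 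Concretely, for a fixed realization, the $(g_1\,\mathbf 1_{E^c})$-measure of the $r/n$-neighborhood of $\riderset_+$ can jump \emph{up} in slope when a moving boundary of that neighborhood enters the set $E^c$, so the map $r\mapsto \Prob(R^*\le r,\,E^c)$ is not concave pathwise, and there is no averaging argument offered that repairs this. The $O(\log n)$ component-size fact you invoke controls \emph{connectivity between two added supply nodes} (this is what the paper uses for the rare-connectivity lemma), not the monotonicity of $p_{R^*\mid E^c}$; it does not address the obstacle.

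The paper avoids this difficulty altogether by a different decomposition. Rather than conditioning on an isolation event and then trying to show concavity on its complement, it passes to a forward-window surrogate $\widetilde\delta_m$ (interval $[x,x+2\ell/n]$ instead of $[x-\ell/n,x+\ell/n]$), which differs from $\deltaM$ by $O(\smoothness\MPar/n)$. Writing the two gains as
\[
\termI=\widetilde\delta_m(\base_1+\shift)-\widetilde\delta_m(\base_1),\qquad
\termII=\widetilde\delta_m(\base_2)-\widetilde\delta_m(\base_2-\shift),
\]
it splits $\termI=\termI_A+\termI_B$ and shows by a simple change of variables $u=x+2(\base_2-\base_1-\shift)/n$ that $\termII\le \termI_A+O(\base_2/n)$ \emph{unconditionally} --- no concavity on any bad event is needed. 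The remaining piece $\termI_B$ is the probability of a three-interval ``gap'' pattern in $\riderset_+$, which is then lower-bounded by a $\mathsf{NoSupply}\cap\mathsf{InnerGap}$ event on the raw demand points; this is where an isolation-type bound (analogous to your $E(X)$) enters, but only to certify strict positivity of $\termI_B$, yielding the $\shift^3$ factor directly. In short: the paper compares the two increments \emph{head-to-head} via translation rather than routing through a density-monotonicity argument, which is exactly what sidesteps the obstacle you were unable to close.
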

\paragraph{Bounding connectivity effects.} Next, we show that bounded service ranges naturally fragment the geometric graph, making interference between random supply nodes negligible.

\begin{lemma}[Rare connectivity]\label{lem:rarelysamecomponent}
Under the conditions of~\Cref{prop:domination2drivers-shift},
\[
    \rhoS{\servicevec} \left(\base_1+\shift, \base_2-\shift\right) \le  \frac{4\smoothness \MPar }{n}\left(\left\lceil 2e^{ 4 \smoothness\MPar m/n}\log n\right\rceil+2\right) +  \frac{1}{2\MPar\, n} \, .
\]
\end{lemma}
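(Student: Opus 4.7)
The plan is to bound the probability that the two augmented supply nodes $x=(s_x,\base_1+\shift)$ and $y=(s_y,\base_2-\shift)$ lie in the same connected component of $G\oplus x\oplus y$ by combining a deterministic bound on the spatial extent of any component in one dimension with a probabilistic tail bound on its size.

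The starting point is geometric: because every service range is at most $\MPar$, each service interval has Lebesgue length at most $2\MPar/n$, and so any connected component containing $K$ supply nodes has spatial diameter at most $2K\MPar/n$. Setting $K\triangleq \lceil 2e^{4\smoothness\MPar m/n}\log n\rceil+2$ and letting $C_x$ denote the component of $x$ in $G\oplus x\oplus y$, I would decompose
\[
\rhoS{\servicevec}(\base_1+\shift,\,\base_2-\shift)\;\le\;\Prob\!\left(|s_x-s_y|\le 2K\,\tfrac{\MPar}{n}\right)\;+\;\Prob\!\left(|C_x|>K\right).
\]
Under the density upper bound $g_1\le \smoothness$ one has $\Prob(|s_x-s_y|\le \ell)\le 2\smoothness \ell$, so the first term is at most $4\smoothness\MPar K/n$, matching the leading term of the target bound; the ``$+2$'' in $K$ absorbs the extra supply atoms contributed by $x$ and $y$ themselves.

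For the component-size tail $\Prob(|C_x|>K)\le 1/(2\MPar n)$ I would use a one-dimensional \emph{gap} argument. Partition a sufficiently long interval around $s_x$ into disjoint windows of width $2\MPar/n$; a window containing no supply and no demand atoms forms an impassable barrier, since every supply--demand edge has length at most $\MPar/n$, strictly less than the window width. The probability that a fixed window is such a gap is at least $(1-2\smoothness\MPar/n)^{n+m}\gtrsim e^{-4\smoothness\MPar m/n}$ up to absolute constants, and by the negative association of occupancy counts across disjoint windows these estimates combine into a product bound. Exploring $\Theta(e^{4\smoothness\MPar m/n}\log n)$ windows on each side of $s_x$ therefore drives the non-termination probability to $O(1/n)$, which after tracking constants is bounded by $1/(2\MPar n)$.

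The main obstacle is controlling the dependence between adjacent windows, since a single supply node can have its interval straddling a window boundary. I would sidestep this by defining the gap event through only atoms lying inside the window, and by noting that, since edge lengths are bounded by $\MPar/n$ while the window width is $2\MPar/n$, no supply--demand edge can join atoms on opposite sides of a gap window. The remaining work is then careful bookkeeping: chasing the constants so that the exponential in the choice of $K$ matches $e^{4\smoothness\MPar m/n}$ exactly, and verifying that the resulting product of spatial probability and gap-based tail reproduces the lemma's bound.
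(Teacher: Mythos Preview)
Your overall gap strategy is the same as the paper's, but two points prevent you from recovering the stated bound.

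First, your requirement that a barrier window be empty of \emph{both} supply and demand is stronger than needed and spoils the constant. In a bipartite interval graph, every path alternates between supply and demand, and two consecutive demand nodes on a path are at distance at most $2\MPar/n$; hence a \emph{demand-free} interval of length $2\MPar/n$ already blocks any path. This is exactly what the paper exploits: it partitions $[0,1]$ into intervals $I_i$ of width $2\MPar/n$, defines $\mathsf{HasRider}_i=\{\riderset\cap I_i\neq\emptyset\}$, and bounds (via negative correlation and a union bound over start positions) the probability that some run of $a=\lceil 2e^{4\smoothness\MPar m/n}\log n\rceil$ consecutive intervals is all rider-full. With demand-free gaps, the single-interval gap probability is at least $(1-\smoothness\ell)^m\ge e^{-4\smoothness\MPar m/n}$, which is precisely the exponent in the lemma. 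Your both-empty version gives only $(1-2\smoothness\MPar/n)^{n+m}\ge e^{-4\smoothness\MPar(n+m)/n}$; the ``up to absolute constants'' you invoke hides a factor $e^{4\smoothness\MPar}$ that depends on $\MPar,\smoothness$, so you would end up with $K$ of order $e^{4\smoothness\MPar(m+n)/n}\log n$, not $e^{4\smoothness\MPar m/n}\log n$, and the bookkeeping you plan at the end cannot close this gap.

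Second, your decomposition via the number of supply nodes in $C_x$ is a detour that does not mesh with the gap argument. Finding a gap window near $s_x$ bounds the \emph{spatial diameter} of the component, not its cardinality; a short interval can contain arbitrarily many supply atoms, so you cannot conclude $\Prob(|C_x|>K)$ is small from a gap. The paper avoids this entirely: it conditions on the global event $\overline{\calX}$ that no long rider-full run exists anywhere, deduces deterministically that connectivity forces $|s_x-s_y|\le (a+2)\,\tfrac{2\MPar}{n}$, and then bounds $\Prob(|s_x-s_y|\le \cdot)$ plus $\Prob(\calX)$. If you drop the supply-count intermediary and work directly with spatial extent, and switch to demand-free gaps, your argument becomes the paper's.
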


\begin{figure}[t]
    \centering
    \subfigure[Strong concavity of $\widetilde{\delta}_m(\cdot,\servicevec) $]{
    \includegraphics[width=0.4\linewidth]{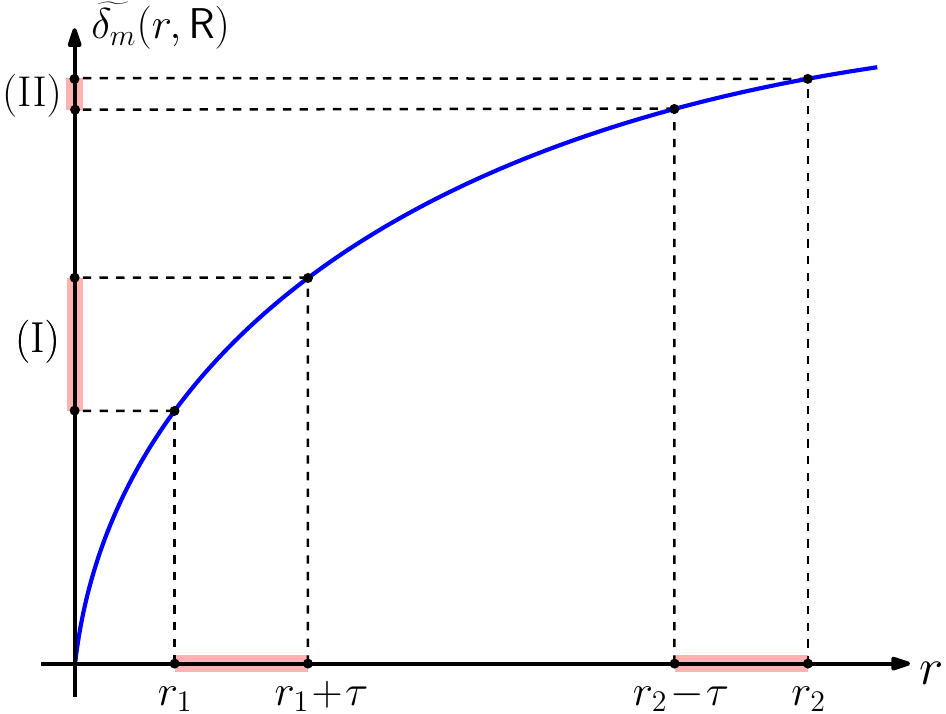}
    \label{fig:concavity}
    }
    \hfill 
    \subfigure[Illustration: $\Gap_{\shift}(\riderset,x)$ and no supply imply $\Gap_{\shift}(\riderset_+,x)$]{
    \includegraphics[height= 1.87 in]{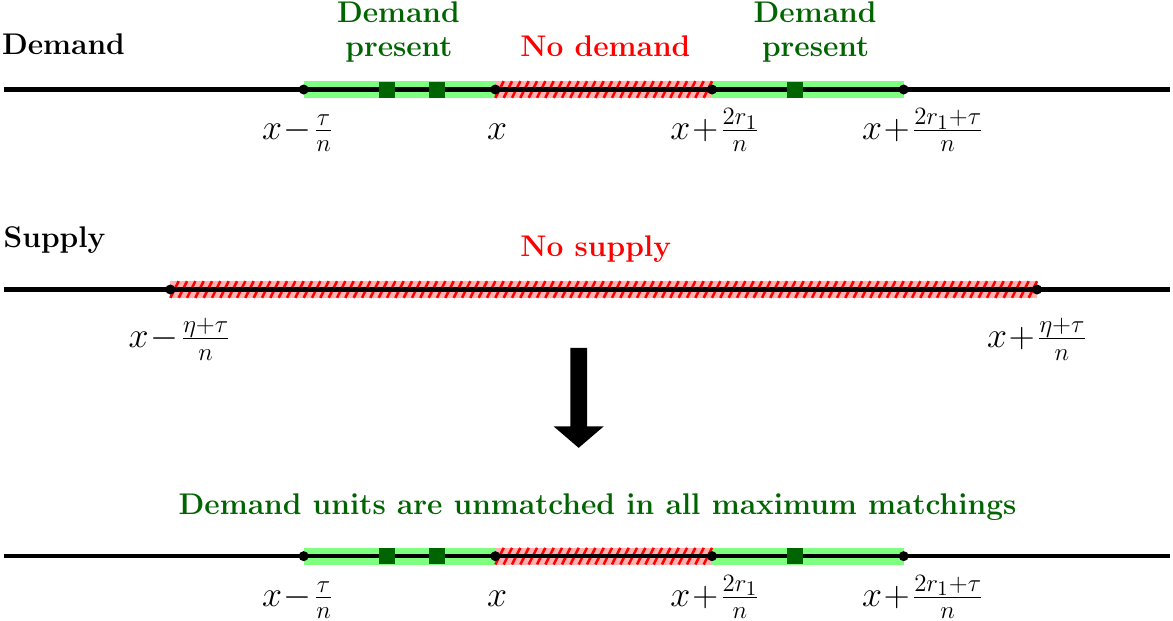}
    \label{fig:gap}
    }
    \caption{Illustrations used in proofs}
    \label{eq:proofconcavityillustration}
\end{figure}

The full proof of this lemma is deferred to~\Cref{sec:proofoflemmararelysamecomponent}. It proceeds by showing that, with high probability, the connected components of the graph span only a fraction \( O(\log n/n) \) of \( [0,1] \). 
Combining~\eqref{eq:main_decomposition-shift},~\prettyref{lem:rarelysamecomponent}, and~\prettyref{prop:strongconcave1D-shift}, we obtain
\begin{align*}
    &\muM\left(\servicevec \oplus (\base_1+\shift) \oplus (\base_2-\shift)\right)-\muM\left(\servicevec \oplus \base_1 \oplus \base_2\right)  \geq {8} \,  \CMB \, \tau^3 - \frac{14 \smoothness\MPar }{n}  - \rhoS{\servicevec}(\base_1+\shift,\base_2-\shift) \\
    &~~~~ \geq  {8} \, \CMB  \, \shift^3 -  \frac{4\smoothness \MPar }{n}\left(\left\lceil 2e^{ 4 \smoothness\MPar m/n}\log n\right\rceil+2\right) -  \frac{1}{2\MPar \, n} -\frac{14 \smoothness\MPar }{n} \geq {8}  \, \CMB  \, \shift^3  - \alphaMB \, \frac{\log n}{n},
\end{align*}
where the last inequality holds because \( \MPar \geq 1 \) and~\eqref{eq:def-CMB-NMB-alphaMB}.

\subsubsection{Proof of~\prettyref{prop:strongconcave1D-shift}}\label{sec:proof-strongconcave1D-shift}
    
For a graph \( G = G((\drivervec,\servicevec), \ridervec) \),~let
\begin{align}
        \riderset_+(G) \triangleq \bigcup_{\pi: \, \text{maximum matching of }G} \{\rider_j: j \text{ is unmatched in } \pi\} \, . \label{eq:D_+}
\end{align}
denote the set of demand node locations that are unmatched in some maximum matching. The following lemma gives an expression for $\deltaM(\service,\servicevec)$ in terms of $\riderset_+$. The idea is that if a supply node with range $r$ can be matched to a demand node in $\riderset_+$, then the matching size can be augmented.

\begin{lemma} \label{lmm:service_calR_+}
For any \( \newrange \geq 0 \) and service range vector \( \servicevec \in \reals_{\ge 0}^n\),
\begin{align}
    \deltaM(\newrange,\servicevec) = \Expect_{G\sim \mathbb{G}(m,\servicevec)} \left[ \, \int_{0}^{1} \mathds{1}\left\{\riderset_+(G)\cap\Big(\Big[x- \frac{\newrange}{n}, x+ \frac{\newrange}{n}\Big]\cap[0,1]\Big)\neq\emptyset\right\} g_1(x)  \, \diff x\right]. \label{eq:service_calR_+}
\end{align}
\end{lemma}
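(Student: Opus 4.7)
The plan is to first establish a combinatorial identity: for any realization of $G$ and any realization of the added supply node $(\driver_x, \newrange)$,
\[
    M\bigl(G \oplus (\driver_x,\newrange)\bigr) - M(G)
    \;=\; \mathds{1}\bigl\{\,N(\driver_x,\newrange) \cap \riderset_+(G) \neq \emptyset\,\bigr\},
\]
where $N(\driver_x,\newrange) = \{\rider_j : |\rider_j - \driver_x| \leq \newrange/n\}$ is the neighborhood of the new supply node in $G \oplus (\driver_x,\newrange)$. Once this is shown, the claim follows by taking expectations, applying the tower property with $\driver_x \sim \mathbb{D}_1$ independent of $G$, and translating the neighborhood condition into the geometric condition appearing inside the indicator in~\eqref{eq:service_calR_+}.

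To prove the identity, I would argue both directions using augmenting paths. Since we add only one supply node, standard bipartite matching theory gives $M(G\oplus(\driver_x,\newrange)) - M(G)\in\{0,1\}$, so it suffices to determine when the increase equals $1$. For the easy direction ($\Leftarrow$), suppose some $\rider_j \in N(\driver_x,\newrange)\cap \riderset_+(G)$ exists. By definition of $\riderset_+(G)$, there is a maximum matching $\pi^*$ of $G$ in which $\rider_j$ is unmatched; adjoining the edge $\{(\driver_x,\newrange),\rider_j\}$ to $\pi^*$ yields a matching of size $M(G)+1$ in $G\oplus(\driver_x,\newrange)$. For the harder direction ($\Rightarrow$), suppose $M(G\oplus(\driver_x,\newrange)) = M(G)+1$, and let $\pi'$ be a maximum matching of $G\oplus(\driver_x,\newrange)$. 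If $(\driver_x,\newrange)$ is matched in $\pi'$ to some $\rider_j \in N(\driver_x,\newrange)$, then $\pi' \setminus \{\{(\driver_x,\newrange),\rider_j\}\}$ is a matching of size $M(G)$ in $G$ in which $\rider_j$ is unmatched, hence a maximum matching of $G$ with $\rider_j$ unmatched, so $\rider_j \in \riderset_+(G)$. If instead $(\driver_x,\newrange)$ is unmatched in $\pi'$, then $\pi'$ restricted to $G$ already has size $M(G)+1$, contradicting the definition of $M(G)$; so this case cannot occur.

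Having established the indicator identity, I take expectation over $(\driver_x, G)$. Conditioning on $G$ and using independence of $\driver_x \sim \mathbb{D}_1$,
\[
    \Expect_{\driver_x}\bigl[\mathds{1}\{N(\driver_x,\newrange)\cap \riderset_+(G)\neq\emptyset\}\,\big|\,G\bigr]
    \;=\;\int_0^1 \mathds{1}\Bigl\{\riderset_+(G)\cap \bigl([x-\tfrac{\newrange}{n},x+\tfrac{\newrange}{n}]\cap[0,1]\bigr)\neq\emptyset\Bigr\}\,g_1(x)\,\diff x,
\]
since $N(x,\newrange)\cap \riderset_+(G)\neq\emptyset$ is precisely the event that $\riderset_+(G)$ intersects $[x-\newrange/n,x+\newrange/n]\cap[0,1]$. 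Taking the outer expectation over $G\sim\mathbb{G}(m,\servicevec)$ yields~\eqref{eq:service_calR_+}.

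I expect the main (though modest) obstacle to be the direction $\Rightarrow$ of the combinatorial identity: one must be careful to produce a maximum matching of $G$ (not just some matching) that leaves an in-neighborhood demand node unmatched. This is handled cleanly by taking a maximum matching $\pi'$ of $G\oplus(\driver_x,\newrange)$ and removing the edge incident to the new supply node, observing that the remainder has size exactly $M(G)$ and leaves the former partner unmatched. Beyond this, the proof is a direct application of Fubini's theorem and the geometric interpretation of the edge rule $|\rider_j - \driver_x| \leq \newrange/n$.
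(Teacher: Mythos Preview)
Your proposal is correct and follows essentially the same approach as the paper: establish the pointwise identity $M(G\oplus(\driver_x,\newrange))-M(G)=\mathds{1}\{N(\driver_x,\newrange)\cap\riderset_+(G)\neq\emptyset\}$ and then integrate over $\driver_x\sim\mathbb{D}_1$. Your treatment of the $\Rightarrow$ direction (remove the edge incident to the new supply node from a maximum matching of $G\oplus(\driver_x,\newrange)$) is in fact more explicit than the paper's brief appeal to Berge's lemma, which glosses over why the first demand node on the augmenting path must lie in $\riderset_+(G)$.
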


Define the forward-window surrogate
\begin{align} \label{eq:forward-window-surrogate}
    \widetilde{\delta}_m(\newrange ,\servicevec) \triangleq \mathbb{E}\bigg[\int_{0}^{1} \mathbf{1}\Big\{\riderset_+(G)\cap \Big(\Big[x, x+ \frac{2 \newrange }{n}\Big]\cap[0,1]\Big)\neq\emptyset\Big\} \,  g_1(x) \, \diff x\bigg],
\end{align}
where the interval \( [x, x+ 2\newrange/n] \) has the same length as its symmetric counterpart but shifts forward. By~\prettyref{lmm:service_calR_+}, and as \( g_1(\cdot) \) is \( \smoothness \)-Lipschitz,
\begin{align}
    \big|\deltaM(\newrange,\servicevec)- \widetilde{\delta}_m(\newrange,\servicevec) \big| \leq {3\, \smoothness\newrange}/{n}, ~~~~ \mbox{for each } \newrange \geq 0. \label{eq:delta_tilde_general}
\end{align}
Consequently, to prove~\Cref{prop:strongconcave1D-shift}, it suffices to show the following lemma.
\begin{lemma} \label{lem:concavitytwoterms}
    Under the same conditions as~\Cref{prop:strongconcave1D-shift},
    \begin{align}
        \underbrace{
            \widetilde{\delta}_m(\base_1 + \shift,\servicevec) - \widetilde{\delta}_m(\base_1,\servicevec)}_{\triangleq \, \termI 
        }
        \,-\, 
        \underbrace{
            \big( \widetilde{\delta}_m(\base_2, \servicevec) - \widetilde{\delta}_m(\base_2 - \shift, \servicevec) \big) }_{\triangleq \, \termII     
        } 
        \geq {8} \,\CMB \shift^3 - \frac{2\smoothness \, \base_2}{n} \, . \label{eq:shifted_lower_bound_general}
    \end{align}
\end{lemma}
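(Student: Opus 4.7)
My plan is to lower bound $\termI - \termII$ by building explicit geometric ``witness'' events whose probabilities factor over the independent supply and demand samples, and then quantitatively comparing these probabilities at the two scales $b_1$ and $b_2 - \tau$. Because the indicator $\mathbf{1}\{\riderset_+(G) \cap [x, x + 2r/n] \neq \emptyset\}$ is monotone non-decreasing in $r$, I first rewrite each of $\termI$ and $\termII$ as a disjoint-increment probability: for $\termI$, this is the probability that $\riderset_+$ has no point in a prefix of length $2b_1/n$ but at least one point in the next window of length $2\tau/n$, and analogously for $\termII$ at scale $b_2 - \tau$. The difference then measures exactly how much less likely this ``first-hit'' event becomes when the required empty prefix grows from $2b_1/n$ to $2(b_2 - \tau)/n$.

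\textbf{Reduction from $\riderset_+$ to $(\riderset, \drivervec)$.} Since $\riderset_+$ depends on the global maximum-matching structure, I pass to a sufficient event that involves only the independent pair $(\riderset, \drivervec)$, exploiting the observation sketched in Figure~\ref{fig:gap}: if the raw demand set $\riderset$ already exhibits the prescribed gap pattern and the non-empty portion is surrounded by a supply-free buffer of width at least $\MPar/n$, then every demand node in the non-empty portion has degree $0$ in $G$, is therefore automatically unmatched, and hence belongs to $\riderset_+$. Combined with $\riderset_+ \subseteq \riderset$, this yields an event $E(x, b)$ that depends only on $(\riderset, \drivervec)$ and implies the target $\riderset_+$-event.

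\textbf{Probability estimates and comparison.} Using independence of $\riderset$ and $\drivervec$ together with the density bounds $g_1, g_2 \in [1/\smoothness, \smoothness]$ and their Lipschitz regularity, I factor $\mathbb{P}(E(x, b))$ into three pieces: the empty-demand prefix contributes $\approx \exp(-2\betaPar b \, g_2(x))$; the prescribed demand-occupancy pattern in the small window of length $2\tau/n$ contributes a $\tau$-dependent factor, where the specific multi-layered sub-window arrangement yields the $\tau^3$ scaling together with the $(\MPar \smoothness)^{-3}$ factor appearing in $\CMB$; and the supply-free buffer around each witness demand point contributes $\exp(-2\MPar \smoothness)$, accumulating to the $\exp(-(8 - 4\betaPar)\MPar \smoothness)$ prefactor. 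Taking the difference $\termI - \termII$, the ratio of exponential prefixes yields an additional $\gtrsim c\tau$ factor via $b_2 - \tau - b_1 \geq \tau$ and a first-order expansion, and integrating over $x \in [0, 1]$ with weight $g_1(x) \geq 1/\smoothness$ then produces a bound of order $\CMB \tau^3$. The additive correction $-2 \smoothness b_2 / n$ absorbs boundary effects when $x$ is close to $0$ or $1$ and the target window exits $[0, 1]$, which violates the shift-invariance used to factor $\mathbb{P}(E(x,b))$.

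\textbf{Main obstacle.} The principal technical challenge is the reduction step: certifying rigorously that the demand-side gap pattern, once combined with the supply-free buffer, really does force the corresponding gap in $\riderset_+$. The ``isolated $\Rightarrow$ unmatched'' direction is immediate, but care is required to choose the buffer wide enough so that no augmenting path rooted outside the buffer can reach and absorb a witness demand node into an enlarged matching. A secondary subtlety is obtaining the sharp $\tau^3$ scaling: a naive single-window witness event yields only $\tau^2$, so the correct multi-sub-window configuration must be prescribed, and the counting of independent supply-free buffers must be done carefully to match the exponential and polynomial constants appearing in $\CMB$.
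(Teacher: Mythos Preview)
Your reduction idea (isolate demand points inside a supply-free buffer to certify membership in $\riderset_+$) is exactly the mechanism the paper uses, but it only furnishes a \emph{lower} bound on any $\riderset_+$-event probability: $E(x,b)$ implies the target event, so $\Prob(E(x,b))\le \Prob(\text{target})$. That is fine for $\termI$, but to lower-bound $\termI-\termII$ you need an \emph{upper} bound on $\termII$, and your construction cannot provide one. Your ``taking the difference'' step implicitly treats $\termI\approx P_E(b_1)$ and $\termII\approx P_E(b_2-\tau)$ as two-sided approximations and then subtracts; subtracting two lower bounds gives no control on the difference. The ``ratio of exponential prefixes yields an additional $\gtrsim c\tau$'' heuristic therefore has no rigorous footing.

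The paper avoids this by never reducing $\termII$ at all. It first splits $\termI=\termI_A+\termI_B$ according to whether $\riderset_+$ additionally hits the small left window $[x-\tau/n,x]$, and then observes a \emph{pathwise set inclusion}: for every realization of $\riderset_+$, the $\termII$-event at position $x$ implies the $\termI_A$-event at the shifted position $u=x+2(b_2-b_1-\tau)/n$, because the long empty prefix required by $\termII$ contains the short empty prefix required by $\termI_A$, and the two ``hit'' windows coincide after the shift. A change of variables then gives $\termII\le \termI_A + O(1/n)$ (the $O(1/n)$ is just the domain shift), hence $\termI-\termII\ge \termI_B - O(1/n)$. Only at this point, with a \emph{single} nonnegative term $\termI_B$ to bound from below, does the paper invoke the $(\riderset,\drivervec)$ reduction via $\mathsf{InnerGap}_\tau(\riderset,x)\cap\mathsf{NoSupply}(x)$; the $\tau^3$ and the constants in $\CMB$ then fall out of the $\mathsf{Gap}$ pattern (two hits in disjoint length-$\tau/n$ windows plus an empty middle). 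So the piece you are missing is not the reduction you flagged as the ``main obstacle'' but the decomposition-plus-shift argument that isolates $\termI_B$ and eliminates the need to upper-bound $\termII$.
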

The key is to show \( \widetilde{\delta}_m(\cdot,\servicevec) \) is strongly concave up to a vanishing correction term. We show:
\begin{align*}
    \termI - \termII  \ge  \mathbb{E} \int_0^1 \mathbf{1} \left\{ \Gap_\shift(\riderset_+(G),   x) \right\}   g_1(x) \,  \diff x -  \frac{2(\base_2-\base_1-\shift)}{n} \,,
\end{align*}
where for any \( x \in [0,1] \) and any set \( \mathcal{L} \) of locations, the event \( \Gap_{\shift}(\mathcal{L},x) \) is defined as
\[
    \Gap_\shift(\calL, x) \triangleq \left\{
    \calL\cap \Big[x - \frac{\shift}{n}, x \Big] \neq \emptyset,    \,\,
    \calL\cap \Big[x, x + \frac{2\base_1}{n} \Big] = \emptyset,    \,\,
    \calL\cap \Big[x + \frac{2\base_1}{n}, x + \frac{2\base_1+2\shift}{n}\Big] \neq \emptyset
    \right\}.
\]
To understand the proof, it is helpful to interpret $\termI$ as the expected value of the integrand of the indicator of the event that increasing the service range of a forward-looking node located at \( x \) from \( 2\base_1/n \)  to \( 2(\base_1+\shift)/n \) increases the maximum matching, and similarly for \( \termII \), corresponding to the increase from \( 2(\base_2-\shift)/n \) to \( 2\base_2/n \). 
For each \( x \in [0,1] \) (up to boundary effects) where the event holds in \( \termII \), there exists a corresponding position \( x + \frac{2(\base_2 - \base_1 - \shift)}{n} \in [0,1] \) where it also holds in \( \termI \). Positions \( x \) satisfying \( \Gap_\shift(\calL, x) \) are positions where the event holds in \( \termI \) but are not mapped to under this correspondence. 
We show that such gap patterns \( \Gap_\shift(\calL, x) \) are prevalent in the realized graph, as they occur whenever the set \( \riderset \) exhibits this structure and no element of \( \driverset \) lies within the corresponding region. The proof is illustrated in~\Cref{eq:proofconcavityillustration}, with details in~\Cref{sec:proofofconcavitytwoterms}.

\subsection{Analysis for \texorpdfstring{$\dimension\geq 2$}{}} \label{sec-high-dimension}

Comparing against the case $\dimension = 1$, two additional hurdles arise. First, the graph $G$ contains a giant component, which requires careful handling via trimming. Second, the the $\Gap_\shift(\riderset_+, \cdot )$ event used to prove~\Cref{prop:domination2drivers-shift} for $\dimension = 1$, does not generalize. To overcome this, in the proof of~\Cref{prop:strongconcavehighD}, we define \emph{special pattern cells}, which are cells with useful structural properties. The main result of this section is the following proposition, which implies~\cref{U:2} of~\Cref{thm:uniformity}. 
\begin{proposition}[Uniformity principle, $\dimension \ge 2$]  \label{prop:schur-concavity-highD-untrimmed}
Given $\dimension \geq 2$ and any constant $\MPar \geq \max\{1,\betaPar^{-1}\}$, for any two service range vectors $\servicevec,\servicevec' \in [0,\MPar ]^n$ satisfying
\[
    \servicevec \succeq \servicevec' ~~~~ \mbox{ and } ~~~~ \| {\servicevec}^{\downarrow} - {\servicevec'}^{\downarrow} \|_1> \theta n ~~~~ \mbox{for some } \theta > 0,
\]
the expected matching size satisfies for all $n \ge { \highdimN }$,
\[
    \muM\left( \servicevec'\right) -      \muM\left( \servicevec\right) > \highdimalpha \, \theta^{4} \, n
    - \beta_{\dimension,\MPar} \big(  \big(  \highdimalpha \, \theta^{4}/3 \big) \wedge \epsiloncondition\big)^{-\dimension}\,,
\]
where $\highdimN, \, \highdimalpha,\, \beta_{\dimension,\MPar},  \, \epsiloncondition$ are positive constants that depend only on their subscripts.
\end{proposition}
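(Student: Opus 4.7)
The plan is to lift the one-dimensional proof of Proposition~\ref{prop:schur-concavity} to $\dimension \ge 2$. The top-level architecture is unchanged: apply Hardy--Littlewood--P\'olya (Lemma~\ref{lem:HLP}) to decompose the transport from $\servicevec$ to $\servicevec'$ into at most $T_* \le n$ T-transforms moving amounts $\shift_\ell$ with $\sum_\ell \shift_\ell \ge \theta n /2$; prove a per-step gain bound of the form $c_\dimension\, \shift_\ell^3 - \mathrm{err}/n$; and chain via convexity of $x \mapsto x^3$ to obtain $\sum_\ell \shift_\ell^3 \ge (\theta n/2)^3 / T_*^2 \gtrsim \theta^3 n$, so the total gain is at least $\highdimalpha\, \theta^3 n - T_* \cdot \mathrm{err}/n \ge \highdimalpha\, \theta^3 n - \mathrm{err}$.

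The per-step inequality is a high-dimensional analog of Proposition~\ref{prop:domination2drivers-shift}. The gain decomposition Lemma~\ref{lem:main_decomposition-shift}, which writes the gain of a $(\base_1,\base_2)\mapsto(\base_1+\shift,\base_2-\shift)$ shift as a second difference of single-driver success probabilities $\deltaM(\service,\servicevec)=\Expect[M(G\oplus(s,\service))-M(G)]$ minus a connectivity correction $\rhoS{\servicevec}(\base_1+\shift,\base_2-\shift)$, transfers verbatim since it only uses the augmenting-path structure of matchings. The core work is therefore (i) a quantitative strong-concavity estimate for $\deltaM(\cdot,\servicevec)$ in dimension $\dimension$, and (ii) a high-dimensional replacement for the rare-connectivity Lemma~\ref{lem:rarelysamecomponent}.

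For (i), in one dimension the strong-concavity gap was supplied by the events $\mathsf{Gap}_\shift(\riderset_+,x)$: points where the unmatched-demand set has a three-interval pattern witnessing that expanding a range from $\base_1$ to $\base_1+\shift$ is strictly more valuable than from $\base_2-\shift$ to $\base_2$. For $\dimension \ge 2$, I would tile $[0,1]^\dimension$ by a grid of cells of side length $\patternside$ tuned to the incremental volume $\shift/n$, and define \emph{special pattern cells} $\activepatterncells$ as those cells whose local configuration of $\riderset_+(G)$ is rich enough to supply an analogous witness in $\dimension$ dimensions. A volumetric/inclusion--exclusion argument on bounded-radius balls of volumes $\base_1/n$, $(\base_1{+}\shift)/n$, $(\base_2{-}\shift)/n$, $\base_2/n$ should then show that the expected number of such cells is at least $\alphagainpattern\,\shift^3 n$, yielding the cubic concavity gap.

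The principal obstacle, and the reason for the weaker exponent $1/(3(\dimension+1))$ in~\ref{U:2}, lies in (ii): for $\dimension \ge 2$ the bipartite RGG contains a giant component of size $\Theta(n)$ as soon as supply ranges are $\Theta(1)$, so two random supply nodes frequently share an augmenting path and $\rhoS{\servicevec}$ is $\Theta(1)$ rather than $o(1)$. I would bypass this by trimming: restrict attention to configurations whose per-cell supply and demand counts on a grid of side $\varepsilon$ are close to their means, discarding the $O(\varepsilon^{-\dimension})$ atypical cells. Trimming severs long augmenting paths so that the effect of each T-transform is localized, at the cost of a per-step error bounded by $\beta_{\dimension,\MPar}\,\varepsilon^{-\dimension}/n$. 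The per-step balance $\highdimalpha\,\shift^3 \gtrsim \beta_{\dimension,\MPar}\,\varepsilon^{-\dimension}/n$ then forces the choice $\varepsilon \asymp \highdimalpha\,\shift^3$, clipped at the structural threshold $\epsiloncondition$ beyond which the cell geometry breaks down, yielding exactly the stated total error $\beta_{\dimension,\MPar}\bigl((\highdimalpha \theta^3/3) \wedge \epsiloncondition\bigr)^{-\dimension}$. The hardest step is (i): in $\dimension \ge 2$ the clean three-interval geometry of 1D is unavailable, so one must design the pattern-cell event delicately enough that its count grows as $\shift^3$ rather than a higher power, which a more symmetric construction would naively produce.
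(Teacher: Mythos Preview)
Your architecture is exactly the paper's: HLP decomposition into $\sfT$-transforms, a per-step bound via the second difference of $\deltaM$ minus a connectivity correction, chaining by convexity of $x\mapsto x^3$, and trimming to tame the giant component. The gap is in your trimming mechanism. Discarding cells whose node counts are atypical does not sever long augmenting paths: the giant component threads through the overwhelmingly many typical cells, so $\rhoS{\servicevec}$ stays $\Theta(1)$. The paper's trimming is simpler and purely structural: partition $[0,1]^\dimension$ into cells of side $L_T = 2\varepsilon^{-1}\dimension(\MPar/n)^{1/\dimension}$ and delete every edge whose endpoints lie in different cells, obtaining $G_\varepsilon$. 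This confines every connected component to a single cell, so the connectivity probability in $G_\varepsilon$ is at most the cell volume, $\beta_{\dimension,\MPar}\varepsilon^{-\dimension}/n$. The cost $|\mu_m-\mu_m^\varepsilon|\le \varepsilon n$ (only supply nodes within $(\MPar/n)^{1/\dimension}$ of a boundary lose edges) is paid \emph{once}, not per step: you fix $\varepsilon$ globally, run the entire $\sfT$-transform chain on $G_\varepsilon$, sum the per-step connectivity errors to $\le T_*\cdot\beta_{\dimension,\MPar}\varepsilon^{-\dimension}/n\le\beta_{\dimension,\MPar}\varepsilon^{-\dimension}$, and only at the end untrim and set $\varepsilon=(\highdimalpha\theta^3/3)\wedge\epsiloncondition$ so that the $2\varepsilon n$ loss is absorbed by the $\highdimalpha\theta^3 n$ gain. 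Your per-step balancing of $\varepsilon$ against $\shift_\ell$ is not how the bookkeeping runs.

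Your pattern-cell accounting also needs adjustment. The special cell in the paper contains exactly two $\riderset_+$ points, one in a ball of volume $\asymp\shift/n$ and one in a concentric annulus of the same volume, with no other demand or supply in a surrounding ball; the expected count of such cells is $\gtrsim\shift^2 n$ (two independent hits, each of probability $\asymp\shift/n$, times $\sim n$ cells), not $\shift^3 n$. Each such cell contributes $\gtrsim\shift/n$ to the second difference of $\deltaM^\varepsilon$, via the overlap volume of two balls of radius $(\base_2/n)^{1/\dimension}$ placed so that they are disjoint at radius $((\base_2-\shift)/n)^{1/\dimension}$ but intersect at radius $(\base_2/n)^{1/\dimension}$. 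The product gives the cubic gap. Crucially, the contribution of non-special cells is not merely bounded but \emph{concave} in $\ell$ (Lemma~\ref{lmm:concaverho2}), proved by writing $\mathrm{Vol}\bigl(\bigcup_j B_\ell(\rider_j)\bigr)$ as a sum over Voronoi cells; without this, non-special cells could cancel the gain.
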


\noindent \textit{Proof.}
We first introduce a trimming operation inspired by the one in~\cite{magnolia}, to deal with the giant component that can be present in $G$ when \( \dimension \geq 2 \).

\begin{figure}[t]
    \centering
        \subfigure[Graph $G$]{
            \includegraphics[width=0.35\linewidth]{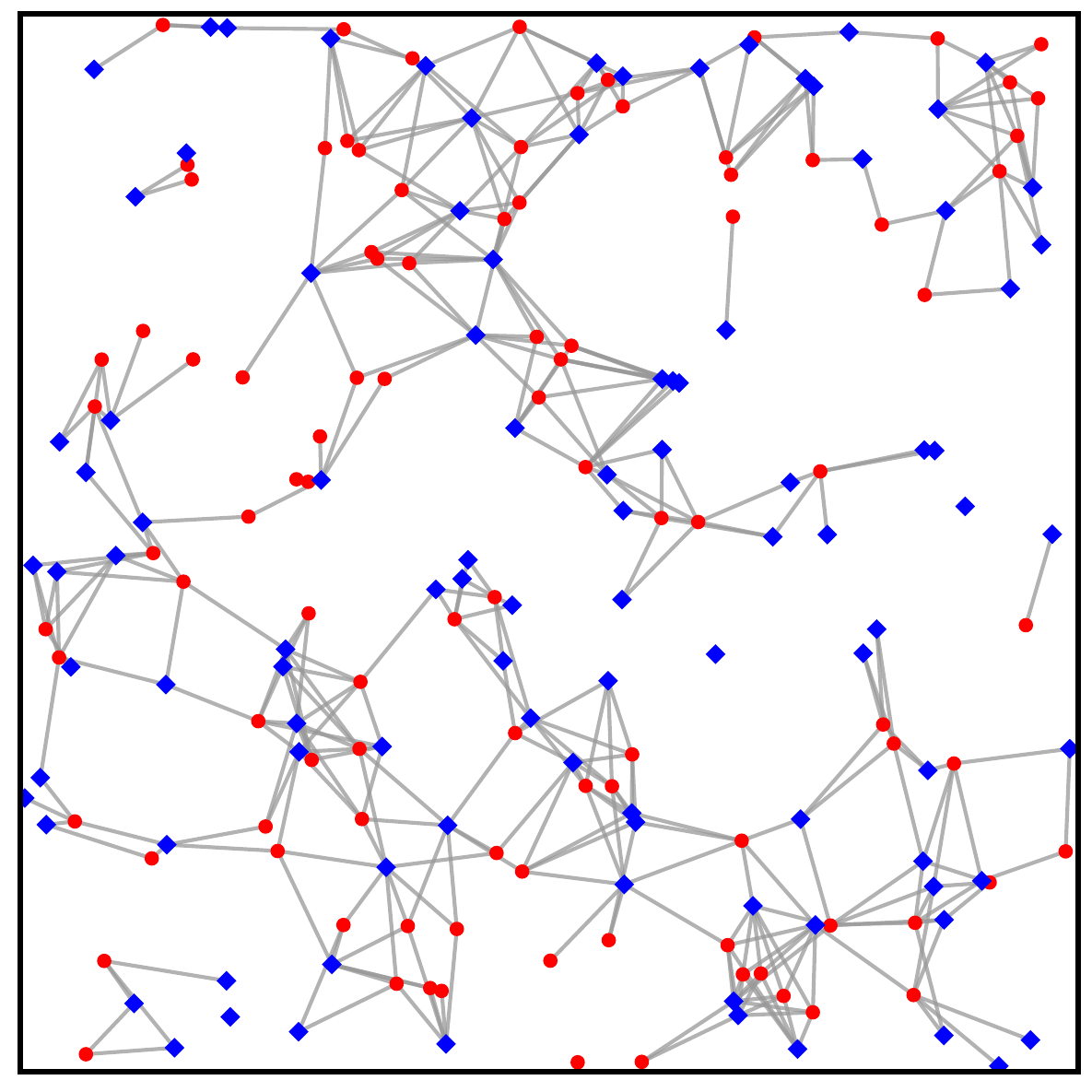}
            \label{fig-graph}
        } 
        \subfigure[Trimmed $G_\varepsilon$]{
            \includegraphics[width=0.35\linewidth]{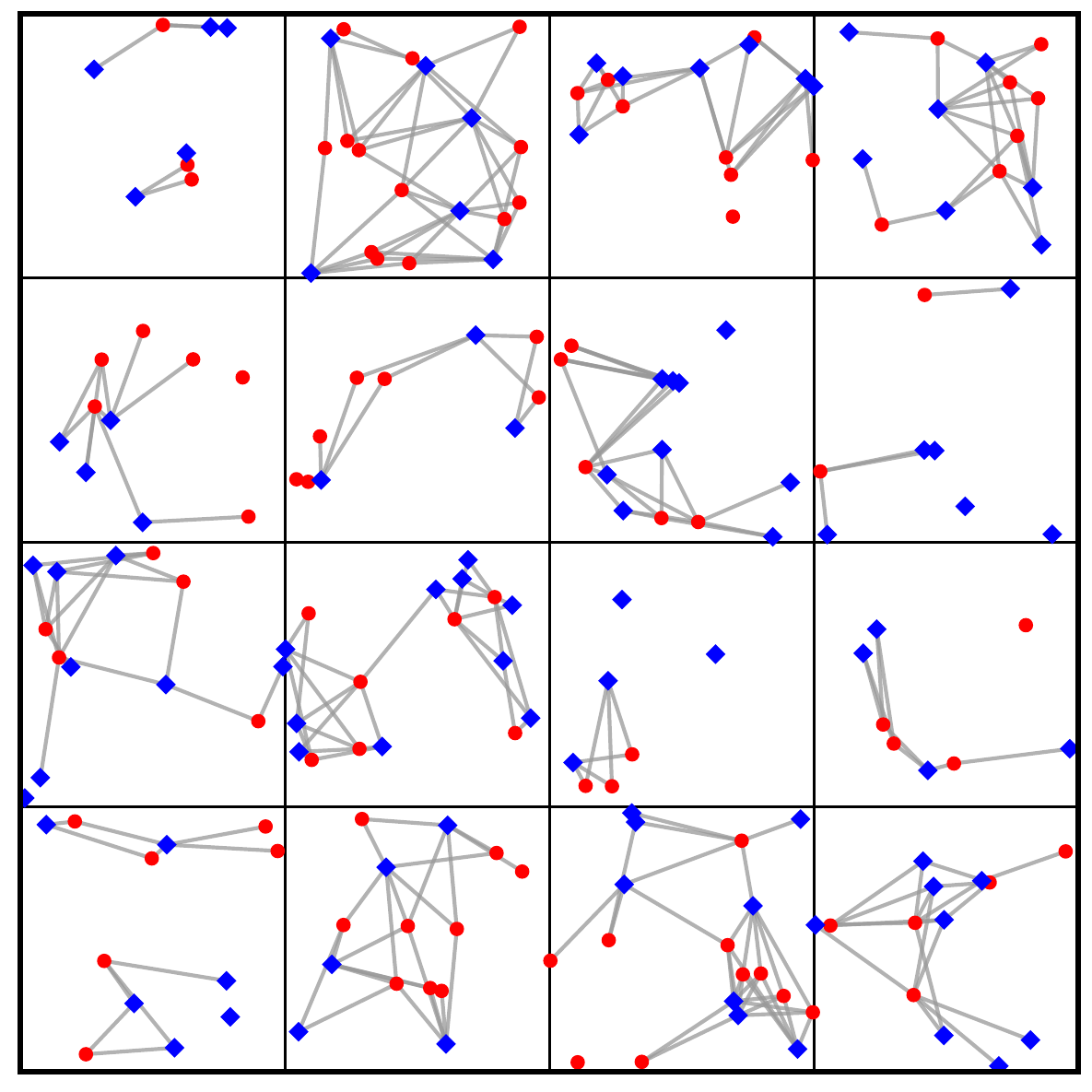}
            \label{fig-trimming}
        }
        \\
        \caption{Partition of $[0,1]^2$ into trimming cells.}
        \label{fig:trimmingcellillustration}
\end{figure}

\paragraph{Partition for trimming.}  Fix some constant $\varepsilon > 0$.
        Partition $[0,1]^\dimension$ into hypercubes (\emph{trimming cells}) of side-length
        \[
            L_T \triangleq 2 \, \varepsilon^{-1} \dimension \, \big(\MPar/n)^{1/\dimension},
        \]
        where the last cell in each dimension may be smaller from rounding. Construct a new graph $G_\varepsilon$ from $G$ by removing every edge whose endpoints are in different cells. This operation is shown in~\Cref{fig:trimmingcellillustration}. For any point $\rider$, denote by $\tcell(\rider)$ the trimming cell that contains it.

Define the expected matching size in the trimmed graph as:
\begin{align}
    \mu_{m}^\varepsilon (\servicevec) \triangleq  \Expect_{G\sim  \mathbb{G}(m , \servicevec) } \left[M(G_\varepsilon) \right]
\end{align}
The next lemma shows trimming has a small impact on the expected size of the maximum matching.
\begin{lemma}\label{lmm:trimmingbound}
   For any $\servicevec \in [0,\MPar]^n$ and  constant $\varepsilon>0$, we have
  $ | \mu_{m}^\varepsilon (\servicevec)- \mu_{m} (\servicevec)|\leq \varepsilon n.$
\end{lemma}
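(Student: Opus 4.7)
The plan is to use a simple geometric argument, since $\mu_m^{\varepsilon}(\servicevec)\le \mu_m(\servicevec)$ holds trivially (trimming only removes edges). First I would fix any realization of $G \sim \mathbb{G}(m,\servicevec)$ together with a maximum matching $\pi^*$ of $G$, and observe that $\pi^* \cap E(G_\varepsilon)$ is still a valid matching in $G_\varepsilon$, which gives the pointwise bound
\[
    M(G_\varepsilon)\;\ge\; M(G) - \bigl|\{(i,j)\in\pi^* : \tcell(s_i)\neq \tcell(d_j)\}\bigr|.
\]
Hence it suffices to bound the expected number of matching edges whose endpoints lie in different trimming cells.

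The next step reduces this combinatorial count to a geometric one. Set $\rho \triangleq (\MPar/n)^{1/\dimension}$. Any edge $(i,j)\in G$ satisfies $\|s_i-d_j\|_2 \le (r_i/n)^{1/\dimension} \le \rho$, so whenever $s_i$ and $d_j$ lie in different trimming cells, $s_i$ must be within Euclidean distance $\rho$ of some interior cell-boundary hyperplane. Let $B\subseteq[0,1]^\dimension$ denote the $\rho$-neighborhood of the union of all such interior hyperplanes. Since distinct matching edges in $\pi^*$ involve distinct supply nodes, one obtains
\[
    M(G) - M(G_\varepsilon) \;\le\; \bigl|\{i\in[n] : s_i\in B\}\bigr|.
\]

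The main (and only nontrivial) computation is the volume bound $\mathrm{Vol}(B)\le \varepsilon$. The interior trimming-cell hyperplanes are axis-aligned, and in each of the $\dimension$ coordinate directions there are at most $\lfloor 1/L_T\rfloor$ of them in $[0,1]^\dimension$; each contributes a slab of width $2\rho$ and volume at most $2\rho$. By a union bound,
\[
    \mathrm{Vol}(B) \;\le\; \dimension\cdot \frac{1}{L_T}\cdot 2\rho \;=\; \frac{2\dimension\rho}{L_T} \;=\; \varepsilon,
\]
by the defining choice $L_T=2\varepsilon^{-1}\dimension\rho$. Note that this slab-wise bound automatically accommodates the possibly-smaller boundary cells produced by rounding, which is the only real subtlety.

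Finally, since $\{s_i\}_{i=1}^n \iidsim \Uniform([0,1]^\dimension)$, linearity of expectation yields $\Expect|\{i:s_i\in B\}|=n\,\mathrm{Vol}(B)\le \varepsilon n$. Taking expectation of the pointwise inequality completes the proof. There is no substantive obstacle in the argument; the constants in $L_T$ have been chosen precisely so that the union bound closes at exactly $\varepsilon$, and everything else is bookkeeping.
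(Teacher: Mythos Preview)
Your proposal is correct and follows essentially the same approach as the paper: both bound $M(G)-M(G_\varepsilon)$ by the number of supply nodes within distance $(\MPar/n)^{1/\dimension}$ of a trimming-cell boundary, and then compute the expected count via the volume of the boundary neighborhood, which closes at exactly $\varepsilon$ by the choice of $L_T$. Your write-up is slightly more explicit (separating the trivial direction, invoking a fixed maximum matching, and flagging the rounding issue), but the argument is the same.
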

\begin{proof}
The matching size is changed by no more than the number of supply nodes that are  within distance 
$(\MPar/n)^{1/\dimension}$ of a cell boundary are affected by trimming. Thus,
\begin{equation*}
    | \mu_{m}^\varepsilon (\servicevec)- \mu_{m} (\servicevec)| \, \leq \, \sum_{i=1}^n \mathbb{P}\big(\rider_i \text{ is within } \left( \MPar/n\right)^{1/\dimension} 
    \text{ of the {boundary} of a cell}\big) \le \varepsilon n \,,
\end{equation*}
where the last inequality holds because for any $\dimension$, there are at most $\frac{\varepsilon n^{1/\dimension}}{2\dimension  \MPar^{1/\dimension}}$ cell boundaries, and hence the volume of the region within distance $(\MPar/n)^{1/\dimension}$ of the boundaries is less than $\varepsilon$.  
\end{proof}

\paragraph{\(\sfT\)-transforms in the trimmed graph}
We introduce the following proposition, which is the counterpart to~\Cref{prop:domination2drivers-shift}, and establishes the gain from $\sfT$-transforms in the trimmed graph. 

\begin{proposition}[Gain from a pair of supply nodes]\label{prop:domination2supply nodestrimmed}
    Given any constant $\MPar \geq \max\{1,\betaPar^{-1} \}$,  let  $\servicevec\in [0, \MPar]^{n-2} $ be a vector of service ranges. For any two additional ranges $\base_1, \base_2$ with $ 0 \leq \base_1 \leq \base_2 \leq \MPar$, any $\shift\in[0,(\base_2 - \base_1)/2]$, any $\varepsilon \leq \epsiloncondition$ and all $n \ge\highdimN$, 
    \begin{align}
        \mu_m^\varepsilon\left(\servicevec \oplus \left(\base_1+\shift\right) \oplus \left(\base_2-\shift\right)\right)-\mu_m^\varepsilon \left(\servicevec \oplus \base_1 \oplus \base_2 \right) \geq  \, \highdimalpha' \, \shift^{4} - {\highdimbeta}/{n}, \label{eq:average_better}
    \end{align}  
    where each constant $\highdimalpha'$ and $\beta_{\dimension,\MPar}$ depends only on the variables in the subscript.
\end{proposition}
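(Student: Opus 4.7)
The plan is to mirror the three-step structure used in the proof of~\Cref{prop:domination2drivers-shift}: (i) decompose the expected gain from the $\sfT$-transform into single-supply-node contributions plus a connectivity correction; (ii) establish strong concavity (up to a lower-order correction) of the single-supply-node success probability, now in the trimmed graph $G_\varepsilon$; (iii) show that the connectivity correction is negligible, almost for free thanks to the trimming.

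First I would prove a direct analogue of~\Cref{lem:main_decomposition-shift} for the trimmed graph. Define the trimmed success probability
\[
    \delta_m^\varepsilon(r,\servicevec) \triangleq \Expect_{s,G}\!\bigl[M((G\oplus (s,r))_\varepsilon) - M(G_\varepsilon)\bigr],
\]
where $s \sim \Uniform([0,1]^\dimension)$ is independent of $G \sim \mathbb{G}(m,\servicevec)$, and the trimmed connectivity probability $\rho_{\servicevec}^\varepsilon(r_x,r_y)$ that two such augmented test supply nodes lie in the same connected component of the doubly-augmented trimmed graph. By essentially the same augmenting-path argument,
\[
    \mu_m^\varepsilon\!\bigl(\servicevec\oplus(\base_1{+}\shift)\oplus(\base_2{-}\shift)\bigr) - \mu_m^\varepsilon\!\bigl(\servicevec\oplus \base_1\oplus \base_2\bigr)
    \geq \delta_m^\varepsilon(\base_1{+}\shift,\servicevec) + \delta_m^\varepsilon(\base_2{-}\shift,\servicevec) - \delta_m^\varepsilon(\base_1,\servicevec) - \delta_m^\varepsilon(\base_2,\servicevec) - \rho_{\servicevec}^\varepsilon(\base_1{+}\shift, \base_2{-}\shift).
\]
Moreover, trimming kills long-range dependence trivially: two uniformly placed test supply nodes can only end up in the same connected component of $G_\varepsilon$ if they land in the same trimming cell, so a union bound gives $\rho_{\servicevec}^\varepsilon(\base_1{+}\shift,\base_2{-}\shift) = O(\varepsilon^{-\dimension}/n)$, which can be absorbed into the error term $\beta_{\dimension,\MPar}\,\varepsilon^{-\dimension}/n$.

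The main obstacle is the strong concavity step; this is precisely where the one-dimensional $\mathsf{Gap}_\shift$ event fails to generalize. As hinted by the macros $\patterncells$, $\activepatterncells$ and $\patternside$ in the preamble, I would introduce a second, finer partition of $[0,1]^\dimension$ into pattern cells of side length $\patternside\,(\MPar/n)^{1/\dimension}$, small enough that each pattern cell sits strictly inside a single trimming cell, yet large enough to host a prescribed local geometric configuration. For each pattern cell I would define a dimension-$\dimension$ analogue of $\mathsf{Gap}_\shift$: an arrangement in which some demand points of $\riderset_+(G_\varepsilon)$ are placed in an ``inner'' shell and an ``outer'' shell, each of annular thickness $\Theta(\shift / n^{1/\dimension})$, separated by a clean band of radial width $\Theta(\base_1 / n^{1/\dimension})$ in which no competing supply of $\servicevec$ lies. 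Conditional on this pattern, a test supply node with range $2\base_1/n$ placed anywhere in a ``core'' subregion of $\dimension$-volume $\Theta(\shift^\dimension/n)$ fails to reach the outer shell, while raising the range by the additive $2\shift/n$ suffices to reach it; this is the pattern-cell analogue of~\Cref{lmm:service_calR_+}. Integrating the resulting indicator against the uniform density of the test location, summing over pattern cells, and running the injectivity argument of~\Cref{lem:concavitytwoterms} yields
\[
    \delta_m^\varepsilon(\base_1{+}\shift,\servicevec) - \delta_m^\varepsilon(\base_1,\servicevec) - \bigl[\delta_m^\varepsilon(\base_2,\servicevec) - \delta_m^\varepsilon(\base_2{-}\shift,\servicevec)\bigr] \geq 8\,\highdimalpha\,\shift^3 - O\bigl(\varepsilon^{-\dimension}/n\bigr).
\]
The delicate point is to choose the pattern so that (a) its probability is lower-bounded uniformly over the neighboring service ranges in $[0,\MPar]$ and (b) it continues to produce an augmenting path despite arbitrary interference from those neighbors; both are controlled by insisting on cleanness of the pattern cell and absorbing the resulting constant into $\highdimalpha$.

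Combining the three pieces, choosing $\varepsilon \leq \epsiloncondition$ small enough that each pattern cell fits comfortably inside a single trimming cell and $n \geq \highdimN$ large enough that the pattern-cell probability becomes non-degenerate, yields the claim
\[
    \mu_m^\varepsilon\!\bigl(\servicevec\oplus(\base_1{+}\shift)\oplus(\base_2{-}\shift)\bigr) - \mu_m^\varepsilon\!\bigl(\servicevec\oplus \base_1\oplus \base_2\bigr) \geq 8\,\highdimalpha\,\shift^3 - \highdimbeta/n.
\]
The hardest part is identifying the right multi-scale pattern in higher dimension that still yields the clean $\shift^3$ rate; any mismatch would change the exponent in~\Cref{thm:uniformity}\ref{U:2} and, consequently, the threshold $\theta$ at which the uniformity principle kicks in.
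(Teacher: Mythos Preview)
Your three-step structure and your handling of (i) and (iii) match the paper. The gap is in (ii). You propose a shell-based analogue of $\mathsf{Gap}_\shift$ and then invoke ``the injectivity argument of~\Cref{lem:concavitytwoterms},'' but that argument is intrinsically one-dimensional: it passes to the forward-window surrogate and uses the translation $x\mapsto x+2(\base_2-\base_1-\shift)/n$ to embed term~$\termII$ into term~$\termI$. No such translation exists for centered balls in $\reals^\dimension$, and you offer no replacement. Your bookkeeping also does not close: a core of volume $\Theta(\shift^\dimension/n)$ per cell cannot yield the $\shift^3$ rate unless the number of good cells scales like $\shift^{3-\dimension}n$, which is impossible for $\dimension\ge 4$. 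More fundamentally, a single $\riderset_+$ point contributes \emph{linearly} in $\ell$ to $\delta_m^\varepsilon(\ell)$, so your shell pattern produces no second difference unless the balls around the inner and outer $\riderset_+$ points actually interact; reachability of a shell by a test supply is a first-difference event, not a concavity event.

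The paper proceeds differently. It rewrites $\delta_m^\varepsilon(\ell,\servicevec)=\Expect\big[\mathrm{Vol}\big(\bigcup_{\rider\in\riderset_+}B_\ell(\rider)\big)\big]$ (\Cref{lem: high-dim-delta-formula}) and shows, via a Voronoi-cell decomposition, that this union-of-balls volume is \emph{concave} in $\ell$ for every realization (\Cref{lmm:concaverho2}); this replaces the injectivity step entirely and gives the baseline $\ge 0$ on the complement of the special cells. Strict concavity comes from special pattern cells containing exactly two $\riderset_+$ points at a distance tuned so that their $B_{\base_2-\shift}$ balls are disjoint while their $B_{\base_2}$ balls overlap in a lens of volume $\ge c\,\shift/n$ (\Cref{lmm:gainonepattern}). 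Each of the two points must land in a prescribed region of volume $\Theta(\shift/n)$, so a given cell is special with probability $\Theta(\shift^2)$, giving $\Theta(\shift^2 n)$ special cells (\Cref{lmm:lowerboundpatterna}) and hence $(\shift/n)\cdot(\shift^2 n)=\shift^3$. The driving mechanism is the \emph{onset of ball overlap}, not reachability of an outer shell, and this is what makes the rate independent of $\dimension$.
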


Analogous to the proof of \prettyref{prop:schur-concavity}, by summing over $T_*$ steps of $\sfT$-transforms, 
\begin{align*}
    \muM^\epsilon(\servicevec') - \muM^\epsilon(\servicevec)
    & \geq  
     \, \highdimalpha' \sum_{\ell=1}^{T_*} \shift_{\ell}^{4}
    -   T_*  \frac{\highdimbeta}{n}  
    \stepa{\ge}  \highdimalpha'\sum_{\ell=1}^{T_*}  \shift_{\ell}^{4}
    -  \highdimbeta  
    \\
    &\stepb{\ge}   \highdimalpha \, \theta^{4} n 
    -  \highdimbeta \, , 
\end{align*}
where (a) holds by~\prettyref{lem:HLP}(a). For (b), since \(x\mapsto x^{4}\) is convex on \(\mathbb{R}_{\ge 0}\), Jensen's inequality gives
\[
    \sum_{\ell=1}^{T_*}\tau_\ell^{4}
    \geq T_*\Big(\frac{\sum_{\ell=1}^{T_*}\tau_\ell}{T_*}\Big)^{4}
    = \frac{\left(\sum_{\ell=1}^{T_*}\tau_\ell\right)^{4}}{T_*^{4-1}}
    \geq \frac{\left(\|\servicevec^\downarrow - \servicevec'^\downarrow\|_1/2\right)^{4}}{n^{4-1}}
    \geq 2^{-4} \, \theta^{4} n,
\]
where we used~\prettyref{lem:HLP}(b) and \(\|\servicevec^\downarrow -\servicevec'^\downarrow \|_1\ge \theta n\), and then set \(\highdimalpha = 2^{-4} \highdimalpha'\).
Finally, by Lemma~\ref{lmm:trimmingbound},
\[
\mu_m(\servicevec')-\mu_m(\servicevec)\ge \mu_m^\varepsilon(\servicevec')-\mu_m^\varepsilon(\servicevec)-2\varepsilon n.
\]
Setting \(\varepsilon = \big(\highdimalpha\,\theta^{4}/3\big)\wedge \epsiloncondition\) yields the stated bound.
\hfill $\Box{}$

\subsubsection{Proof of \prettyref{prop:domination2supply nodestrimmed}}

The proof follows the same structure as that of \prettyref{prop:domination2drivers-shift}. We use the same notation, generalizing $\delta_m(\service, \servicevec)$ to $\delta^\varepsilon_m(\service, \servicevec)$ and $\rho_m^{\servicevec}(\service_x, \service_y) $ to $\rho_m^{\servicevec,\varepsilon}(\service_x, \service_y) $ by replacing $G$ with $G_\varepsilon$ in the definitions. 
Throughout, trimming is performed after any augmentation: i.e. $(G\oplus x)_{\varepsilon}$ denotes the trimmed version of the augmented graph $G\oplus x$. Specifically, 
\[
    \delta^\varepsilon_m(\service, \servicevec) \triangleq \Expect_{\driver \sim \mathrm{Uniform}([0,1]^\dimension) \independent G \sim \mathbb{G}(m,\servicevec)}\, \Big[ M\big(  \big( G\oplus (s,r) \big)_{\varepsilon}\big)-M(G_{\varepsilon}) \Big],
\]
and
\[
    \rhoS{\servicevec}(\service_x, \service_y) \triangleq 
    \Expect \Big[\mathds{1} \Big\{(\driver_x, \service_x)\text{ and }(\driver_y, \service_y)\text{ connected in } \big( G\oplus (\driver_x, \service_x) \oplus (\driver_y, \service_y) \big)_{\varepsilon} \Big\}\Big] \, ,
\]
where the expectation is with respect to the randomness $\driver_x,\driver_y \, \stackrel{\mathrm{i.i.d.}}{\sim} \, \mathrm{Uniform}([0,1]^\dimension)$ which is independent of $G \sim \mathbb{G}(m,\servicevec)$.
The main technical challenge is to show the success probability function $\deltaM^\varepsilon(\cdot,\servicevec)$ is strongly concave.

\begin{proposition}[{Strong concavity}] \label{prop:strongconcavehighD} 
    Under the conditions of~\Cref{prop:domination2supply nodestrimmed},
    \begin{align}
        \deltaM^\varepsilon(\base_1+\shift,\servicevec) + \deltaM^\varepsilon(\base_2-\shift,\servicevec) - \deltaM^\varepsilon(\base_1,\servicevec) - \deltaM^\varepsilon(\base_2,\servicevec)
        \, \geq \,  \shift^{4} \highdimalpha' \, , \label{eq:concavity-shift-highdim}
    \end{align}
    where $\highdimalpha'>0$ only depends on $\dimension$, $\betaPar$ and $\MPar$. 
\end{proposition}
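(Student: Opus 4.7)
The plan is to mirror the one-dimensional strategy from~\prettyref{sec:proof-strongconcave1D-shift}, but to replace the gap event $\mathsf{Gap}_\shift$ with a higher-dimensional notion of \emph{special pattern cells} tailored to $\dimension \geq 2$. First, I would extend~\prettyref{lmm:service_calR_+} to the trimmed, $\dimension$-dimensional setting, writing
\[
    \deltaM^\varepsilon(\newrange,\servicevec) \,=\, \E_G \!\left[\int_{[0,1]^\dimension} \mathbf{1}\!\left\{\riderset_+(G_\varepsilon)\cap B\!\left(x,(\newrange/n)^{1/\dimension}\right)\cap \tcell(x) \neq \emptyset\right\} \diff x\right],
\]
so that the concavity gap in~\prettyref{eq:concavity-shift-highdim} becomes the expectation of $\int_{[0,1]^\dimension}\bigl[\mathbf{1}\{E_{\base_1,\shift}(x)\}-\mathbf{1}\{E_{\base_2,\shift}(x)\}\bigr]\,\diff x$, where $E_{\base,\shift}(x)$ is the event that a supply node placed at $x$ with range $\base+\shift$ can augment the matching while the same node with range $\base$ cannot. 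Because trimming localizes every interaction to a trimming cell, this integral decomposes cell by cell and $\riderset_+(G_\varepsilon)$ can be analyzed using only local information inside each trimming cell.

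Next, I would partition $[0,1]^\dimension$ into pattern cells $\patterncells$ of side length $\patternside = \Theta((\MPar/n)^{1/\dimension})$, chosen small enough that several of them fit inside each trimming cell, yet large enough to comfortably contain a ball of volume $\base_2/n$. A pattern cell is declared \emph{active}, and included in $\activepatterncells$, when it exhibits a prescribed configuration: two (or more) unmatched demand nodes at a controlled mutual distance $D$ together with a surrounding ``clear zone'' free of supply and of other demand. The distance $D$ is tuned so that the balls of radius $(\base_1/n)^{1/\dimension}$ around the two configured points are disjoint while the balls of radius $(\base_2/n)^{1/\dimension}$ overlap, and the clear zone ensures the configured demands truly belong to $\riderset_+(G_\varepsilon)$ rather than being absorbed by a local match. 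A Poisson-type computation in the sparse, heterogeneous regime with ranges bounded by $\MPar$, together with independence across cells, then yields a lower bound of the form $\E\bigl[|\activepatterncells|\bigr] \geq c_{\dimension,\betaPar,\MPar}\, n$.

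With active cells in hand, I would quantify the per-cell contribution to the concavity gap. Inside an active cell, the marginal coverage $A_C(\base+\shift)-A_C(\base)$ is the measure of positions $x$ whose nearest configured demand lies in a thin spherical shell. At range $\base_1$ the two shells are disjoint, so the contribution equals twice the single-shell volume, whereas at range $\base_2$ the overlap between the shells subtracts a term whose integral over admissible $\shift$ and over positions $x$ in the cell scales like $\shift^3/n$; the cubic scaling comes from integrating a linear-in-$\shift$ overlap length against the linear-in-$\shift$ shell thickness and the linear-in-$\shift$ transverse degree of freedom. Summing this per-cell contribution over $\Theta(n)$ active cells and taking expectation produces~\prettyref{eq:concavity-shift-highdim} with a constant $\highdimalpha$ depending only on $\dimension$, $\betaPar$, and $\MPar$, provided $n$ is large enough and $\varepsilon \leq \epsiloncondition$ so that trimming does not erase active cells near cell boundaries.

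The main obstacle is the design of the pattern, which must simultaneously be geometrically productive, so that the shell-overlap computation yields the clean $\shift^3$ lower bound despite the more intricate geometry of unions of balls in $\dimension\ge 2$; probable enough under $\mathbb{G}(m,\servicevec)$ that a constant fraction of cells are active, which is delicate in the sparse heterogeneous regime; and robust to the fact that $\riderset_+$ is defined through a \emph{global} maximum matching. The last issue is precisely where trimming helps, because the matching in $G_\varepsilon$ is a disjoint union of local subproblems, so membership in $\riderset_+(G_\varepsilon)$ can be certified by local configuration events. The careful calibration of the pattern cell side length $\patternside$, the configuration distance $D$, and the clear-zone width as functions of $\dimension$, $\MPar$, and $\betaPar$ is where most of the technical work will concentrate.
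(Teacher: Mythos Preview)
Your high-level skeleton---pattern cells, special configurations with a clear zone to certify membership in $\riderset_+(G_\varepsilon)$---matches the paper, but two concrete pieces are misidentified.

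\textbf{The $\tau^3$ factorization.} You claim $\Theta(n)$ active cells, each contributing $\Theta(\tau^3/n)$. The paper does the opposite split: the definition of a \emph{special} cell is itself $\tau$-dependent. One $\riderset_+$ point must lie in a ball of volume $\Theta(\tau/n)$ around the cell center and the other in an annulus of volume $\Theta(\tau/n)$ (property~\eqref{prop:volumes}); this forces the pair to sit at a distance where balls of range $\base_2-\tau$ are just disjoint while balls of range $\base_2$ overlap with volume $\ge c\tau/n$ (properties~\eqref{prop:P2a}--\eqref{prop:P2b}). Consequently the per-cell second difference is $\Theta(\tau/n)$ (\prettyref{lmm:gainonepattern}), and the probability a given cell is special is $\Theta(\tau^2)$, giving $\Theta(\tau^2 n)$ special cells in expectation (\prettyref{lmm:lowerboundpatterna}). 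Your heuristic ``linear-in-$\tau$ overlap length $\times$ linear-in-$\tau$ shell thickness $\times$ linear-in-$\tau$ transverse degree of freedom'' does not describe the second difference $[V(\base_2)-V(\base_2-\tau)]-[V(\base_1+\tau)-V(\base_1)]$ of the overlap volume, and a $\tau$-independent configuration with $\Theta(n)$ active cells does not deliver a $\tau^3/n$ per-cell lower bound.

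\textbf{Non-active cells.} You sum only over active cells, implicitly assuming the remainder contributes nonnegatively. This is not automatic: the integrand $\mathbf{1}\{E_{\base_1,\tau}(x)\}-\mathbf{1}\{E_{\base_2-\tau,\tau}(x)\}$ can equal $-1$. The paper handles this by writing $\delta_m^\varepsilon=\rho_1+\rho_2$ (special vs.\ non-special cells) and proving separately that $\rho_1$ is concave in the range parameter via a Voronoi decomposition of $\bigcup_{\rider_j}B_\ell(\rider_j)$ and a polar-coordinate monotonicity argument (\prettyref{lmm:concaverho2}). Without this step, the positive contribution from special cells could in principle be canceled.
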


The following is the counterpart to~\prettyref{lem:main_decomposition-shift}. Its proof is omitted, since it is the same.

\begin{lemma}[Decomposition of $\sfT$-transform gain]  \label{lem:main_decomposition-shift-high-dim} 
    Under the conditions of~\Cref{prop:domination2supply nodestrimmed},
    \begin{align}
        & \muM^\varepsilon\left(\servicevec \oplus  (\base_1+\shift) \oplus (\base_2-\shift) \right)-\muM^\varepsilon \left(\servicevec \oplus \base_1 \oplus  \base_2 \right) \nonumber \\
        & ~~~~  \geq \deltaM^\varepsilon(\base_1+\shift,\servicevec) + \deltaM^\varepsilon(\base_2-\shift,\servicevec) - \deltaM^\varepsilon(\base_1,\servicevec) - \deltaM^\varepsilon(\base_2,\servicevec)
        - \rho_m^{\servicevec,\varepsilon}(\service_x, \service_y). \label{eq:main_decomposition-shift-high-dim}
    \end{align}
\end{lemma}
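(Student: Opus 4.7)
The plan is to derive \eqref{eq:main_decomposition-shift-high-dim} from two pointwise bounds on the doubly-augmented matching size and then take expectations, mirroring the argument for the one-dimensional \Cref{lem:main_decomposition-shift}. Fix a realization of $G \sim \mathbb{G}(m, \servicevec)$ together with supply locations $\driver_x, \driver_y \iidsim \Uniform([0,1]^\dimension)$, and for service ranges $(r_a, r_b)$ abbreviate
\begin{align*}
    M_0 &= M(G_\varepsilon), & M_x &= M\bigl((G \oplus (\driver_x, r_a))_\varepsilon\bigr), \\
    M_y &= M\bigl((G \oplus (\driver_y, r_b))_\varepsilon\bigr), & M_{xy} &= M\bigl((G \oplus (\driver_x, r_a) \oplus (\driver_y, r_b))_\varepsilon\bigr).
\end{align*}
Let $\calQ$ denote the event that $(\driver_x, r_a)$ and $(\driver_y, r_b)$ lie in the same connected component of the doubly-augmented trimmed graph. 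The key pointwise bound I would establish is
\begin{align*}
    (M_x - M_0) + (M_y - M_0) - \mathds{1}\{\calQ\} \ \leq \ M_{xy} - M_0 \ \leq \ (M_x - M_0) + (M_y - M_0).
\end{align*}

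Both inequalities rest on standard matching facts. The upper bound is submodularity of the rank function of the transversal matroid on the supply side, applied to the two singleton augmentations. For the lower bound I would case-split on $\calQ$. If $\calQ$ fails, the component $C_x$ containing $(\driver_x, r_a)$ in the doubly-augmented trimmed graph does not contain $(\driver_y, r_b)$, so $C_x$ coincides with the component of $(\driver_x, r_a)$ in the singly-augmented trimmed graph, and likewise for $C_y$; since maximum matchings decompose additively over connected components and trimming confines each component to a single trimming cell, a direct computation yields the equality $M_{xy} - M_0 = (M_x - M_0) + (M_y - M_0)$. If $\calQ$ holds, monotonicity gives $M_{xy} \geq \max(M_x, M_y)$, and since each of $M_x - M_0$ and $M_y - M_0$ lies in $\{0,1\}$ (a single added supply node can augment the matching by at most one), one has $M_{xy} - M_0 \geq \max(M_x - M_0, M_y - M_0) \geq (M_x - M_0) + (M_y - M_0) - 1$.

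Taking expectations over $(G, \driver_x, \driver_y)$ at the shifted pair $(r_a, r_b) = (\base_1+\shift, \base_2-\shift)$ converts the lower bound into
\(
    \muM^\varepsilon\bigl(\servicevec \oplus (\base_1+\shift) \oplus (\base_2-\shift)\bigr) - \muM^\varepsilon(\servicevec) \geq \deltaM^\varepsilon(\base_1+\shift, \servicevec) + \deltaM^\varepsilon(\base_2-\shift, \servicevec) - \rho_m^{\servicevec, \varepsilon}(\base_1+\shift, \base_2-\shift),
\)
and at the unshifted pair $(\base_1, \base_2)$ the upper bound becomes
\(
    \muM^\varepsilon\bigl(\servicevec \oplus \base_1 \oplus \base_2\bigr) - \muM^\varepsilon(\servicevec) \leq \deltaM^\varepsilon(\base_1, \servicevec) + \deltaM^\varepsilon(\base_2, \servicevec).
\)
Subtracting the second from the first reproduces \eqref{eq:main_decomposition-shift-high-dim} verbatim. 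The only point that warrants care is verifying the $\calQ^c$ decomposition in the presence of trimming: because trimming merely deletes edges whose endpoints lie in different cells and is applied \emph{after} augmentation, the component of $(\driver_x, r_a)$ in $(G \oplus (\driver_x, r_a))_\varepsilon$ and in $(G \oplus (\driver_x, r_a) \oplus (\driver_y, r_b))_\varepsilon$ agree precisely when $\calQ$ fails, which is exactly what the decomposition argument requires.
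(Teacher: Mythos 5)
Your proposal is correct and follows essentially the same route as the paper: the paper proves this lemma by noting it is identical to the one-dimensional case (\Cref{lem:main_decomposition-shift}), whose proof establishes exactly your two-sided pointwise bound $(M_x-M_0)+(M_y-M_0)-\mathds{1}\{\calQ\}\le M_{xy}-M_0\le (M_x-M_0)+(M_y-M_0)$, takes expectations, and applies the lower bound at the shifted pair and the upper bound at the unshifted pair before subtracting. The only cosmetic difference is that the paper justifies the pointwise bound by counting vertex-disjoint augmenting paths via Berge's lemma, whereas you invoke transversal-matroid submodularity for the upper bound and a component decomposition for the lower bound; both are valid, and your observation that trimming commutes with the augmentation (so the component of $x$ is unaffected by adding $y$ when $\calQ$ fails) is exactly the care the setting requires.
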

Next, we show that due to the trimming effect which fragments the geometric graph,  interference between supply nodes is negligible in the trimmed graph for $\dimension\ge 2$. 
\begin{lemma}[Rare connectivity]\label{lmm:rarelysamecomponenthighdim}
    For supply nodes $x=(\driver_x,\service_x)$ and $y=(\driver_y,\service_y)$ with $\driver_x,\driver_y \stackrel{\mathrm{i.i.d.}}{\sim}\Uniform([0,1]^\dimension)$ independent of $G\sim \mathbb{G}(m,\servicevec)$ with $\servicevec \in \reals_{\ge 0}^{n-2}$, 
    \[
        \rho_m^{\servicevec, \varepsilon}(\service_x, \service_y) \le   \highdimbeta / {n} \,, \text{ where } \beta_{\dimension,\MPar} \triangleq \left(2\dimension \right)^\dimension \MPar \, .
    \]
\end{lemma}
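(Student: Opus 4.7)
The plan is to leverage the defining structural consequence of trimming: after removing every edge whose endpoints lie in different trimming cells, any connected component of $G_\varepsilon$ (and more generally of $(G\oplus x\oplus y)_\varepsilon$) is contained within a single trimming cell. Consequently, the supply nodes $(\driver_x,\service_x)$ and $(\driver_y,\service_y)$ can lie in the same connected component of $(G\oplus x\oplus y)_{\varepsilon}$ only if $\driver_x$ and $\driver_y$ fall into the same trimming cell. This reduces the graph-connectivity question to a purely geometric event about the random supply locations, independent of the underlying graph $G$ and of the service ranges $\service_x,\service_y$.

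Next, since $\driver_x,\driver_y\stackrel{\text{i.i.d.}}{\sim}\Uniform([0,1]^\dimension)$ and are independent of $G$, letting $\{C_k\}$ denote the partition of $[0,1]^\dimension$ into trimming cells, I would bound
\[
    \rhoS{\servicevec,\varepsilon}(\service_x,\service_y)
    \;\leq\;
    \Prob\bigl[\tcell(\driver_x)=\tcell(\driver_y)\bigr]
    \;=\;
    \sum_k \mathrm{Vol}(C_k)^2
    \;\leq\;
    \max_k \mathrm{Vol}(C_k)\cdot\sum_k \mathrm{Vol}(C_k)
    \;=\; \max_k \mathrm{Vol}(C_k),
\]
where the last step uses $\sum_k \mathrm{Vol}(C_k)=1$. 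Each trimming cell has side length at most $L_T=2\varepsilon^{-1}\dimension(\MPar/n)^{1/\dimension}$, with rounding only shrinking boundary cells, so $\max_k \mathrm{Vol}(C_k)\leq L_T^\dimension=(2\dimension)^\dimension\MPar\,\varepsilon^{-\dimension}/n=\highdimbeta/n$, which matches the claimed bound.

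There is no serious obstacle here: the trimming operation was engineered precisely so that long-range connectivity in the trimmed graph is impossible, reducing the lemma to a one-line geometric probability computation. The only minor sanity checks are that $\tcell(\cdot)$ is almost surely well-defined (cells overlap only on a measure-zero boundary, so ties have probability zero) and that augmenting $G$ with $x$ and $y$ does not alter the partition $\{C_k\}$ itself, so the same cell decomposition continues to control connectivity after augmentation.
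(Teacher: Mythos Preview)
Your proposal is correct and follows essentially the same approach as the paper: both observe that trimming forces connected components to lie within a single trimming cell, so $\rho_m^{\servicevec,\varepsilon}(\service_x,\service_y)$ is bounded by the probability that two uniform points land in the same cell, which is at most the maximal cell volume $L_T^\dimension=(2\dimension)^\dimension\MPar\,\varepsilon^{-\dimension}/n$. Your write-up is a slightly more detailed version of the paper's two-line proof, making the $\sum_k \mathrm{Vol}(C_k)^2 \le \max_k \mathrm{Vol}(C_k)$ step explicit.
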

\begin{proof}
    By definition of the trimming operation, two nodes can belong to the same component only if they belong to the same cell, and, by design, the volume of a cell is $\left(\frac{2\dimension \, \MPar^{1/\dimension}}{\varepsilon n^{1/\dimension}}\right)^{\dimension}$.
\end{proof}
Combining~\prettyref{lmm:rarelysamecomponenthighdim} and~\prettyref{lem:main_decomposition-shift-high-dim} and~\prettyref{prop:strongconcavehighD}, we obtain~\prettyref{prop:domination2supply nodestrimmed}.

\subsubsection{Proof of \prettyref{prop:strongconcavehighD}}\label{sec:strongconcavityhighD} 
For any point $\rider_j \in [0,1]^\dimension$, let $\tcell(\rider_j)$ denote the trimming cell containing $\rider_j$. For any $\newrange \in [0, \MPar]$, define $B_\newrange(\rider_j)$ as the part of the ball at $\rider_j$ of radius $(\newrange/n)^{1/\dimension}$ in the trimming cell 
, i.e.

\[
    B_\newrange(\rider_j) \triangleq B\big(\rider_j,(\newrange/n)^{1/\dimension}\big) \cap \tcell(\rider_j).
\]
Recall \prettyref{eq:D_+} and take $\riderset_+ \equiv \riderset_+(G_\varepsilon)$ that is the set of demand node locations that are unmatched in some maximum matching. Analogous to \prettyref{lmm:service_calR_+}, we have the following lemma.  
\begin{lemma} \label{lem: high-dim-delta-formula}
For any $\newrange \ge 0$ and $\servicevec \in [0,\MPar]^n$, 
   \begin{align}\label{eq:equalitymatchprobavolume}
    \delta^\varepsilon_m\left(\newrange,\servicevec\right) = \Expect_{G \sim \mathbb{G}(m,\servicevec)} \left[  \mathrm{Vol}_{\newrange}(\riderset_+)\right] \,, \quad \text{where } \mathrm{Vol}_{\newrange}\left(\riderset_+\right) \triangleq \mathrm{Vol} 
    \bigg( \bigcup_{\rider_j \in \riderset_+}  B_\newrange(\rider_j)
    \bigg) \,.
\end{align} 
\end{lemma}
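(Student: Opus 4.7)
The plan is to adapt the proof of the one-dimensional companion \prettyref{lmm:service_calR_+} to the trimmed higher-dimensional setting. Conditional on $G$, fix a realization and consider what happens when we augment with a supply node $(s,r)$. After trimming, the neighborhood of $s$ in $(G\oplus (s,r))_\varepsilon$ consists exactly of those demand nodes $d_j$ that (i) lie in the same trimming cell as $s$, and (ii) satisfy $\|s-d_j\|\le (r/n)^{1/\dimension}$. Because ``$s\in \tcell(d_j)$'' is the same as ``$d_j\in \tcell(s)$'', this neighborhood is precisely $\{d_j : s\in B_\newrange(d_j)\}$.

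Next I invoke the standard augmenting-path characterization for bipartite matchings (Berge/König--Egerváry): adding a new vertex on the supply side changes the maximum matching size by either $0$ or $1$, and the change is $1$ if and only if the new vertex has at least one neighbor that is unmatched in some maximum matching of the original graph. Applied here,
\[
    M\big((G\oplus (s,r))_\varepsilon\big) - M(G_\varepsilon)
    \;=\; \mathds{1}\Big\{\,\exists\, d_j\in \riderset_+(G_\varepsilon)\ \text{with}\ s\in B_\newrange(d_j)\,\Big\}
    \;=\; \mathds{1}\Big\{ s\in \bigcup_{d_j\in \riderset_+} B_\newrange(d_j)\Big\}.
\]
Taking the expectation over $s\sim \Uniform([0,1]^\dimension)$, which is independent of $G$, and using that $[0,1]^\dimension$ carries Lebesgue density~$1$ (so probabilities of falling in a Borel set inside the unit cube equal its volume), the conditional expectation given $G$ equals $\mathrm{Vol}\big(\bigcup_{d_j\in\riderset_+} B_\newrange(d_j)\big) = \mathrm{Vol}_\newrange(\riderset_+)$. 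Finally, taking expectation over $G\sim \mathbb{G}(m,\servicevec)$ yields exactly \prettyref{eq:equalitymatchprobavolume}.

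The only step requiring care is the augmenting-path characterization: $\riderset_+(G_\varepsilon)$ is defined as the union over \emph{all} maximum matchings of the unmatched demand vertices, and one must check that this coincides with the set of demand vertices reachable by an alternating path from some unmatched supply vertex. This is the classical Dulmage--Mendelsohn decomposition, so no new work is needed. Note that the sets $B_\newrange(d_j)$ are each contained in $\tcell(d_j) \subseteq [0,1]^\dimension$, so the volume computation inside the unit cube is valid with no boundary correction needed (unlike the $\dimension=1$ Lipschitz-density case, where the density factor $g_1(x)$ had to be carried through).
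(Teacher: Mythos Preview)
Your proposal is correct and follows essentially the same approach as the paper's proof: identify the trimmed neighborhood of the new supply node $s$ as $\{d_j : s\in B_\newrange(d_j)\}$, apply the Berge-type characterization that augmentation occurs iff $s$ is adjacent to some vertex in $\riderset_+(G_\varepsilon)$, and integrate over the uniform law of $s$ to obtain the volume. The paper states the augmenting-path equivalence without attributing it to Dulmage--Mendelsohn, but the logic is identical.
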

\noindent \textbf{Partition for pattern.} 
Partition $[0,1]^\dimension$ into {a set $\calA$ of} hypercubes (\emph{pattern cells}) of side length 
\begin{align}
    L_p \triangleq  \frac{1}{n^{1/\dimension}} \, \patternside, ~~~~ \mbox{where } \patternside \triangleq 6 \MPar ^{1/\dimension}. \label{eq: def-w-k}
\end{align}
Among them, a pattern cell that satisfies a structural property is called \emph{special pattern cell}: i) it is  entirely contained in a trimming cell: ii) it contains exactly two points of $\riderset_+$ in a ball of radius $R'''$ centered at the cell center, with one of them in a ball of radius $R$ and the other inside an annulus of with inner and outer radii $R'$ and $R''$ respectively (see~\Cref{fig:Uaillustration} for an illustration, and~\eqref{eq:patterndef} for a formal definition). We denote by $\calA^*[\riderset_+]$ the subset of special pattern cells in $\calA$. Then, we can decompose the contribution of $\calA$ to $\delta^\varepsilon_m \left(\newrange,\servicevec\right)$ as the the part contributed by $\calA^*[\riderset_+]$ and the part contributed by $\calA \backslash \calA^*[\riderset_+]$.
\begin{align}\label{eq:decomporho1rho2}
  \delta^\varepsilon_m\left(\newrange,\servicevec\right)
    & =  
    \underbrace{\mathbb{E}_{G \sim \mathbb{G}(m,\servicevec)}\bigg[\sum_{A \in \patterncells   
        \setminus \activepatterncells}  \!\!\!\! \text{Vol}\bigg( \bigcup_{\rider_j \in \riderset_+} B_\newrange(\rider_j)\cap A \bigg)\bigg]}_{ \triangleq \, \rho_{1}(\newrange)
    } \nonumber \\
    &~~~~ +
    \underbrace{\mathbb{E}_{G \sim \mathbb{G}(m,\servicevec)}\bigg[\sum_{A \in \activepatterncells}
        \!\!\!\! \text{Vol}\bigg( \bigcup_{\rider_j \in \riderset_+} B_\newrange(\rider_j)\cap A \bigg)\bigg]}_{ \triangleq \, \rho_{2}(\newrange)
    } \,,
\end{align}
The purpose of decomposing into pattern cells is that the probability of increasing the matching by adding a new node uniformly in a special pattern cell is a strongly concave function of the service range. Outside the special pattern cells, this probability is concave in the service range, i.e. $\rho_1(\newrange)$ is concave (see \prettyref{lmm:concaverho2}), while $\rho_2(\newrange)$ is strongly concave.
\begin{figure}[t]
    \centering
    \includegraphics[width=0.8\linewidth]{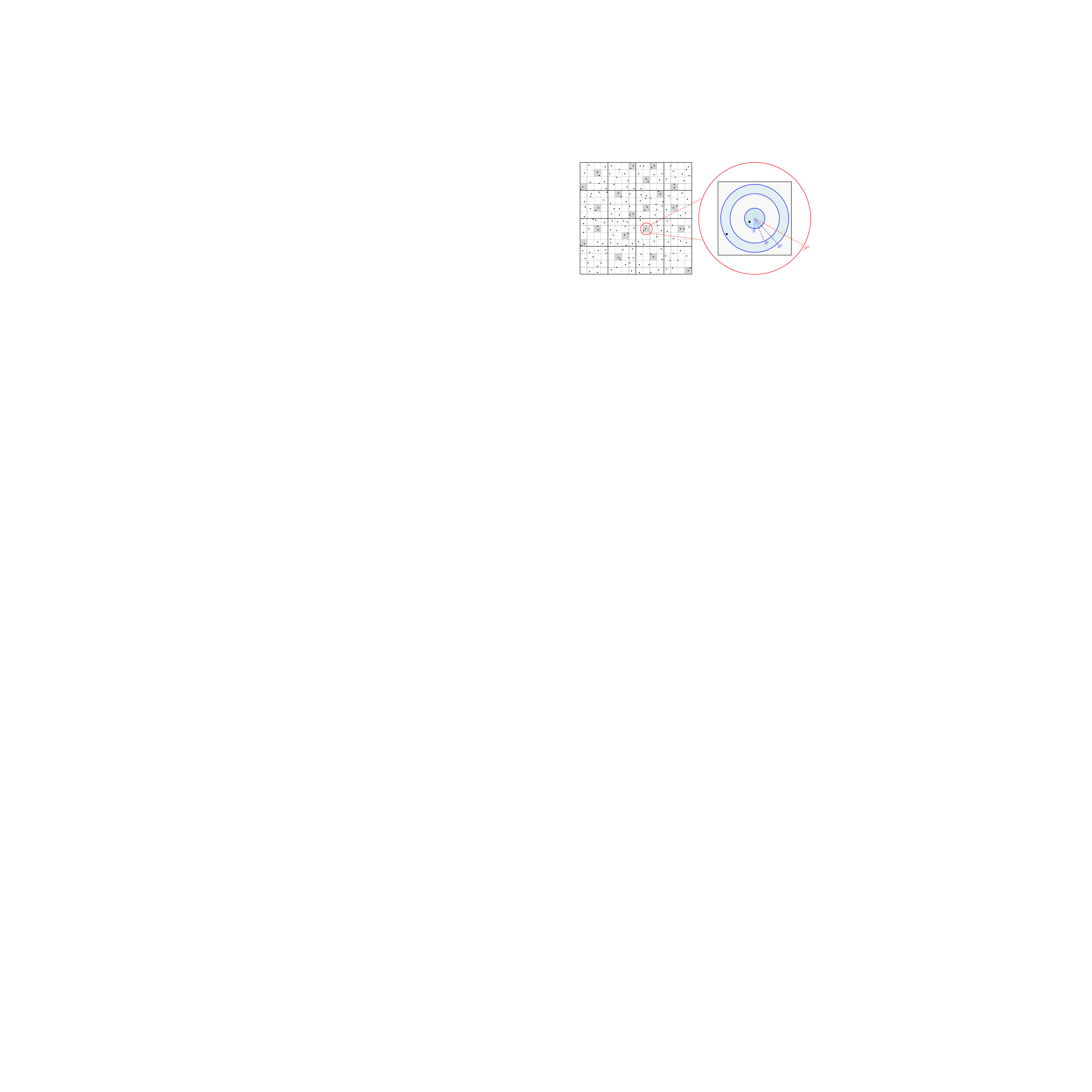}
    \caption{Example of a pattern cell decomposition for $\dimension=2$. The coarse grid corresponds to the trimming cells, whereas the fine grid corresponds to the pattern cells. Special pattern cells are shaded in, and one such cell $A$ is zoomed into. The definition of the pattern is given in~\eqref{eq:patterndef}.}
    \label{fig:Uaillustration}
\end{figure}
\begin{lemma}\label{lmm:concaverho2}
$\rho_{1}(\newrange)$ is concave in $\newrange$ for $0\le \newrange \le \MPar$.
\end{lemma}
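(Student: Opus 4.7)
The plan is to argue Lemma~\ref{lmm:concaverho2} pathwise. Writing
\[
\rho_1(\newrange) = \mathbb{E}\bigl[\mathrm{Vol}\bigl(S(\newrange) \cap U\bigr)\bigr], \qquad S(\newrange) := \bigcup_{\rider_j \in \riderset_+} B_\newrange(\rider_j),\ U := \bigcup_{A \in \patterncells \setminus \activepatterncells} A,
\]
and using that expectation preserves concavity, it suffices to show that $\newrange \mapsto \mathrm{Vol}(S(\newrange) \cap U)$ is concave on $[0, \MPar]$ for every realization of $G$.

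The central tool I will invoke is the following auxiliary fact: \emph{for any convex set $C \subseteq \reals^\dimension$ and point $p \in C$, the function $\newrange \mapsto \mathrm{Vol}\bigl(B(p, (\newrange/n)^{1/\dimension}) \cap C\bigr)$ is concave in $\newrange \ge 0$.} The proof is a polar-coordinate computation about $p$: the volume equals $\int_0^{r(\newrange)} s^{\dimension-1} g(s)\,ds$ with $g(s) := \sigma_{\dimension-1}\bigl(\{\omega \in S^{\dimension-1} : p + s\omega \in C\}\bigr)$, where $\sigma_{\dimension-1}$ is the spherical measure. Star-shapedness of $C$ at $p$ (which follows from convexity) implies $g$ is non-increasing in $s$, so the substitution $r = (\newrange/n)^{1/\dimension}$ yields $\tfrac{d}{d\newrange}\mathrm{Vol} = g(r(\newrange))/(\dimension n)$, which is non-increasing in $\newrange$.

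Next, I would decompose within each trimming cell $T$ by the Voronoi cells $V_j^T$ of the demand points $\rider_j \in \riderset_+ \cap T$, taken inside $T$. Since $B_\newrange(\rider_j) \subseteq \tcell(\rider_j)$ by definition, only points of $\riderset_+$ in the same trimming cell as $x$ can cover $x$, giving
\[
\mathrm{Vol}(S(\newrange) \cap U) = \sum_T \sum_{\rider_j \in \riderset_+ \cap T} \mathrm{Vol}\bigl(B(\rider_j, r(\newrange)) \cap V_j^T \cap U\bigr),
\]
where each $V_j^T$ is convex (intersection of half-spaces with $T$) and contains $\rider_j$. Writing
\[
\mathrm{Vol}\bigl(B \cap V_j^T \cap U\bigr) = \mathrm{Vol}\bigl(B \cap V_j^T\bigr) - \sum_{A' \in \activepatterncells} \mathrm{Vol}\bigl(B \cap V_j^T \cap A'\bigr),
\]
the first term is concave by the auxiliary fact applied to $V_j^T$, so concavity of $\rho_1$ reduces to showing the aggregate special-cell correction is convex in $\newrange$.

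The main obstacle is establishing this convexity. For pairs $(\rider_j, A')$ with $\rider_j \notin A'$ (the generic case, in which $\rider_j$'s contribution to $A'$ arises only because its ball spills into $A'$), the individual term $\mathrm{Vol}(B \cap V_j^T \cap A')$ is the volume of a ball intersected with a convex region not containing its center: it stays zero until $r(\newrange) \ge \mathrm{dist}(\rider_j, V_j^T \cap A')$ and then grows, which is not convex on its own; for pairs with $\rider_j \in A'$, the term is concave by the auxiliary fact, again not convex. My plan is to exploit the explicit form~\eqref{eq:patterndef} of special pattern cells together with the pattern-cell side length $L_p = \patternside\, n^{-1/\dimension}$ from~\eqref{eq: def-w-k}: because $L_p$ strictly exceeds $(\MPar/n)^{1/\dimension}$, any ball of radius at most $(\MPar/n)^{1/\dimension}$ meets only $O(1)$ surrounding cells, and the prescribed two-point geometry at radii $R, R', R'', R'''$ inside each special cell constrains the relative positions of the relevant $\riderset_+$ points enough that the ramp-up contributions from $\rider_j \notin A'$ combine with the own-cell concave contributions from $\rider_j \in A'$ into a convex envelope in $\newrange$. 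This combinatorial-geometric bookkeeping is the crux of the argument and delivers pathwise concavity of $\mathrm{Vol}(S(\newrange) \cap U)$.
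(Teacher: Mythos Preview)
Your setup and the auxiliary fact (concavity of $\newrange\mapsto \mathrm{Vol}(B(p,(\newrange/n)^{1/\dimension})\cap C)$ when $C$ is convex and $p\in C$) are correct, and the Voronoi decomposition is the right move. The gap is in the ``crux'': you never actually prove that the special-cell correction
\[
\sum_{A'\in\activepatterncells}\mathrm{Vol}\bigl(B_\newrange(\rider_j)\cap V_j^T\cap A'\bigr)
\]
is convex in $\newrange$; you only assert that the two-point geometry of~\eqref{eq:patterndef} will make the bookkeeping work out. As you yourself note, the individual summands are either concave (when $\rider_j\in A'$) or have a flat-then-growing shape (when $\rider_j\notin A'$), neither of which is convex, so there is real content to check and you have not supplied it.

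The point you are missing is that this correction is not merely convex --- it is \emph{identically zero} for $\newrange\in[0,\MPar]$. This is exactly what properties~(\ref{prop:insidecell}) and~(\ref{prop:distance}) of Lemma~\ref{lem:constantdef} give you. By~(\ref{prop:insidecell}), the two points of $\riderset_+$ in a special cell $A'$ lie in $B(x_{A'},R'')$, and $B(x_{A'},R''+(\MPar/n)^{1/\dimension})\subseteq A'$, so their balls $B_\newrange(\rider_j)$ stay entirely inside $A'$ and hence miss $U$; those terms vanish. By the definition~\eqref{eq:patterndef}, any other $\rider_j\in\riderset_+$ lies outside $B(x_{A'},R''')$, so~(\ref{prop:distance}) gives $\mathrm{dist}(\rider_j,A')>2(\MPar/n)^{1/\dimension}$ and $B_\newrange(\rider_j)\cap A'=\emptyset$; those terms also vanish. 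With the correction gone, your decomposition reduces to a sum of $\mathrm{Vol}(B_\newrange(\rider_j)\cap V_j^T)$ over $\rider_j\in\riderset_+^{\overline{*}}$, each concave by your auxiliary fact, and you are done. This is precisely the paper's argument: it splits $\riderset_+$ into special-cell points and the rest, uses~(\ref{prop:insidecell})--(\ref{prop:distance}) to drop the special-cell points from $\rho_1$ entirely, and then runs the Voronoi argument only on $\riderset_+^{\overline{*}}$.
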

 In order to show that $\rho_{2}(\newrange)$ is strongly concave, for any fixed cell $A$ and $\riderset_+$, we define 
\begin{align}
    \chi_A(\newrange,\riderset_+) \triangleq  \mathrm{Vol}
    \bigg(
        \bigcup_{\rider_j \in \riderset_+} B_\newrange(\rider_j)\cap A
    \bigg). \label{eq:gamma_A_ell}
\end{align}
Then, we introduce the following two technical lemmas. 
\begin{lemma}\label{lmm:gainonepattern}
For any configuration $\riderset_+$, if $A \in \activepatterncells$, then for some
$\alphagainpattern>0$
\begin{align*}
    \big(\chi_A (\base_1+\shift,\riderset_+ ) - \chi_A(\base_1,\riderset_+) \big)
    \,-\,
    \big(\chi_A(\base_2,\riderset_+) - \chi_A (\base_2-\shift,\riderset_+ )\big)
    \, \geq \, 
    \shift^{2}\frac{\alphagainpattern}{n}\, .
\end{align*}
\end{lemma}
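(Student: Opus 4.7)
The plan is to combine inclusion–exclusion with the two-point structure of a special pattern cell to lower bound the second difference of $\chi_A(\cdot,\riderset_+)$.

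\textbf{Reduction to the pairwise overlap.} Let $p_1, p_2$ denote the two points of $\riderset_+$ in $A$, where $p_1$ lies in the inner ball of radius $R/n^{1/\dimension}$ around the cell center and $p_2$ lies in the annulus with radii $R'/n^{1/\dimension}$ and $R''/n^{1/\dimension}$. By inclusion–exclusion,
\[
\chi_A(\newrange,\riderset_+) = \mathrm{Vol}\big(B_\newrange(p_1)\cap A\big) + \mathrm{Vol}\big(B_\newrange(p_2)\cap A\big) - W_A(\newrange),
\]
where $W_A(\newrange) := \mathrm{Vol}\big(B_\newrange(p_1)\cap B_\newrange(p_2)\cap A\big)$. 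I would first use the pattern-cell side length $\patternside = ((3^\dimension+1)^{1/\dimension}+2)\MPar^{1/\dimension}$, together with the containment of $A$ in a trimming cell, to verify that each ball $B_\newrange(p_i)$ is entirely inside $A$ for every $\newrange\le\MPar$. Hence each single-ball contribution equals $\kappa_\dimension\newrange/n$, exactly linear in $\newrange$, and all concavity of $\chi_A$ must come from $-W_A$.

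\textbf{Placing the overlap kink via the annulus.} Writing $d := \|p_1-p_2\|$, the overlap $W_A$ vanishes for $\newrange\le\newrange^\star := n(d/2)^\dimension$ and is strictly positive and increasing beyond this threshold. The annulus radii $R'$ and $R''$ in the definition of a special pattern cell are calibrated so that, for every admissible triple $(\base_1,\base_2,\shift)$, the threshold $\newrange^\star$ lies inside $[\base_1+\shift,\base_2-\shift]$. Consequently $W_A(\base_1) = W_A(\base_1+\shift) = 0$, and the desired second difference reduces to $W_A(\base_2) - W_A(\base_2-\shift)$.

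\textbf{Quantifying the lens growth.} I would then carry out a direct geometric computation of the lens volume of two $\dimension$-dimensional balls of common radius $(\newrange/n)^{1/\dimension}$ separated by distance $d$, and show that $W_A(\base_2) - W_A(\base_2-\shift) \ge \alphagainpattern\,\shift/n$ for a constant $\alphagainpattern > 0$ depending only on $\dimension$ and $\MPar$. The key geometric fact is that, once the balls overlap, $dW_A/d\newrange$ is bounded below by a positive quantity proportional to the solid angle subtended by one ball as seen from the center of the other, and this angle is controlled uniformly via the annulus parameters.

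\textbf{Main obstacle.} The main technical difficulty lies in the last step: in dimension $\dimension \ge 2$ the lens volume grows only super-linearly from zero above the threshold (like $(\newrange - \newrange^\star)^{(\dimension+1)/2}$ in appropriate units), so the pattern parameters $R, R', R''$ must be tuned to guarantee that both $\newrange^\star - \base_1$ and $\base_2 - \newrange^\star$ exceed a constant multiple of $\shift$. This two-sided margin is what ultimately lets the integrated lens growth over $[\base_2-\shift, \base_2]$ scale as $\shift/n$ with a dimension-dependent but $\shift$-independent constant $\alphagainpattern$.
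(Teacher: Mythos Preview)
Your strategy matches the paper's: reduce $\chi_A$ by inclusion--exclusion to the two linear single-ball terms and the pairwise overlap $W_A$, then extract the strict concavity from $W_A$. The key gap is your placement of the overlap threshold. The annulus in a special pattern cell is calibrated so that the two points $p_1,p_2$ are separated by at least $R'-R = 2\bigl((\base_2-\shift)/n\bigr)^{1/\dimension}$ and at most $R''+R \le 2\bigl((\base_2-\shift/2)/n\bigr)^{1/\dimension}$; hence $\newrange^\star\in[\base_2-\shift,\,\base_2-\shift/2]$, not in $[\base_1+\shift,\,\base_2-\shift]$ as you state. This is not a minor shift: it forces $W_A(\base_2-\shift)=0$ exactly (property~(P2a) in the paper), so the second difference of $\chi_A$ equals $W_A(\base_2)$ outright, not $W_A(\base_2)-W_A(\base_2-\shift)$. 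The paper then lower-bounds $W_A(\base_2)$ in one stroke by inscribing in the lens the ball of radius $(\base_2/n)^{1/\dimension}-d/2$ centered at the midpoint of $p_1,p_2$ (property~(P2b)). No analysis of $dW_A/d\newrange$ is needed, and your ``main obstacle'' dissolves once the threshold is located correctly.

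A smaller omission: your inclusion--exclusion tacitly assumes that only $p_1,p_2$ contribute to $\chi_A$. This requires ruling out contributions from every other $\rider_j\in\riderset_+$, which the paper handles via property~(P4): any $\rider_j\notin B(x_A,R''')$ is at distance more than $2(\MPar/n)^{1/\dimension}$ from $A$, so $B_\newrange(\rider_j)\cap A=\emptyset$ for all $\newrange\le\MPar$.
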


\begin{lemma}\label{lmm:lowerboundpatterna}
For any $\varepsilon \leq \epsiloncondition$ and $n \ge \highdimN$, there exists $\alphaproba>0$ such that
\[
    \sum_{A \in \patterncells} \mathbb{P}_{G \sim \mathbb{G}(m,\servicevec)}\left[A \in \activepatterncells\right]
    \geq  \tau^2\alphaproba n.
\]
\end{lemma}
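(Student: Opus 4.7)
The plan is to directly lower-bound, for each pattern cell $A \in \patterncells$, the probability that $A \in \activepatterncells$ by a multiple of $\tau^2$, and then sum over the $\Theta(n)$ pattern cells. The strategy is to exhibit an explicit sufficient event---a localized configuration of demand and supply around $A$---that both forces $A$ to be a special pattern cell and has probability $\Omega(\tau^2)$.

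Fix $A \in \patterncells$ with center $c_A$. I would introduce three sufficient events: $E_1(A)$, that $A$ is entirely contained in a single trimming cell; $E_2(A)$, that exactly two demand nodes fall in the $R'''$-ball around $c_A$, with one in the inner ball of radius $R$ and the other in the annulus of radii $R', R''$, matching the pattern definition in \eqref{eq:patterndef}; and $E_3(A)$, that no supply node lies within distance $(\MPar/n)^{1/\dimension}$ of either of those two demand nodes. On $E_1(A) \cap E_2(A) \cap E_3(A)$, the two identified demand nodes are isolated in $G_\varepsilon$, so they belong to $\riderset_+$ for every maximum matching; combined with $E_2(A)$, which precludes any other demand node in the $R'''$-ball, and $E_1(A)$, which guarantees that the relevant balls are not cut by trimming, this forces $A \in \activepatterncells$.

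Each event then admits a clean lower bound. For $E_1$, since pattern cells have side $L_p = \Theta(n^{-1/\dimension})$ while trimming cells have side $L_T = \Theta(\varepsilon^{-1} n^{-1/\dimension})$, a union bound over cell boundaries shows that a $(1-O(\varepsilon))$-fraction of pattern cells lies inside a single trimming cell, giving at least $\Omega(n)$ cells satisfying $E_1$ once $\varepsilon \leq \epsiloncondition$ is small enough. For $E_2$, the radii $R, R', R''$ in \eqref{eq:patterndef} are tuned so that both the inner ball and the annulus have Lebesgue measure $\Theta(\tau/n)$---the same scaling that makes \prettyref{lmm:gainonepattern} deliver a $\tau/n$ gain per active cell---while $B(c_A, R''')$ has volume $\Theta(1/n)$. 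Since demand locations are i.i.d.\ uniform and $m = \Theta(n)$, a direct first-moment computation gives
\[
\mathbb{P}[E_2(A)] \, = \, m(m-1) \cdot \mathrm{Vol}(B(c_A, R)) \cdot \mathrm{Vol}(\mathrm{annulus}) \cdot \big(1 - \mathrm{Vol}(B(c_A, R'''))\big)^{m-2} \, \geq \, c_2 \, \tau^2,
\]
for a constant $c_2 = c_2(\dimension, \betaPar, \MPar) > 0$. Finally, supply locations are independent of demand locations, so conditional on any realization in $E_2$ we have $\mathbb{P}[E_3(A) \mid E_2(A)] \geq (1 - 2 \kappa_\dimension \MPar/n)^n \geq e^{-3 \kappa_\dimension \MPar}$ for $n$ large. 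Multiplying these three lower bounds and summing over the $\Omega(n)$ pattern cells satisfying $E_1$ gives the claimed bound $\alphaproba \, \tau^2 n$, with $\alphaproba$ depending only on $\dimension, \betaPar, \MPar$.

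The main technical point is not probabilistic but geometric: one must verify that the radii $R, R', R'', R'''$ chosen in the pattern definition \eqref{eq:patterndef} simultaneously achieve (i) volumes of order $\tau/n$ for the inner ball and the annulus, which is what drives the $\tau^2$ factor in $\mathbb{P}[E_2]$ here, and (ii) the overlap structure required by \prettyref{lmm:gainonepattern} so that active cells contribute the promised $\tau/n$ gain. Given this consistency between the two pattern lemmas, the remaining steps---elementary volume computations, a union bound for $E_1$, and independence of supply and demand for $E_3$---are routine.
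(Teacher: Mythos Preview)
Your proposal is correct and follows essentially the same approach as the paper's proof. The only minor difference is that your supply-void event $E_3$ is defined conditionally on the two demand positions (no supply within $(\MPar/n)^{1/\dimension}$ of either), whereas the paper uses the fixed, slightly larger region $B(x_A,R''')$ and requires it to be free of both supply nodes and all other demand nodes simultaneously; both versions yield an $e^{-O(1)}$ lower bound and the rest of the argument---the $\Theta(n)$ count of pattern cells contained in trimming cells via $\varepsilon\le\epsiloncondition$, and the $\Theta(\tau^2)$ probability from the inner ball and annulus each having volume $\Theta(\tau/n)$ (this is exactly \ref{prop:volumes})---is identical.
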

Thus, for any $\varepsilon\leq \epsiloncondition$ and any $n> \highdimN$, 
\begin{align*}
    \rho_{2}(\base_1+\tau)+\rho_{2}(\base_2-\tau)-\rho_{2}(\base_1)-\rho_{2}(\base_2)
    & 
    \stepa{\geq} \shift^{2}\frac{\alphagainpattern}{n}\sum_{A\in \patterncells}\mathbb{P}_{G \sim \mathbb{G}(m,\servicevec)}\left[A \in \activepatterncells\right] 
    \\
    &
    \stepb{\geq} 
    \shift^{4} \highdimalpha'  \,,
\end{align*}
where $\highdimalpha' \triangleq \alphaproba\alphagainpattern$.
Here, (a) holds by \prettyref{lmm:gainonepattern}, and (b) holds by~\prettyref{lmm:lowerboundpatterna}.  Together with \prettyref{eq:decomporho1rho2},~\prettyref{lmm:concaverho2} and the above inequality, the proof of~\prettyref{prop:strongconcavehighD} is complete.

\subsubsection{Special pattern cells}
This section describes the special pattern cells and their use in the proofs for Lemmas~\ref{lmm:concaverho2}-\ref{lmm:lowerboundpatterna}. Define
\[
    R \triangleq \frac{1}{10}\left(\frac{r_2}{n}\right)^{1/\dimension} \, ,
    \quad
    R' \triangleq \left(1-\frac{1}{10^\dimension}\right)^{1/\dimension}\left(\frac{r_2}{n}\right)^{1/\dimension} \, ,
    \quad
    R'' \triangleq \left(\frac{r_2}{n}\right)^{1/\dimension} \, ,
\]
as well as
\[
    R''' \triangleq \frac{\sqrt{\dimension}\,\patternside}{n^{1/\dimension}} + 2\left(\frac{\MPar }{n}\right)^{1/\dimension} \, .
\]
Here, $\patternside$ is defined in~\eqref{eq: def-w-k}. Note that the parameters $R,R',R'',R'''$ depend on $\MPar ,\dimension,\base_1, \base_2, \shift,n$. We omit the dependency in the subscript for readability. For a given cell $A$, we denote $x_A$ its center. For any realization of $\riderset_+$, a pattern cell $A$ is in $\activepatterncells$ if: 
\begin{align}\label{eq:patterndef}
    \left\{\, A \subseteq {\tcell}(x_A) \right\}
    \cap
    \left\{
        \begin{aligned}
            &\left|\mathcal \riderset_+ \cap B(x_A, R''')\right| \,=\, 2,\\[0.3em]
            &\left|\mathcal \riderset_+ \cap B(x_A, R)\right| \,=\, 1,\\[0.3em]
            &\left|\mathcal \riderset_+ \cap \big(B(x_A, R'') \setminus B(x_A, R')\big)\right| \,=\, 1
        \end{aligned}
    \right\}.
\end{align}
Such a cell $A$ is termed a special pattern cell. Property $A \in \activepatterncells$ is illustrated in \Cref{fig:Uaillustration}.  
We now collect some properties that special pattern cells satisfy -- these properties are used extensively in the proofs of Lemmas~\ref{lmm:concaverho2}-\ref{lmm:lowerboundpatterna}. The proof of~\prettyref{lem:constantdef} itself is postponed to~\Cref{sec-proof-of-properties}.

\begin{lemma} \label{lem:constantdef} 
For each \( n\in\mathbb N \), the parameters $R, R', R'', R'''$ satisfy:
\begin{enumerate}[label= \( \mathrm{(P\arabic*)} \), ref=P\arabic*]
    \item \label{prop:volumes}
    \[
        \mathrm{Vol}\Bigl( B\bigl(x_A,  R''\bigr) \setminus B\big(x_A,  R'\big) \Bigr)
        = \mathrm{Vol}\Bigl( B\bigl(x_A,  R\bigr) \Bigr)= \kappa_{\dimension} \frac{r_2}{ 10^\dimension n} \geq  \kappa_{\dimension} \frac{2\tau}{10^\dimension n} .
    \]
    where $\kappa_{\dimension}$ is the volume of a ball of unit radius in $\mathbb{R}^\dimension$.
    \item \label{prop:no_intersection} 
    Consider any two points $x_0,x_1$ with $x_0 \in B\left(x_A,R\right)$ and $x_1 \in B(x_A,R'')\setminus B(x_A,R')$. Define, for $\ell\ge 0$,
    \[
        I(\ell;x_0,x_1)\triangleq \mathrm{Vol}\Big( B\big(x_0,(\ell/n)^{1/\dimension}\big)\,\medcap\,B\big(x_1,(\ell/n)^{1/\dimension}\big)\Big).
    \]
    Then the map $\ell\mapsto I(\ell;x_0,x_1)$ is convex on $[\base_2-2\shift,\base_2]$, and there exists a constant $\alphagainpattern>0$ such that
    \begin{align} \label{eq:second-difference}
        I(\base_2)-2I(\base_2-\shift)+I(\base_2-2\shift)
        \ge \frac{\alphagainpattern}{n}\,\shift^{\,2} \, .
    \end{align}
    \item \label{prop:insidecell} 
    \[
        B\left(x_A,\,R'' + (\MPar /n)^{1/\dimension}\right)  \subseteq  A, \qquad A  \subseteq  B(x_A, R''').
    \]
    \item \label{prop:distance}
    Any point $\rider \in [0,1]^\dimension \setminus B(x_A,R''')$ satisfies\footnote{The distance from a point $d$ to a cell $A$ is defined as $\operatorname{dist}(d,A) = \inf_{a\in A} \| d-a\|_2$.}
    \[
        \operatorname{dist}(\rider,A) > 2\left(\frac{\MPar }{n}\right)^{1/\dimension}.
    \]
\end{enumerate}
\end{lemma}

% ---------------------  Section --------------------- %

\section{Analysis for the dual service range model} \label{sec-markov-chain}    

In this section, we prove our theorems on the dual service range model, i.e.~\Cref{thm: sizes-of-maximum-matching} and~\Cref{thm: extreme-cases}. Proofs for supporting lemmas are deferred to~\Cref{apx-deferred-proofs-for-dual-service}.

\subsection{Analysis for Markov embedding} \label{sec:dual_embedding}

We now establish a relationship between the matching problem and the Markov chain $(\potential(t))_{t\geq 0}$. At the core of this relationship is a generative process. We present this connection and introduce some terminology, and then use the generative process to prove~\Cref{thm: sizes-of-maximum-matching}.

\paragraph{Terminology.} 
For each $t \ge 0$, we track three \textit{active} agents: one demand node $\u(t)$, one flexible supply node $\vf(t)$, and one inflexible supply node $\vnf(t)$. At time $t$, the supply node (flexible or inflexible) is
\begin{itemize}
    \item[--] \textit{behind}, if it is too far left of the demand node: $\vnf(t) < \u(t) - \base/n$ or $\vf(t) < \u(t) - (\base + \extra)/n$.
    \item[--] \textit{in range}, if it is within matching range, i.e. $|\vnf(t) - \u(t)| \leq \base/n$ or $|\vf(t) - \u(t)| \leq (\base + \extra)/n$.
    \item[--] \textit{ahead}, if it is too far right of the demand node: $\vnf(t) > \u(t) + \base/n$ or $\vf(t) > \u(t) + (\base+\extra)/n$.
\end{itemize}
Further, we say that the flexible supply node has \emph{priority} over the inflexible supply node at time $t$, if $\vf(t) + \extra/n < \vnf$; otherwise the inflexible supply node has priority. Note that the priority rule compares the \textit{deadline}, i.e. the right-most point a supply node can cover, of the flexible and inflexible supply node, and prioritizes the supply node whose deadline is more imminent. We will say that the demand node is \textit{advanced} at time $t$, to mean $\u(t+1) = \u(t) + \mathrm{Exp}(1)/n$.  Similarly, a flexible (resp. inflexible) supply node is advanced at time $t$ if $\vf(t+1) = \vf(t) + \mathrm{Exp}(p)/n$ (resp. $\vnf(t+1) = \vnf(t) + \mathrm{Exp}(1-p)/n$). If an agent is not advanced, then it is kept, i.e. it remains the same at time $(t+1)$ as $t$.

\paragraph{The generative process.} Scan the unit interval $[0,1]$ from left-to-right, and generate a set of demand nodes, inflexible supply nodes, and flexible supply nodes as Poisson point processes on $[0,1]$ of rate $1$, $1-p$ and $p$ respectively by advancing them successively and matching via a greedy rule described below. Without loss of generality, $\u(0) = \vnf(0) = \vf(0) = 0$ may be assumed.

\begin{itemize}
    \item Case (A): The inflexible supply node is behind and it has priority over the flexible supply node. No match is made, and only the inflexible supply node is advanced.
    \item Case (B): The inflexible supply node is in-range and it has priority over the flexible supply node. The demand node is matched to the inflexible supply node, and both of them are advanced.
    \item Case (C): The flexible supply node is behind and it has priority over the inflexible supply node. No match is made, and only the flexible supply node is advanced.
    \item Case (D): Both supply nodes are ahead. The demand node is not matched, and only it is advanced.
    \item Case (E): The flexible supply node is in-range and it has priority over the inflexible supply node. The demand node is matched to the flexible supply node, and both of them are advanced.
\end{itemize}

\begin{figure}[t]
    \centering
    \subfigure[Case (A)]{
    \includegraphics[width = 0.3 \textwidth]{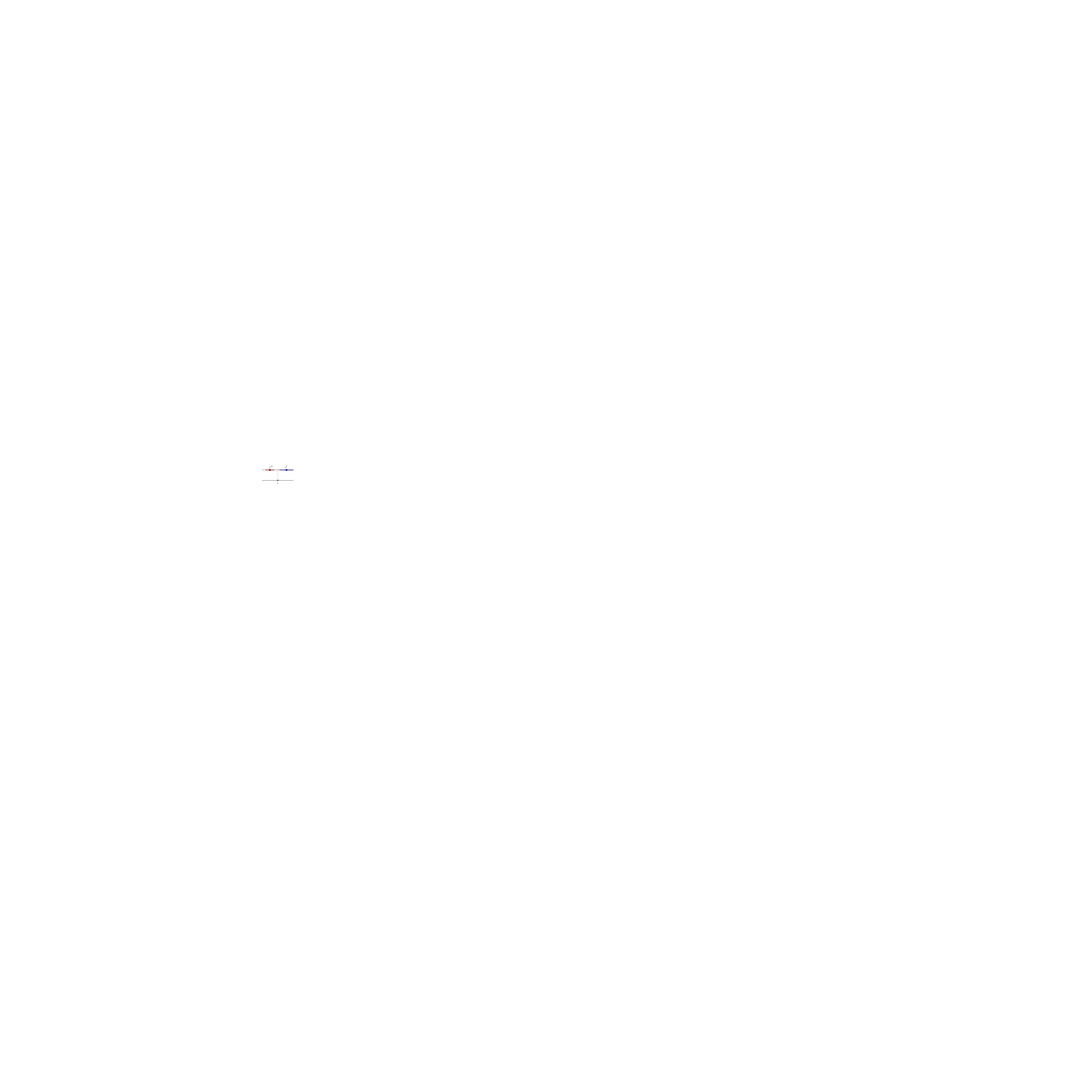}
    \label{fig-Generative-Configurations-A}
    } 
    \hfill
    \subfigure[Case (B)]{
    \includegraphics[width = 0.3 \textwidth]{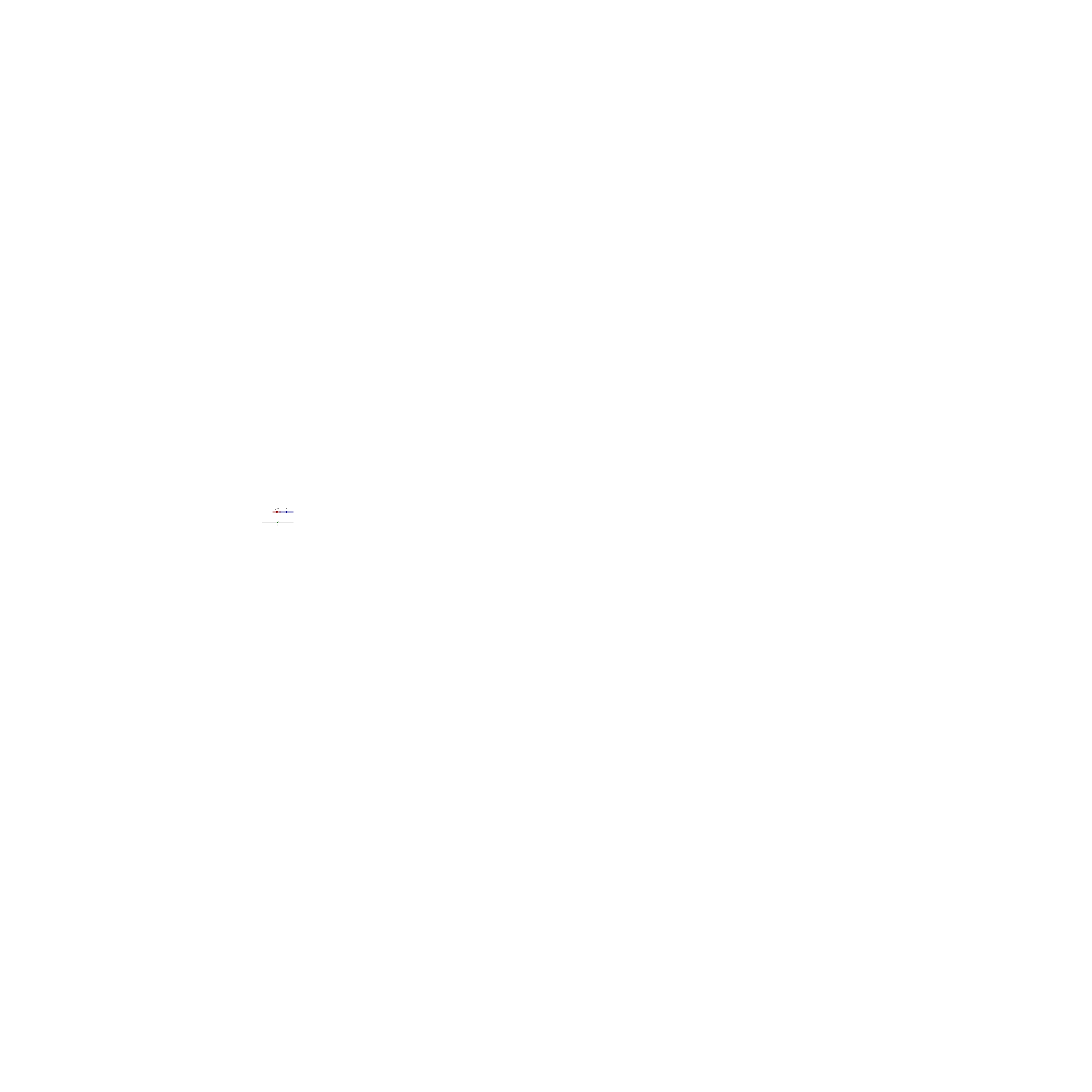}
    \label{fig-Generative-Configurations-B}
    } 
    \hfill
    \subfigure[Case (C)]{
    \includegraphics[width = 0.3 \textwidth]{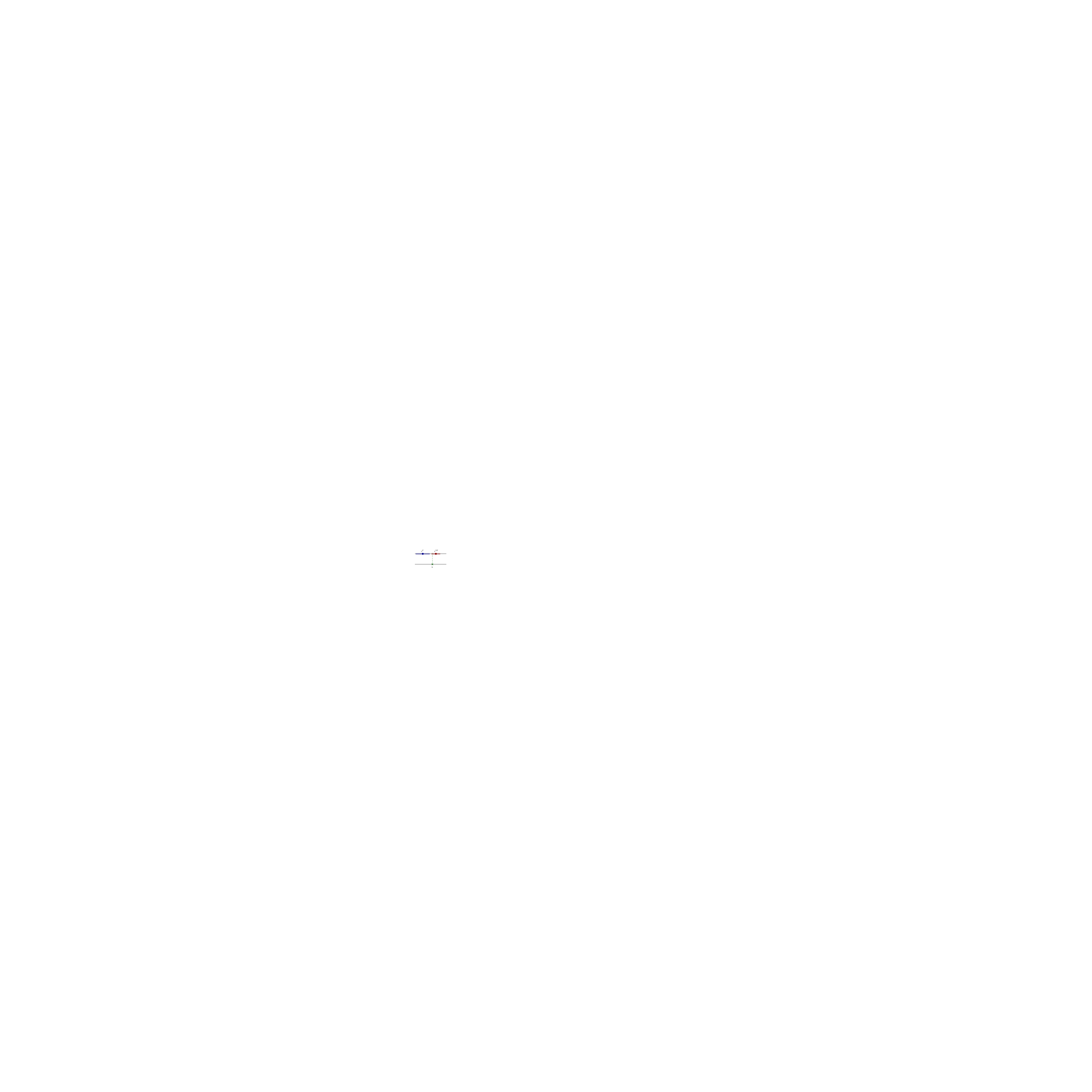}
    \label{fig-Generative-Configurations-C}
    } 
    \vfill
    \subfigure[Case (D)]{
    \includegraphics[width = 0.3 \textwidth]{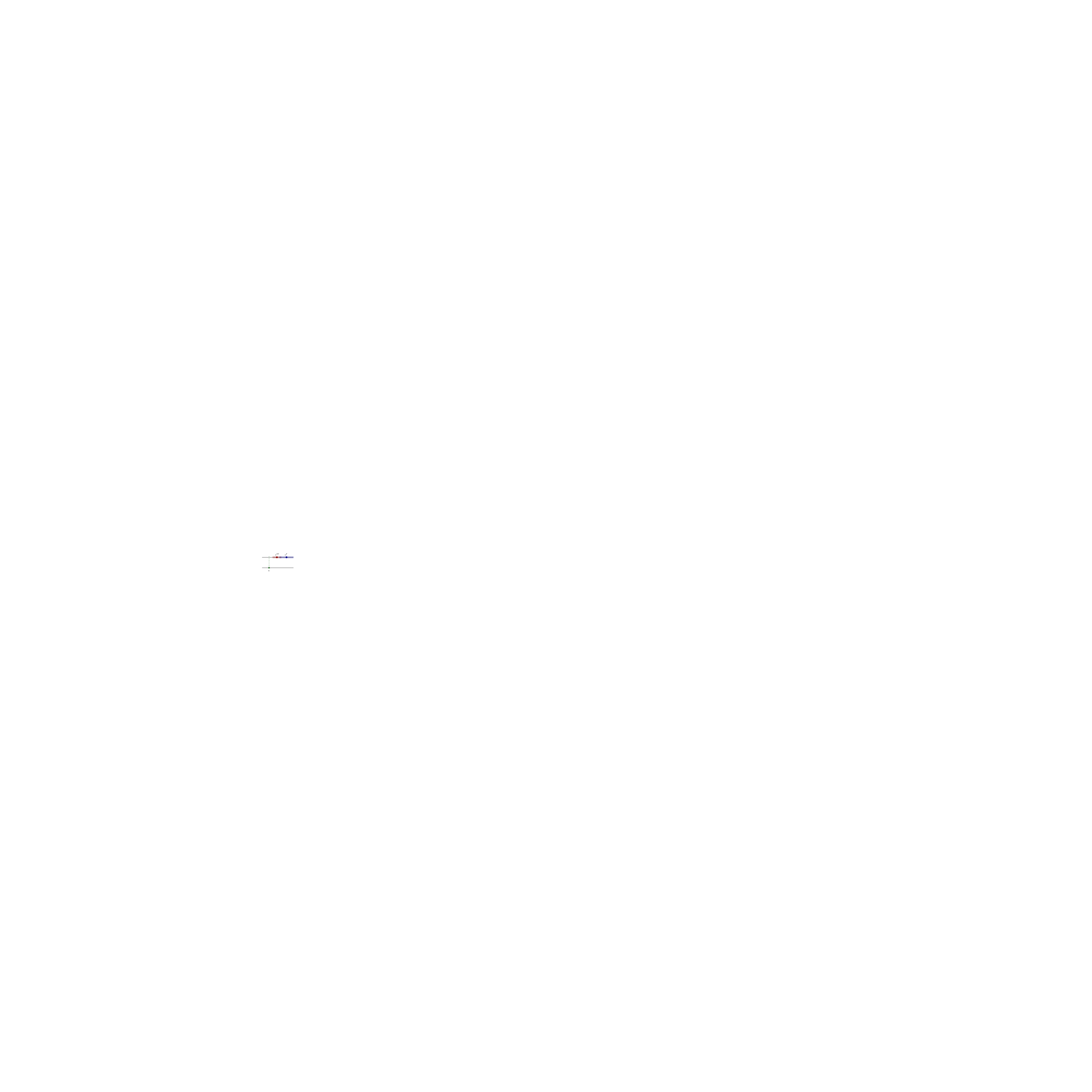}
    \label{fig-Generative-Configurations-D}
    } 
    \qquad
    \subfigure[Case (E)]{
    \includegraphics[width = 0.3 \textwidth]{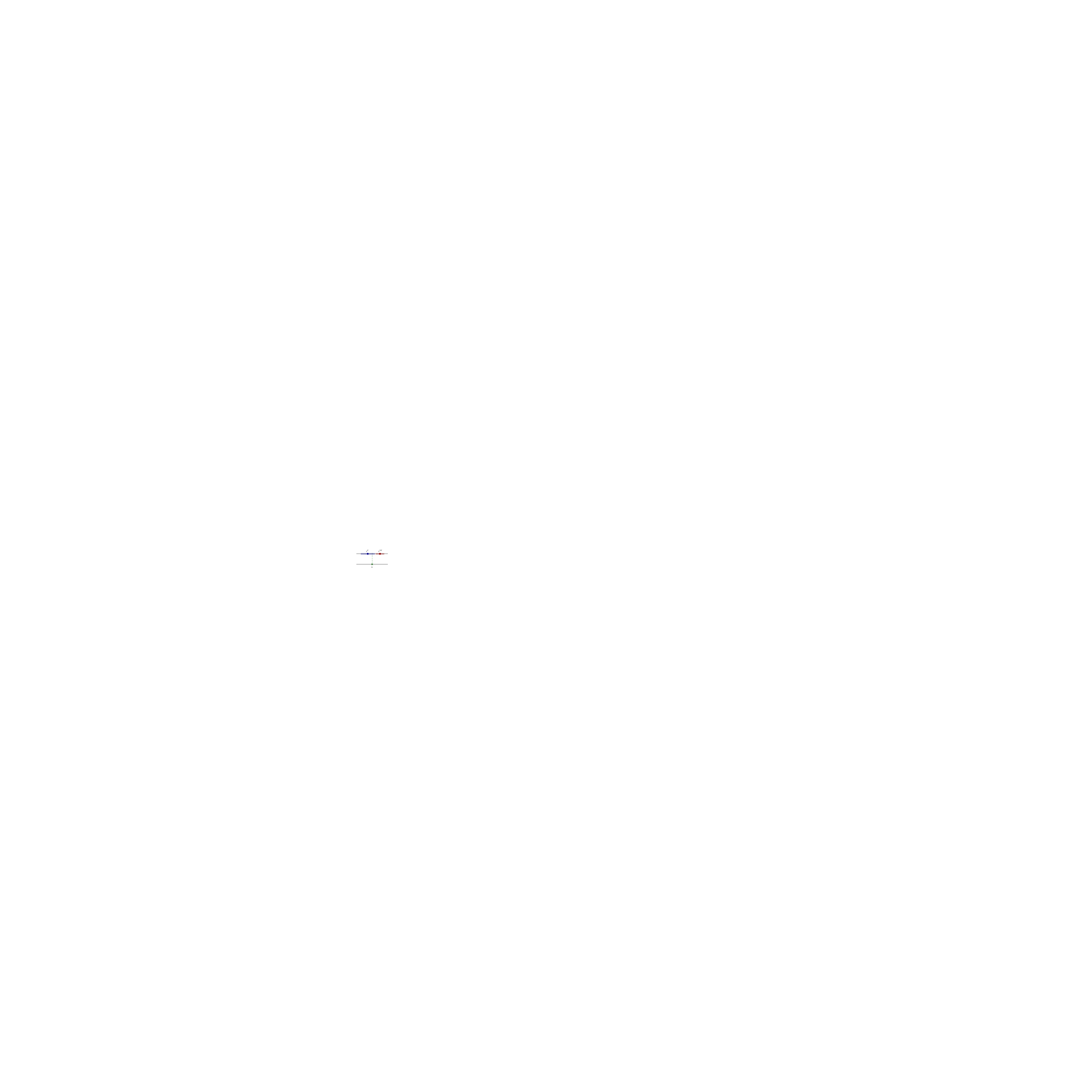}
    \label{fig-Generative-Configurations-E}
    } 
    \caption{Possible configurations between active demand node, and active flexible and inflexible supply nodes in various cases. Shaded regions around a supply node represent its service region.}
    \label{fig-Generative-Configurations}
\end{figure}

\begin{table}[t]
    \centering
    \renewcommand{\arraystretch}{1.2}
    \begin{tabular}{cccc}
    \toprule
    \textbf{Region} $R$ & $\nabla \potential(t)$ \textbf{in} $R$ & \textbf{Visualization} & \textbf{Interpretation} \\
    \midrule
        $\calA$  & $[0, \, -v_t]$  & Move down by $\mathrm{Exp}(1-p)$  & Discard inflexible supply node \\
        $\calB$  & $[w_t, \, w_t-v_t]$  & Move in shaded segment (Fig.~\ref{fig-Markov-Chain-Regions})  & Match inflexible supply node \\
        $\calC$  & $[-u_t, \, 0]$  & Move left by $\mathrm{Exp}(p)$  & Discard flexible supply node \\
        $\calD$  & $[w_t,\, w_t]$  & Move up-right by $\sqrt{2} \cdot \mathrm{Exp}(1)$  & Discard demand node \\
        $\calE$  & $[w_t - u_t, \,w_t]$  & Move in shaded segment (Fig.~\ref{fig-Markov-Chain-Regions})  & Match flexible supply node \\
    \bottomrule
    \end{tabular}
\caption{Markov dynamics of $\potential(t)$ in each region of $\reals^2$}
\label{tab: markov_dynamics}
\end{table}

The generative process terminates when all active agents exceed $1$. The five cases (A)--(E) are illustrated in Figure~\ref{fig-Generative-Configurations}. Instead of tracking the triplet $(\u(t), \vf(t), \vnf(t))$, it is convenient to track only the scaled lead times
\begin{equation}\label{eq:psi-def}
    \potentialF(t)  \,\triangleq\, n \bigl( \u(t) - \vf(t) \bigr) + (\base + \extra) \, , \qquad 
    \potentialNF(t) \,\triangleq \, n \bigl( \u(t) - \vnf(t) \bigr) + \base \, ,
\end{equation}
which measure at time $t$, the signed distance from the demand node to the left-most reachable point of the flexible and inflexible supply node respectively. Then $\potential(t) \triangleq \begin{bmatrix}
    \potentialF(t), & \potentialNF(t)
\end{bmatrix}$
is a Markov process on $\reals^2$, with the cases (A)-(E) above corresponding precisely to the five regions $\calA$ - $\calE$ defined below, and the one-step transitions $\nabla\potential(t)  \triangleq \potential(t+1) - \potential(t)$ defined as
\begin{align} \label{eq: markov-chain-dynamics}
  \nabla  \potential(t)  = 
    \begin{cases}
        [0, \, -v_t] & \text{ for }\potential(t) \in \calA
        \triangleq \{(x,y): y\ge 2\base, \, x\le y + 2\extra\}\\
        [w_t,\, w_t - v_t] & \text{ for } \potential(t) \in \calB 
        \triangleq\{(x,y): 0\le y \le 2\base, \, x\le y + 2\extra\} \\
         [-u_t,\, 0] &  \text{ for } \potential(t) \in \calC
        \triangleq \{(x,y): x\ge 2(\base+\extra), \, y\le x-2\extra\}\\ 
        [w_t,\, w_t] & \text{ for } \potential(t) \in \calD 
        \triangleq \{(x,y): x\le 0, \, y \le 0\} \\ 
        [w_t-u_t,\, w_t] & \text{ for } \potential(t) \in \calE
        \triangleq \{(x,y): 0 \le x \le 2(\base+\extra), \, y\le (x-2\extra)_+\} \\ 
    \end{cases} \,.
\end{align}

Note that the transition rules are independent of $t$; they are described for each region in Table~\ref{tab: markov_dynamics}. Note also the following about each step of the generative process:
\begin{itemize}
    \item steps in $\calB\cup \calE$ are \emph{matches} (of demand nodes to inflexible or flexible supply nodes);
    \item steps in $\calD$ skip demand nodes (such nodes remain unmatched);
    \item steps in $\calA \cup \calC$ discard supply nodes (no new demand node is generated).
\end{itemize}

\begin{proof}[Proof of~\texorpdfstring{\Cref{thm: sizes-of-maximum-matching}}{}]

Let $\widehat M_n$, $\widehat M_n^{\mathrm{F}}$, $\widehat M_n^{\mathrm{NF}}$ respectively be the number of matched supply nodes, flexible supply nodes and inflexible supply nodes, produced by the generative process above. 

\begin{proposition}\label{prop:gen-emb-eq}
Let $p \in (0,1)$. There is a constant $C=C(\base,\extra,p)<\infty$ such that
\begin{align}
    \left|\,\Expect[\widehat M_n] \,-\, \nu_n(\base,\extra,p)\,\right| \ &\leq \ C \sqrt{n \log n}  \label{eq: total-c}\\
    \left|\,\Expect[\widehat M_n^{\mathrm{F}}] \,-\, \nu_n^{\mathrm{F}}(\base,\extra,p)\,\right| \ &\leq \ C \sqrt{n \log n} \label{eq: flex-c} \\
    \left|\,\Expect[\widehat M_n^{\mathrm{NF}}] \,-\, \nu_n^{\mathrm{NF}}(\base,\extra,p)\,\right| \ &\leq \ C \sqrt{n \log n} \label{eq: non-flex-c}
\end{align}
\end{proposition}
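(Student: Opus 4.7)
The plan is to view the generative process as a greedy ``earliest-deadline-first'' (EDF) matching rule applied to Poisson-generated data, couple this to the actual model $\mathbb{G}(n,\servicevec_p)$, and show that greedy is optimal in one dimension. The error will come from three small pieces: Poissonization, boundary behaviour near $x=1$, and rare large-gap events.

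\textbf{Step 1 (Poissonization coupling).} By the rules~\eqref{eq: markov-chain-dynamics}, the generative process generates demand, flexible supply, and inflexible supply positions as independent Poisson point processes on $[0,\infty)$ with rates $n$, $pn$, $(1-p)n$ respectively, halted when the first active agent exits $[0,1]$. Let $N,N^{\mathrm F},N^{\mathrm{NF}}$ be the numbers of points of each type falling in $[0,1]$; conditioned on these counts, the positions are i.i.d.\ $\Uniform([0,1])$. Chernoff bounds yield
\[
    \Prob\!\left( |N-n| + |N^{\mathrm F}-pn| + |N^{\mathrm{NF}}-(1-p)n| \le C_1\sqrt{n\log n}\right)\ge 1 - n^{-10},
\]
for some $C_1=C_1(p)$. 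On this event, couple the generated data with an instance of $\mathbb{G}(n,\servicevec_p)$ by adding or deleting at most $O(\sqrt{n\log n})$ nodes of each type; each such alteration perturbs any maximum matching size and any type-wise matched count by at most one, giving $O(\sqrt{n\log n})$ contribution to each gap. The complementary event contributes $O(1)$ because $\widehat M_n\le n$.

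\textbf{Step 2 (EDF is optimal in 1D).} Let $\pi_g$ be the matching produced on the actual (coupled) data by running the generative rule: a two-pointer sweep that scans demand left-to-right and, at each demand $d$, selects among compatible unmatched supplies the one with the smaller \emph{deadline} (rightmost reachable point). I claim $|\pi_g|=M(G)$ almost surely. The proof is a standard 1D exchange argument: if some matching $\pi^\star$ satisfies $|\pi^\star|>|\pi_g|$, then $\pi_g\triangle\pi^\star$ contains a $\pi_g$-augmenting path $P$; traversing $P$ in order of position, two of its edges must properly cross on the line, and the EDF rule would have preferred the crossing edge over the greedy choice, contradicting the construction of $\pi_g$. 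Type-ambiguous configurations (where distinct maximum matchings give different flexible/inflexible counts) occur only when two supply nodes have identical deadlines, which is a probability-zero event under continuous positions; running the exchange argument restricted to one type at a time then also delivers the type-wise equalities for $\nu_n^{\mathrm F}$ and $\nu_n^{\mathrm{NF}}$. Consequently, on the coupled data the generative process reproduces $\pi_g$ along with its type split.

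\textbf{Step 3 (Boundary and assembly).} Two residual terms remain and are absorbed into $O(\sqrt{n\log n})$: (a) boundary behaviour near $x=1$, where the generative process halts while greedy on the coupled data still processes $O(\log n)$ points (using that exponential gaps of size $\log(n)/n$ occur $O(\log n)$ times with probability $1-n^{-10}$); and (b) the exceptional event of Step~1, contributing $O(n\cdot n^{-10})=o(1)$. Combining with the Poissonization bound of Step~1 and the exact equality of Step~2 yields~\eqref{eq: total-c}--\eqref{eq: non-flex-c} with $C=C(\base,\extra,p)$.

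\textbf{Main obstacle.} The subtle point is the two-type exchange argument in Step~2: with two different service ranges, the EDF priority between flexible and inflexible supply must be shown to produce a maximum matching \emph{and} to hit the same type-wise counts used to define $\nu_n^{\mathrm F}$ and $\nu_n^{\mathrm{NF}}$. The single-type leftmost-first optimality on interval graphs is classical; in the two-type setting one must rule out configurations where greedy ``wastes'' a flexible supply on a demand that an inflexible one could have served, reducing the flexible count below that of some optimum. A clean formulation would first establish, via König-type uniqueness for bipartite interval graphs under continuous positions, that the set of matched supply nodes is a.s.\ common to all maximum matchings (so type-wise counts are well-defined), and then verify that the EDF rule attains this canonical solution.
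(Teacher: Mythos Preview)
Your overall strategy—Poissonization, greedy optimality on interval graphs, and a coupling that perturbs $O(\sqrt{n\log n})$ vertices—is exactly what the paper does. The Poissonization step and the Lipschitz argument for adding/deleting nodes are fine; the boundary concern in Step~3 is a non-issue once you note that the generative process, run until all PPP points in $[0,1]$ have been consumed, simply \emph{is} the greedy algorithm applied to $\widehat G_n$.

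The genuine gap is in your handling of the type-wise counts. Your proposed fix—establish via ``K\"onig-type uniqueness'' that the set of matched supply nodes is almost surely common to all maximum matchings—does not work: in bipartite graphs (including interval bigraphs) the matched vertex set is \emph{not} invariant across maximum matchings, even generically. Concretely, with one demand $d$ and two supplies $s_1,s_2$ both in range, both $\{(d,s_1)\}$ and $\{(d,s_2)\}$ are maximum, and no continuity argument rules this out. Similarly, ``running the exchange argument restricted to one type at a time'' is not a proof: restricting to flexible supplies alone changes which demands are even matchable. The paper sidesteps all of this with a much simpler observation (its Lemma~13): the generative process and Algorithm~1 make \emph{identical decisions} at every demand node, so they produce the same matching as a set of edges, not merely the same size. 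Once you have edge-level agreement, the type-wise equalities $\widehat M_n^{\mathrm F}=M_{\mathrm F}(\widehat G_n)$ and $\widehat M_n^{\mathrm{NF}}=M_{\mathrm{NF}}(\widehat G_n)$ are immediate, and the coupling argument finishes as you wrote. Replace your uniqueness discussion with this agreement lemma; it is a short induction on the left-to-right demand order, using that the active supply of each type in the generative process is exactly the leftmost unmatched supply of that type.
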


Let $\widehat N^{\mathrm{D}}(T)$, $\widehat N^{\mathrm{F}}(T)$, $\widehat N^{\mathrm{NF}}(T)$ be the numbers of demand nodes, flexible supply nodes, and inflexible supply nodes that have been advanced up to step $T$, and let $\widehat M^{\mathrm{D}}(T)$, $\widehat M^{\mathrm{F}}(T)$, $\widehat M^{\mathrm{NF}}(T)$ denote the numbers of matches of each kind up to step $T$. Thus, the fractions of demand nodes, flexible supply nodes, and inflexible supply nodes that are matched by time $T$ are
\[
    \frac{\widehat M^{\mathrm{D}}(T)}{\widehat N^{\mathrm{D}}(T)}, \quad
    \frac{\widehat M^{\mathrm{F}}(T)}{\widehat N^{\mathrm{F}}(T)}, \quad \mathrm{and} \quad 
    \frac{\widehat M^{\mathrm{NF}}(T)}{\widehat N^{\mathrm{NF}}(T)}.
\]
Let $\tau_n$ denote the (random) termination time of the generative process on $[0,1]$ i.e. $\tau_n$ is the first step at which all active agents have been advanced past $1$, and hence $\widehat N^{\mathrm{D}}(\tau_n)$, $\widehat N^{\mathrm{F}}(\tau_n)$, $\widehat N^{\mathrm{NF}}(\tau_n)$ are exactly the total numbers of demand, flexible supply, and inflexible supply points in $[0,1]$ generated by the three independent PPPs of rates $n$, $pn$, and $(1-p)n$. 
Note that
\[
    \widehat M_n = \widehat M^{\mathrm{D}}(\tau_n), \qquad
    \widehat M_n^{\mathrm{F}} = \widehat M^{\mathrm{F}}(\tau_n), \qquad
    \widehat M_n^{\mathrm{NF}} = \widehat M^{\mathrm{NF}}(\tau_n),
\]
since every match uses exactly one demand node and exactly one supply node. Next, we show that the Markov chain $\potential(t)$ follows a law of large numbers (LLN).

\begin{proposition}[Markov chain LLN]\label{prop:psi-ergodic}
The Markov chain $\potential(t)$ admits a unique stationary distribution $\pi$ on $\mathbb{R}^2$, such that for any bounded measurable function $g$,
\[
    \frac1T\sum_{t=0}^{T-1}g \bigl(\potential(t)\bigr)\ \xrightarrow[T\to\infty]{\mathrm{a.s.}}\ \int g \,\diff\pi \, .
\]
\end{proposition}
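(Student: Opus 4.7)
The plan is to apply the standard ergodic theory for $\varphi$-irreducible Markov chains on general state spaces (Meyn and Tweedie, \emph{Markov Chains and Stochastic Stability}, Chapters~10, 13, and 17). It suffices to verify three properties for $\potential(t)$: (i) $\varphi$-irreducibility for some nontrivial measure $\varphi$, (ii) aperiodicity, and (iii) a Foster-Lyapunov drift condition $PV \le V - 1 + b\,\mathds{1}_C$ for some coercive $V\ge 1$ and a compact small set $C$. Together these yield positive Harris recurrence with a unique invariant law $\pi$, and Theorem~17.0.1 of Meyn and Tweedie then delivers the almost-sure strong law $\frac{1}{T}\sum_{t=0}^{T-1} g(\potential(t)) \to \int g\,\mathrm{d}\pi$ for every $\pi$-integrable $g$; in particular for every bounded measurable $g$, as claimed.

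Properties (i) and (ii) follow from the absolute continuity of the exponential innovations $u_t,v_t,w_t$. Inspecting Table~\ref{tab: markov_dynamics}, in each region $\calA,\dots,\calE$ the one-step kernel has a positive Lebesgue density along one or two linearly independent directions. Starting from any $\psi_0$, the deterministic drifts in the non-matching regions $\calA,\calC,\calD$ drive the chain into a bounded central set within a bounded number of steps, after which the exponential densities provide a minorization $P^{k_0}(\psi,\cdot) \ge c\,\mathrm{Leb}(\cdot \cap C)$ uniformly for $\psi$ in a fixed compact $C$ straddling $\calB\cup\calE$. Thus $C$ is a small set, yielding $\varphi$-irreducibility (with $\varphi$ equal to Lebesgue restricted to $C$) and aperiodicity (since the return time to $C$ can take values $k_0$ or $k_0+1$ with positive probability).

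For (iii), a natural candidate is a coercive, piecewise linear $V(x,y)=\alpha|x|+\beta|y|$ with tuned weights $\alpha,\beta>0$. The expected increments in Table~\ref{tab: markov_dynamics} point toward the bounded central set in the three non-matching regions: $[0,-\tfrac{1}{1-p}]$ in $\calA$, $[-\tfrac{1}{p},0]$ in $\calC$, and $[1,1]$ in $\calD$, each of which strictly decreases $V$ in expectation. In the matching regions $\calB$ and $\calE$, which extend to $-\infty$ in $x$ and $y$ respectively and carry ``rotational'' expected drifts $[1,-\tfrac{p}{1-p}]$ and $[-\tfrac{1-p}{p},1]$, the unbounded coordinate contributes a $-1$ term to $PV-V$ while the other coordinate contributes only a bounded correction. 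Choosing $(\alpha,\beta)$ so that this dominant term wins uniformly outside a large compact $K$ yields the desired $PV \le V - 1 + b\,\mathds{1}_K$.

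The main obstacle is the joint tuning of $V$ across all five regions: the rotational drift in $\calB$ (right-down) and $\calE$ (left-up) can defeat a single weighted $V$ near the ``in-bounded'' boundary of these regions, and the need to favor $|x|$ to handle $\calB$ competes with the need to favor $|y|$ to handle $\calE$. A robust fallback is a multi-step drift $P^{k_0}V \le V - 1 + b\,\mathds{1}_K$ for a fixed $k_0\in\{2,3\}$: from any point deep in $\calB$, tracing the drift over a few steps shows that the chain visits $\calA\cup\calD\cup\calE$ with a guaranteed net decrease of $V$, and symmetrically for $\calE$. This multi-step variant still fits the Meyn-Tweedie framework via the $k_0$-step skeleton $\potential(k_0 t)$, thereby establishing (iii) and the proposition.
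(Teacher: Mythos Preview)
Your high-level plan---Meyn--Tweedie irreducibility plus a Foster--Lyapunov drift toward a petite set, then Theorem~17.0.1---is exactly the paper's, and your irreducibility sketch mirrors its Lemma~\ref{lem:small}. The gap is the Lyapunov function. The linear $V=\alpha|x|+\beta|y|$ cannot satisfy a one-step drift for any $\alpha,\beta>0$: in $\calB$ with $x\ll 0$ the $|x|$-drift is exactly $-\alpha$ while the $|y|$-contribution is at least $\beta\,\mathbb{E}|w-v|$ (take $y=0$), and symmetrically in $\calE$ you get $-\beta+\alpha\,\mathbb{E}|w-u|$; feasibility would require $\mathbb{E}|w-v|\cdot\mathbb{E}|w-u|<1$, but at $p=\tfrac12$ both equal $5/3$. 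A second failure sits in $\calA$: with $y$ just above $2\base$ and $x\to-\infty$ (still outside any compact $K$), the $x$-coordinate is frozen and $\mathbb{E}|y-v|-y=(2e^{-(1-p)y}-1)/(1-p)$ is strictly positive whenever $y<\tfrac{\ln 2}{1-p}$, so for small $\base$ the one-step drift is positive on a noncompact set. Your multi-step fallback is only asserted; with genuinely positive one-step drift in parts of $\calA$ and $\calC$, making $P^{k_0}V\le V-1+b\,\mathds{1}_K$ work would require a real case analysis across region transitions, not a sentence.

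The paper resolves both obstructions with a different $V$, linear in the positive parts but \emph{quadratic} in the negative parts: $V(x,y)=1+\alpha x_+ +\beta y_+ +\delta(x_-^2+y_-^2)$. In $\calB$ the drift of $\delta x_-^2$ scales like $-2\delta x_-$, which is unbounded below and therefore swallows any bounded $y$-contribution (and symmetrically for $\calE$), removing the $\alpha$-versus-$\beta$ conflict entirely. In $\calA$ (and $\calC$), decoupling the weight $\beta$ on $y_+$ from the overshoot weight $\delta$ on $y_-^2$ lets one take $\beta$ large enough to dominate the bounded term $\delta\,\mathbb{E}[(y-v)_-^2]$ for any $\base>0$. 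This yields a clean one-step drift; see Lemma~\ref{lem:drift}.
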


By construction of the generative process, we have that for each deterministic $T$,
\begin{align*}
    \frac{\widehat N^{\mathrm{D}}(T)}{T} &= \frac1T\sum_{t=0}^{T-1}\mathbf{1}\{\potential(t)\in \calB\cup\calD\cup\calE\},
    &
    \frac{\widehat M^{\mathrm{D}}(T)}{T} &= \frac1T\sum_{t=0}^{T-1}\mathbf{1}\{\potential(t)\in \calB\cup\calE\},\\
    \frac{\widehat N^{\mathrm{F}}(T)}{T} &= \frac1T\sum_{t=0}^{T-1}\mathbf{1}\{\potential(t)\in \calC\cup\calE\},
    &
    \frac{\widehat M^{\mathrm{F}}(T)}{T} &= \frac1T\sum_{t=0}^{T-1}\mathbf{1}\{\potential(t)\in \calE\},\\
    \frac{\widehat N^{\mathrm{NF}}(T)}{T} &= \frac1T\sum_{t=0}^{T-1}\mathbf{1}\{\potential(t)\in \calA\cup\calB\},
    &
    \frac{\widehat M^{\mathrm{NF}}(T)}{T} &= \frac1T\sum_{t=0}^{T-1}\mathbf{1}\{\potential(t)\in \calB\}.
\end{align*}

Applying \Cref{prop:psi-ergodic} to these indicator functions yields, almost surely as $T\to\infty$,
\begin{align*}
    \frac{\widehat N^{\mathrm{D}}(T)}{T}\toas F_{\calB}+F_{\calD}+F_{\calE}, \qquad
    &\frac{\widehat M^{\mathrm{D}}(T)}{T}\toas F_{\calB}+F_{\calE},\\
    \frac{\widehat N^{\mathrm{F}}(T)}{T}\toas F_{\calC}+F_{\calE}, \qquad
    &\frac{\widehat M^{\mathrm{F}}(T)}{T}\toas F_{\calE},\\
    \frac{\widehat N^{\mathrm{NF}}(T)}{T}\toas F_{\calA}+F_{\calB}, \qquad
    &\frac{\widehat M^{\mathrm{NF}}(T)}{T}\toas F_{\calB}.
\end{align*}
Since $\tau_n\to\infty$ almost surely as $n\to\infty$ (indeed, $\tau_n\ge \max\{\widehat N^{\mathrm{D}}(\tau_n),\widehat N^{\mathrm{F}}(\tau_n),\widehat N^{\mathrm{NF}}(\tau_n)\}$ and each of these PPP counts diverges with $n$), the same almost sure limits hold along the random subsequence $T=\tau_n$. In particular,
\begin{align}
    \frac{\widehat M^{\mathrm{F}}(\tau_n)}{\widehat N^{\mathrm{F}}(\tau_n)}
    \toas \frac{F_{\calE}}{F_{\calC}+F_{\calE}},
    \qquad
    \frac{\widehat M^{\mathrm{NF}}(\tau_n)}{\widehat N^{\mathrm{NF}}(\tau_n)}
    \toas \frac{F_{\calB}}{F_{\calA}+F_{\calB}},
    \qquad
    \frac{\widehat M^{\mathrm{D}}(\tau_n)}{\widehat N^{\mathrm{D}}(\tau_n)}
    \toas \frac{F_{\calB}+F_{\calE}}{F_{\calB}+F_{\calD}+F_{\calE}}.
    \label{eq:ratio-limits-at-tau}
\end{align}
At time $\tau_n$, the generative process has revealed all points of the three independent PPPs on $[0,1]$ with rates $n$, $pn$, and $(1-p)n$. Hence,
\[
    \frac{\widehat N^{\mathrm{D}}(\tau_n)}{n}\toas 1,\qquad
    \frac{\widehat N^{\mathrm{F}}(\tau_n)}{n}\toas p,\qquad
    \frac{\widehat N^{\mathrm{NF}}(\tau_n)}{n}\toas 1-p,
\]
by the strong law of large numbers for sums of exponential inter-arrival times.
Combining these limits with \eqref{eq:ratio-limits-at-tau} gives the almost sure limits
\[
    \frac{\widehat M_n^{\mathrm{F}}}{n}
    = \frac{\widehat M^{\mathrm{F}}(\tau_n)}{\widehat N^{\mathrm{F}}(\tau_n)}\cdot \frac{\widehat N^{\mathrm{F}}(\tau_n)}{n}
    \toas \left(\frac{F_{\calE}}{F_{\calC}+F_{\calE}}\right)p,
    \qquad
    \frac{\widehat M_n^{\mathrm{NF}}}{n}
    \toas \left(\frac{F_{\calB}}{F_{\calA}+F_{\calB}}\right)(1-p),
\]
and
\[
    \frac{\widehat M_n}{n}
    = \frac{\widehat M^{\mathrm{D}}(\tau_n)}{\widehat N^{\mathrm{D}}(\tau_n)}\cdot \frac{\widehat N^{\mathrm{D}}(\tau_n)}{n}
    \toas \frac{F_{\calB}+F_{\calE}}{F_{\calB}+F_{\calD}+F_{\calE}}.
\]
All three ratios on the left are bounded in $[0,1]$, hence by dominated convergence,
\begin{align}
    \frac{\Expect[\widehat M_n^{\mathrm{F}}]}{n}\to \left(\frac{F_{\calE}}{F_{\calC}+F_{\calE}}\right)p, \quad
    \frac{\Expect[\widehat M_n^{\mathrm{NF}}]}{n}\to \left(\frac{F_{\calB}}{F_{\calA}+F_{\calB}}\right)(1-p), \quad
    \frac{\Expect[\widehat M_n]}{n}\to \frac{F_{\calB}+F_{\calE}}{F_{\calB}+F_{\calD}+F_{\calE}},
    \label{eq:expected-greedy-limits}
\end{align}
Combining~\eqref{eq:expected-greedy-limits} with~\Cref{prop:gen-emb-eq} gives~\eqref{eq: match-flex} and~\eqref{eq: match-non-flex}. To prove~\eqref{eq: match-total}, define the net imbalance of advanced supply nodes versus demand nodes up to step $T$ by
\[
    \Delta(T) \, \triangleq \, \big(\widehat N^{\mathrm{F}}(T)+\widehat N^{\mathrm{NF}}(T)\big) - \widehat N^{\mathrm{D}}(T).
\]
At a single step, $\Delta$ increases by $1$ in $\calA\cup\calC$, decreases by $1$ in $\calD$, and is unchanged in $\calB\cup\calE$. Hence
\[
    \frac{\Delta(T)}{T}
    =
    \frac1T\sum_{t=0}^{T-1}\Big(\mathbf{1}\{\potential(t)\in \calA\cup\calC\}-\mathbf{1}\{\potential(t)\in \calD\}\Big)
    \ \toas\ (F_{\calA}+F_{\calC})-F_{\calD} \, .
\]
Evaluating at $T=\tau_n$, we also have the identity
\[
    \Delta(\tau_n)
    =
    \big(\widehat N^{\mathrm{F}}(\tau_n)+\widehat N^{\mathrm{NF}}(\tau_n)\big) - \widehat N^{\mathrm{D}}(\tau_n).
\]
Since both $\widehat N^{\mathrm{D}}(\tau_n)/n\toas 1$ and $\big(\widehat N^{\mathrm{F}}(\tau_n)+\widehat N^{\mathrm{NF}}(\tau_n)\big)/n\toas 1$, it follows that $\Delta(\tau_n)/n\toas 0$. Moreover, $\tau_n=\Theta(n)$ almost surely because
\[
    \max \left\{\widehat N^{\mathrm{D}}(\tau_n),\widehat N^{\mathrm{F}}(\tau_n),\widehat N^{\mathrm{NF}}(\tau_n) \right\}
    \ \le\ \tau_n\ \le\ \widehat N^{\mathrm{D}}(\tau_n)+\widehat N^{\mathrm{F}}(\tau_n)+\widehat N^{\mathrm{NF}}(\tau_n),
\]
and each PPP count is $\Theta(n)$ almost surely. Therefore $\Delta(\tau_n)/\tau_n\toas 0$, and so
\(
    F_{\calA}+F_{\calC} \,=\, F_{\calD}.
\)
Using $F_{\calA}+F_{\calB}+F_{\calC}+F_{\calD}+F_{\calE}=1$, we obtain
\[
    \frac{F_{\calB}+F_{\calE}}{F_{\calB}+F_{\calD}+F_{\calE}}
    =
    \frac{1-(F_{\calA}+F_{\calC}+F_{\calD})}{1-(F_{\calA}+F_{\calC})}
    =
    \frac{1-2F_{\calD}}{1-F_{\calD}}.
\]
Substituting into \eqref{eq:expected-greedy-limits} gives $\Expect[\widehat M_n]/n \to (1-2F_{\calD})/(1-F_{\calD})$, and combining with~\Cref{prop:gen-emb-eq} yields \eqref{eq: match-total}. This concludes the proof.
\end{proof}

\subsubsection{Proof of~\texorpdfstring{\Cref{prop:gen-emb-eq}}{}}

The generative process produces three independent Poisson point processes (PPPs) on $[0,1]$:
demands $\Phi^{\mathrm{D}}$ with intensity $n$, flexible supplies $\Phi^{\mathrm{F}}$ with intensity $pn$, and inflexible supplies $\Phi^{\mathrm{NF}}$ with intensity $(1-p)n$. Let $\widehat G_n$ be the interval bigraph induced by these point sets and the rule $|\driver_i-\rider_j|\le \service_i/n$.
The graph $\widehat G_n$ is an \emph{interval graph}, so a maximum matching can be found via a greedy algorithm~\cite{glover1967convex, lipskipreparata1981}. Specifically, Algorithm~\ref{alg: greedy} (which processes demand nodes left-to-right and matches to the unmatched neighbor with the smallest deadline $\driver_i+\service_i/n$) is optimal on $\widehat G_n$. 
\begin{lemma} \label{lem: greedy-is-optimal} Let $\service_i$ be a vector of service ranges. Let $G$ be a bipartite geometric graph on the vertex set $\drivervec \cup \ridervec$ (with $|\drivervec| = |\ridervec| = n$), in which $\driver_i$ has an edge with $\rider_j$ if and only if $|\driver_i - \rider_j| \leq \service_i/n$. The output $\calM$ of Algorithm~\ref{alg: greedy} on $G$ is a maximum matching.
\end{lemma}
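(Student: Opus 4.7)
The plan is to prove optimality of the greedy earliest-deadline-first (EDF) rule via a classical exchange argument, exploiting the interval structure of the compatibility graph: each supply node $\driver_i$ has an associated closed interval $I_i \triangleq [\driver_i - \service_i/n,\, \driver_i + \service_i/n]$, and $(i,j)$ is an edge iff $\rider_j \in I_i$. Algorithm~\ref{alg: greedy} sweeps demands in increasing order of $\rider_j$ and matches each to the unmatched neighbor with the smallest right endpoint $\driver_i + \service_i/n$. I will show that any maximum matching $\calM^*$ can be transformed, without loss of cardinality, into a matching that agrees with $\calM$ (the greedy output) on every demand, from which $|\calM| \ge |\calM^*|$ follows.

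Concretely, I would proceed by induction on the demand index $j$, producing a sequence $\calM^*_0 \triangleq \calM^*, \calM^*_1, \calM^*_2, \ldots, \calM^*_n$ with $|\calM^*_j| = |\calM^*|$, such that $\calM^*_j$ makes the same decision as greedy on $\rider_1, \ldots, \rider_j$ (matched to the same supply, or left unmatched). At step $j$, let $\driver_a$ be the supply greedy picks for $\rider_j$ (if any). I would split into cases based on whether $\calM^*_{j-1}$ matches $\rider_j$ and whether $\driver_a$ is already used by $\calM^*_{j-1}$. The only nontrivial subcase is the one where $\calM^*_{j-1}$ matches $\rider_j$ to some $\driver_b \neq \driver_a$ while also matching $\driver_a$ to a different demand $\rider_c$; then I would reroute to match $\rider_j \leftrightarrow \driver_a$ and $\rider_c \leftrightarrow \driver_b$.

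The main verification is that the rerouted edge $(\driver_b, \rider_c)$ is legitimate, i.e.\ $\rider_c \in I_b$. This is where the EDF choice and the left-to-right sweep combine. Because greedy has not yet used $\driver_a$ before step $j$, the inductive agreement forces $\rider_c$ to be a demand with index $c \ge j$, so $\rider_c \ge \rider_j$. Since $\rider_j \in I_b$, we get the left-endpoint bound $\rider_c \ge \rider_j \ge \driver_b - \service_b/n$. For the right-endpoint bound, $\rider_c \in I_a$ gives $\rider_c \le \driver_a + \service_a/n$, and the EDF rule $\driver_a + \service_a/n \le \driver_b + \service_b/n$ (both $\driver_a, \driver_b$ were available neighbors of $\rider_j$ and greedy chose the smaller deadline) yields $\rider_c \le \driver_b + \service_b/n$. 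Hence $\rider_c \in I_b$, and the swap preserves matching size and feasibility.

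The hardest part is really just laying out the case analysis cleanly and being careful that all four boundary inequalities hold simultaneously; the geometric content is entirely captured by EDF plus the left-to-right order. As the lemma statement cites \cite{glover1967convex, lipskipreparata1981}, I expect the paper to either invoke those references directly or provide the above short exchange argument.
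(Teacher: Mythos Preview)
Your proposal is correct and matches the paper's proof essentially line for line: the paper also inducts on the demand index, maintains a maximum matching $\calM^*$ agreeing with greedy on the first $t$ demands, and handles the nontrivial double-swap subcase by the same two-sided interval verification ($\rider_c \ge \rider_j \ge \driver_b - \service_b/n$ from $\rider_j \in I_b$, and $\rider_c \le \driver_a + \service_a/n \le \driver_b + \service_b/n$ from EDF). The paper likewise notes that optimality of greedy on interval graphs is classical and cites \cite{glover1967convex, lipskipreparata1981}.
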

Let $\calM(\widehat G_n)$ be the output of Algorithm~\ref{alg: greedy}, and let $\widehat \calM(\widehat G_n)$ denote the matching realized by the generative process on $\widehat G_n$. Our next result shows that the two matchings agree.

\begin{algorithm}[t]                 
  \caption{Greedy Algorithm}
  \label{alg: greedy}

  \begin{algorithmic}[1]
    % --------- Inputs / Output -------------
    \Require Bipartite graph $G$ on vertex set $\drivervec \cup \ridervec$

    % --------- Initialization --------------
    \State $\drivervec' \gets \drivervec$ \Comment{unmatched supply nodes}
    \State $\ridervec' \gets \ridervec$         \Comment{unmatched demand nodes}
    \State $\calM' \gets \emptyset$ \Comment{matching}

    % --------- Main loop -------------------
    \For{$t = 1,\cdots,|\ridervec|$}
        \If{$\rider_{(t)}$ has a neighbor in $\drivervec'$}
            \State $t^* \gets \argmin \{\driver_i + \service_i/n : \driver_i \text{ is a neighbor of $\rider_{(t)}$ in $\drivervec'$} \}$ \Comment{greedy matching rule}
            \State $\drivervec' \gets \drivervec' - \driver_{t^*}$
            \State $\ridervec' \gets \ridervec' - \rider_{(t)}$
            \State $\calM' \gets \calM' + (\rider_{(t)}, \driver_{t^*}) $
        \EndIf
    \EndFor

    % --------- Return ----------------------
    \State \Return $\calM'$
  \end{algorithmic}
\end{algorithm}

\begin{lemma} \label{lem:agreement}
For any realization of $\widehat G_n$, we have $\calM(\widehat G_n)=\widehat \calM(\widehat G_n)$.
\end{lemma}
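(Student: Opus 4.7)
The plan is to prove $\widehat\calM(\widehat G_n)=\calM(\widehat G_n)$ by induction on the demands of $\Phi^{\mathrm{D}}$ taken in increasing order of position. Writing $\rider_{(k)}$ for the $k$-th demand in this order, the inductive invariant is that after both procedures have finished considering the first $k$ demands, they have produced exactly the same matched pairs. The base case $k=0$ is immediate.

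For the inductive step, I fix the first step $t$ of the generative process at which $\u(t)=\rider_{(k+1)}$. By construction, $\vf(t)$ and $\vnf(t)$ are the leftmost unmatched and non-discarded flexible and inflexible supply points. The central observation is that each previously discarded supply was ``behind'' its active demand at the moment of its discard via Case A or C, meaning its deadline $\driver_i+\service_i/n$ was strictly less than that active demand, and hence strictly less than $\rider_{(k+1)}$ (since demands are processed in increasing order). Consequently every discarded supply is out of range for $\rider_{(k+1)}$ and for all later demands, so it is never a candidate in Algorithm~\ref{alg: greedy} either, and the comparison reduces to supplies at or to the right of the current actives.

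The generative process next executes any further Case A or C steps to discard any remaining behind supplies, each of which is inconsequential for the reason above, until both actives are either in range or ahead of $\rider_{(k+1)}$. At this terminal state I split into two cases: (i) both actives ahead, so the process sits in region $\calD$ and skips $\rider_{(k+1)}$ unmatched; and (ii) at least one active is in range, so the process sits in $\calB$ or $\calE$ and matches $\rider_{(k+1)}$ to an active supply. Since the deadline $\driver_i+\service_i/n$ is monotone in $\driver_i$ within a fixed supply type, the leftmost in-range supply of each type is precisely the corresponding active. Consequently, the set of in-range candidates of Algorithm~\ref{alg: greedy} equals $\{\vf(t),\vnf(t)\}$ intersected with the in-range set, and the greedy smallest-deadline selection picks the same element as the region partition of the generative process. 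This preserves the invariant and completes the induction.

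The main obstacle I anticipate is the case analysis at the moment of matching, and in particular the subtlety that region $\calE$ in~\eqref{eq: markov-chain-dynamics} also contains configurations in which the flexible supply is in range while the inflexible supply is ahead and nominally holds priority; in such configurations the generative process still matches the flexible supply, which agrees with greedy because the inflexible supply is out of range. Verifying this together with the analogous edge configurations by inspecting the region definitions, while noting that measure-zero boundary configurations almost surely do not arise under the Poisson point processes, is the only non-routine step.
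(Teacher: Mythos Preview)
Your proposal is correct and follows essentially the same approach as the paper: both argue, demand by demand in left-to-right order, that the generative process's active supplies coincide with the leftmost unmatched supplies of each type in the greedy run, and then case-split on whether zero, one, or two types are in range. The paper phrases this as a minimal-counterexample argument rather than explicit induction, and your treatment is slightly more careful in two places---you spell out why supplies discarded in Cases~A/C have expired deadlines and hence never appear as greedy candidates, and you flag the region-$\calE$ edge case where the inflexible supply is ahead yet nominally holds priority---but these are refinements of the same argument rather than a different route.
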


Consequently, $|\widehat \calM(\widehat G_n)|=|\calM(\widehat G_n)|$, and since the greedy algorithm is optimal, this equals the maximum matching size in $\widehat G_n$. Let $\widehat M_n$, $\widehat M_n^{\mathrm{F}}$ and $\widehat M_n^{\mathrm{NF}}$ be, respectively, the total number of matches, the number of matches using flexible supplies, and the number of matches using inflexible supplies, produced by the generative process. Then pathwise
\[
    \widehat M_n = |\calM(\widehat G_n)|,
    ~~~~~~
    \widehat M_n^{\mathrm{F}} = |\{e\in \calM(\widehat G_n): e \text{ uses a flexible supply}\}|,
    ~~~~~~
    \widehat M_n^{\mathrm{NF}} = |\calM(\widehat G_n)| - \widehat M_n^{\mathrm{F}}.
\]
Define the PPP counts
\[
    N^{\mathrm{D}} \triangleq |\Phi^{\mathrm{D}}|,   
    ~~~~~~
    N^{\mathrm{F}} \triangleq |\Phi^{\mathrm{F}}|,   
    ~~~~~~
    N^{\mathrm{NF}} \triangleq |\Phi^{\mathrm{NF}}|, 
    ~~~~~~
    N^{\mathrm{S}} \triangleq N^{\mathrm{F}}+N^{\mathrm{NF}}.
\]
We transform $\widehat G_n$ in two stages to obtain a graph with the law of the true model $G_n$ (i.e., $n$ demands, $n$ supplies, and $ pn $ flexible supplies chosen among the $n$).

\medskip
\noindent \emph{Stage 1 (exactly $n$ demands and $n$ supplies).} If $N^{\mathrm{D}}>n$ (resp.\ $<n$), delete $N^{\mathrm{D}}-n$ (resp.\ add $n-N^{\mathrm{D}}$) i.i.d.\ $\mathrm{Unif}[0,1]$ demand points; do the analogous operation on the supply side to end with exactly $n$ supply points. Because PPP points are i.i.d.\ uniform within the window, this operation changes the instance by at most $|N^{\mathrm{D}}-n|+|N^{\mathrm{S}}-n|$ vertex insertions/deletions. Since adding/removing a single vertex changes the maximum matching size by at most one, we obtain
\begin{align}\label{eq:stage1-total}
    \big|\,|\calM(\widehat G_n)| - |\calM(G_n')|\,\big|
    \, \leq \, |N^{\mathrm{D}}-n|+|N^{\mathrm{S}}-n|, 
\end{align}
where $G_n'$ denotes the graph after Stage~$1$. Changing the type (flexible/inflexible) of a supply node changes the number of matched nodes of the same type by at most $1$. Therefore, 
\begin{align}\label{eq:stage1-type}
    \big|\,\widehat M_n^{\mathrm{F}} - M_{\mathrm{F}}(G_n')\,\big| \le |N^{\mathrm{D}}-n|+|N^{\mathrm{S}}-n|,
    ~~~~~~
    \big|\,\widehat M_n^{\mathrm{NF}} - M_{\mathrm{NF}}(G_n')\,\big| \le |N^{\mathrm{D}}-n|+|N^{\mathrm{S}}-n|,
\end{align}
where $M_{\mathrm{F}}(\cdot)$ and $M_{\mathrm{NF}}(\cdot)$ denote the numbers of matched flexible and inflexible supplies in a maximum matching of the argument graph.

\medskip
\noindent \emph{Stage 2 (enforce $ p n $ flexible supplies).} Let $K_n$ be the number of flexible supplies among the $n$ supply points of $G_n'$. We change the \emph{types} of exactly $|K_n-  p n |$ supplies (arbitrarily) so that the final flexible count is $p n$. Changing the type of one supply (i.e., changing only the incident edges of that supply by switching its radius between $\base$ and $\base+\extra$) alters the size of a maximum matching by at most $1$, and similarly changes $M_{\mathrm{F}}$ and $M_{\mathrm{NF}}$ by at most $1$. Therefore,
\begin{align}\label{eq:stage2-bounds-parent}
    \big|\,|\calM(G_n')| - |\calM(G_n'')|\,\big| \leq |K_n- p n |,
\end{align}
and 
\begin{align} \label{eq:stage2-bounds-children}
    \big|\,M_{\mathrm{F}}(G_n') - M_{\mathrm{F}}(G_n'')\,\big| \leq |K_n - p n | \, ,
    ~~~~
    \big|\,M_{\mathrm{NF}}(G_n') - M_{\mathrm{NF}}(G_n'')\,\big| \leq |K_n - p n| \, .
\end{align}
Here $G_n''$ is the graph after Stage~2. Because the $n$ supply locations are i.i.d.\ $\mathrm{Unif}[0,1]$ and types are now obtained by choosing exactly $ p n $ of the $n$ supplies uniformly to be flexible, $G_n''$ has the same law as the original graph $G_n$ with parameters $(\base,\extra,p)$.

Standard Poisson tail bounds give that with probability at least $1-4/n$,
\begin{align}\label{eq:counts-conc-new}
    |N^{\mathrm{D}}-n|\leq 2\sqrt{n\log n}
    ~~~~~~ \text{and} ~~~~~~
    |N^{\mathrm{S}}-n|\leq 2\sqrt{n\log n} \, .
\end{align}
Further, conditional on the $n$ supply locations after Stage~1, let the types be i.i.d.\ $\mathrm{Bernoulli}(p)$ (this can be achieved by adopting the natural marking from the union of the independent PPPs and marking any added supplies independently). Then $K_n\sim\mathrm{Binomial}(n,p)$, and by a Chernoff bound, with probability at least $1-2/n$,
\begin{align}\label{eq:binom-conc}
    |K_n - pn| \,\leq\, 2\sqrt{n\log n} \, .
\end{align}
On the intersection of \eqref{eq:counts-conc-new} and \eqref{eq:binom-conc}, combining \eqref{eq:stage1-total}, \eqref{eq:stage2-bounds-parent} and~\eqref{eq:stage2-bounds-children} yields
\[
    \big|\,|\calM(\widehat G_n)| - |\calM(G_n'')|\,\big|
    \,\leq\, |N^{\mathrm{D}}-n|+|N^{\mathrm{S}}-n|+|K_n- p n |
    \,\leq\, C_1\sqrt{n\log n},
\]
for a constant $C_1=C_1(\base,\extra,p)$. Outside this event, the difference is trivially at most $n$. Taking expectations and using that the bad event has probability $O(1/n)$, we obtain
\begin{align}\label{eq:depo-exp-new-total}
    \big|\,\Expect[|\calM(\widehat G_n)|]-\Expect[|\calM(G_n''|]\,\big|
    \, \leq \, C\sqrt{n\log n}.
\end{align}
The same argument applied to \eqref{eq:stage1-type},~\eqref{eq:stage2-bounds-parent} and~\eqref{eq:stage2-bounds-children} gives
\begin{align}\label{eq:depo-exp-new-type}
    \big|\,\Expect[\widehat M_n^{\mathrm{F}}]-\Expect[M_{\mathrm{F}}(G_n'')]\,\big|
    \ \leq\ C\sqrt{n\log n},
    \qquad
    \big|\,\Expect[\widehat M_n^{\mathrm{NF}}]-\Expect[M_{\mathrm{NF}}(G_n'')]\,\big|
    \ \leq\ C\sqrt{n\log n}.
\end{align}

To conclude, note that by definition, $\nu_n(\base,\extra,p) \triangleq \Expect[|\calM(G_n'')|]$, $\nu_n^{\mathrm{F}}(\base,\extra,p) \triangleq \Expect[M_{\mathrm{F}}(G_n'')]$, and $\nu_n^{\mathrm{NF}}(\base,\extra,p) \triangleq \Expect[M_{\mathrm{NF}}(G_n'')]$. Using $|\calM(\widehat G_n)|=\widehat M_n$ established above and combining with \eqref{eq:depo-exp-new-total}--\eqref{eq:depo-exp-new-type}, we obtain the desired bounds:
\begin{align*}
\left|\,\Expect[\widehat M_n] - \nu_n(\base,\extra,p)\,\right|
\ \leq \ C\sqrt{n\log n}
\end{align*}
and
\begin{align*}
\left|\,\Expect[\widehat M_n^{\mathrm{F}}] - \nu_n^{\mathrm{F}}(\base,\extra,p)\,\right|
\ \leq \ C\sqrt{n\log n},\qquad
\left|\,\Expect[\widehat M_n^{\mathrm{NF}}] - \nu_n^{\mathrm{NF}}(\base,\extra,p)\,\right|
\ \leq\ C\sqrt{n\log n}.
\end{align*}
This concludes the proof of~\Cref{prop:gen-emb-eq}.

\subsubsection{Proof of~\texorpdfstring{\Cref{prop:psi-ergodic}}{}}
We show that the Markov chain $\potential(t)$ is positive Harris recurrent; see~\Cref{apx-preliminaries} for a review of terminology, and tools used in the proof. First, we produce a petite set, and verify $\varphi$-irreducibility of $\potential(t)$.
\begin{lemma} \label{lem:small}
Fix $\rs>0$ and $d \in \left( 0, \min\{1,\base/2,\extra \}\right)$. Consider the rectangles 
\[
    R \triangleq [d, \, 2d] \times [3d, \,  4d], ~~~~ 
    K_\rs \triangleq \left[ -\rs, \, 2(\base+\extra)\right] \times \left[-\rs, \, 2\base \right].
\]
Let $\varphi = \mathrm{Vol}\vert_R$, so that for any Borel set $A \subseteq \reals^2$, $\varphi(A) = \mathrm{Vol}(A\cap R)$. Then,
\begin{itemize}
    \item[$\mathrm{(i)}$] $K_\rs$ is a petite set for $\potential(t)$ with respect to $\varphi$.
    \item[$\mathrm{(ii)}$] $\potential(t)$ is $\varphi$-irreducible. 
\end{itemize}
\end{lemma}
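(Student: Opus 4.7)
The plan is to prove (i) and (ii) via a common minorization: from any starting state in $K_\rs$ (respectively, in $\reals^2$), the chain reaches $R$ in a bounded number of steps with a uniform density lower bound. The first observation is that $R \subseteq \calB$: since $d < \base/2$, we have $y \leq 4d < 2\base$ on $R$, and since $y \geq 3d \geq 2d \geq x$ on $R$, the condition $x \leq y + 2\extra$ is automatic. Hence the final transition that lands in $R$ will occur inside $\calB$, where the two-dimensional kernel has a positive density.

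The core one-step minorization is the following. For $(x_0, y_0) \in \calB$ with $x_0 \leq 0$ and $y_0 \geq 3d$, the transition $(x_0, y_0) \mapsto (x_0 + w_t, y_0 + w_t - v_t)$ with independent $w_t \sim \Exp(1)$ and $v_t \sim \Exp(1-p)$ lands in any Borel $A \subseteq R$ with probability at least $c_1 \mathrm{Vol}(A)$ for some $c_1 > 0$. Indeed, the map $(w_t, v_t) \mapsto (x_1, y_1) = (x_0 + w_t,\, y_0 + w_t - v_t)$ has unit Jacobian, the pre-image of $R$ is a bounded rectangle in $(w_t, v_t)$-space whose diameter depends continuously on $(x_0, y_0)$, and both exponential densities are bounded below by positive constants on any such bounded rectangle. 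It therefore suffices to show that from any $z \in K_\rs$ the chain can reach the sub-rectangle $\calB \cap \{x_0 \leq 0,\, y_0 \geq 3d\}$ in a bounded number of steps, with joint density bounded below.

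I would establish this by case analysis on the region containing $z \in K_\rs$. For $z \in \calD$, a single diagonal step pushes both coordinates up by the same $w_t \sim \Exp(1)$; with $w_t$ in an appropriate bounded interval, this step lands in $\calB$ or $\calE$, from which a second two-dimensional step realizes the target. For $z \in \calE$, the transition $(x_0, y_0) \mapsto (x_0 + w_t - u_t,\, y_0 + w_t)$ with $u_t \sim \Exp(p)$ simultaneously decreases $x$ and increases $y$, so choosing $(u_t, w_t)$ in a bounded rectangle lands directly in the required sub-region of $\calB$. For $z \in \calB$ with $x_0 > 0$, a preliminary detour is needed because $x$ only increases inside $\calB$: one step can be arranged to exit $\calB$ into $\calE$, after which another step with $u_t \sim \Exp(p)$ drives $x$ below zero and returns the chain to $\calB$ with the desired coordinates. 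The residual case where $z$ lies on the measure-zero boundaries with $\calA$ or $\calC$ is absorbed by one additional step. Along each path the relevant exponentials are confined to bounded intervals depending only on $(\rs, \base, \extra, p, d)$, so their densities are uniformly bounded below. Taking $a$ to be the uniform distribution on $\{1, \ldots, N+1\}$ for $N$ large enough to accommodate the longest path, the combination yields $K_a(z, A) \geq c\,\varphi(A)$ for all $z \in K_\rs$ and all Borel $A$, which proves (i). Part (ii) then follows by extending the same construction to arbitrary $z \in \reals^2$: the deterministic drifts (downward in $\calA$, leftward in $\calC$, diagonal-upward in $\calD$) bring the chain into $K_\rs$ from any initial state in a bounded number of steps with positive probability, after which the preceding argument applies.

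The main difficulty I expect is the case analysis above. Transitions from $\calA$, $\calC$, and $\calD$ are supported on one-dimensional loci (a vertical line, a horizontal line, and a diagonal line respectively), so no single-step kernel from those regions can minorize a two-dimensional target. Multi-step paths must therefore be composed across regions, with a verification at each step that the intermediate point lies in the intended region for the next transition, and that the joint density along the entire path is uniformly bounded below. The conceptual picture---that $R$ is reachable with positive density from anywhere in $K_\rs$---is transparent from the transition diagram, but writing down a single lower bound valid over all of $K_\rs$ and all boundary configurations is the delicate part of the argument.
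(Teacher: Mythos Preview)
Your approach is correct and follows the same template as the paper: funnel the chain from $K_\rs$ into a small staging rectangle from which one further two-dimensional step dominates Lebesgue measure on $R$. The paper happens to place its staging rectangle in $\calE$ rather than $\calB$: it sets $K_0 \triangleq [d,2d]\times[-d,0]\subseteq\calE$, shows the one-step density from $K_0$ onto $R$ is bounded below, and then routes the cases as $\calB\to K_C\subseteq\calC\to K_0$ (two steps), $\calE\to K_A\subseteq\calA\to\calB\to K_C\to K_0$ (four steps), and $\calD\to(\calB\cup\calE)\to\ldots$ (five steps). Your route through a pre-$R$ slice of $\calB$ is equally legitimate and in some branches shorter.

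There is one genuine precision gap. Your intermediate target $\calB\cap\{x_0\le 0,\ y_0\ge 3d\}$ is unbounded in the negative $x_0$-direction, so the claim that the one-step kernel from there onto $R$ is bounded below by a \emph{uniform} $c_1>0$ is false as written: landing in $R=[d,2d]\times[3d,4d]$ forces $w_t\ge d-x_0$, and as $x_0\to-\infty$ the exponential density there vanishes. The fix is simply to replace the staging region by a bounded rectangle, e.g.\ $[-d,0]\times[3d,4d]\subseteq\calB$; then your Jacobian argument gives a uniform $c_1$, and the case analysis must be tightened to hit that box (which it can, via the same $\calE\to\calB$ and $\calB\to\calE\to\calB$ moves you describe, confining the exponentials to bounded intervals). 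The paper sidesteps the issue by making its staging rectangle $K_0$ bounded from the outset.
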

Next, we show that the Foster-Lyapunov condition holds, i.e. we construct a radially unbounded function $V$ satisfying the drift inequality toward the petite set. 
\begin{lemma}[Foster–Lyapunov]\label{lem:drift}
Consider the function
\[
    V(x,y)\ \triangleq\ 1+ \alpha\,x_+ + \beta\,y_++ \delta\,(x_-^2+y_-^2) \, , ~~~~~~ \alpha,\beta,\delta>0.
\]
There exist parameters $\alpha,\beta,\delta$ and $\rs>0$ such that, with
$K_\rs\triangleq[-\rs,\,2(\base+\extra)]\times[-\rs,\,2\base]$,
\begin{align}\label{eq:geom-drift}
    \mathbb E\left[V\big(\potential(t+1)\big)-V\big(\potential(t)\big)\,\middle|\,\potential(t)=(x,y)\right]
    \leq -\eta\,\mathbf 1_{K_\rs^c}(x,y) +  b_0\,\mathbf 1_{K_\rs}(x,y)
    ~~~~ \forall(x,y)\in\mathbb R^2 \, ,
\end{align}
for some positive finite constants $\eta$ and $b_0$.
\end{lemma}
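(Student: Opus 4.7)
The plan is to compute the one-step drift $\Delta V(x,y) := \mathbb{E}[V(\potential(t+1))-V(\potential(t))\mid \potential(t)=(x,y)]$ in closed form on each of the five regions $\calA,\dots,\calE$, and then to choose the parameters $(\alpha,\beta,\delta,\rs)$ so that $\Delta V \le -\eta$ on $K_\rs^c$ and $\Delta V \le b_0$ on $K_\rs$. Since every increment of $\nabla\potential(t)$ is a linear combination of independent exponentials with explicit rates, each drift contribution can be written exactly using three building blocks: for $W\sim\Exp(\lambda)$ and $a\ge 0$,
\begin{align*}
\mathbb{E}[(a-W)_+] &= a-\tfrac1\lambda+\tfrac{e^{-\lambda a}}{\lambda}, \quad
\mathbb{E}[(W-a)_+]= \tfrac{e^{-\lambda a}}{\lambda},\\
\mathbb{E}[(a-W)_+^2] &= a^2-\tfrac{2a}{\lambda}+\tfrac{2}{\lambda^2}-\tfrac{2 e^{-\lambda a}}{\lambda^2}, \quad
\mathbb{E}[(W-a)_+^2]= \tfrac{2 e^{-\lambda a}}{\lambda^2}.
\end{align*}

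Applying these region by region, the leading behavior is: in $\calA$ (where $y\ge 2\base>0$ and only $y\mapsto y-v_t$), $\Delta V = -\frac{\beta}{1-p} + \bigl[\frac{\beta}{1-p}+\frac{2\delta}{(1-p)^2}\bigr] e^{-(1-p)y}$; in $\calC$ the symmetric expression with $\alpha,p,x$; in $\calD$ (where $x,y\le 0$), $\Delta V = \alpha e^{-x_-}+\beta e^{-y_-} + 2\delta\bigl(2 - x_- - y_- - e^{-x_-} - e^{-y_-}\bigr)$; and in $\calB$ and $\calE$ the only coordinate that can be unbounded is $x_-$ or $y_-$ respectively, so the quadratic penalty dominates and gives $\Delta V\le -2\delta\,x_- + C$ (resp.\ $-2\delta\,y_-+C$) for constants $C$ depending only on $(\alpha,\beta,\delta,\base,\extra,p)$.

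Next I would verify the drift inequality by treating the four ``far'' pieces of $K_\rs^c$: (i) $x>2(\base+\extra)$, placing $\potential$ in $\calA\cup\calC$; (ii) $y>2\base$, placing it in $\calA$; (iii) $x<-\rs$, placing it in $\calA\cup\calB\cup\calD$; and (iv) $y<-\rs$, placing it in $\calC\cup\calD\cup\calE$. In (i)--(ii) the drift is $-\alpha/p$ or $-\beta/(1-p)$ plus an exponentially small tail; the tightest constraint occurs at the boundary $y=2\base$, which forces a parameter condition of the form $\beta \bigl[1-e^{-2(1-p)\base}\bigr] > \frac{2\delta e^{-2(1-p)\base}}{1-p} + \eta(1-p)$ (and a symmetric condition on $\alpha$ at $x=2(\base+\extra)$). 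In (iii)--(iv) the quadratic drift gives $\le -2\delta\rs + C$, which is $\le -\eta$ once $\rs$ is large. Hence the recipe is to pick $\delta>0$ small, then pick $\alpha,\beta$ large enough (depending only on $\delta,\base,\extra,p$) to satisfy the boundary constraints, and finally pick $\rs$ large so that both the exponential tails $e^{-p\rs},e^{-(1-p)\rs}$ and the constants $C$ from (iii)--(iv) are dominated by $2\delta\rs$.

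On $K_\rs$, $V$ is bounded and the increments of $\potential$ have finite exponential moments, so $\Delta V$ is bounded above by some $b_0<\infty$ uniformly, which completes the verification of \eqref{eq:geom-drift}. The hard part is the coupling between $\delta$ and $\beta$ (and $\alpha$) revealed in case (ii): the positive tail correction in $\calA$ near $y=2\base$ is proportional to $\delta$, so a larger $\delta$ both speeds up the quadratic drift in (iii)--(iv) (good) and worsens the boundary correction in (i)--(ii) (bad). This rules out any ``one region at a time'' shortcut and forces the nested ordering $\delta \to (\alpha,\beta) \to \rs$ above; once this ordering is fixed, the remaining verifications reduce to bookkeeping of the closed-form drift formulas.
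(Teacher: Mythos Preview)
Your approach is correct and essentially identical to the paper's: both compute the drift region by region using the same exponential identities, isolate the same boundary constraints in $\calA$ and $\calC$ (forcing $\beta>\frac{2\delta}{1-p}\cdot\frac{e^{-2(1-p)\base}}{1-e^{-2(1-p)\base}}$ and the analogous bound on $\alpha$), then let the quadratic penalty handle the far negative directions in $\calB,\calD,\calE$ by taking $\rs$ large. One small slip: in your case (ii), $y>2\base$ does not force $\potential\in\calA$---it could lie in $\calC$ (when $x>y+2\extra$)---but that piece is already absorbed by case (i), so the argument goes through unchanged.
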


Lemmas~\ref{lem:small} and~\ref{lem:drift} together imply that the chain is positive Harris recurrent (see Theorem~\ref{thm:PHR} in~\Cref{apx-preliminaries}). Since $\potential(t)$ is positive Harris recurrent, the conditions of~\Cref{thm: convergence} hold, i.e. for any bounded measurable function $g$, we have that
\[ 
    \frac 1 T \sum_{i=0}^{T-1} g(\potential(t)) \toas \int g \, \diff\pi \, .
\]
This concludes the proof of~\Cref{prop:psi-ergodic}.

\subsection{Analysis for extreme cases}
\label{sec:dual_exact}

In this section, we prove~\Cref{thm: extreme-cases} by deriving an exact formula for the expected size of a maximum matching in the two extreme cases shown respectively in~\Cref{fig-Markov-Chain-Regions-c-to-0} and~\Cref{fig-Markov-Chain-Regions-eps-to-0}.

\begin{proof}[Proof of~\Cref{thm: extreme-cases}]
Consider a partition of $\reals^2$, defined by splitting regions $\calA$, $\calB$ and $\calE$ into two parts each. This finer partition helps visualize the extreme cases and the stationary distribution, and is given by
\begin{align*}
    \calA_1 &= \{(x,y): y\geq 2\base,\ x\leq y \}, 
    \qquad \qquad \qquad ~~~~~~~~~\,
    \calA_2 = \{(x,y): y\geq 2\base,\ y \leq x\leq y + 2\extra\} 
    \\
    \calB_1 &= \{(x,y): 0\leq y \leq 2\base,\ x\leq y \},
    \qquad  \qquad ~~~~~~~~~~\,
    \calB_2 = \{(x,y): 0\leq y \leq 2\base,\ y \leq x \leq y + 2\extra\}
    \\
    \calC &= \{(x,y): x\geq 2(\base+\extra),\ y\leq x-2\extra\}, \qquad \quad ~~~~~~
    \calD = \{(x,y): x\leq 0,\ y \leq 0\},
    \\
    \calE_1 &= \{(x,y): 2\extra \leq x \leq 2(\base+\extra),\ y\leq x-2\extra \},
    \qquad~\;
    \calE_2 = \{(x,y): 0 \leq x \leq 2\extra,\ y \leq 0 \},  
\end{align*}
Note that $\calA_1 \cup \calA_2 = \calA $, $\calB_1 \cup \calB_2 = \calB$ and $\calE_1 \cup \calE_2  =\calE$. 

\begin{figure}[t]
    \centering
    \includegraphics[width=0.99\linewidth]{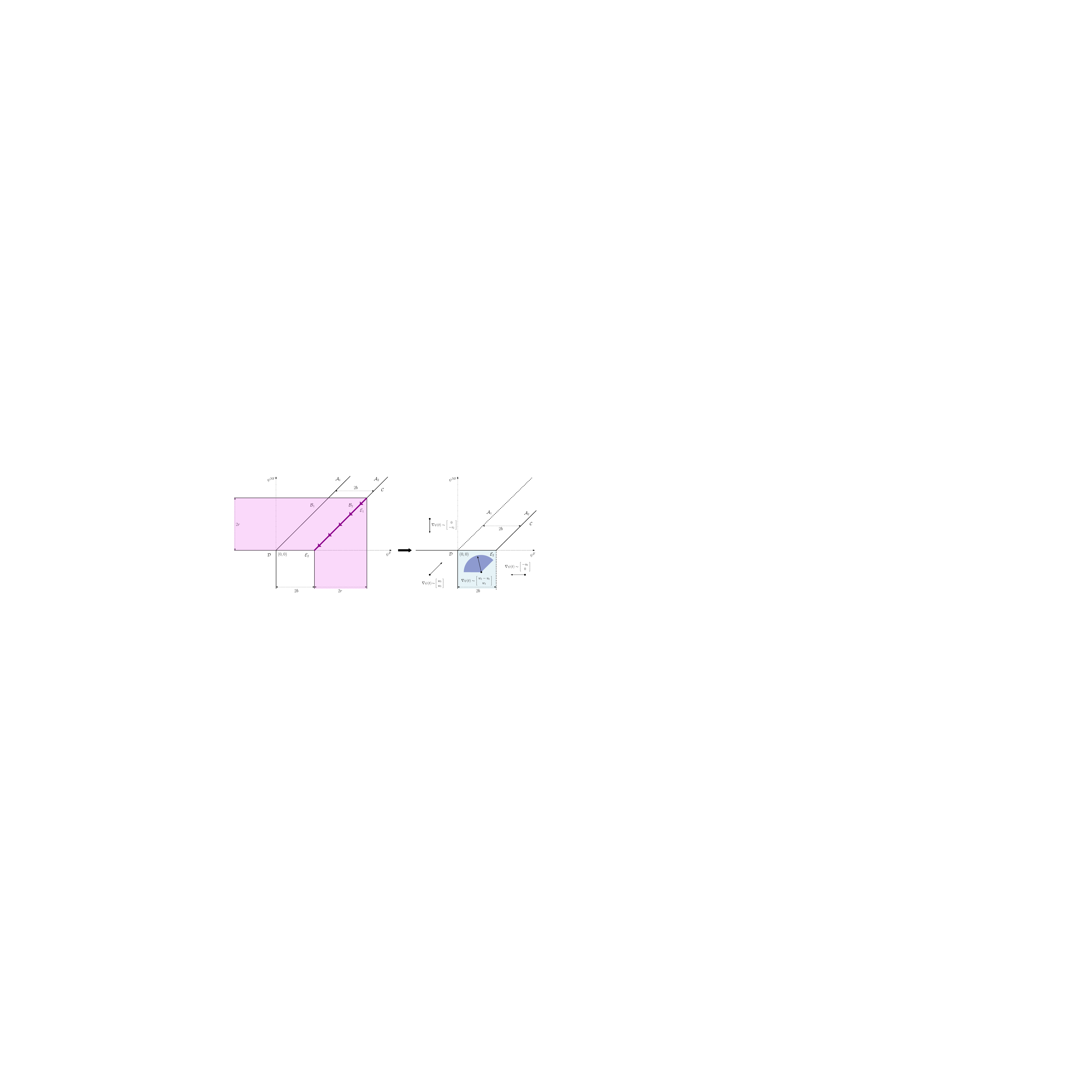}
    \caption{Dynamics with $\base = 0$, obtained by shrinking shaded region along the marked direction.}
    \label{fig-Markov-Chain-Regions-c-to-0}
\end{figure}

\medskip 
\begin{enumerate}
\item[$\mathrm{(i)}$] \textit{The case of $\base = 0$.} ~This case can be visualized as the limit of shrinking the strip $\calB_1 \cup \calE_1$ along the line joining $(2(\base+\extra),2\base)$ to $(2\extra,0)$. This transformation and the resulting Markov process $\potential^{(\base=0)}$ is shown in Figure~\ref{fig-Markov-Chain-Regions-c-to-0}. Only the regions $\calA_1, \calA_2, \calC, \calD, \calE_2$ survive. The stationary distribution can be explicitly computed in this regime as shown in Lemma~\ref{lem-stationary-density-c-equal-0}.

\begin{figure}[t]
    \centering
    \includegraphics[width=0.99\linewidth]{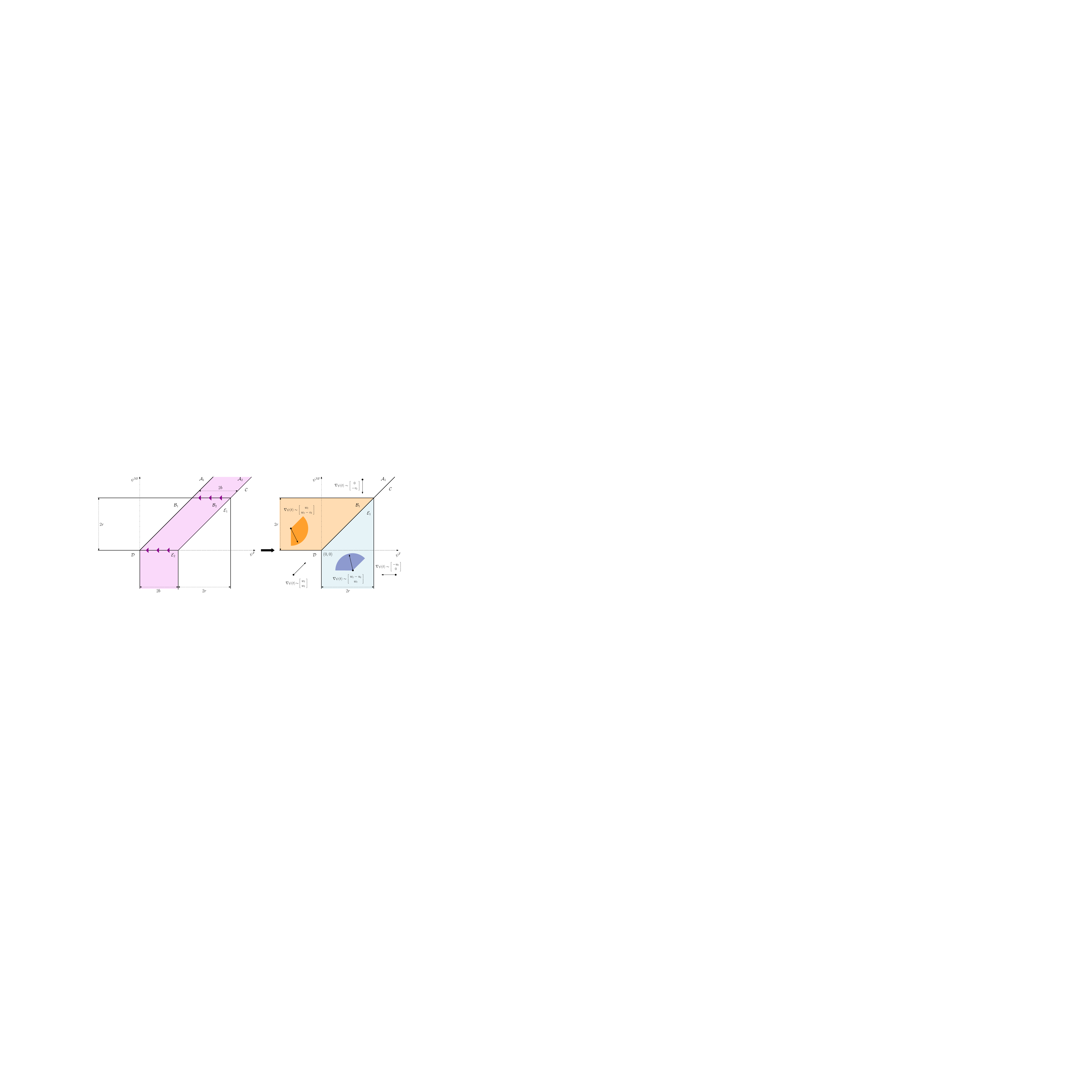}
    \caption{Dynamics with $\extra = 0$, obtained by collapsing shaded region along the marked direction.}
    \label{fig-Markov-Chain-Regions-eps-to-0}
\end{figure}

\begin{lemma}[Stationary density, $\base = 0 $] \label{lem-stationary-density-c-equal-0}
Fix $\extra>0$ and $p\in(0,1)$. Let  $\pi:\mathbb{R}^2\to[0,\infty)$ be 
\[
    \pi(x,y)=
    \begin{cases}
        C\,e^{p x-(1+p)y}, & (x,y)\in \calA_1,\\
        C\,e^{-(1-p)x-py}, & (x,y)\in \calA_2,\\
        C\,e^{2\extra}\,e^{-(2-p)x+(1-p)y}, & (x,y)\in \calC,\\
        C\,e^{p x+(1-p)y}, & (x,y)\in \calD,\\
        C\,e^{-(1-p)x+(1-p)y}, & (x,y)\in \calE_2,
    \end{cases}
    \qquad
    \mbox{ where }C \triangleq \frac{e^{2\extra}\,p(1-p)^2}{(2-p)\,e^{2\extra}-p\,e^{2p\extra}}.
\]
Then $\pi$ is a stationary density for the Markov chain $\potential^{(\base=0)}$.
\end{lemma}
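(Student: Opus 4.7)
The plan is to verify directly that $\pi = \pi K$, where $K$ denotes the one-step transition kernel of $\potential^{(\base=0)}$. When $\base=0$, the region $\calB$ collapses to the line $y=0$ and $\calE_1$ collapses to $x=2\extra$, both of 2D Lebesgue measure zero, so the chain effectively transitions among the five regions $\calA_1, \calA_2, \calC, \calD, \calE_2$, with the kernel acting by $(x_1,x_2)\mapsto (x_1, x_2-V)$ on $\calA$ with $V \sim \Exp(1-p)$, $(x_1,x_2)\mapsto (x_1-U, x_2)$ on $\calC$ with $U \sim \Exp(p)$, $(x_1,x_2)\mapsto (x_1+W, x_2+W)$ on $\calD$ with $W \sim \Exp(1)$, and $(x_1,x_2)\mapsto (x_1+W-U, x_2+W)$ on $\calE_2$ with $W,U$ independent.

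The body would verify $\pi(y^*) = (\pi K)(y^*)$ pointwise for each target region $R \in \{\calA_1, \calA_2, \calC, \calD, \calE_2\}$. For each target $y^* \in R$, I would enumerate the source regions whose one-step transitions land at $y^*$, compute the push-forward contribution by integrating the source density against the transition density over the preimage, and sum. The key simplification is that every contribution reduces to a 1D or 2D exponential integral: for sources in $\calA$ and $\calC$, the substitution $v = x_2 - y_2^*$ or $u = x_1 - y_1^*$ yields a 1D integral; for $\calD$, the parametrization $w = y_1^* - x_1 = y_2^* - x_2$ gives a 1D integral; for $\calE_2$, the substitution $w = y_2^* - x_2$, $u = x_1 + w - y_1^*$ produces a separable 2D integral. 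After each substitution, the contribution factors as $\pi(y^*)$ times a dimensionless constant depending only on $p$ and $\extra$, so stationarity reduces to the algebraic identity that these constants sum to $1$. For instance, for $y^* \in \calA_1$ the contributions from $\calA_1, \calC, \calD, \calE_2$ evaluate to $\pi(y^*)$ times $(1-p)/2$, $(p/2)e^{-2\extra}$, $1/2$, and $(p/2)(1-e^{-2\extra})$ respectively, which sum to $\pi(y^*)$. The ansatz exponents on each region were engineered so that each source-to-target substitution produces the common factor $e^{py_1^* - (1+p)y_2^*}$ (possibly after absorbing an $e^{\pm 2\extra}$), making the remaining identity purely algebraic; as a sanity check, the density is continuous across all region boundaries (e.g.\ at $x_1 = x_2$, $x_2 = 0$, $x_1 = 0$, $x_1 = 2\extra$).

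The normalization constant $C$ is then pinned down by integrating $\pi$ region by region; each integral factorizes into 1D exponentials (e.g.\ $\int_{\calA_1} \pi = C/p$), and the five-term sum simplifies to the claimed closed form after clearing common factors of $e^{2\extra}$ and $e^{2p\extra}$. The main obstacle is the case analysis on target position: there are five target regions with up to four contributing source regions each, and the lower limits of the source integrals depend both on the source region and on where $y^*$ sits relative to the region boundaries. My strategy is to treat each target region independently, exploit the common-factor structure above so that every contribution is $\pi(y^*)$ times a scalar, and reduce each verification to a single numerical identity among a handful of exponential constants.
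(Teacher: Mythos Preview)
Your proposal is correct and essentially identical to the paper's proof: the paper writes out the regionwise transition kernels, verifies the global balance equation $\pi(x,y)=\sum_R T_R(x,y)$ for each of the five target regions by exactly the source-by-source integration you describe (your $\calA_1$ example matches the paper's computation line for line), and then checks normalization by integrating $\pi$ over each region. One minor correction: for some target regions (e.g.\ $\calA_2$, $\calE_2$) all five source regions contribute nontrivially, not just four, so be prepared to split the $\calE_2$ and $\calA_2$ source integrals according to where the diagonal $x_1=y_1$ or the boundary $x_1=x-y$ falls.
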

We may then directly compute
\[ 
    F_{\calD} = \int_{-\infty}^0 \int_{-\infty}^0 \frac{e^{2\extra}p(1-p)^2}{(2-p)e^{2\extra} - pe^{2p\extra}}\, e^{px + (1-p)y} \, \diff x \, \diff y = \frac{(1-p)e^{2\extra} }{(2-p)e^{2\extra} - p e^{2p\extra}}.
\]
Substituting this in~\Cref{thm: sizes-of-maximum-matching}, and using~\Cref{prop:gen-emb-eq} yields the formula for the case $\base=0$.
\item[\( \mathrm{(ii)} \)] {\textit{The case of $\extra = 0$.}} ~This can be visualized as shrinking the strip $\calE_2 \cup \calB_2 \cup \calA_2$ of width $2\extra$ to width zero. This transformation and the resulting Markov process $\potential^{(\extra=0)}$ is shown in Figure~\ref{fig-Markov-Chain-Regions-eps-to-0}. Only the regions $\calA_1, \calB_1,\calC, \calD, \calE_1$ survive. The stationary distribution can be explicitly computed in this regime as shown in~\Cref{lem-stationary-density-eps-equal-0}.

\begin{lemma}[Stationary density, $\extra = 0 $] \label{lem-stationary-density-eps-equal-0} 
Fix $\base>0$ and $p\in(0,1)$. Let $\pi:\mathbb{R}^2\to[0,\infty)$ be
\[
    \pi(x,y)=
    \begin{cases}
        C e^{2\base}\,e^{p x-(1+p)y}, & (x,y)\in \calA_1,\\[2pt]
        C e^{p x-p y}, & (x,y)\in \calB_1,\\[2pt]
        C e^{2\base}\,e^{-(2-p)x+(1-p)y}, & (x,y)\in \calC,\\[2pt]
        C e^{p x+(1-p)y}, & (x,y)\in \calD,\\[2pt]
        C e^{-(1-p)x+(1-p)y}, & (x,y)\in \calE_1.
    \end{cases}
    \qquad \mbox{ where } C = \frac{p(1-p)}{2(1+\base)}.
\]
Then $\pi$ is a stationary density for the Markov chain $\potential^{(\extra=0)}$. 
\end{lemma}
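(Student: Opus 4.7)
The plan is to verify directly that $\pi$ satisfies $\pi P = \pi$ for the transition kernel $P$ of $\potential^{(\extra=0)}$. By Lemmas~\ref{lem:small}~and~\ref{lem:drift}, the chain is positive Harris recurrent and hence has a unique invariant distribution, so exhibiting any probability density that balances suffices. With $\extra=0$, the strips $\calA_2, \calB_2, \calE_2$ degenerate and the effective state space reduces to $\calA_1 \cup \calB_1 \cup \calC \cup \calD \cup \calE_1$, on each of which $\pi$ takes the exponential form $C_R e^{\alpha_R x + \beta_R y}$.

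First, verify the normalization by integrating each exponential piece separately: this yields
\[
    \int \pi \ = \ C\left[\frac{1+2\base}{p} + \frac{1+2\base}{1-p} + \frac{1}{p(1-p)}\right] \ = \ C \cdot \frac{2(1+\base)}{p(1-p)},
\]
so that $C = p(1-p)/[2(1+\base)]$ is forced, matching the claimed constant.

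Next, check the balance equation $\pi(a,b) = \int \pi(z) K(z, (a, b))\, dz$ pointwise by splitting the right-hand side according to the source region and the transition kernel in~\eqref{eq: markov-chain-dynamics}. The five source regions produce five distinct contributions: $\calA_1$ and $\calC$ each contribute a one-dimensional Laplace integral (their jumps are concentrated on a vertical and horizontal ray respectively), $\calD$ contributes a one-dimensional integral along the main diagonal, and $\calB_1, \calE_1$ contribute two-dimensional integrals coming from the compound jumps $(W, W-V)$ and $(W-U, W)$. For each target region $R^\star \in \{\calA_1, \calB_1, \calC, \calD, \calE_1\}$, identify which source regions can transition into $R^\star$ (accounting for the sign constraints on the jumps) and compute the resulting integrals. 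Because $\pi$ is piecewise exponential and the jumps are exponentially distributed, every integral reduces to a standard moment-generating integral of the form $\int_0^{\infty} e^{-\lambda t}\, dt = 1/\lambda$, and the exponents $(\alpha_R, \beta_R)$ are tuned so that the sum equals $\pi(a, b)$.

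The main obstacle is the careful bookkeeping across the $5 \times 5$ grid of source-target pairs together with the boundary interfaces. In particular, the prefactors $e^{2\base}$ appearing in $\pi|_{\calA_1}$ and $\pi|_{\calC}$ are needed to absorb the flux crossing the interfaces $\{y = 2\base\}$ (between $\calA_1$ and $\calB_1$) and $\{x = 2\base\}$ (between $\calC$ and $\calE_1$); the specific values of these prefactors are dictated by continuity of the density across these interfaces, which can be checked by direct substitution, e.g.\ at $y=2\base$ both $\calA_1$ and $\calB_1$ formulas give $Ce^{px - 2p\base}$. Once the region-wise balance identities are established, stationarity follows; combined with the normalization above and uniqueness of the invariant measure from Lemmas~\ref{lem:small}--\ref{lem:drift}, this identifies $\pi$ as \emph{the} stationary density and completes the proof.
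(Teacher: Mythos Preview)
Your proposal is correct and follows essentially the same approach as the paper: direct verification of the global balance equations $\pi=\pi P$ by computing, for each target region $R^\star\in\{\calA_1,\calB_1,\calC,\calD,\calE_1\}$, the five source contributions $T_R(x,y)=\iint_R \pi(x_1,y_1)K_R((x_1,y_1),(x,y))\,\diff x_1\,\diff y_1$ and checking they sum to $\pi(x,y)$, together with the normalization integral. The paper carries out all $25$ integrals explicitly (and does normalization last rather than first), but the structure, the role of the $e^{2\base}$ prefactors at the interfaces, and the reduction to elementary exponential integrals are exactly as you describe.
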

We may then directly compute
\[ 
    F_{\calD} = \int_{-\infty}^0 \int_{-\infty}^0 \frac{p(1-p)}{2(1+\base)} \, e^{px +(1-p) y} \, \diff x \, \diff y = \frac{1}{2(1+\base)}.
\]
Substituting this in~\Cref{thm: sizes-of-maximum-matching}, and using~\Cref{prop:gen-emb-eq} yields the formula for the case $\extra = 0$. To conclude the proof of~\Cref{thm: extreme-cases}, note that the case $p=0$ corresponds to having all supply nodes inflexible almost surely, which is equivalent to the case $\extra = 0$ for any value of $p$. 
\end{enumerate}
This concludes the proof of~\Cref{thm: extreme-cases}.
\end{proof}

% ---------------------  Section --------------------- %

\section{Simulations} \label{sec-simulations}
In this section, we present simulations illustrating three main findings. First, we confirm that the uniformity principle holds when the service range parametrizes the volume of the service region. Second, we demonstrate that the uniformity principle no longer holds when the service range parametrizes the radius of the service region. Third, we verify the agreement between the matching rate obtained by analyzing the embedded Markov chain and the true fraction of matched nodes in a maximum matching. We then validate the upper and lower bounds~\eqref{eq: upper-bound} and~\eqref{eq: lower-bound-with-q} for the dual service range model against the simulation data.

Throughout, we generate $n=400$ supply nodes and $m=400$ demand nodes i.i.d.\ uniformly on $[0,1]^{\dimension}$, form the induced bipartite graph under the specified connectivity rule, and compute the maximum matching size. Curves report the average matching fraction, and shaded regions indicate $\pm 1$ standard deviation over $10^4$ independent trials.

To generate service range vectors that are ordered according to majorization, we use a randomized version of our dual service range model, parameterized by a scalar $\alpha\in[0,1]$:
\begin{align} \label{eq:service-range-random}
    \servicevec_i(\alpha)=\base(\alpha)+\extra(\alpha)\,X_i(\alpha), ~~~~~~
    X_i(\alpha) \stackrel{\mathrm{i.i.d}}{\sim} \mathrm{Bernoulli}\big(p(\alpha)\big)\, ,
\end{align}
i.e., each supply node has service range $\base(\alpha)$ with probability $1-p(\alpha)$, and range $\base(\alpha)+\extra(\alpha)$ with probability $p(\alpha)$. To isolate the effect of distribution shape, the parameters $\base(\alpha)$, $\extra(\alpha)$, and $p(\alpha)$ are constrained such that the expected service range 
\[
\bar{\base} \triangleq \mathbb{E}[\servicevec_i(\alpha)] = \base(\alpha) + p(\alpha) \extra(\alpha)
\] 
remains constant across all $\alpha$. The scalar $\alpha$ serves as a control for inequality: for a two-value distribution with a fixed mean, increasing $\alpha$ increases the variance, which results in a more unequal allocation, i.e. a more majorized vector.

Accordingly, we consider three families of service range vectors, each parameterized by $\alpha$: (i) the fixed $r$ family with $\base(\alpha)=\base_0$, (ii) the fixed $b$ family with $\extra(\alpha)= \extra_0 $, and (iii) the fixed $p$ family with $p(\alpha)= p_0$. For each family, the varying parameters are tuned to vary monotonically with $\alpha$ such that the variance of $\servicevec_i(\alpha)$ increases with $\alpha$. This construction ensures that
\[
    \alpha_1 < \alpha_2 \implies \servicevec(\alpha_1) \preceq_{\mathrm{st}}  \servicevec(\alpha_2) \, ,\footnote{
   Since $\servicevec(\alpha)$ is random, $\preceq_{\mathrm{st}}$ denotes stochastic majorization. Let $S_\ell(\alpha) = \sum_{i=1}^\ell \servicevec_{[i]}(\alpha)$ be the sum of the $\ell$ largest coordinates of the random vector. The condition implies that for all $\ell \in [n]$, the partial sum $S_\ell(\alpha_2)$ stochastically dominates $S_\ell(\alpha_1)$ (denoted $S_\ell(\alpha_1) \leq_{\mathrm{st}} S_\ell(\alpha_2)$).}
\]
i.e., a larger $\alpha$ typically produces a more majorized vector of service ranges. For example, in the fixed $r$ family with $\base(\alpha)=\base_0$, this is achieved by decreasing $p(\alpha)$ while increasing $\extra(\alpha)$ as $\alpha$ increases. The specific definitions for all three families are provided in Table~\ref{tab:sim-uniformity}.

\begin{table}[t]
    \centering
    \setlength{\tabcolsep}{6pt}
    \renewcommand{\arraystretch}{1.15}
    \begin{tabular}{ccc}
        \toprule
        \textbf{Family} &  \textbf{Varied in $\alpha$} & \textbf{Implied by $\bar{\base}=\base+p\extra$} \\
        \midrule
        Fixed $\base$: $\base(\alpha)=\base_0$
        & $p(\alpha)=p_{\max}-\alpha\big(p_{\max}-p_{\min}\big)$
        & $\extra(\alpha)=(\bar{\base}-\base_0)/p(\alpha)$ \\
        \addlinespace
        Fixed $\extra$: $\extra(\alpha)=\extra_0$
        & $p(\alpha)=p_{\min}+\alpha\big(p_{\max}-p_{\min}\big)$
        & $\base(\alpha)=\bar{\base}-p(\alpha)\extra_0$ \\
        \addlinespace
        Fixed $p$: $p(\alpha)=p_0$
        & $\base(\alpha)=\base_{\max}-\alpha\big(\base_{\max}-\base_{\min}\big)$
        & $\extra(\alpha)=(\bar{\base}-\base(\alpha))/p_0$ \\
        \bottomrule
    \end{tabular}
    \caption{One-parameter families used in our simulations. In each family, the parameters are chosen so that the mean $\bar{\base}$ is held constant while variance (and hence majorization) increases with~$\alpha$.}
    \label{tab:sim-uniformity}
\end{table}

\paragraph{The uniformity principle.}
We evaluate the uniformity principle by plotting the average matching fraction (over $10^4$ trials) against the inequality parameter $\alpha$. We perform this evaluation for the three service range families (fixed $\base$, fixed $\extra$, and fixed $p$) defined in Table~\ref{tab:sim-uniformity}. The specific numerical parameters used for dimensions $\dimension=1, 2, 3$ are detailed in Table~\ref{tab:sim-uniformity-params}. 

\begin{table}[t]
\centering
\setlength{\tabcolsep}{6pt}
\renewcommand{\arraystretch}{1.15}
\begin{tabular}{cccc}
\toprule
\textbf{Dimension} $\dimension$
& \textbf{Family}
& \textbf{Mean} ${\bar{\base}}$
& \textbf{Parameters} \\
\midrule
\multirow{3}{*}{$1$}
    & Fixed $\base$: $\base(\alpha)=\base_0$  & \multirow{3}{*}{$1$}
        & $\base_0=0.5,\ p_{\min}=0.2,\ p_{\max}=0.8$ \\
    & Fixed $\extra$: $\extra(\alpha)=\extra_0$  & 
        & $\extra_0=0.6,\ p_{\min}=0.01,\ p_{\max}=0.5$ \\
    & Fixed $p$: $p(\alpha)=p_0$      & 
        & $p_0=0.3,\ \base_{\min}=0.5,\ \base_{\max}=1$ \\
\addlinespace
\midrule
\multirow{3}{*}{$2$}
    & Fixed $\base$: $\base(\alpha)=\base_0$  & \multirow{3}{*}{$0.15$}
        & $\base_0=0.025,\ p_{\min}=0.2,\ p_{\max}=0.8$ \\
    & Fixed $\extra$: $\extra(\alpha)=\extra_0$  & 
        & $\extra_0=0.3,\ p_{\min}=0.0,\ p_{\max}=0.5$ \\
    &Fixed $p$: $p(\alpha)=p_0$     & 
        & $p_0=0.5,\ \base_{\min}=0.006,\ \base_{\max}=0.12$ \\
\addlinespace
\midrule
\multirow{3}{*}{$3$}
    & Fixed $\base$: $\base(\alpha)=\base_0$  & \multirow{3}{*}{$0.1$}
        & $\base_0=0.02,\ p_{\min}=0.2,\ p_{\max}=0.8$ \\
    & Fixed $\extra$: $\extra(\alpha)=\extra_0$  & 
        & $\extra_0=0.2,\ p_{\min}=0.0,\ p_{\max}=0.5$ \\
    & Fixed $p$: $p(\alpha)=p_0$     & 
        & $p_0=0.5,\ \base_{\min}=0.005,\ \base_{\max}=0.1$ \\
\bottomrule
\end{tabular}
\caption{Numerical choices used to generate $\servicevec(\alpha)$ in Figure~\ref{fig: uniformity}. For each dimension, the mean constraint $\bar{\base}=\base+p\extra$ is fixed across families, and the remaining parameters vary linearly with $\alpha$ as specified in Table~\ref{tab:sim-uniformity}.}
\label{tab:sim-uniformity-params}
\end{table}

Figure~\ref{fig: uniformity} demonstrates that for all three families and across all tested dimensions, the matching fraction decreases monotonically with $\alpha$. This observation validates the uniformity principle: a smaller $\alpha$ corresponds to a less majorized (more uniform) service range vector, which systematically yields a larger maximum matching.

\begin{figure}[t]
    \centering
    \subfigure[$\dimension = 1$]{
    \includegraphics[width=0.3\linewidth]{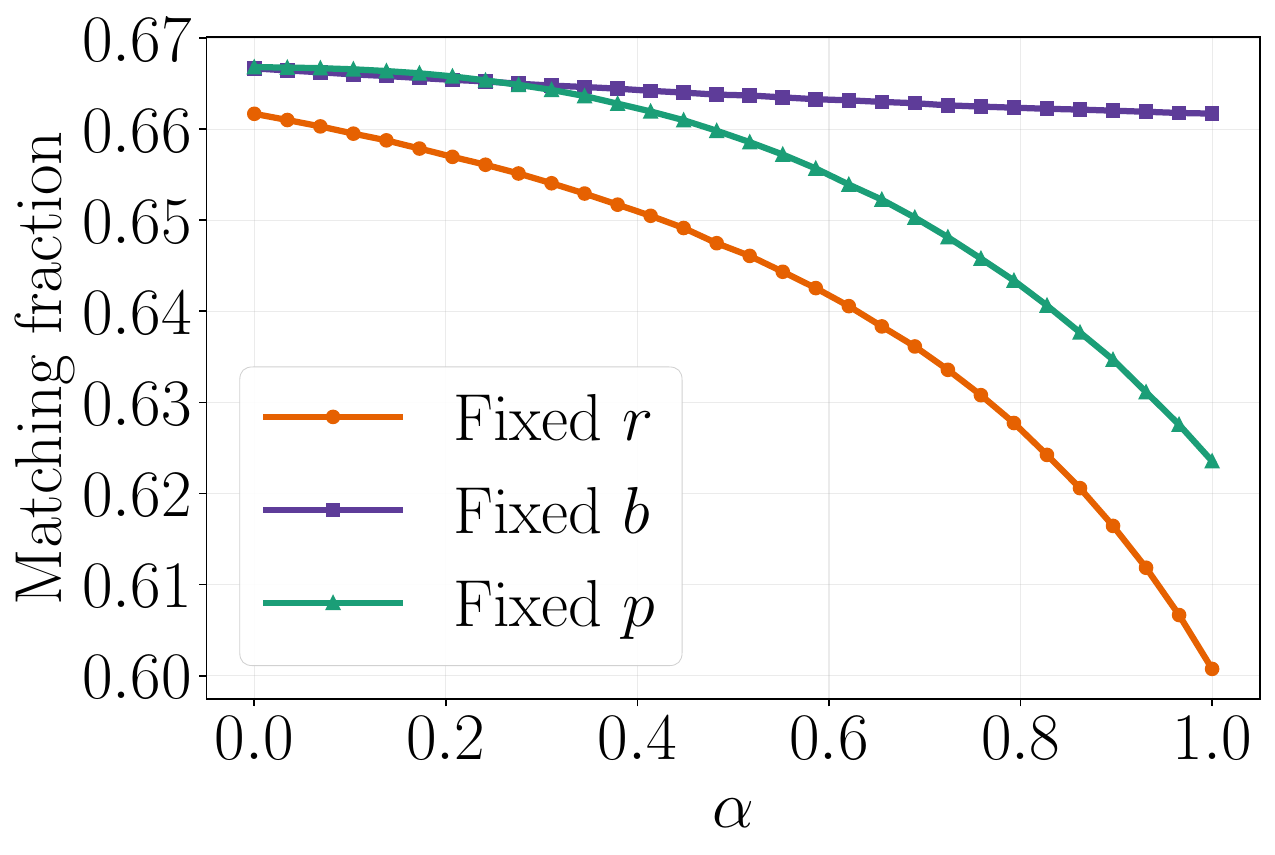}
    \label{fig: uniformity-1D}
    }
    \hfill
    \subfigure[$\dimension = 2$]{
    \includegraphics[width=0.3\linewidth]{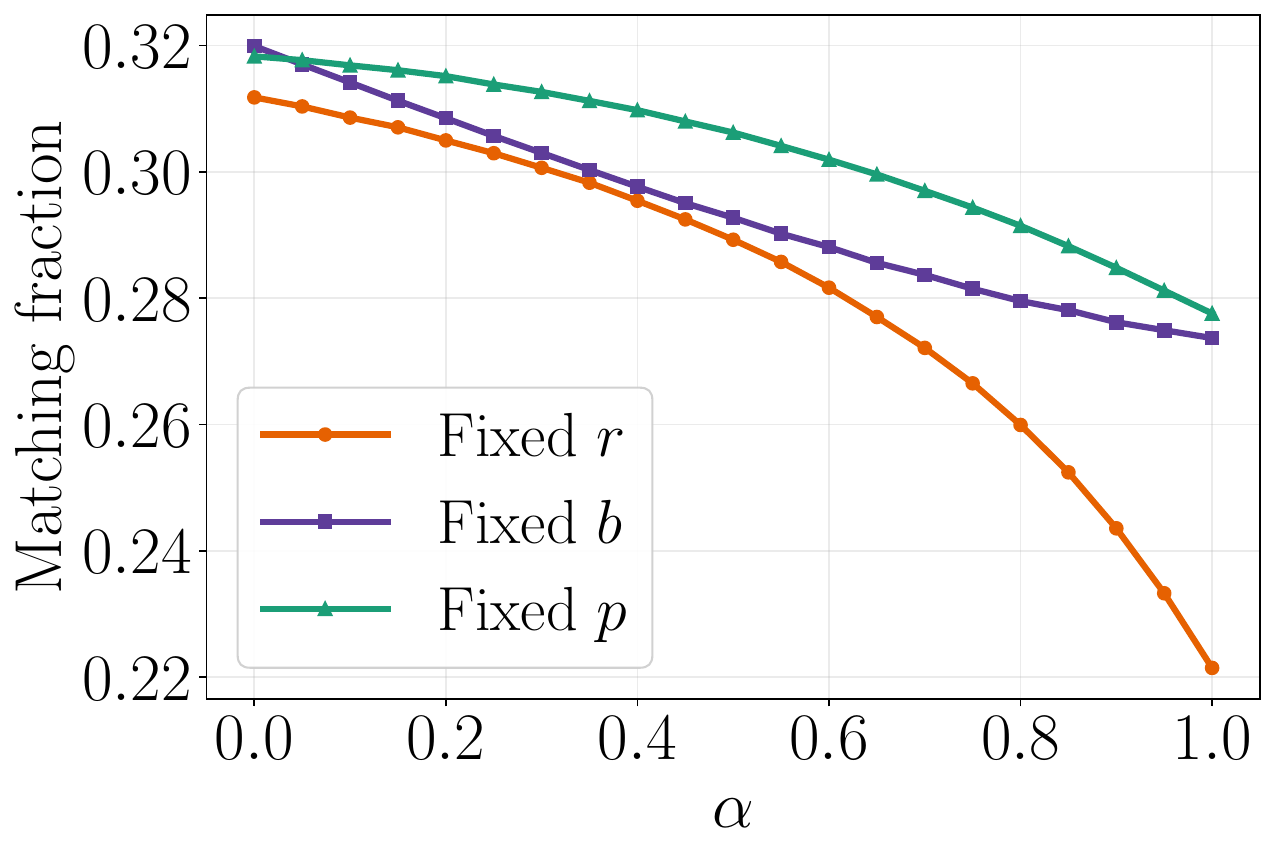}
    \label{fig: uniformity-2D}
    }
    \hfill
    \subfigure[$\dimension = 3$]{
    \includegraphics[width=0.3\linewidth]{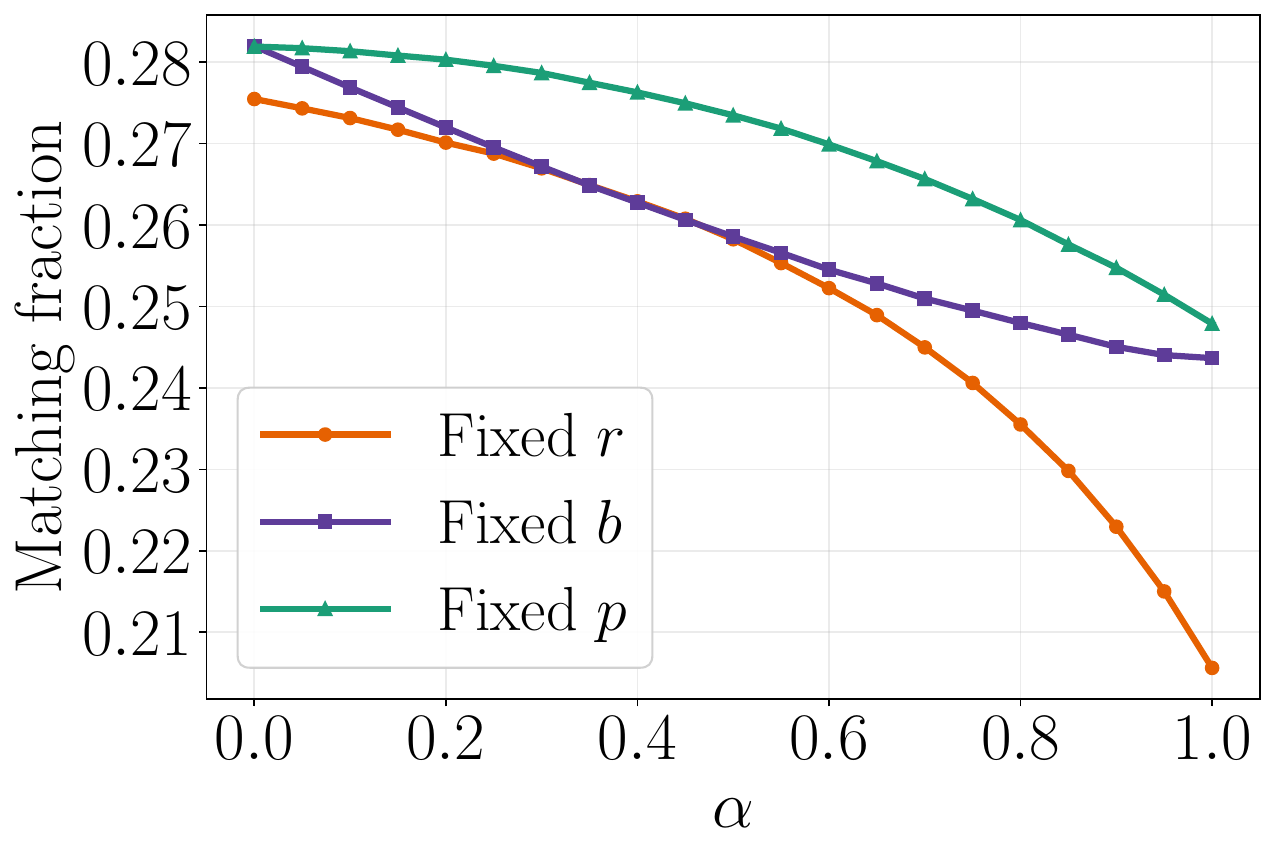}
    \label{fig: uniformity-3D}
    }
    \caption{Illustration of the uniformity principle. Each plot shows three curves corresponding to the fixed $\base$ with $\base(\alpha)=\base_0$, fixed $\extra$ with $\extra(\alpha)=\extra_0$, and fixed $p$ with $p(\alpha)=p_0$  families respectively.} 
    \label{fig: uniformity}
\end{figure}

\paragraph{Comparing volume and radius parametrizations.} 
As discussed in~\Cref{rem:radiusvsrange}, we investigate an alternative modeling choice where the service range parametrizes the radius of the service region rather than its volume. We focus on dimension $\dimension=2$, where the two models are defined as follows:
\begin{itemize}
    \item (Volume parametrization) Given a vector $\servicevec = (\service_i)_{i=1}^n$ of service ranges, $\driver_i$ and $\rider_j$ are connected iff 
    \[
        \|\driver_i-\rider_j\|_2  \, \leq  \, \sqrt{\service_i/n} \, .
    \]
    \item (Radius parametrization) Given a vector $\servicevec = (\service_i)_{i=1}^n$ of service ranges, $\driver_i$ and $\rider_j$ are connected iff
    \[
        \|\driver_i-\rider_j\|_2 \leq \service_i/\sqrt n \, .
    \]
\end{itemize}

We evaluate both models using the same three  families defined in Table~\ref{tab:sim-uniformity} (fixed $\base$, fixed $\extra$, and fixed $p$). The specific numerical parameters for these simulations are detailed in Table~\ref{tab:sim-params}. The matching fractions for both models are plotted in Figure~\ref{fig: radius-vs-range}. Consistent with~\Cref{thm:uniformity}, the volume parametrization obeys the uniformity principle, evidenced by the matching fraction decreasing monotonically with $\alpha$ across all panels. In contrast, the radius model violates this principle; the matching fraction is not monotonically decreasing. Specifically, there exist instances where $\alpha_1 < \alpha_2$ (implying $\servicevec(\alpha_1) \preceq_{\mathrm{st}} \servicevec(\alpha_2)$) yet the matching fraction is strictly larger at~$\alpha_2$.

\begin{figure}[t]
    \centering
    \subfigure[Fixed $\base$ with $\base(\alpha) = \base_0$]{
    \includegraphics[width=0.3\linewidth]{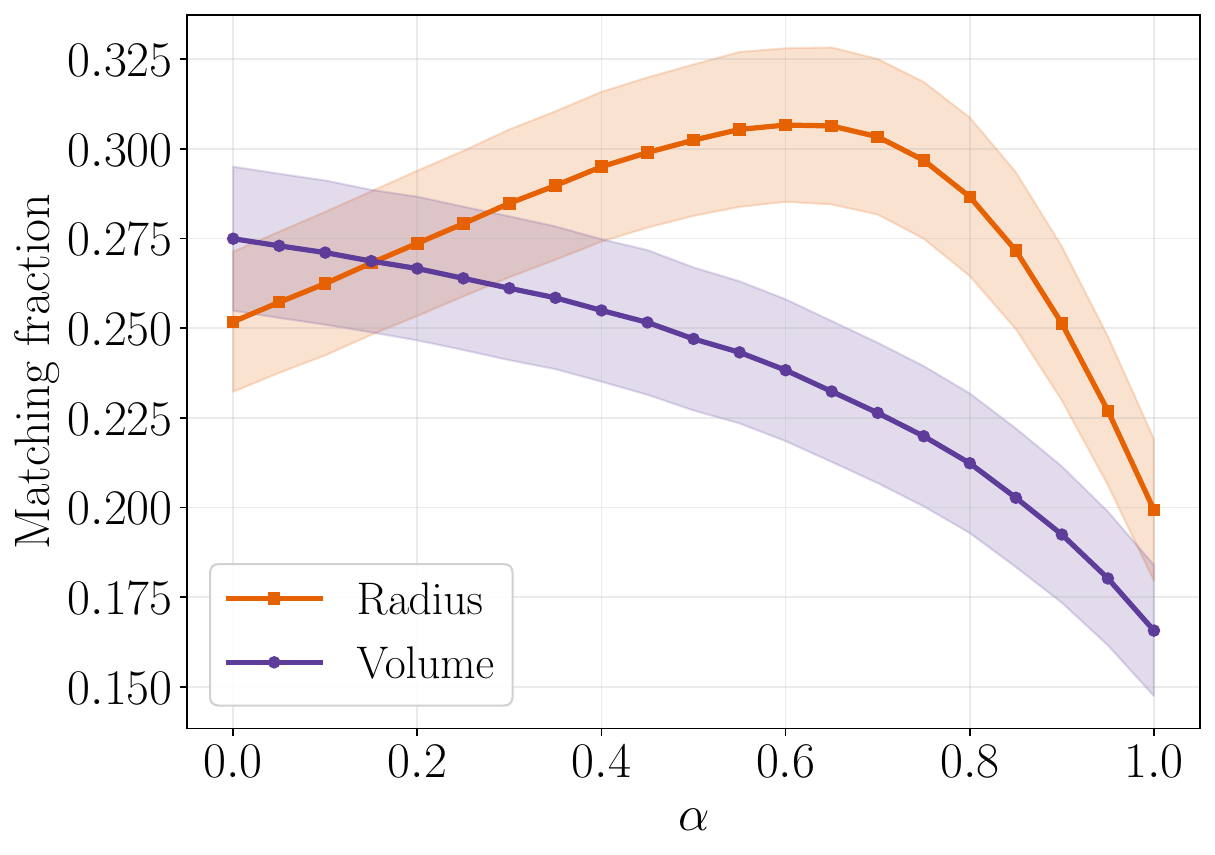}
    \label{fig: radius-vs-range-fixed-c}
    }
    \hfill
    \subfigure[Fixed $\extra$ with $\extra(\alpha) = \extra_0$ ]{
    \includegraphics[width=0.3\linewidth]{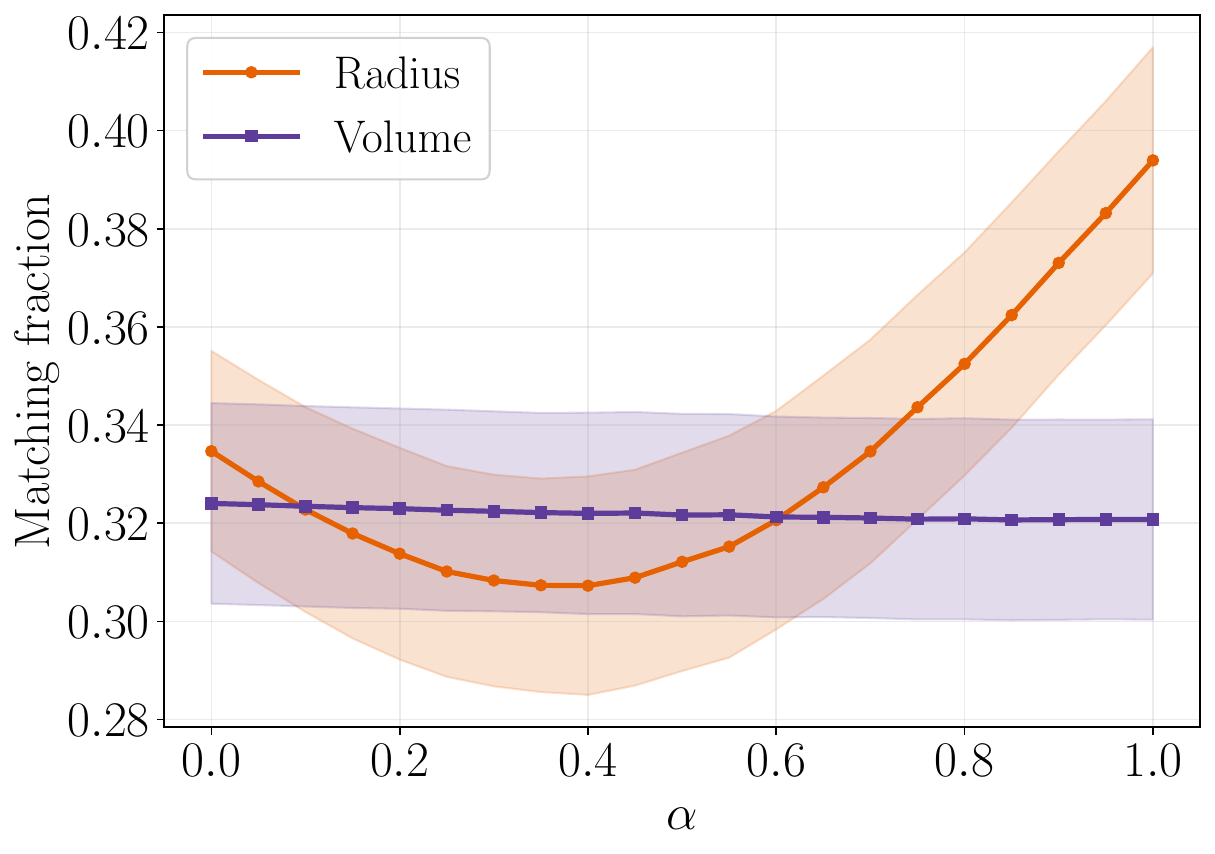}
    \label{fig: radius-vs-range-fixed-eps}
    }
    \hfill
    \subfigure[Fixed $p$ with $p(\alpha) = p_0$]{
    \includegraphics[width=0.3\linewidth]{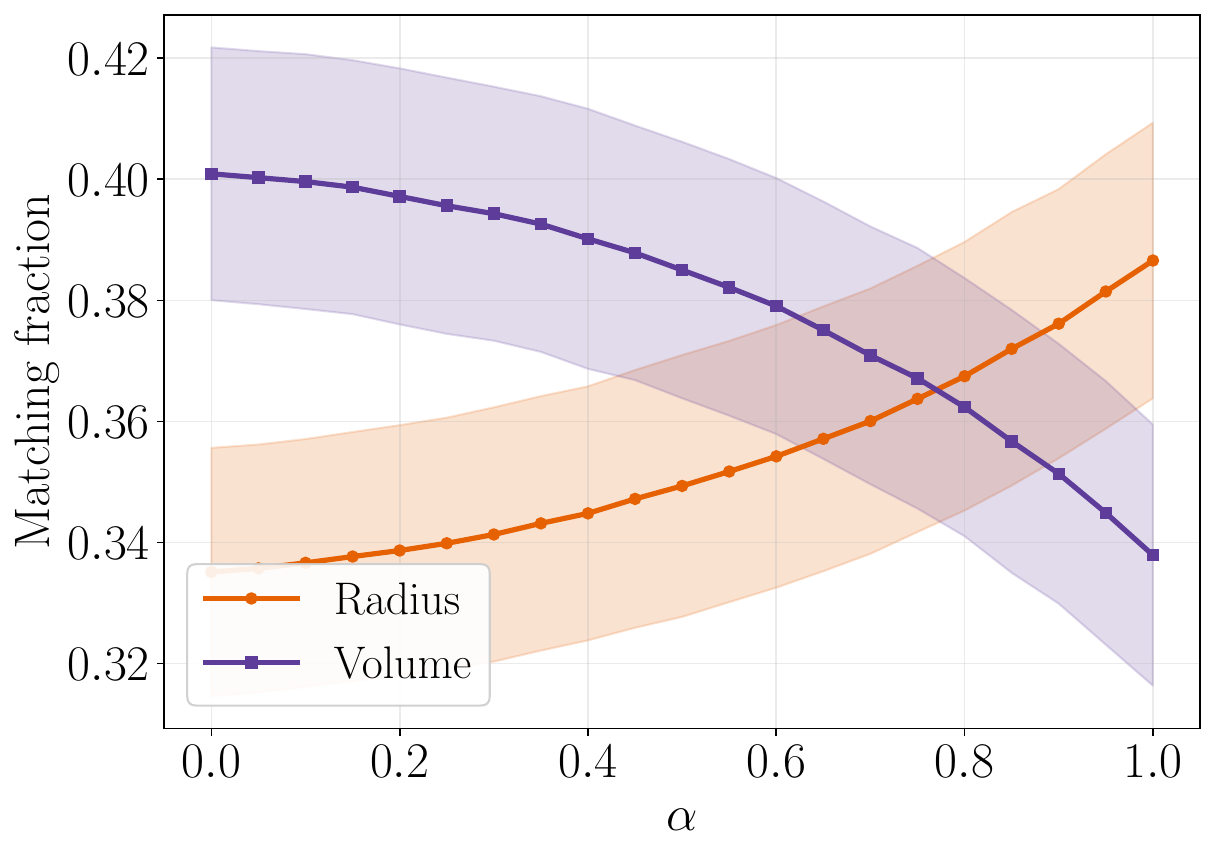}
    \label{fig: radius-vs-range-fixed-p}
    }
    \caption{
    Illustration of the uniformity principle holding when service range parametrizes volume, but not holding when it parametrizes radius, for the three families in dimension $\dimension = 2$ and parameters in~\Cref{tab:sim-params}. 
    }
    \label{fig: radius-vs-range}
\end{figure}

\begin{figure}[t]
    \centering
    \subfigure[Varying $\base$]{
    \includegraphics[width=0.3\linewidth]{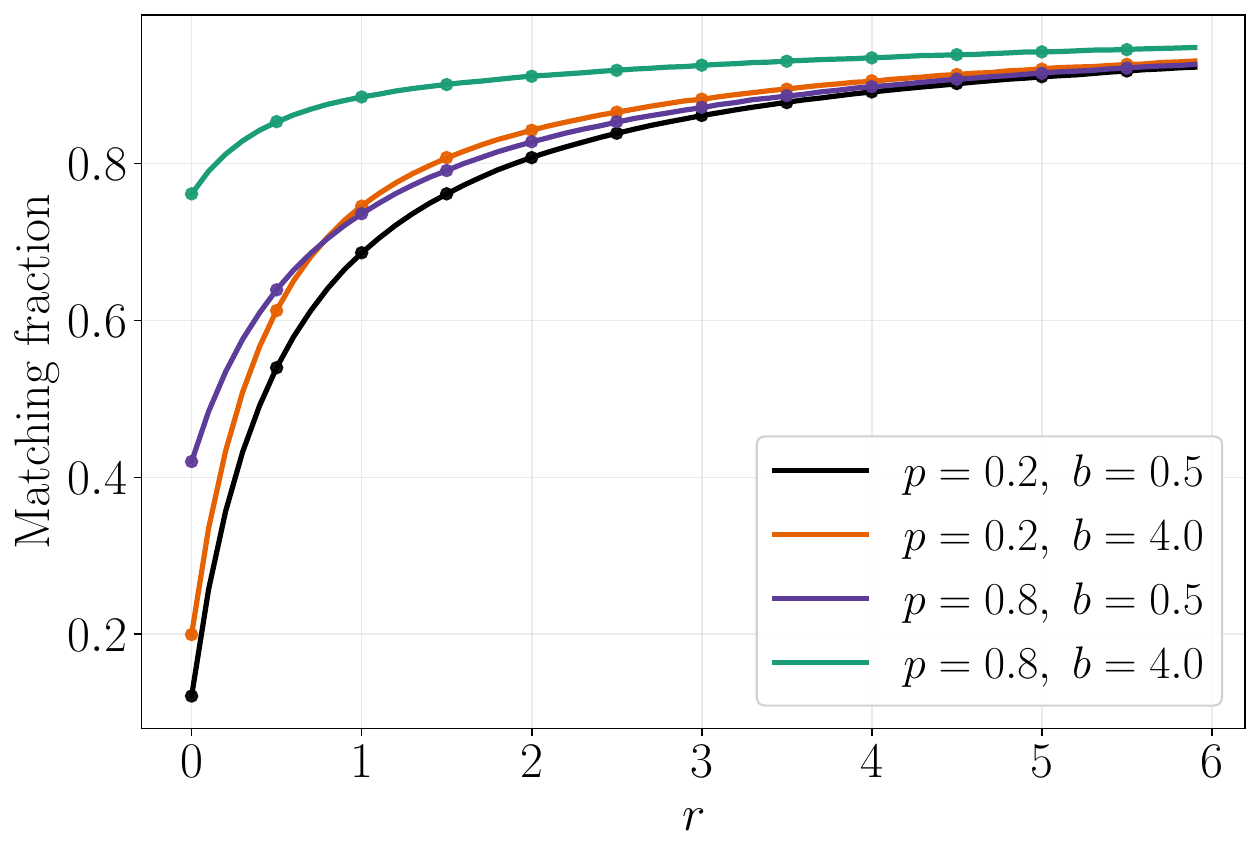}
    \label{fig: markov-versus-r}
    }
    \hfill 
    \subfigure[Varying $\extra$]{
    \includegraphics[width=0.3\linewidth]{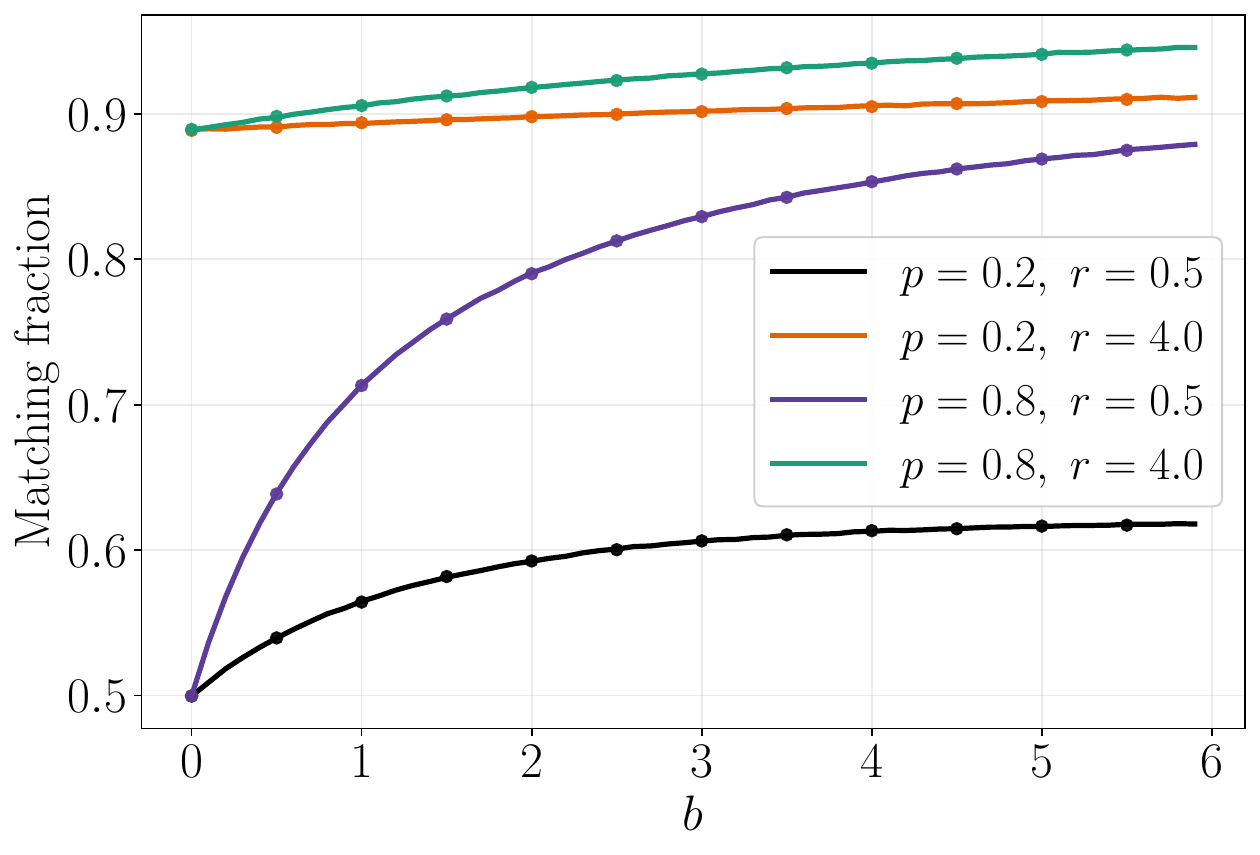}
    \label{fig: markov-versus-b}
    }
    \hfill 
    \subfigure[Varying $p$]{
    \includegraphics[width=0.3\linewidth]{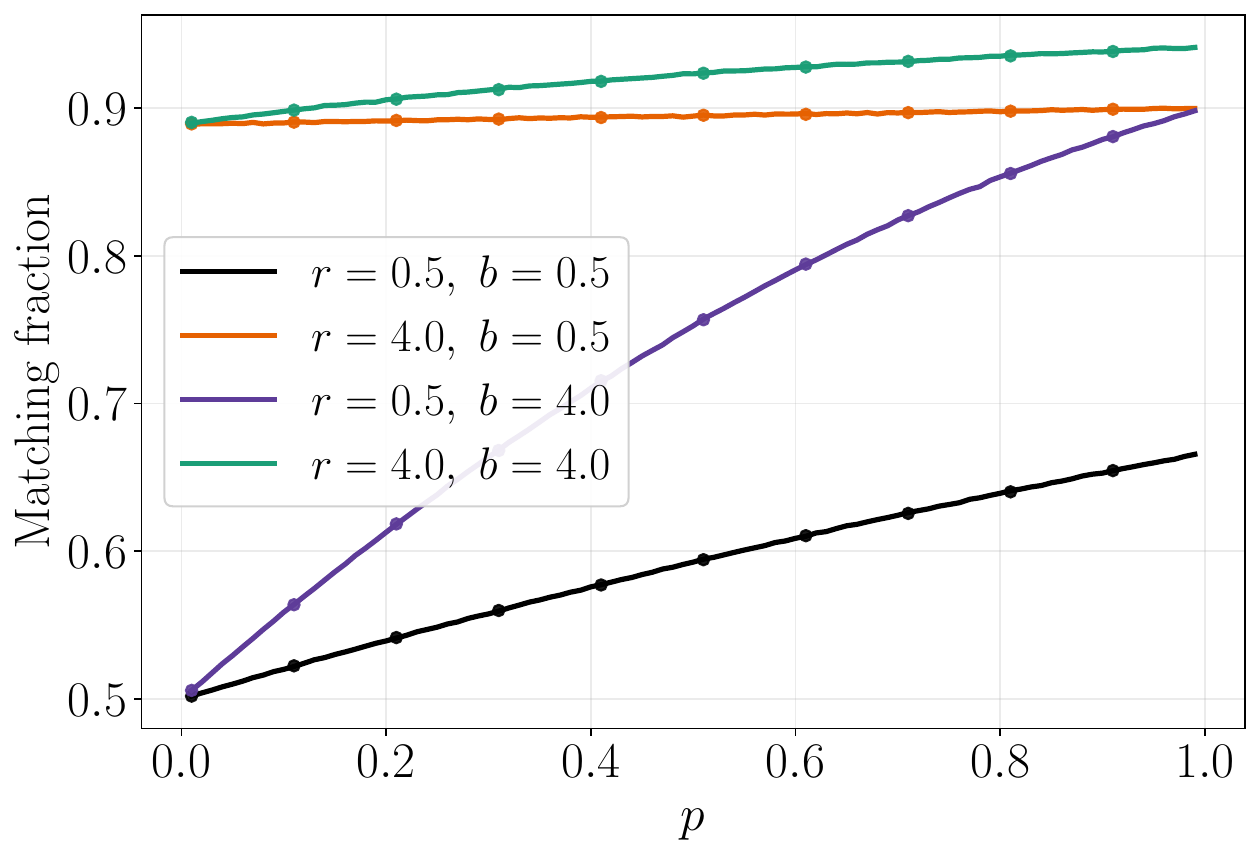}
    \label{fig: markov-versus-p}
    }
    \caption{Solid line represents true maximum matching size, with markers representing the formula~\eqref{eq: match-total} obtained from the stationary distribution of the Markov chain $\potential(t)$.}
    \label{fig: markov-chain-simulation}
\end{figure}

\begin{figure}[t]
    \centering
    \subfigure[Bounds vs. $\base$ ($\extra = 2$, $p = 0.6$)]{
    \includegraphics[width=0.3\linewidth]{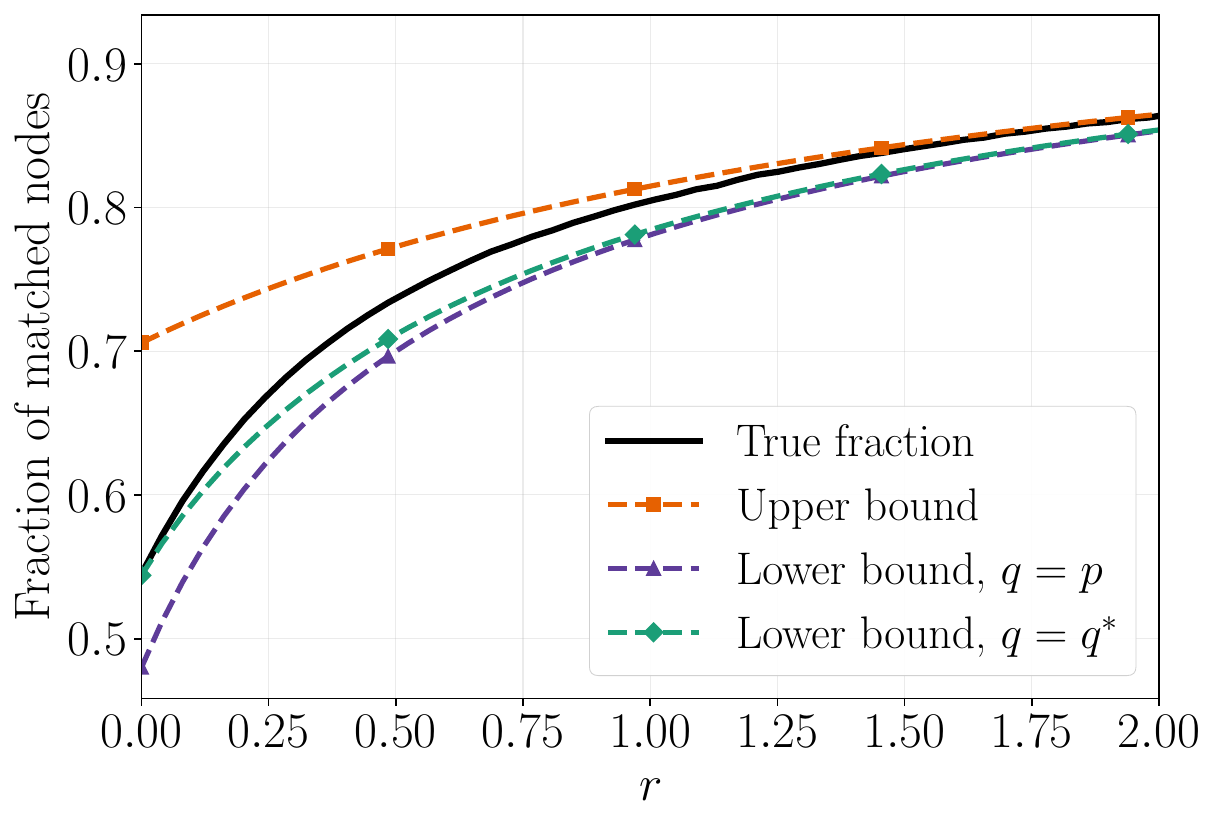}
    \label{fig: mrb-vs-r}
    }
    \hfill 
    \subfigure[Bounds vs. $\extra$ ($\base = 1/4$, $p = 1/2$)]{
    \includegraphics[width=0.3\linewidth]{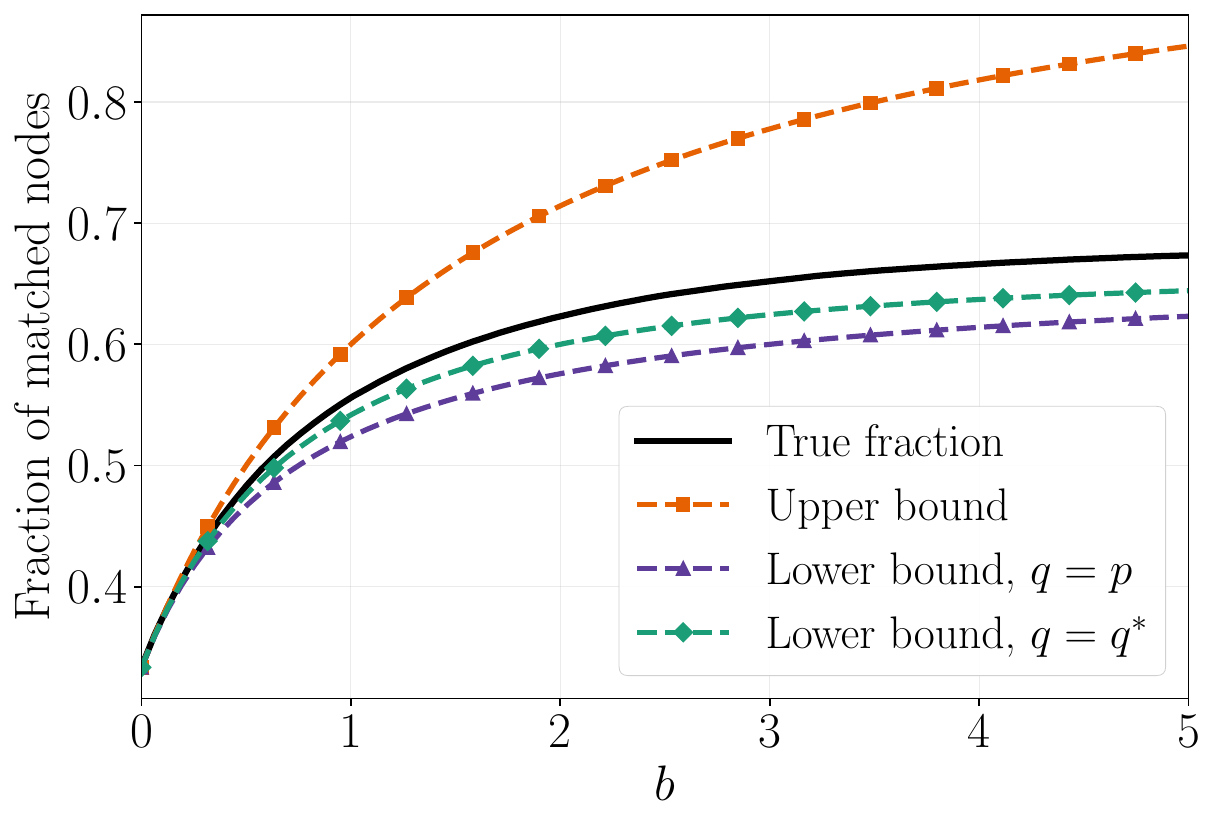}
    \label{fig: mrb-vs-b}
    }
    \hfill 
    \subfigure[Bounds vs. $p$ ($\base = 1$, $\extra =1$)]{
    \includegraphics[width=0.3\linewidth]{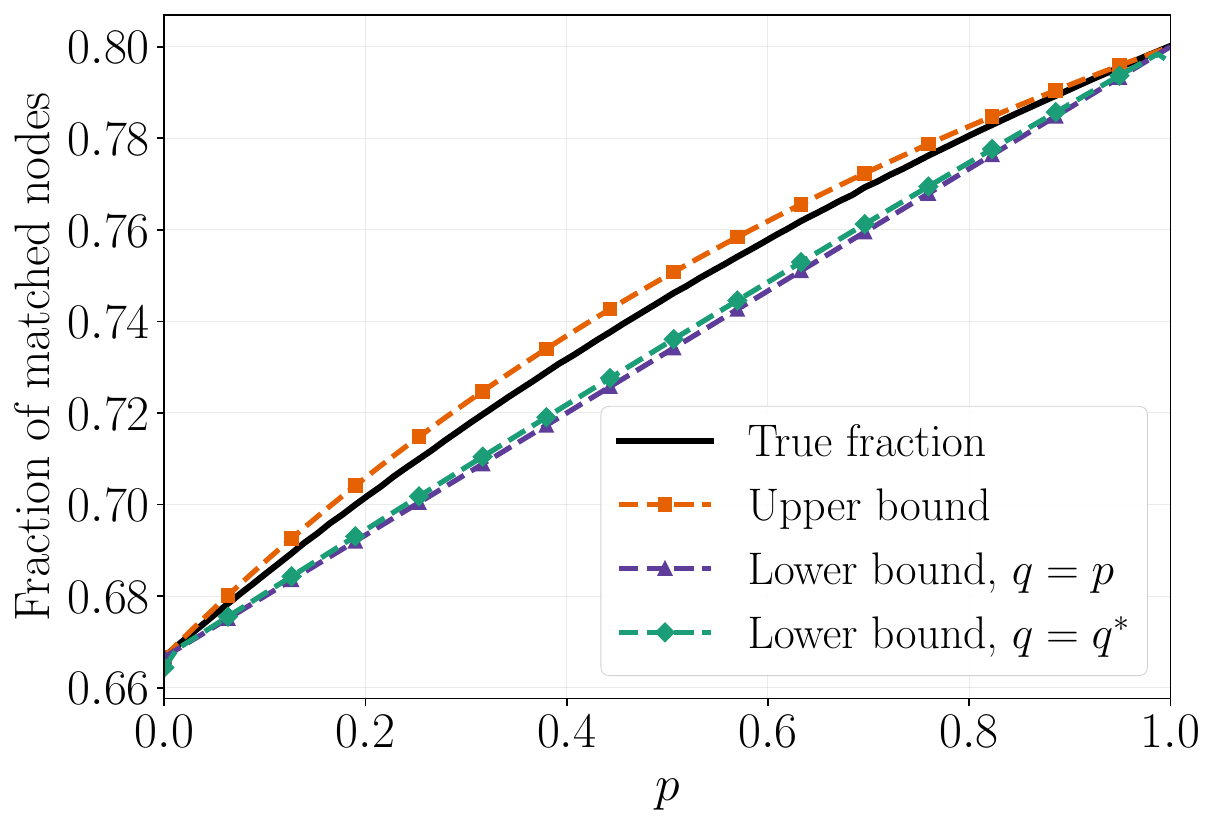}
    \label{fig: mrb-vs-p}
    }    
    \caption{Upper and lower bounds for matching fraction in dual service range model.}
    \label{fig: mrb}
\end{figure}

\begin{table}[t]
    \centering
    \setlength{\tabcolsep}{6pt}
    \renewcommand{\arraystretch}{1.15}
    \begin{tabular}{ccc}
        \toprule
        \textbf{Family}
        & \textbf{Radius interpretation}
        & \textbf{Volume interpretation} \\
        \midrule
            Fixed $\base$
                & $\bar{\base}=0.3,\,\base_{0}=0,\, p\in[0.2,0.8]$
                & $\bar{\base}=0.127,\,\base_{0}=0,\, p\in[0.2,0.8]$ \\[4pt]
            Fixed $\extra$
                & $\bar{\base}=0.4,\,\extra_{0}=0.8,\, p \in[0,0.5]$
                & $\bar{\base}=0.15,\,\extra_{0}=0.09,\, p\in[0,0.5]$ \\[4pt]
            Fixed $p$
                & $\bar{\base}=0.4,\, p_0=0.5,\,\base\in[0.02,0.36]$
                & $\bar{\base}= 0.21,\, p_0=0.5,\,\base \in[0.011,\, 0.191]$ \\
        \bottomrule
    \end{tabular}
    \caption{Simulation parameters for the three one-parameter families under both radius and volume interpretations. Each parametrization preserves the mean $\bar{\base}$ while increasing the dispersion of $\servicevec(\alpha)$ with~$\alpha$.}
    \label{tab:sim-params}
\end{table}

\paragraph{Markov chain.}
Figure~\ref{fig: markov-chain-simulation} checks the accuracy of the closed-form expression~\eqref{eq: match-total}, which is derived from the stationary distribution of the Markov chain $\potential(t)$. For each parameter setting, we plot (i) the matching fraction obtained by the embedded Markov chain, and (ii) the exact matching fraction obtained by solving the underlying bipartite matching instance via the Hopcroft--Karp algorithm. We plot the two quantities under three separate parameter sweeps, varying $\base$, $\extra$, and $p$ (holding the remaining parameters fixed). In all panels, the theoretical and empirical curves agree, validating the formula~\eqref{eq: match-total}.

Finally, Figure~\ref{fig: mrb} verifies the theoretical bounds derived in~\Cref{rmk: bounds-for-dual-service range} for the dual-service range model. We plot these bounds as functions of $\base$, $\extra$, and $p$ (holding other parameters fixed) and observe that they effectively bracket the true matching fraction. The upper bound~\eqref{eq: upper-bound} corresponds to the performance of the uniform allocation (via the uniformity principle), while the lower bounds~\eqref{eq: lower-bound-with-q} and~\eqref{eq: lower-bound-with-p} represent the performance guaranteed by decomposing the graph into disjoint subgraphs.

% ---------------------  Section --------------------- %
\section{Conclusion} \label{sec-conclusion}

We studied the ex-ante design of flexibility in spatial matching environments modeled by bipartite random geometric graphs. For a fixed total budget of service range, we established a sharp {uniformity principle}: making the vector of service ranges more uniform (in the sense of majorization) strictly increases the expected size of a maximum matching, in all dimensions $\dimension \ge 1$ under mild regularity conditions on the spatial distribution. This effect is driven by diminishing returns to additional service range and by the way bounded ranges fragment the graph into mostly local components. 

Building on this insight, we introduced a dual service range model with inflexible and flexible supply nodes and developed a Markov-chain embedding that yields tractable characterizations and bounds for the expected matching rate, including closed-form expressions. Taken together, these results provide structural guidance for spatial platforms such as autonomous fleets, drone delivery, emergency services, and feature-based matching markets by showing that, when designing coverage ex-ante, it is typically better to spread flexibility broadly than to concentrate it on a small set of ``super'' units.

Our analysis suggests several directions for future work.
First, our simulations indicate that the uniformity principle depends critically on the decision variable: it holds for service range parametrized by volume but can fail for service radius parametrized by radius. Note that while these formulations are equivalent for $\dimension=1$, they  diverge for $\dimension \ge 2$ due to the non-linear scaling of volume with radius. Characterizing optimal allocations under radius constraints, where cost and coverage volume scale non-linearly, remains an open challenge.

Second, we study an {ex-ante} benchmark in which ranges are chosen before supply and demand locations are realized. In many platforms, the spatial distribution of supply is relatively stable (e.g., depots or driver home bases), suggesting a two-stage design where ranges are chosen after observing supply locations but before demand arrives; quantifying the value of such partial adaptivity is an important direction. 

Finally, our results treat flexibility on the supply side only. Extending the analysis to {two-sided} flexibility under coupled budgets in geometric environments, in the spirit of~\cite{freundmartinzhao2024twosided} but with spatial locality and congestion, remains open, including whether flexibility should be concentrated on one side or shared across both, and whether any form of uniformity principle survives.

%%%%%%%%%%%%%%%%%%%%%%%%%%%%%%%%%%%%%%%%%%%%%%%%%%%%%%%%%%
% --------------------- References --------------------- %
%%%%%%%%%%%%%%%%%%%%%%%%%%%%%%%%%%%%%%%%%%%%%%%%%%%%%%%%%%
\section*{Acknowledgement}
The authors thank Philipp Afèche, Gérard P. Cachon, Ozan Candogan, Daniel Freund, Bruce Hajek, Ming Hu, Yashaswini Murthy, Rad Niazadeh, Jiaming Xu, and Jiayu (Kamessi) Zhao for their helpful discussions.

\bibliographystyle{alpha}
\bibliography{biblio}

@misc{bordenave2012matchingsinfinitegraphs,
      title={Matchings on infinite graphs}, 
      author={Charles Bordenave and Marc Lelarge and Justin Salez},
      year={2012},
      eprint={1102.0712},
      archivePrefix={arXiv},
      primaryClass={math.PR},
      url={https://arxiv.org/abs/1102.0712}, 
}

@article{DOERR18,
  author       = {Benjamin Doerr},
  title        = {Probabilistic Tools for the Analysis of Randomized Optimization Heuristics},
  journal      = {CoRR},
  volume       = {abs/1801.06733},
  year         = {2018},
  url          = {http://arxiv.org/abs/1801.06733},
  eprinttype    = {arXiv},
  eprint       = {1801.06733},
  timestamp    = {Mon, 13 Aug 2018 16:46:36 +0200},
  biburl       = {https://dblp.org/rec/journals/corr/abs-1801-06733.bib},
  bibsource    = {dblp computer science bibliography, https://dblp.org}
}

@inproceedings{KarpSipser, author = {Karp, R. M. and Sipser, M.}, title = {Maximum matching in sparse random graphs}, year = {1981}, publisher = {IEEE Computer Society}, address = {USA}, url = {https://doi.org/10.1109/SFCS.1981.21}, doi = {10.1109/SFCS.1981.21}, booktitle = {Proceedings of the 22nd Annual Symposium on Foundations of Computer Science}, pages = {364–375}, numpages = {12}, series = {SFCS '81} }

@misc{holroyd2008poissonmatching,
      title={Poisson Matching}, 
      author={Alexander E. Holroyd and Robin Pemantle and Yuval Peres and Oded Schramm},
      year={2008},
      eprint={0712.1867},
      archivePrefix={arXiv},
      primaryClass={math.PR},
      url={https://arxiv.org/abs/0712.1867}, 
}

@inproceedings{magnolia,
author = {Hayderi, Alexandre and Saberi, Amin and Vitercik, Ellen and Wikum, Anders},
title = {MAGNOLIA: matching algorithms via {GNN}s for online value-to-go approximation},
year = {2024},
publisher = {JMLR.org},
abstract = {Online Bayesian bipartite matching is a central problem in digital marketplaces and exchanges, including advertising, crowdsourcing, ridesharing, and kidney exchange. We introduce a graph neural network (GNN) approach that emulates the problem's combinatorially-complex optimal online algorithm, which selects actions (e.g., which nodes to match) by computing each action's value-togo (VTG)--the expected weight of the final matching if the algorithm takes that action, then acts optimally in the future. We train a GNN to estimate VTG and show empirically that this GNN returns high-weight matchings across a variety of tasks. Moreover, we identify a common family of graph distributions in spatial crowdsourcing applications, such as rideshare, under which VTG can be efficiently approximated by aggregating information within local neighborhoods in the graphs. This structure matches the local behavior of GNNs, providing theoretical justification for our approach.},
booktitle = {Proceedings of the 41st International Conference on Machine Learning},
articleno = {711},
numpages = {24},
location = {Vienna, Austria},
series = {ICML'24}
}

@inproceedings{inbook,
  title={Two-stage stochastic matching with application to ride hailing},
  author={Feng, Yiding and Niazadeh, Rad and Saberi, Amin},
  booktitle={Proceedings of the 2021 {ACM-SIAM} Symposium on Discrete Algorithms ({SODA})},
  pages={2862--2877},
  year={2021},
  organization={SIAM}
}

@book{meyn2012markov,
  title={Markov chains and stochastic stability},
  author={Meyn, Sean P and Tweedie, Richard L},
  year={2012},
  publisher={Springer Science \& Business Media}
}

@book{penrose2003rgg,
  title     = {Random Geometric Graphs},
  author    = {Penrose, Mathew},
  year      = {2003},
  publisher = {Oxford University Press}
}

@article{glover1967convex,
  title   = {Maximum Matching in a Convex Bipartite Graph},
  author  = {Glover, Fred},
  journal = {Naval Research Logistics Quarterly},
  year    = {1967},
  volume  = {14},
  number  = {3},
  pages   = {313--316}
}

@article{lipskipreparata1981,
  title   = {Efficient Algorithms for Finding Maximum Matchings in Convex Bipartite Graphs and Related Problems},
  author  = {Lipski Jr, Witold and Preparata, Franco P.},
  journal = {Acta Informatica},
  year    = {1981},
  volume  = {15},
  number  = {4},
  pages   = {329--346}
}

@article{murray2015flying,
  title   = {The Flying Sidekick Traveling Salesman Problem: Optimization of Drone-Assisted Parcel Delivery},
  author  = {Murray, Cameron C. and Chu, Amanda G.},
  journal = {Transportation Research Part C: Emerging Technologies},
  year    = {2015},
  volume  = {54},
  pages   = {86--109}
}

@article{freundmartinzhao2024twosided,
  title={Two-sided flexibility in platforms},
  author={Freund, Daniel and Martin, S{\'e}bastien and Zhao, Jiayu Kamessi},
  journal={arXiv preprint arXiv:2404.04709},
  year={2024}
}

@article{ajtai1984optimal,
  title={On optimal matchings},
  author={Ajtai, Mikl{\'o}s and Koml{\'o}s, J{\'a}nos and Tusn{\'a}dy, G{\'a}bor},
  journal={Combinatorica},
  volume={4},
  number={4},
  pages={259--264},
  year={1984},
  publisher={Springer-Verlag Berlin/Heidelberg}
}

@article{talagrand1994transportation,
  title={The transportation cost from the uniform measure to the empirical measure in dimension $\ge$ 3},
  author={Talagrand, Michel},
  journal={The Annals of Probability},
  pages={919--959},
  year={1994},
  publisher={JSTOR}
}

@article{gao2025randomized,
  title={Randomized Dimensionality Reduction for Euclidean Maximization and Diversity Measures},
  author={Gao, Jie and Jayaram, Rajesh and Kolbe, Benedikt and Sapir, Shay and Schwiegelshohn, Chris and Silwal, Sandeep and Waingarten, Erik},
  journal={arXiv preprint arXiv:2506.00165},
  year={2025}
}

@article{ozkan2020dynamic,
  title={Dynamic matching for real-time ride sharing},
  author={{\"O}zkan, Erhun and Ward, Amy R},
  journal={Stochastic Systems},
  volume={10},
  number={1},
  pages={29--70},
  year={2020},
  publisher={INFORMS}
}

@article{nygren2010akamai,
  title={The {A}kamai network: a platform for high-performance internet applications},
  author={Nygren, Erik and Sitaraman, Ramesh K and Sun, Jennifer},
  journal={ACM SIGOPS Operating Systems Review},
  volume={44},
  number={3},
  pages={2--19},
  year={2010},
  publisher={ACM New York, NY, USA}
}

@book{marshall2011majorization,
  author    = {Marshall, Albert W. and Olkin, Ingram and Arnold, Barry C.},
  title     = {Inequalities: Theory of Majorization and Its Applications},
  edition   = {2},
  year      = {2011},
  series    = {Springer Series in Statistics},
  publisher = {Springer},
  address   = {New York}
}

@incollection{liyanage1993allocation,
  author    = {Liyanage, Liwan and Shanthikumar, J. George},
  title     = {Allocation through Stochastic Schur Convexity and Stochastic Transposition Increasingness},
  booktitle = {Stochastic Inequalities},
  series    = {Institute of Mathematical Statistics Lecture Notes--Monograph Series},
  volume    = {22},
  editor    = {Shaked, Moshe and Tong, Ya-Lun},
  pages     = {253--273},
  year      = {1992},
  publisher = {Institute of Mathematical Statistics},
  address   = {Hayward, CA}
}

@article{feng2018arrangement,
  author  = {Feng, Qi and Shanthikumar, J. George},
  title   = {Arrangement Increasing Resource Allocation},
  journal = {Methodology and Computing in Applied Probability},
  year    = {2018},
  volume  = {20},
  number  = {3},
  pages   = {935--955},
  doi     = {10.1007/s11009-017-9580-5}
}

@inproceedings{tsitsiklis2013queueing,
  author    = {Tsitsiklis, John N. and Xu, Kuang},
  title     = {Queueing System Topologies with Limited Flexibility},
  booktitle = {Proceedings of the ACM SIGMETRICS / International Conference on Measurement and Modeling of Computer Systems (SIGMETRICS '13)},
  year      = {2013},
  pages     = {167--178},
  publisher = {ACM},
  address   = {New York},
  doi       = {10.1145/2465529.2465757}
}

@article{tsitsiklis2017flexible,
  author  = {Tsitsiklis, John N. and Xu, Kuang},
  title   = {Flexible Queueing Architectures},
  journal = {Operations Research},
  year    = {2017},
  volume  = {65},
  number  = {5},
  pages   = {1398--1413},
  doi     = {10.1287/opre.2017.1620}
}

@article{bimpikis2019spatial,
  title={Spatial pricing in ride-sharing networks},
  author={Bimpikis, Kostas and Candogan, Ozan and Saban, Daniela},
  journal={Operations Research},
  volume={67},
  number={3},
  pages={744--769},
  year={2019},
  publisher={INFORMS}
}

@misc{blog_taskrabbit,
  author       = {Taskrabbit},
  title        = {Tasker Update (App Version 4.58.0)},
  howpublished = {\url{https://www.taskrabbit.com/blog/tasker-update-app-version-4-58-0/}},
  note         = {Taskrabbit Blog. Accessed: 2025-12-15.},
  year         = {2024},
  month        = aug
}

@article{Kanoria2025,
  title={Dynamic spatial matching},
  author={Kanoria, Yash},
  journal={The Annals of Applied Probability},
  volume={35},
  number={5},
  pages={3086--3118},
  year={2025},
  publisher={Institute of Mathematical Statistics}
}

@article{Chen2025,
  title={Feature-Based Dynamic Matching},
  author={Chen, Yilun and Kanoria, Yash and Kumar, Akshit and Zhang, Wenxin},
  journal={Operations Research},
  year={2025},
  publisher={INFORMS}
}

@article{yang2025online,
  title={Online Metric Matching: Beyond the Worst Case},
  author={Yang, Mingwei and Yu, Sophie H},
  journal={Operations Research},
  year={2025},
  publisher={INFORMS}
}

@inproceedings{balkanski2023power,
  title={The power of greedy for online minimum cost matching on the line},
  author={Balkanski, Eric and Faenza, Yuri and P{\'e}rivier, No{\'e}mie},
  booktitle={Proceedings of the 24th ACM Conference on Economics and Computation},
  pages={185--205},
  year={2023}
}

@article{sentenac2025online,
  title={Online matching in geometric random graphs},
  author={Sentenac, Flore and Noiry, Nathan and Lerasle, Matthieu and M{\'e}nard, Laurent and Perchet, Vianney},
  journal={Mathematics of Operations Research},
  year={2025},
  publisher={INFORMS}
}

@article{church1974maximal,
  title={The maximal covering location problem},
  author={Church, Richard and Velle, Charles R},
  journal={Papers in regional science},
  volume={32},
  number={1},
  pages={101--118},
  year={1974},
  publisher={Elsevier}
}

@article{revelle1995integrated,
  title={Integrated fire and ambulance siting: A deterministic model},
  author={Revelle, Charles and Snyder, Stephanie},
  journal={Socio-Economic Planning Sciences},
  volume={29},
  number={4},
  pages={261--271},
  year={1995},
  publisher={Elsevier}
}

@article{jordan1995principles,
  title={Principles on the benefits of manufacturing process flexibility},
  author={Jordan, William C and Graves, Stephen C},
  journal={Management Science},
  volume={41},
  number={4},
  pages={577--594},
  year={1995},
  publisher={INFORMS}
}

@article{wallace2004resource,
  title={Resource pooling and staffing in call centers with skill-based routing},
  author={Wallace, Rodney B and Whitt, Ward},
  journal={Operations Research},
  volume={7},
  number={4},
  pages={276--294},
  year={2004}
}

@article{bassamboo2010flexibility,
  title={Optimal flexibility configurations in newsvendor networks: Going beyond chaining and pairing},
  author={Bassamboo, Achal and Randhawa, Ramandeep S and Van Mieghem, Jan A},
  journal={Operations Research},
  volume={58},
  number={5},
  pages={1284--1303},
  year={2010},
  publisher={INFORMS}
}

@article{cachon1999allocation,
  author       = {Cachon, G{\'e}rard P. and Lariviere, Martin A.},
  title        = {Capacity Choice and Allocation: Strategic Behavior and Supply Chain Performance},
  journal      = {Management Science},
  volume       = {45},
  number       = {8},
  pages        = {1091--1108},
  year         = {1999},
  publisher    = {INFORMS}
}

@article{cho2014capacity,
  author       = {Cho, Soo-Haeng and Tang, Christopher S.},
  title        = {Capacity Allocation Under Retail Competition: Uniform and Competitive Allocations},
  journal      = {Operations Research},
  volume       = {62},
  number       = {1},
  pages        = {72--80},
  year         = {2014},
  publisher    = {INFORMS}
}

@incollection{gurvich2019operations,
  title={Operations in the on-demand economy: Staffing services with self-scheduling capacity},
  author={Gurvich, Itai and Lariviere, Martin and Moreno, Antonio},
  booktitle={Sharing Economy: Making Supply Meet Demand},
  pages={249--278},
  year={2019},
  publisher={Springer}
}

@article{dong2020managing,
  title={Managing supply in the on-demand economy: Flexible workers, full-time employees, or both?},
  author={Dong, Jing and Ibrahim, Rouba},
  journal={Operations Research},
  volume={68},
  number={4},
  pages={1238--1264},
  year={2020},
  publisher={INFORMS}
}

@article{lobel2024frontiers,
  title={Frontiers in operations: Employees vs. contractors: An operational perspective},
  author={Lobel, Ilan and Martin, S{\'e}bastien and Song, Haotian},
  journal={Manufacturing \& Service Operations Management},
  volume={26},
  number={4},
  pages={1306--1322},
  year={2024},
  publisher={INFORMS}
}

@article{yan2025trading,
  title={Trading flexibility for adoption: From dynamic to static walking in ride-sharing},
  author={Yan, Julia and Martin, S{\'e}bastien and Taylor, Sean J},
  journal={Management Science},
  volume={71},
  number={7},
  pages={5875--5892},
  year={2025},
  publisher={INFORMS}
}

@article{chen2024courier,
  title={Courier dispatch in on-demand delivery},
  author={Chen, Mingliu and Hu, Ming},
  journal={Management Science},
  volume={70},
  number={6},
  pages={3789--3807},
  year={2024},
  publisher={INFORMS}
}

@article{leung2018learning,
  title={Learning to hire? {H}iring as a dynamic experiential learning process in an online market for contract labor},
  author={Leung, Ming D},
  journal={Management Science},
  volume={64},
  number={12},
  pages={5651--5668},
  year={2018},
  publisher={INFORMS}
}

@misc{moyal2008convex,
      title={Convex comparison of service disciplines in real time queues}, 
      author={Pascal Moyal},
      year={2008},
      eprint={0704.2885},
      archivePrefix={arXiv},
      primaryClass={math.PR},
      url={https://arxiv.org/abs/0704.2885}, 
}

@article{eom2025batching,
  title={Batching and Greedy Policies: How Good Are They in Dynamic Matching?},
  author={Eom, Myungeun and Toriello, Alejandro},
  journal={Manufacturing \& Service Operations Management},
  year={2025},
  publisher={INFORMS}
}

@article{wang2024demand,
  title={On-demand ride-matching in a spatial model with abandonment and cancellation},
  author={Wang, Guangju and Zhang, Hailun and Zhang, Jiheng},
  journal={Operations Research},
  volume={72},
  number={3},
  pages={1278--1297},
  year={2024},
  publisher={INFORMS}
}

@article{kanoria2021facilitating,
  title={Facilitating the search for partners on matching platforms},
  author={Kanoria, Yash and Saban, Daniela},
  journal={Management Science},
  volume={67},
  number={10},
  pages={5990--6029},
  year={2021},
  publisher={INFORMS}
}

@article{ashlagi2014stability,
  title={Stability in large matching markets with complementarities},
  author={Ashlagi, Itai and Braverman, Mark and Hassidim, Avinatan},
  journal={Operations Research},
  volume={62},
  number={4},
  pages={713--732},
  year={2014},
  publisher={INFORMS}
}

@article{bansal2014randomized,
  title={A randomized o (log2 k)-competitive algorithm for metric bipartite matching},
  author={Bansal, Nikhil and Buchbinder, Niv and Gupta, Anupam and Naor, Joseph},
  journal={Algorithmica},
  volume={68},
  number={2},
  pages={390--403},
  year={2014},
  publisher={Springer}
}

@article{gupta2019stochastic,
  title={Stochastic online metric matching},
  author={Gupta, Anupam and Guruganesh, Guru and Peng, Binghui and Wajc, David},
  journal={arXiv preprint arXiv:1904.09284},
  year={2019}
}

@article{campbell2006incentive,
  title={Incentive schemes for attended home delivery services},
  author={Campbell, Ann Melissa and Savelsbergh, Martin},
  journal={Transportation science},
  volume={40},
  number={3},
  pages={327--341},
  year={2006},
  publisher={INFORMS}
}

%%%%%%%%%%%%%%%%%%%%%%%%%%%%%%%%%%%%%%%%%%%%%%%%%%%%%%%%%%
% ---------------------- Appendix ---------------------- %
%%%%%%%%%%%%%%%%%%%%%%%%%%%%%%%%%%%%%%%%%%%%%%%%%%%%%%%%%%

\appendix

\section{Preliminaries on Markov chain convergence}  \label{apx-preliminaries}
This section reviews the theory of convergence of general state Markov chains. Throughout, $(\sfX,\mathcal B)$ is a measurable space (in our case, $\sfX=\mathbb R^2$ with the Borel $\sigma$–algebra), $P$ is a Markov kernel on $(\sfX,\mathcal B)$, and $P^t(x,\cdot)$ denotes the $t$–step transition law from $x \in \sfX$. 

\paragraph{${\varphi}$–irreducibility.} 
A Markov chain is \emph{$\varphi$–irreducible} if there exists a $\sigma$–finite measure $\varphi$ on $(\sfX,\mathcal B)$ such that for every  $x \in \sfX$ and $A\in\mathcal B$ with $\varphi(A)>0$, there exists $t\ge 1$ with $P^t(x,A)>0$. Intuitively, no set of positive $\varphi$–mass is permanently unreachable from any starting point~ \cite[Ch.~4]{meyn2012markov}.

\paragraph{Small and petite sets.}
A measurable set $C\in\mathcal B(\sfX)$ is \emph{small} if there exist $t\ge 1$, $\varepsilon>0$, and a probability measure $\nu$ on $(\sfX,\mathcal B(\sfX))$ such that
\[
    P^t(x,\cdot)\ \ge\ \varepsilon\,\nu(\cdot)\qquad\text{for all }x\in C \, .
\]
A set $C\in\mathcal B(\sfX)$ is \emph{petite} if there exist a probability distribution $a$ on the nonnegative integers, and a nontrivial measure $\nu$ on $(\sfX,\mathcal B(\sfX))$ such that
\[
    \sum_{t=0}^\infty a(t)\, P^t(x,\cdot)\  \ge \, \nu(\cdot)\qquad \text{for all }x\in C \, ,
\]
i.e. after randomizing the time step with law $a$, the $a$–mixture of the transition kernels dominates a common measure $\nu$ uniformly for all $x\in C$. Note that every small set is petite.

\paragraph{Positive Harris recurrence.}
A $\varphi$–irreducible Markov chain with kernel $P$ on $(\sfX,\mathcal B)$ is \emph{Harris recurrent} if, for every measurable set $A\in\mathcal B$ with $\varphi(A)>0$,
\[
    \mathbb P_x\{\tau_A<\infty\}=1 \qquad \text{for all }x\in \sfX\, ,
\]
where $\tau_A=\inf\{t\ge 1:\,X_t\in A\}$ is the hitting time of $A$.  
The chain is \emph{positive Harris recurrent} if it is Harris recurrent and there exists an invariant probability measure $\pi$ on $(\sfX,\mathcal B)$. A convenient way to check for positive Harris recurrence is the Foster-Lyapunov criteria.

\begin{theorem} \label{thm:PHR}
    Suppose $C \in \calB(\sfX)$ is a petite set such that there exist finite constants $\eta, \,b_0$, and a real valued function $V: \sfX \to \mathbb{R}_+ \cup \{\infty\}$ that satisfies
    \[ 
        \Delta V(x) \triangleq \mathbb{E}[V(X_{t+1}) \ | X_t=x] - V(x) \leq -\eta  + b_0\,\mathbf{1}_C(x), ~~~~ \forall \, x\in\sfX\, .
    \]
    If $V$ is finite everywhere and bounded on $C$, then the Markov chain $(X_t)$ is positive Harris recurrent.
\end{theorem}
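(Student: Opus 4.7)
The plan is to follow the classical Meyn--Tweedie Foster-Lyapunov methodology: use the drift inequality to control hitting times to the petite set $C$, lift these to Harris recurrence via the petite-set minorization, and construct an invariant probability measure via Kac's formula.

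First, I would show that $\mathbb{E}_x[\tau_C] < \infty$ for every $x \in \mathsf{X}$, where $\tau_C \triangleq \inf\{t \geq 1 : X_t \in C\}$. For $x \notin C$, define $M_t \triangleq V(X_{t \wedge \tau_C}) + \eta(t \wedge \tau_C)$. Since the drift inequality gives $\mathbb{E}[V(X_{t+1}) - V(X_t) \mid X_t = y] \leq -\eta$ whenever $y \notin C$, and $X_s \notin C$ for every $s < \tau_C$, the process $M_t$ is a nonnegative supermartingale under $\mathbb{P}_x$. Optional stopping (combined with Fatou's lemma if needed) yields $\eta \, \mathbb{E}_x[\tau_C] \leq V(x) < \infty$. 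For $x \in C$, combine a one-step argument (using that $V$ is bounded on $C$, so $\mathbb{E}_x[V(X_1)] \leq \sup_C V + b_0 < \infty$) with the estimate above applied at $X_1$ to conclude that $\mathbb{E}_x[\tau_C] < \infty$ as well.

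Second, I would promote finite return times to Harris recurrence. By the petite-set assumption, there is a probability distribution $a$ on $\mathbb{N}$ and a nontrivial measure $\nu$ such that $\sum_t a(t) P^t(x, \cdot) \geq \nu(\cdot)$ for every $x \in C$. For any measurable $A$ with $\nu(A) > 0$, each visit to $C$ then provides an independent lower bound of $\nu(A) > 0$ on hitting $A$ at a randomized future time. Since step one guarantees the chain returns to $C$ infinitely often almost surely from any starting state, a Borel--Cantelli argument yields $\mathbb{P}_x\{X_t \in A \text{ i.o.}\} = 1$, which is Harris recurrence with respect to the maximal irreducibility measure $\psi$ dominating $\nu$.

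Third, I would construct the invariant probability measure via Kac's formula:
\[
    \pi(A) \triangleq \frac{1}{\mathbb{E}_\nu[\tau_C]} \, \mathbb{E}_\nu\!\left[\,\sum_{t=0}^{\tau_C - 1} \mathbf{1}\{X_t \in A\}\,\right].
\]
A standard renewal computation shows $\pi P = \pi$, and step one combined with boundedness of $V$ on $C$ (so that $V$ is $\nu$-integrable, since $\nu$ can be chosen supported in $C$) ensures $\mathbb{E}_\nu[\tau_C] < \infty$, so $\pi$ has finite total mass and normalizes to a probability measure. Together with Harris recurrence, this yields positive Harris recurrence. The most delicate step is the second one: passing from the minorization on a \emph{randomized} time $a$ to an almost-sure infinitely-often statement for the underlying chain requires iterating carefully over successive returns to $C$ and invoking a conditional Borel--Cantelli argument that correctly handles the sampling distribution $a$.
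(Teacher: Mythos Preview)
The paper does not actually prove this statement: its entire proof is the single sentence ``This is a restatement of Theorem~11.3.4 in Meyn--Tweedie.'' Your sketch is precisely the classical Foster--Lyapunov argument that this citation points to, so in spirit you and the paper agree; you simply went further and outlined the content behind the reference. The outline is sound. The only places that would require care in a complete write-up are the one you already flagged (passing from the randomized-time minorization on $C$ to an almost-sure infinitely-often statement in step two) and a small technical point in step three: the minorizing measure $\nu$ in the petite-set definition is merely nontrivial, not necessarily a probability measure nor supported in $C$, so the Kac-type construction is cleanest via the Nummelin split chain and its artificial atom, which is how Meyn--Tweedie in fact organize the proof.
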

\begin{proof}
    This is a restatement of~\cite[Theorem 11.3.4]{meyn2012markov}.
\end{proof}

We conclude this section with a takeaway theorem: Markov chains that are positive Harris recurrent satisfy the following law of large numbers.

\begin{theorem} \label{thm: convergence}
    If $\Phi$ is a positive Harris recurrent Markov chain with invariant probability $\pi$, then for any measurable function $g$ with $\pi(|g|) < \infty$,
    \[ 
        \frac 1 T \sum_{i=0}^{T-1}g(\Phi_{i}) \toas \int g \,\diff \pi \, .
    \]
\end{theorem}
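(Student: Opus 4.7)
The plan is to present this as a classical ergodic theorem for Harris recurrent Markov chains on a general state space and, as with Theorem~\ref{thm:PHR}, to simply invoke it as a restatement of~\cite[Theorem~17.0.1]{meyn2012markov}. If a self-contained argument is desired, the standard route is via the Nummelin splitting technique, which reduces the general-state-space statement to the classical strong law of large numbers for i.i.d.\ random variables. Below I sketch this route.

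First, I would use the assumed $\varphi$-irreducibility together with Harris recurrence to obtain a small set $C$ with $\pi(C)>0$ satisfying a one-step minorization $P^{t_0}(x,\cdot)\geq \varepsilon\,\nu(\cdot)$ for all $x\in C$. Applying the Nummelin splitting construction then produces an enlarged bivariate chain $\tilde\Phi$ on $\mathsf X\times\{0,1\}$ whose first coordinate has the same law as $\Phi$ and which possesses an accessible atom $\alpha$. Denote by $0 \leq \tau_0 < \tau_1 < \tau_2 < \cdots$ the successive visits of $\tilde\Phi$ to $\alpha$.

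Next, I would exploit the strong Markov property at $\alpha$ to decompose the trajectory into i.i.d.\ excursions. Define the cycle lengths $L_k \triangleq \tau_k - \tau_{k-1}$ and excursion sums $S_k \triangleq \sum_{t=\tau_{k-1}}^{\tau_k - 1} g(\Phi_t)$ for $k\geq 1$; both sequences are i.i.d., with $\mathbb E[L_1]<\infty$ by positive recurrence. Kac's representation of the invariant measure,
\[
    \pi(h) \,=\, \frac{1}{\mathbb E[L_1]}\, \mathbb E\!\left[\sum_{t=0}^{L_1-1} h(\Phi_t)\right] \qquad \text{for every nonnegative measurable }h,
\]
gives $\mathbb E|S_1| = \pi(|g|)\,\mathbb E[L_1] < \infty$ and $\mathbb E[S_1] = \pi(g)\,\mathbb E[L_1]$. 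The classical SLLN applied separately to $\{S_k\}$ and $\{L_k\}$ yields
\[
    \frac{S_1+\cdots+S_n}{L_1+\cdots+L_n} \, \toas \, \frac{\mathbb E[S_1]}{\mathbb E[L_1]} \,=\, \int g\,\diff\pi.
\]

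Finally, I would interpolate to an arbitrary deterministic time $T$. Let $N(T)\triangleq \max\{k:\tau_k \leq T\}$; then the difference between $\sum_{t=0}^{T-1}g(\Phi_t)$ and $\sum_{k=1}^{N(T)} S_k$ is controlled by $|S_{N(T)+1}|$, which is $o(T)$ almost surely since $S_k/k\toas 0$ follows from $\mathbb E|S_1|<\infty$. Dividing by $T$ and using the renewal limit $N(T)/T \toas 1/\mathbb E[L_1]$ completes the argument. The main obstacle is the measure-theoretic construction of the split chain in the first step, which is standard but non-trivial, and is the reason this result is typically cited rather than reproved in applied works.
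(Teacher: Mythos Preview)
Your proposal is correct and matches the paper exactly: the paper's entire proof is the single line ``See~\cite[Theorem 17.0.1 (i)]{meyn2012markov},'' which is precisely the citation you lead with. Your optional Nummelin-splitting sketch is a sound outline of the standard proof behind that citation, but the paper does not attempt any self-contained argument.
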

\begin{proof}
    See~\cite[Theorem 17.0.1 (i)]{meyn2012markov}.
\end{proof}

\section{Supplementary proofs for the uniformity principle} \label{sec-deferred-proofs} 

In this section, we present the proofs for various lemmas used in the analysis of the uniformity principle. Following~\Cref{sec-uniformity-principle}, we separate the lemmas based on whether $\dimension=1$ or $\dimension \geq 2$.

\subsection{Supplementary proofs for~\texorpdfstring{\Cref{sec-one-dimension}}{}}

\subsubsection{Proof of~\prettyref{lem:HLP}}
Part~(a) follows from \cite[Theorem~2.B.1(a)]{marshall2011majorization}.
We give a constructive proof that also establishes the identity in~(b) and the bound $T_\star\le n-1$.

\noindent \emph{Setup.}
Without loss of generality, assume $\mathbf x=\mathbf x^\downarrow$ and $\mathbf y=\mathbf y^\downarrow$
(nonincreasing order). Let $z^{(0)}=\mathbf x$. For $r\ge1$, if $z^{(r-1)}\neq\mathbf y$, choose
\[
    i \triangleq \max\{\ell: z^{(r-1)}_\ell>y_\ell\},\qquad
    j \triangleq \min\{\ell>i: z^{(r-1)}_\ell<y_\ell\},
\]
the last surplus and first deficit. Set $a=z^{(r-1)}_i$, $b=z^{(r-1)}_j$ and define
\[
    \tau_r \triangleq \min\{a-y_i, y_j-b\},\qquad t_r \triangleq  \frac{\tau_r}{a-b}\in[0,1].
\]
Update \emph{componentwise} via the $\sfT$-transform on $(i,j)$:
\[
    z_\ell^{(r)} \triangleq 
    \begin{cases}
        a' = (1-t_r) a + t_r\, b = a-\tau_r, & \ell=i,\\
        b' = (1-t_r)\, b + t_r\, a = b+\tau_r, & \ell=j,\\
        z_\ell^{(r-1)}, & \ell\notin\{i,j\}.
    \end{cases}
\]
Thus $z_i^{(r)}=a'$, $z_j^{(r)}=b'$, and all other coordinates are unchanged.

\medskip
\noindent \emph{Order preservation.}
Since $i$ is the last surplus and $j$ the first deficit, it follows that
\[
    z^{(r-1)}_{i+1}\le y_i,\qquad y_j\le z^{(r-1)}_{j-1}.
\]
Hence $a-y_i\le a-z^{(r-1)}_{i+1}$ and $y_j-b\le z^{(r-1)}_{j-1}-b$, which give
\[
    z^{(r)}_i=a'=a-\tau_r\ge z^{(r-1)}_{i+1},\qquad
    z^{(r)}_j=b'=b+\tau_r\le z^{(r-1)}_{j-1}.
\]
Moreover, because $i<j$ and $\mathbf y$ is nonincreasing,
\[
    (a-y_i)+(y_j-b)=(a-b)+(y_j-y_i)\le a-b,
\]
so $\tau_r\le \frac12(a-b)$ and $z^{(r)}_i-z^{(r)}_j=a'-b'=(1-2t_r)(a-b)\ge0$.
Therefore $z^{(r)}$ is still nonincreasing (with the usual boundary conventions).

\medskip
\noindent \emph{Exact $\ell_1$ decrease (part (b)).}
By the choice of $\tau_r$ there is no overshoot toward $\mathbf y$: $a'\ge y_i$ and
$b'\le y_j$, with equality in at least one. Hence
\[
    |z_i^{(r)}-y_i|=|a'-y_i|=|a-y_i|-\tau_r,\qquad
    |z_j^{(r)}-y_j|=|b'-y_j|=|b-y_j|-\tau_r,
\]
and for $\ell\notin\{i,j\}$, $|z_\ell^{(r)}-y_\ell|=|z_\ell^{(r-1)}-y_\ell|$. Therefore
\begin{equation}\label{eq:l1-drop}
    \|z^{(r)}-\mathbf y\|_1=\|z^{(r-1)}-\mathbf y\|_1-2\tau_r .
\end{equation}
Telescoping \eqref{eq:l1-drop} from $r=1$ to $T_\star$ yields
\[
    0 = \|z^{(T_\star)}-\mathbf y\|_1=\|\mathbf x-\mathbf y\|_1-2\sum_{r=1}^{T_\star}\tau_r,
\]
which proves the identity in~(b).

\medskip
\noindent \emph{Bound on the number of steps.}
Because $\tau_r=\min\{a-y_i, y_j-b\}$, each step fixes at least one of the two coordinates exactly ($z_i^{(r)}=y_i$ or $z_j^{(r)}=y_j$). Fixed coordinates never move again, so after at most $n-1$ steps all coordinates are fixed (the last is forced by equality of sums).
Hence $T_\star\le n-1$.

\subsubsection{Proof of~\prettyref{lem:main_decomposition-shift}}\label{sec:proof_main_decomposition-shift}

For convenience, define the matching gain
$\Delta(x, G) \triangleq M(G\oplus x)-M(G)\in\{0,1\}$,
and the connectivity indicator
\[
    \mathbf{I}(x \sim y, G) \triangleq \mathds{1}\{x\text{ and }y\text{ lie in the same connected component of }G\oplus x\oplus y\}.
\]
We establish the result by first proving deterministic bounds for the augmentation problem with two supply nodes, then extending them to the random setting through expectation.

\begin{claim}[Augmentation bound for two supply nodes]\label{clm:two_aug}
For any two new supply nodes $x, y\in[0,1]\times\mathbb{R}_{\ge0}$,
\begin{align}\label{eq:two-driver-ineq}
    \Delta(x, G)+\Delta(y, G)-\mathbf{I}(x \sim y, G)
    \le
    M(G\oplus x\oplus y)-M(G)
    \le
    \Delta(x, G)+\Delta(y, G).
\end{align}
\end{claim}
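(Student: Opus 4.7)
The plan is to prove the two inequalities separately via augmenting-path arguments against a fixed maximum matching $M^*$ of $G$, exploiting that $x$ and $y$ are $M^*$-unmatched (being outside $V(G)$) and that adding a single vertex changes the matching size by at most one.

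For the upper bound, I would fix any maximum matching $M^{xy}$ of $G\oplus x\oplus y$ and decompose the symmetric difference $M^{xy}\triangle M^*$ into alternating paths and cycles. The number of $M^*$-augmenting paths in this decomposition is exactly $M(G\oplus x\oplus y)-M(G)$, and each such path starts at an $M^*$-unmatched vertex; the only $M^*$-unmatched vertices peculiar to $G\oplus x\oplus y$ are $x$ and $y$, so at most two augmenting paths contribute. The key observation is that an augmenting path starting from $x$ cannot traverse $y$ as an internal supply vertex: in a bipartite alternating path, each internal supply node must be reached via its $M^*$-matched edge, yet $y$ is not covered by $M^*$. Consequently any such path lies entirely inside $G\oplus x$, which certifies $\Delta(x,G)=1$, and the symmetric statement holds for $y$, giving $M(G\oplus x\oplus y)-M(G)\leq \Delta(x,G)+\Delta(y,G)$.

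For the lower bound, I would split on $\mathbf{I}(x\sim y,G)$. The binding subcase is $\Delta(x,G)=\Delta(y,G)=1$; fix augmenting paths $P_x\subset G\oplus x$ and $P_y\subset G\oplus y$ relative to $M^*$. If $x\not\sim y$ in $G\oplus x\oplus y$, then the component of $x$ in $G\oplus x$ embeds into the component of $x$ in $G\oplus x\oplus y$ (and symmetrically for $y$), and these two ambient components are disjoint, so $P_x$ and $P_y$ are vertex-disjoint; hence $M^*\triangle P_x\triangle P_y$ is a matching of size $|M^*|+2$ in $G\oplus x\oplus y$, establishing the inequality with $\mathbf{I}=0$. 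If instead $x\sim y$, the required inequality reduces to $M(G\oplus x\oplus y)-M(G)\geq \Delta(x,G)+\Delta(y,G)-1$, which follows from monotonicity, $M(G\oplus x\oplus y)\geq M(G\oplus x)=M(G)+\Delta(x,G)$, combined with $\Delta(y,G)\leq 1$. The remaining subcases where at least one of $\Delta(x,G),\Delta(y,G)$ vanishes are immediate from the same monotonicity bound, since the right-hand side is then at most $1$.

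I expect the main subtlety to be the connectivity embedding used in the disjoint-components branch, namely justifying that an augmenting path from $x$ in $G\oplus x$ stays inside the connected component of $x$, and that this component embeds into a single component of $G\oplus x\oplus y$ when $x\not\sim y$. Once this containment is spelled out, the two augmenting paths live in disjoint subgraphs and the standard simultaneous augmentation finishes the argument; everything else reduces to bipartite alternating-path accounting.
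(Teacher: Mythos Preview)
Your proposal is correct and follows essentially the same augmenting-path strategy as the paper: fix a maximum matching $M^*$ of $G$, observe that every $M^*$-augmenting path in $G\oplus x\oplus y$ must have $x$ or $y$ as an endpoint, and then bound the number of such vertex-disjoint paths above and below. Your presentation is in fact slightly more careful than the paper's on one point: you explicitly argue that an augmenting path from $x$ cannot use $y$ internally (since internal supply vertices on an alternating path are $M^*$-matched) and hence lies in $G\oplus x$, which is exactly what justifies $|\mathcal P|\le \Delta(x,G)+\Delta(y,G)$; the paper states this inequality with only the terse remark ``at most one augmenting path can start at each supply node.'' For the lower bound, the paper packages both cases through the single quantity $|\mathcal P|$, whereas you split into an explicit simultaneous-augmentation construction when $x\not\sim y$ and a monotonicity argument when $x\sim y$; these are the same content in different clothing.
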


We now take expectations of the bounds in~\Cref{clm:two_aug}. For any fixed realization of
$G$, $x$, and $y$, we have
\[
    \Delta(x, G) + \Delta(y, G) - \mathbf{I}(x \sim y, G)
    \le
    M(G \oplus x \oplus y) - M(G)
    \le
    \Delta(x, G) + \Delta(y, G).
\]
Taking expectations over $\driver_x,\driver_y \stackrel{\text{i.i.d.}}{\sim} \mathbb{D}_1 $ and $G \sim \mathbb{G}(m,\servicevec)$, and using linearity of expectation with the definitions of $\delta_m(\service,\servicevec)$ and $\rho_m^{\servicevec}(\service_x, \service_y)$, we obtain
\begin{align*}
    \delta_m(\service_x,\servicevec) + \delta_m(\service_y,\servicevec) - \rho_m^{\servicevec}(\service_x, \service_y)
    &\le \Expect[M(G \oplus x \oplus y)] - \Expect[M(G)]\\
    &= \mu_m(\servicevec  \oplus \service_x  \oplus \service_y) - \mu_m(\servicevec)\\
    &\le \delta_m(\service_x,\servicevec) + \delta_m(\service_y,\servicevec),
\end{align*}
Thus establishing that any service ranges $\servicevec$ and any $\service_x,\service_y \in \reals_{\ge 0}$,
\begin{align}\label{eq:two-driver-ineq-random}
    \deltaM(\service_x,\servicevec) + \deltaM(\service_y,\servicevec) - \rhoS{\servicevec}(\service_x, \service_y)
    & \le \muM(\servicevec  \oplus \service_x  \oplus \service_y) - \muM(\servicevec) \le \deltaM(\service_x,\servicevec) + \deltaM(\service_y,\servicevec).
\end{align}
To compare different range configurations, we apply~\prettyref{eq:two-driver-ineq-random} twice. For adding two supply nodes with service range $(\base_1 + \shift, \base_2 - \shift)$, the lower bound gives
\[ 
    \muM \left( \servicevec \oplus (\base_1 + \shift) \oplus (\base_2 - \shift)\right) - \muM\left(\servicevec\right) \geq \deltaM\left(\base_1 + \shift, \servicevec\right) + \deltaM \left(\base_2 - \shift, \servicevec \right) - \rhoS{\servicevec}\left(\base_1 + \shift, \base_2 - \shift \right).
\]
Similarly, for adding two supply nodes with ranges $(\base_1, \base_2)$, the upper bound gives
\[ 
    \muM \left( \servicevec \oplus \base_1 \oplus \base_2 \right) - \muM\left(\servicevec\right) \leq \deltaM\left(\base_1, \servicevec\right) + \deltaM \left(\base_2, \servicevec \right).
\]
Subtracting the second inequality from the first yields
\begin{align}
    & \muM\left(\servicevec \oplus  (\base_1+\shift) \oplus (\base_2-\shift) \right)-\muM \left(\servicevec \oplus \base_1 \oplus  \base_2 \right) \nonumber \\
    & ~~~~  \geq \deltaM(\base_1+\shift,\servicevec) + \deltaM(\base_2-\shift,\servicevec) - \deltaM(\base_1,\servicevec) - \deltaM(\base_2,\servicevec)
    - \rhoS{\servicevec} \left(\base_1+\shift, \base_2-\shift\right). 
\end{align}

\begin{proof}[Proof of \prettyref{clm:two_aug}]
Fix a maximum matching $\calM$ of $G$ with $|\calM|=M(G)$.
By Berge's lemma,\footnote{An $\calM$-augmenting path is an alternating path (i.e. a path whose edges alternate between matching edges of $\calM$ and non-matching edges), such that its endpoints are unmatched under $\calM$.}
\[
    \Delta(x, G)=1 \iff \text{there exists an $\calM$-augmenting path starting at $x$ in } G\oplus x,
\]
and similarly for $y$.

Let $H \triangleq G\oplus x\oplus y$, and let $\mathcal{P}$ be a maximum family of
pairwise vertex-disjoint $\calM$-augmenting paths in $H$ starting at either $x$ or $y$. Since $G$ differs from $H$ only by edges incident
to $x$ or $y$, and $G$ has no $\calM$-augmenting paths, every $\calM$-augmenting
path in $H$ must include $x$ or $y$ as an endpoint.
Therefore $|\mathcal{P}|\in\{0,1,2\}$ and
\[
    M(H) = M(G)+|\mathcal{P}|.
\]
 \emph{Upper bound.}
Since at most one augmenting path can start at each supply node,
\[
    M(G\oplus x\oplus y)-M(G) = |\mathcal{P}| \le \Delta(x, G)+\Delta(y, G).
\]
\emph{Lower bound.}
If $x$ and $y$ lie in distinct connected components of $H$, their augmenting paths are disjoint, so
$|\mathcal{P}|\ge \Delta(x, G)+\Delta(y, G)$.
If they lie in the same component, the paths may intersect, destroying at most one:
\[
    |\mathcal{P}| \ge \Delta(x, G) + \Delta(y, G) - 1 = \Delta(x, G) + \Delta(y, G) - \mathbf{I}(x\sim y, G).
\]
Combining both cases yields \eqref{eq:two-driver-ineq}.
\end{proof}

\subsubsection{Proof of \prettyref{lem:rarelysamecomponent}}\label{sec:proofoflemmararelysamecomponent}

Partition $[0,1]$ into intervals of length $\ell \triangleq  \frac{2\MPar }{n}$:
\[
    I_i   \triangleq   \left [(i-1) \times \frac{2\MPar }{n}\, , \,  i \times \frac{2\MPar }{n}\right]\cap[0,1], \qquad  \text{ for } i=1,\dots,J, \text{ where } J \triangleq \left\lceil \frac{n}{2\MPar }\right\rceil.
\]
For demand nodes, define $\HasRider_i \triangleq \left\{\riderset\cap I_i\neq\emptyset\right\}$. The events $\HasRider_1,\dots,\HasRider_J$
are $1$–negatively correlated (e.g., Lemma~1.10.26 of~\cite{DOERR18}), hence for any
$a\in\mathbb{N}$ and any run $i_0,\dots,i_0+a$,
\[
    \Prob \left(\bigcap_{t=0}^{a} \HasRider_{i_0+t}\right)
    \le \prod_{t=0}^{a} \Prob(\HasRider_{i_0+t})
    \le \left[1-(1-\smoothness \ell)^{m}\right]^{a}.
\]
A union bound over the $J$ starting positions gives
\[
    \Prob \left(\calX \right) \triangleq \Prob \left(\exists\text{ a run of length }a\text{ with all }\HasRider_i\right) 
    \le J\exp \left(-a(1-\smoothness\ell)^{m}\right).
\]
Since $n\ge 4\MPar\smoothness $, we have $\ell\, \smoothness\le  \frac{1}{2}$ and thus $(1-\smoothness \, \ell)^{m}\ge e^{-2\smoothness \ell m}=e^{- \frac{4 \smoothness\MPar  m}{n}}$. Therefore,
\[
    \Prob(\calX) \leq  \left\lceil \frac{n}{2\MPar }\right\rceil \exp \left(-ae^{- \frac{4 \smoothness  \MPar  m}{n}}\right).
\]
Choose
\[
    a  \triangleq  \left\lceil 2\exp\left( \frac{4 \smoothness\MPar  m}{n}\right)\log n \right\rceil
    \quad\Longrightarrow \quad \Prob(\calX) \le  \frac{1}{2 \MPar n}.
\]
Work on the complement event $\overline{\calX}$. Any segment of length at least $(a + 1)\ell$ contains an $I_i$ with no demand nodes. Because every edge in $G\oplus x\oplus y$ spans distance at most $ \frac{\MPar }{n}$ (all ranges $\le \MPar $), no alternating path can cross a rider–free gap of length $\ell= \frac{2\MPar }{n}$. Hence, on $\overline{\calX}$,
\[
    \mathbf{I} \left(x\sim y  , G\right)=1 \Longrightarrow\ \left|\driver_y-\driver_x\right| \le (a+2)\ell =  \frac{2 \MPar  (a+2)}{n}.
\]
Therefore, using $\Prob \left(|U-V|\le z\right)\le 2\smoothness z$ for independent $U,V\sim\mathbb{D}_1$,
\[
    \Expect \left [\mathbf{I} \left(x\sim y  , G\right)\right] 
    \le
    \Prob \left(\left|\driver_y-\driver_x\right|\le  \frac{2 \MPar  (a+2)}{n}\right)  + \Prob(\calX)
    \le
    \frac{4\smoothness \MPar }{n}(a+2) +  \frac{1}{2 \MPar n},
\]
where the last inequality holds because
\[
    \Prob \left(\left|\driver_y-\driver_x\right|\le  \frac{2 \MPar  (a+2)}{n}\right)  \leq 2\cdot  \frac{2 \smoothness \MPar  (a+2)}{n}   .
\]
By substituting the chosen $a$, the desired result follows.

\subsubsection{Proof of \prettyref{lmm:service_calR_+}}
Fix $G$. A new supply node at location $\driver$ with range $\newrange$ increases the matching if and only if it can reach an unmatched demand node $\rider$:
\[
    \Delta((\driver,\newrange), G)=1 \iff \exists \,  \rider \in\riderset_+(G) \text{ with } |\driver-\rider |\le \newrange/n.
\]
This equivalence follows from Berge's lemma: the new supply node creates an augmenting path of length one precisely when it connects to an unmatched demand node.  {For $\driver \sim \mathbb{D}_1$ with density $g_1$, the success probability equals the corresponding measure of locations that can reach $\riderset_+(G)$, yielding \eqref{eq:service_calR_+}.
}

\subsubsection{Proof of~\prettyref{lem:concavitytwoterms}}\label{sec:proofofconcavitytwoterms}

We decompose the left hand side of~\eqref{eq:shifted_lower_bound_general} as $\termI + \termII$, where
\begin{align*}
    \termI \triangleq \widetilde{\delta}_m(\base_1+\shift,\servicevec) - \widetilde{\delta}_m(\base_1,\servicevec), ~~~~~~~~
    \termII \triangleq \widetilde{\delta}_m(\base_2,\servicevec) - \widetilde{\delta}_m(\base_2-\shift,\servicevec).
\end{align*}
Using the definition~\eqref{eq:forward-window-surrogate} of $\widetilde{\delta}_m$, we may write
\begin{align*} 
    \termI & \!=\! \E\int_0^1 \! \mathbf{1}\Big\{\riderset_+(G)\cap \Big[x, x+ \frac{2 \base_1}{n} \Big] \!=\! \emptyset \Big\} \cdot \mathbf{1} \Big\{ \riderset_+(G)\cap \Big[x+ \frac{2 \base_1}{n}, x+ \frac{2 \base_1+2\shift}{n} \Big] \!\neq\! \emptyset\Big\} \,  g_1(x) \, \diff x, 
    \\
    \termII & \!=\! \E\int_0^1 \! \mathbf{1}\Big\{\riderset_+(G)\cap \Big[x, x+ \frac{2 (\base_2 \!-\! \shift)}{n} \Big] \!=\! \emptyset \Big\} \cdot \mathbf{1} \Big\{\riderset_+(G)\cap \Big[x+ \frac{2( \base_2 \!-\! \shift)}{n}, x+ \frac{2 \base_2}{n} \Big] \! \neq  \! \emptyset \Big\}  \,  g_1(x) \, \diff x.
\end{align*}
We may now decompose $\termI = \termI_A + \termI_B$, with 
\begin{align}
    \termI_A & \! \triangleq \mathbb{E} \! \int_0^1  \! \mathbf{1} \Big\{ \riderset_+(G) \cap \Big[ x -  \frac{\shift}{n},x +  \frac{2 \base_1}{n}\Big] = \emptyset \Big\}  \mathbf{1}\Big\{\riderset_+ (G) \cap \Big[ x +  \frac{2 \base_1}{n}, x + \frac{2 (\base_1 \!+\! \shift) }{n}\Big] \neq \emptyset \Big\}  g_1(x) \, \diff x,  \label{eq:termI_A}\\
    \termI_B & \triangleq \mathbb{E} \int_0^1 \mathbf{1} \left\{ \Gap_\shift(\riderset_+(G),   x) \right\}   g_1(x)  \, \diff x,
\end{align}
where for any $x \in [0,1]$ and any set $\mathcal{L}$ of locations, the event $\Gap_{\shift}(\mathcal{L},x)$ is defined as
\[
    \Gap_\shift(\calL, x) \triangleq \left\{
    \calL\cap [x- \frac{\shift}{n}, x] \neq \emptyset,    
    \calL\cap [x, x+ \frac{2\base_1}{n}] = \emptyset,    
    \calL\cap [x+ \frac{2\base_1}{n}, x+ \frac{2\base_1+2\shift}{n}] \neq \emptyset
    \right\}.
\]
A change of variables \(u = x+ \frac{2(\base_2-\base_1-\shift)}{n}\) in~\eqref{eq:termI_A} yields
\(
    \termII \le \termI_A +  \frac{2(\base_2-\base_1-\shift)}{n} \, .
\)
Therefore,
\begin{align}
    \termI - \termII  \ge \termI_B -  \frac{2(\base_2-\base_1-\shift)}{n}. \label{eq:IminusII-first}
\end{align}

\paragraph{Lower bounding the gap probability.} The key insight is that the pattern $\Gap_{\shift}(\riderset_+(G),x)$ occurs when demand nodes exhibit a specific spatial configuration and nearby supply nodes are absent. Let
\[
    I_x=\left(x,x+ \frac{2 \base_1}{n}\right), ~~~~
    L'_x=\left(x - \frac{\shift}{n},x\right), ~~~~
    R'_x=\left(x+ \frac{2 \base_1}{n},x+ \frac{2 \base_1+\shift}{n}\right).
\]
Define the \emph{inner pattern}
\[
    \InnerGap_\shift(\calL,x) \triangleq  \{\calL\cap L'_x\neq\emptyset, \calL\cap I_x=\emptyset, \calL\cap R'_x\neq\emptyset\},
\]
and the supply node void event
\[
    \NoSupply(x) \triangleq  \left\{\driverset\cap\left(x- \frac{\MPar +\shift}{n}, x+ \frac{2 \base_1+\MPar +\shift}{n}\right)=\emptyset\right\}.
\]
On $\NoSupply(x)$, all demand nodes in $L'_x\cup R'_x$ are unmatched (all ranges $\le \MPar $), so
\[
    \InnerGap_\shift(\riderset, x)   \cap   \NoSupply(x) \subseteq  \Gap_\shift(\riderset_+(G), x).
\]
By independence of supply nodes and demand nodes,
\[
    \termI_B  \ge \int_{0}^{1} \Prob(\InnerGap_\shift(\riderset,x)) \cdot \Prob(\NoSupply(x)) \, \diff x.
\]
Since the void interval for the event $\NoSupply(x)$ has length $\leq  \frac{2\MPar +2 \base_1+2\shift}{n} \leq  \frac{4\MPar }{n}$, for $n\geq 8\MPar \smoothness$,
\begin{align} \label{eq:no-driver-bound}
    \Prob(\NoSupply(x))  \ge \left(1- \frac{4\MPar \smoothness}{n}\right)^n \ge e^{-8\MPar \smoothness}.
\end{align}

\begin{claim}[Pattern probability lower bound]\label{clm:pattern-lb-general}
For $x\in\left(\shift/n, 1-(2 \base_1+\shift)/n\right)$, if $n\ge 4 \, 
\smoothness\,\MPar $, then
\begin{align} \label{eq:inner-gap-bound}
    \Prob(\InnerGap_\shift(\riderset,x))  \geq \frac{7}{48}\left[1-\frac{\smoothness^4+1}{(\smoothness^2+1)^2}\right]e^{-4\base_1\smoothness\betaPar} \, (\MPar\smoothness)^{-3} \shift ^3  \,.  
\end{align}
\end{claim}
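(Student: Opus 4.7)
My plan is to compute $\Prob(\mathsf{InnerGap}_\shift(\riderset, x))$ directly from the multinomial distribution of the $m$ i.i.d.\ demand points across the three disjoint sub-intervals $L'_x, I_x, R'_x$. Setting $p_L \triangleq \int_{L'_x} g_2$, $p_I \triangleq \int_{I_x} g_2$, $p_R \triangleq \int_{R'_x} g_2$, inclusion--exclusion on the empty-interval events yields
\begin{equation*}
\Prob(\mathsf{InnerGap}) = (1-p_I)^m - (1-p_I-p_L)^m - (1-p_I-p_R)^m + (1-p_I-p_L-p_R)^m,
\end{equation*}
which is the mixed discrete second difference of $(s,t)\mapsto(1-p_I-s-t)^m$. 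Rewriting it as a Fubini double integral of the cross partial derivative gives the cleaner representation
\begin{equation*}
\Prob(\mathsf{InnerGap}) = m(m-1) \int_0^{p_L}\!\!\int_0^{p_R} (1-p_I-s-t)^{m-2}\,\diff t\,\diff s.
\end{equation*}

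Next I would lower-bound each factor using the regularity of $g_2$. The two-sided density bound $g_2\in[1/\smoothness,\smoothness]$ gives $\shift/(n\smoothness) \le p_L,p_R \le \smoothness\shift/n$ and $p_I \le 2\smoothness\base_1/n$, and the restriction on $x$ ensures that all three sub-intervals lie in $[0,1]$. Since $\base_1+\shift \le \base_2 \le \MPar$, for $n\ge 4\smoothness\MPar$ the sum $p_I+p_L+p_R$ stays below $1/2$, so the elementary inequality $(1-u)^{m-2}\ge e^{-2(m-2)u}$ applied pointwise to the integrand gives
\begin{equation*}
(1-p_I-s-t)^{m-2} \ge e^{-4\smoothness(\base_1+\shift)\betaPar}
\end{equation*}
uniformly over $(s,t)\in[0,p_L]\times[0,p_R]$. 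Combining this with $m(m-1) \ge \betaPar^2 n^2/2$ and the product lower bound $p_L p_R \ge \shift^2/(n\smoothness)^2$, then using $\betaPar\ge 1/\MPar$ and $\shift\le\MPar/2$ to trade an extra factor of $\shift$ for $\MPar$, produces a bound of the target form $C \cdot e^{-4\smoothness\base_1\betaPar} (\MPar\smoothness)^{-3}\shift^3$.

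The main obstacle is matching the exact numerical constants, in particular the Schur-like coefficient $1-(\smoothness^4+1)/(\smoothness^2+1)^2 = 2\smoothness^2/(\smoothness^2+1)^2$ and the factor $7/48$. The Schur-like factor should emerge from a sharp accounting of how the two-sided density bound constrains the pair $(p_L,p_R)$: under the extremal allocation where $g_2$ sits near its floor on one of $L'_x, R'_x$ and near its ceiling on the other, the product $p_L p_R$ attains its worst case, whose ratio to the balanced case is exactly $2\smoothness^2/(\smoothness^2+1)^2$. The $7/48$ prefactor and the downgrade from the natural $\shift^2$-scaling to the claimed $\shift^3$-scaling are bookkeeping artefacts absorbed through $\shift\le\MPar/2$ and the standard exponential-vs-polynomial bounds above; the conceptual core is the short chain multinomial $\Rightarrow$ double integral $\Rightarrow$ pointwise lower bound described above.
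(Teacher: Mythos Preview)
Your inclusion--exclusion identity and the double-integral rewriting are correct. The gap is in the next step: lower-bounding the integrand uniformly by $e^{-4\smoothness(\base_1+\shift)\betaPar}$ is valid, but it leaves an extra factor $e^{-4\smoothness\shift\betaPar}$ that you then silently drop when you claim the result is ``of the target form $C\cdot e^{-4\smoothness\base_1\betaPar}(\MPar\smoothness)^{-3}\shift^3$''. That factor cannot be absorbed into a constant depending only on $\smoothness$. Concretely, with $\smoothness=\betaPar=1$, $\base_1=0$, $\MPar=10$, $\shift=5$, your route yields at most $\tfrac{25}{2}e^{-20}\approx 2.6\times10^{-8}$, whereas the claimed right-hand side is $\tfrac{7}{768}\approx 9\times10^{-3}$. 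The issue is structural, not bookkeeping: by replacing $(1-p_I-s-t)^{m-2}$ with a single constant you discard the saturation of $\Prob(N_L\ge1,\,N_R\ge1)$ as $m(p_L+p_R)$ grows, and what remains unavoidably decays exponentially in $\shift\betaPar$.

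The paper sidesteps this by first factoring out the event $\{I_x\text{ empty}\}$, which isolates $e^{-4\smoothness\base_1\betaPar}$ with no $\shift$ in the exponent, and then bounding $\Prob(N_L\ge1,\,N_R\ge1)$ via $\Prob(K\ge2)\ge h(u)$ with $h(u)=1-e^{-2u}-2ue^{-u}$ and $u=\betaPar\shift/\smoothness$. The decisive move is to use \emph{monotonicity of $h$} together with $\betaPar\ge1/\MPar$ to pass to the smaller argument $u'=\shift/(\MPar\smoothness)\le\tfrac12$ \emph{before} applying the Taylor bound $h(u')\ge\tfrac{7}{48}(u')^3$; this is precisely what converts the potentially large $\betaPar\shift$ into the controlled $\shift/\MPar$ and produces the $\shift^3$ scaling without any exponential penalty in $\shift$. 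As a side remark, the Schur-like factor $2\smoothness^2/(\smoothness^2+1)^2$ does not arise from the worst-case of the product $p_Lp_R$ as you suggest, but from the worst-case conditional probability that $K\ge2$ points, each landing in $L'_x$ with probability $q_L=p_L/(p_L+p_R)\in\bigl[\tfrac{1}{1+\smoothness^2},\tfrac{\smoothness^2}{1+\smoothness^2}\bigr]$, hit both halves: namely $1-q_L^2-q_R^2=2q_Lq_R\ge 2\smoothness^2/(1+\smoothness^2)^2$.
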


Let \(J_\shift\triangleq[\shift/n, 1-(2\base_1+\shift)/n]\). Using the nonnegativity of the integrand and restricting to \(x\in J_\shift\),
\begin{align*}
    \termI_B
    &\ge \int_{J_\shift} \Prob(\InnerGap_\shift(\riderset,x))\cdot \Prob(\NoSupply(x)) \, \diff x \\
    & \ge \frac{7 }{48  }\left[1-\frac{\smoothness^4+1}{(\smoothness^2+1)^2}\right]\left(1- \frac{2 \base_1+2\shift}{n}\right) e^{-8\MPar \smoothness  - 4 \base_1\smoothness\betaPar} (\MPar \smoothness)^{-3}\shift ^3   \,,
\end{align*}
where the last step used~\eqref{eq:no-driver-bound} and~\eqref{eq:inner-gap-bound}. Combining with \eqref{eq:IminusII-first} gives
\begin{align*}
    \termI-\termII
    & \ge  \frac{7 }{48  }\left[1-\frac{\smoothness^4+1}{(\smoothness^2+1)^2}\right] e^{-8\MPar \smoothness  - 4 \base_1\smoothness\betaPar} (\MPar \smoothness)^{-3}\shift ^3 -  \frac{2 \base_1+2\shift}{n} -  \frac{2( \base_2-\base_1-\shift)}{n} \\
    & \ge  \frac{7 }{48  }\left[1-\frac{\smoothness^4+1}{(\smoothness^2+1)^2}\right] e^{-8\MPar \smoothness  - 4 \base_1\smoothness\betaPar} (\MPar \smoothness)^{-3}\shift ^3 -  \frac{2 \base_1+2\shift}{n}  -\frac{2 \base_2}{n}   \\
    & \ge  {8} \, \CMB  \, \shift ^3  -\frac{2 \base_2}{n}   \,,
\end{align*}
where the first inequality holds because $\frac{\shift}{\MPar} \le \frac{1}{2}$ by construction, and the last inequality holds by $\base_1 \le \MPar$ and \eqref{eq:def-CMB-NMB-alphaMB}. Finally, from \eqref{eq:delta_tilde_general} and \eqref{eq:shifted_lower_bound_general},
\begin{align*}
    \deltaM(\base_1+\shift,\servicevec) + \deltaM(\base_2-\shift,\servicevec) - \deltaM(\base_1,\servicevec) - \deltaM(\base_2,\servicevec)
    & \geq \termI-\termII -  \frac{12 \smoothness \MPar}{n} \\
    & \geq  {8} \CMB  \, \tau^3 - \frac{14 \smoothness \MPar}{n}\, .
\end{align*}

\begin{proof}[Proof of~\prettyref{clm:pattern-lb-general}]
Let $I_x=[x,x+2\base_1/n]$. Then 
\[ 
    \Prob(\riderset\cap I_x=\emptyset)\ge(1-2\smoothness \, \base_1/n)^m\ge e^{-4\base_1\smoothness\betaPar}, ~~~~ \mbox{for } n\ge 4\smoothness\MPar \ge 4 \base_1.
\]
On the union $L'_x\cup R'_x$ (total length $2\shift/n$), the count of demand nodes $K\sim \Binom(m,2\shift/n)$ satisfies
\begin{align*}
    \Prob(K\ge 2)
    & \geq 1-\exp\left(-2\, \frac{\betaPar \shift}{\smoothness}\right)-2\,\frac{\betaPar \shift}{\smoothness} \exp\left(-\frac{\betaPar \shift}{\smoothness}\right) \triangleq h\left(\frac{\betaPar \shift}{\smoothness}\right) \ge h\left(\frac{\shift}{\MPar \smoothness}\right) \ge \frac{7}{48 }(\MPar\smoothness)^{-3} \shift ^3  \,, 
\end{align*}
where the second inequality holds because the function $h(u)$ is monotone increasing on $u$ and $\frac{\betaPar \shift}{\smoothness}\ge  \frac{\shift}{\MPar \smoothness}$ given that $\MPar \ge \betaPar^{-1}$, and the last inequality holds because
\[
    e^{-u} \le 1-u+ \frac{u^2}{2}- \frac{u^3}{6}+ \frac{u^4}{24},
    \qquad
    e^{-2u} \le 1-2u+2u^2- \frac{4}{3}u^3+ \frac{2}{3}u^4,
\]
for any $u\ge 0$, we have 
\begin{align*}
    h(u) & \geq \left(1 - \left(1-2u+2u^2- \frac{4}{3}u^3+ \frac{2}{3}u^4\right)\right)
     - 2u\left(1-u+ \frac{u^2}{2}- \frac{u^3}{6}+ \frac{u^4}{24}\right) \\
    &=  \frac{1}{3}u^3 -  \frac{1}{3}u^4 -  \frac{1}{12}u^5 =   u^3\!\left( \frac{1}{3}- \frac{u}{3}- \frac{u^2}{12}\right) \ge \frac{7}{48} u^3 \,,
\end{align*}
given $0\le u\le \frac12 $. 
Conditional on $K\ge 2$, both equal halves are hit with probability at least $1-\frac{\smoothness^4+1}{(\smoothness^2+1)^2}$. Multiplying the bounds gives~\eqref{eq:inner-gap-bound}.

\end{proof}

\subsection{Supplementary proofs for~\texorpdfstring{\Cref{sec-high-dimension}}{}}

\subsubsection{Proof of~\prettyref{lem: high-dim-delta-formula}}
 
In the graph $G_{\varepsilon}$, a new supply node can augment a maximum matching precisely when it connects to a node in $\riderset_+(G_{\varepsilon})$. When demand is distributed uniformly, the success probability exactly equals the Lebesgue measure of locations that can reach $\riderset_+(G_{\varepsilon})$. However, a new supply node $\driver$ with service range $\newrange$, connects to an unmatched demand node $ \rider_j \in \riderset_+(G_{\varepsilon})$ precisely when 
\begin{align} 
 \driver \in B\big(\rider_j, (\ell/n)^{1/\dimension} \big) ~~~~\mbox{ and } ~~~~ \driver \in \tcell(\rider_j). \label{eq:augment-2}
\end{align}
Since this is true for any $\rider_j \in \riderset_+$, we have that 
\begin{align*}
    \deltaM^{\varepsilon}(\newrange, \servicevec) &= \, \mathbb{E}_{G \sim \mathbb{G}(m,\servicevec)} \bigg[ \, \mathbb{E}_{\driver \sim \Uniform([0,1]^\dimension)} \bigg[ M( G_{\varepsilon} \oplus (\driver,\service)) - M(G_{\varepsilon}) \bigg] \bigg]  
    \\
    & = \, \mathbb{E}_{G \sim \mathbb{G}(m,\servicevec)} \bigg[ \, \mathbb{E}_{\driver \sim \Uniform([0,1]^\dimension)} \bigg[ \mathbf{1}\big\{ \exists \, j:  \driver \in B(\rider_j, (\newrange/n)^{1/\dimension}) \cap  \tcell(\rider_j) \big\} \bigg] \bigg]  
    \\
    & \stepa{=} \mathbb{E}_{G \sim \mathbb{G}(m,\servicevec)} \bigg[ \, \mathbb{P}\,\bigg( \bigcup_{\rider_j \in \riderset_+} \left( \driver \in B(\rider_j,(\newrange/n)^{1/\dimension}) \cap \tcell(\rider_j) \right) \bigg) \bigg] 
    \\
    & \stepb{=} \mathbb{E}_{G \sim \mathbb{G}(m,\servicevec)} \bigg[ \, \mathrm{Vol}\, \bigg(\bigcup_{\rider_j \in \riderset_+} \left( B(\rider_j, (\newrange/n)^{1/\dimension}) \cap \tcell(\rider_j) \right) \bigg)\bigg] 
    \\
    & \stepc{=} \mathbb{E}_{G \sim \mathbb{G}(m,\servicevec)} \bigg[ \, \mathrm{Vol}\, \bigg( \bigcup_{\rider_j \in\riderset_+} B_{\newrange}(\rider_j) \bigg) \bigg],
\end{align*}
where (a) uses~\eqref{eq:augment-2}; (b) uses that $\driver$ is uniformly distributed, so the probability measure is precisely the Lebesgue measure $\mathrm{Vol}(\cdot)$; (c) uses the definition of $B_{\newrange}(\cdot)$. This concludes the proof.

\subsubsection{Proof of~\prettyref{lmm:concaverho2}}
Partition $\riderset_{+}$ as $\riderset^{{*}}_+ \cup \riderset^{\overline{*}}_+$, where $\riderset^{{*}}_+$ are the nodes in the special pattern cells, and $\riderset^{\overline{*}}_+$ is its complement, i.e. the set of non-special cells. We therefore have
\begin{align*}
    \sum_{A \in \patterncells\setminus \activepatterncells} \!\!\!\! \text{Vol} \bigg( \bigcup_{ \rider_j \in \riderset_+}  \big( B_{{\newrange}}(\rider_j) \cap A \big) \bigg) & = \!\! \sum_{A \in \patterncells\setminus \activepatterncells} \!\!\!\! \text{Vol} \bigg( \bigcup_{ \rider_{j'} \in \riderset_+^{*}}  \big( B_{{\newrange}}(\rider_{j'}) \cap A \big)   \bigcup_{ \rider_{j} \in \riderset_+^{\overline{*}}}  \big( B_{{\newrange}}(\rider_{j}) \cap A \big) \bigg) 
    \\
    & \stepa{=} \!\! \sum_{A \in \patterncells\setminus \activepatterncells} \!\!\!\! \text{Vol} \bigg( \bigcup_{ \rider_{j} \in \riderset_+^{\overline{*}}}  \big( B_{{\newrange}}(\rider_{j}) \cap A \big) \bigg) 
    \stepb{=} \text{Vol} \bigg( \bigcup_{ \rider_{j} \in \riderset_+^{\overline{*}}} B_{{\newrange}}(\rider_{j})  \bigg) \,, 
\end{align*}
where given ${\newrange} \in [0,\MPar]$, (a) follows from~\ref{prop:insidecell}, which implies that $B_{{\newrange}}(\rider_{j'}) \cap A = \emptyset$ for any $\rider_{j'}$ in a special cell, and for any non-special pattern cell $A$; (b) follows from~\ref{prop:distance}, since any $\rider_j$ outside a special pattern cell satisfies $B_{{\newrange}}(\rider_j) \cap A' = \emptyset$ for any special pattern cell $A'$. 

It is convenient to write out the set in the last expression as a disjoint union, by using the Voronoi cells\footnote{For a ground set $X$ and a seed set $S \subseteq X$, the Voronoi cell of a point $s\in S$ is the set of all points in $X$ that are closer to $s$ than any other point of $S$.} of $\riderset_+^{\overline{*}}$. Let $V_{\rider_j}(\riderset_+^{\overline{*}})$ denote the Voronoi cell of $\rider_j$. Then 
\begin{align}
    \text{Vol}\bigg(
    \bigcup_{\substack{\rider_j \in \riderset^{\overline{*}}_+}}
     B_{\newrange}(\rider_j)
    \bigg)
    =
    \sum_{\rider_j \in \riderset^{\overline{*}}_+} f_j({\newrange}), ~~~~ \mbox{ where } f_j({\newrange}) \triangleq \text{Vol}\left(
     B_{\newrange}(\rider_j)\cap V_{\rider_j}(\riderset^{\overline{*}}_+)\right) \, . \label{eq:voronoi-decomposition}
\end{align}
To see why~\eqref{eq:voronoi-decomposition} holds, note that:
\begin{enumerate}
    \item[(i)] The sets $\big( B_{\newrange}(\rider_j) \cap V_{\rider_j}(\riderset^{\overline{*}}_+)\big)_{j}$ are pairwise disjoint, by definition of the Voronoi cells.
    \item[(ii)] For any distinct $\rider_i$ and $\rider_j$ in $\riderset^{\overline{*}}_+$, and any point $y \in B_{\newrange}(\rider_i) \cap V_{\rider_j}(\riderset^{\overline{*}}_+)$, we have
    \[ 
    \| y - \rider_j\|_2 \leq \|y - \rider_i \|_2 \leq (\newrange/n)^{1/\dimension},
    \]
    i.e. inside $V_{\rider_j}(\riderset^{\overline{*}}_+)$, being in any $B(\rider_i, (\newrange/n)^{1/\dimension})$ implies being in $B \left(\rider_j, (\newrange/n)^{1/\dimension} \right)$.
\end{enumerate}
Together, (i) and (ii) imply that the sets $\big( B_{\newrange}(\rider_j) \cap V_{\rider_j}(\riderset^{\overline{*}}_+)\big)_{j}$ partition $\medcup_{\rider_j \in \riderset_+^{\overline{*}}} B_{\newrange}(\rider_j)$, implying~\eqref{eq:voronoi-decomposition}.

It suffices to show that each term $f_j({\newrange})$ is concave, for any realization $\riderset_+^{\overline{*}}$. Recall that $\tcell(\rider_j)$ is the trimming cell containing $\rider_j$, and set
\[
    W  =  \left(\tcell(\rider_j)\,\cap\, V_{\rider_j}(\riderset^{\overline{*}}_+)\right).
\]
$W$ is the intersection of two convex sets, and is itself convex. Without loss of generality, we may translate space so that $\rider_j = 0$, and so
\[
    f_{j}({\newrange}) = \mathrm{Vol}\left( B\left(0, ({\newrange}/n)^{1/\dimension}\right) \cap W \right).
\]
Let $\mathcal{H}^{\dimension-1}$ denote the surface area (Hausdorff) measure in dimension $\dimension-1$. 
By the coarea formula in polar coordinates,
\[
    \mathrm{Vol}\left( B\left(0, ({\newrange}/n)^{1/\dimension}\right) \cap W
    \right)
    =
    \int_0^{({\newrange}/n)^{1/\dimension}} \mathcal{H}^{\dimension-1}(S_s \cap W)\, \diff s,
\]
where $S_s = \partial B(0,s)$. A change of variable $s = (t/n)^{1/\dimension}$ yields
\[
    f_{j}({\newrange}) = \int_0^{\newrange} \frac{1}{\dimension\,n^{1/\dimension}} \,
       t^{-\frac{\dimension-1}{\dimension}}\,
       \mathcal{H}^{\dimension-1}\left(S_{(t/n)^{1/\dimension}} \cap W\right)\, \diff t.
\]
Hence, for almost every ${\newrange}>0$,
\[
    f_{j}'({\newrange}) = \frac{1}{\dimension\,n^{1/\dimension}}\, {\newrange}^{-\frac{\dimension-1}{\dimension}}\,
     \mathcal{H}^{\dimension-1}\left(S_{({\newrange}/n)^{1/\dimension}} \cap W\right).
\]
In polar coordinates, let $\mathbb{S}^{\dimension-1}$ denote the unit sphere with surface measure $\sigma$.   For $\omega \in \mathbb{S}^{\dimension-1}$, define the stopping radius $\tau(\omega) \in [0,\infty]$ as
\[
    \tau(\omega) \triangleq  \sup\{ t \ge 0 : t\omega \in W \} \,,
\]
so that 
\[
    \mathcal{H}^{\dimension-1}\left(S_{({\newrange}/n)^{1/\dimension}} \cap W\right)
   = \left(\frac{{\newrange}}{n}\right)^{\frac{\dimension-1}{\dimension}}\cdot 
     \sigma\bigl\{ \omega : \tau(\omega) \ge ({\newrange}/n)^{1/\dimension} \bigr\}.
\]
Therefore, almost everywhere,
\[
    f_{j}'({\newrange}) = \frac{1}{\dimension\,n}\, \sigma\bigl\{ \omega : \tau(\omega) \ge ({\newrange}/n)^{1/\dimension} \bigr\}.
\]
Since $W$ is convex and $0 \in W$, the set $\{\omega : \tau(\omega) \ge t\}$ decreases monotonically in $t$.  
Thus $f_{j}'({\newrange})$ is nonincreasing in ${\newrange}$, which implies that $f_{j}$ is concave.  Summing over $\rider_j$ and taking the expectation with respect to $\riderset_+$ establishes the concavity of $\rho_{1}({\newrange})$.

\subsubsection{Proof of~\texorpdfstring{\Cref{lmm:gainonepattern}}{}}
Fix $\riderset_+$ and suppose $A\in\activepatterncells$, so that there exist two points $x_0\in \riderset_+\cap B(x_A,R)$ and
$x_1\in \riderset_+\cap\big(B(x_A,R'')\setminus B(x_A,R')\big)$.
Fix any $\ell\in[0,\MPar]$ and write $r(\ell)\triangleq (\ell/n)^{1/\dimension}$.
By~\ref{prop:insidecell}, we have
$B(x_A,R''+(\MPar/n)^{1/\dimension})\subseteq A$ and $A\subseteq \tcell(x_A)$.
Since $x_0\in B(x_A,R)$ and $x_1\in B(x_A,R'')$, it follows that
\[
    B\big(x_0,r(\ell)\big)\subseteq A \, , ~~~~  B\big(x_1,r(\ell)\big)\subseteq A \, ,
\]
and in particular both balls lie in the same trimming cell. Hence, by the definition of $B_\ell(\cdot)$,
\[
    B_\ell(x_0)\cap A = B\big(x_0,r(\ell)\big) \, ,  ~~~~ B_\ell(x_1)\cap A = B\big(x_1,r(\ell)\big) \, .
\]
Moreover, if $\rider_j\in \riderset_+\setminus\{x_0,x_1\}$ then $\rider_j\notin B(x_A,R''')$. This follows from~\ref{prop:distance}, since 
$\mathrm{dist}(\rider_j,A)>2(\MPar/n)^{1/\dimension}\ge 2r(\ell)$, and hence
$B_\ell(\rider_j)\cap A=\emptyset$. Therefore
\[
    \chi_A(\ell,\riderset_+)
    =\mathrm{Vol}\Big(\big(B(x_0,r(\ell))\cup B(x_1,r(\ell))\big)\Big)
    =2\,\kappa_\dimension\,\frac{\ell}{n}-I(\ell;x_0,x_1),
\]
where $I(\ell;x_0,x_1)$ is as in~\ref{prop:no_intersection}. We therefore have
\begin{align}\label{eq:chi-gap-via-I}
    &\big(\chi_A (\base_1+\shift,\riderset_+ ) - \chi_A(\base_1,\riderset_+) \big)
    -
    \big(\chi_A(\base_2,\riderset_+) - \chi_A (\base_2-\shift,\riderset_+ )\big)
    \nonumber\\
    & ~~~~ =
    \Big(I(\base_2;x_0,x_1)-I(\base_2-\shift;x_0,x_1)\Big)
    -
    \Big(I(\base_1+\shift;x_0,x_1)-I(\base_1;x_0,x_1)\Big).
\end{align}
Since $\shift\le (\base_2-\base_1)/2$, we have $\base_1\le \base_2-2\shift$.
By convexity of $\ell\mapsto I(\ell;x_0,x_1)$, we have
\[
    I(\base_1+\shift;x_0,x_1)-I(\base_1;x_0,x_1)
    \, \le\,
    I(\base_2-\shift;x_0,x_1)-I(\base_2-2\shift;x_0,x_1).
\]
Plugging into~\eqref{eq:chi-gap-via-I} yields
\[
\eqref{eq:chi-gap-via-I}
\ \ge\
I(\base_2;x_0,x_1)-2I(\base_2-\shift;x_0,x_1)+I(\base_2-2\shift;x_0,x_1).
\]
Finally, applying~\eqref{eq:second-difference} in~\ref{prop:no_intersection} gives
\[
    I(\base_2)-2I(\base_2-\shift)+I(\base_2-2\shift)
    \, \ge\,
    \frac{\alphagainpattern}{n}\,\shift^{\,2}.
\]
This completes the proof.

\subsubsection{Proof of \prettyref{lmm:lowerboundpatterna}}

Recall that trimming cells have side length $L_T = {2\dimension \MPar^{1/\dimension}}/({\varepsilon n^{1/\dimension}})$, whereas pattern cells have side length $L_P = \patternside/{n^{1/\dimension}}$. {In order for half the pattern cells to be fully contained in trimming cells, it suffices that \( L_T \geq 2\dimension \, L_P\), or equivalently that 
\begin{align}
    \varepsilon < {\epsiloncondition}, ~~~~ \mbox{ where } \epsiloncondition \triangleq \frac{\MPar^{1/\dimension}}{\patternside} \, ,
\end{align}  
Therefore, under this condition,
\begin{align}
\sum_{A \in \patterncells} 
    \mathds{1}\bigl\{ A \subseteq {\tcell}(x_A) \bigr\} 
    \, \ge \, \frac{|\patterncells|}{2} 
    \, \ge \, \frac{1}{2} \Biggl\lfloor \frac{n^{1/\dimension}}{\patternside} \Biggr\rfloor^\dimension \, . \label{eq:lowerboundcellnumber}
\end{align}
For any pattern cell $A$ that is contained in a trimming cell $\tcell(x_A)$, we lower bound the probability that $A$ is a special pattern cell. By definition, this event requires that exactly one $\riderset_+$ node appears in $B(x_A,R)$, exactly one $\riderset_+$ node appears in $B(x_A,R'')\setminus B(x_A,R')$, and none of the other $\riderset_+$ nodes appear in $B(x_A,R''')$. A sufficient condition for that is:
\begin{enumerate}
    \item[(i)] Exactly one of the $m$ demand nodes is in $B(x_A,R)$ (probability $m\cdot \mathrm{Vol}(B(x_A,R))$)
    \item[(ii)] Exactly one of the remaining $m-1$ demand nodes is in $B(x_A, R'') \setminus B(x_A, R')$ (probability $(m-1)\cdot \mathrm{Vol}(\, B(x_A,R'') \setminus B(x_A,R')\,)$).
    \item[(iii)] None of the other $m-2$ demand nodes are in $B(x_A,R''')$ (probability $(1-\mathrm{Vol}(B(x_A,R''')))^{m-2}$).
    \item[(iv)] None of the $n$ supply nodes appear in $B(x_A,R''')$ (probability \allowbreak $(1-\mathrm{Vol}(B(x_A,R''')))^{n}$).
\end{enumerate}
Combining the above 4 events, we have that conditional on $A$ being a special pattern cell,
\[
    \mathbb{P}_{G \sim \mathbb{G}(m,\servicevec)}\Big[A \in \activepatterncells) \, \big| A \subseteq \tcell (x_A)\Big] \geq 2 \,  \tau^2 \, \binom{m}{2}\,
       \left(\frac{2\volsphere}{10^\dimension n}\right)^2 \times
   \bigl(1-\volsphere \cdot (R''')^\dimension\bigr)^{m+n-2},
\]
where the inequality uses~\ref{prop:volumes} to lower bound $\mathrm{Vol}(B(x_A,R))$ and $\mathrm{Vol}(B(x_A, R'') \setminus B(x_A, R'))$, and also uses the fact that $\volsphere$ denotes the volume of a unit ball in $\reals^{\dimension}$.
Therefore, combining with~\eqref{eq:lowerboundcellnumber}, we get that if $\varepsilon\leq \epsiloncondition$, we have:
\[
    \sum_{A \in \patterncells}
    \mathbb{P}_{G \sim \mathbb{G}(m,\servicevec)}\left[A \in \activepatterncells\right]
    \ge   2\tau^2 \, \binom{m}{2}\,
    \left(\frac{2\volsphere}{10^\dimension n}\right)^2\,
    \bigl(1-\volsphere \cdot (R''')^\dimension\bigr)^{m+n-2}\Biggl\lfloor \frac{n^{1/\dimension}}{\patternside} \Biggr\rfloor^\dimension.
\]
On the other hand, by definition:
\[
    R''' =\frac{1}{n^{1/\dimension}} \left(\sqrt{\dimension}\patternside+2\MPar^{1/\dimension}\right).
\]
Therefore, there exist two constants $\highdimN$ and $\alphaproba>0$ such that for any $n> \highdimN$, we have:
\[
    \sum_{A \in \patterncells} \mathbb{P}_{G \sim \mathbb{G}(m,\servicevec)}\left[A \in \activepatterncells\right]\geq \tau^2\alphaproba n.
\]
whenever $\varepsilon\leq \epsiloncondition$.

\subsubsection{Proof of \prettyref{lem:constantdef}} \label{sec-proof-of-properties}
We prove each item separately.

\begin{itemize}
    \item \textbf{Proof of~\ref{prop:volumes}}: The equality of the property holds by the definitions of $R$, $R'$ and $R''$. The inequality follows from $r_2\geq 2 \shift$.

    \item \textbf{Proof of~\ref{prop:no_intersection}.}
    Fix $x_0\in B(x_A,R)$ and $x_1\in B(x_A,R'')\setminus B(x_A,R')$, and set
    $a\triangleq \|x_0-x_1\|_2$.
    Recall for $\ell\geq 0$ that $I(\ell; x_0,x_1)$ is defined as
    \[
        I(\ell;x_0,x_1)\triangleq \mathrm{Vol}\Big( B\big(x_0,r(\ell) \big)\,\medcap\,B\big(x_1,r(\ell) \big)\Big),
        ~~~~ \mbox{where} ~~~~
        r(\ell)\triangleq (\ell/n)^{1/\dimension}.
    \]
    If $a \geq 2\,r(\ell)$, the two balls are disjoint and so $I(\ell;x_0,x_1)=0$. On the other hand, if $a < 2\, r(\ell)$, the intersection volume is given by $\frac{\ell}{n} \varphi_\dimension\big( a/r(\ell)\big)$, where $\varphi_\dimension(s)$ is the intersection volume of two unit balls in $\mathbb{R}^\dimension$ whose centers are a distance $s$ apart. Mathematically,
    \begin{align}\label{eq:scale-I-proof}
        I(\ell;x_0,x_1)=\frac{\ell}{n}\,\varphi_{\dimension}\big(s(\ell)\big) ~~~~\mbox{ where } ~~ s(\ell)\triangleq \frac{a}{r(\ell)}
    \end{align}
    and
    \[
        \varphi_{\dimension}(s) = 
        \begin{cases} 
            2\,\kappa_{\dimension-1}\int_{s/2}^{1}\big(1-t^2\big)^{\frac{\dimension-1}{2}}\,\diff t \,, & 0 \leq s < 2
            \\
            0\,, & s \geq 2
        \end{cases} \, .
    \]
    We show convexity of the function $\ell \mapsto I(\ell;x_0,x_1)$ by showing that (i) $I'(\ell;x_0,x_1)$ is absolutely continuous and (ii) $I''(\ell; x_0,x_1) \geq 0$ almost everywhere. 
    \begin{itemize}
        \item[(i)] For the first derivative, note that 
        \begin{align*}
            s'(\ell)= -\frac{1}{k}\frac{s(\ell)}{\ell} 
            ~~~~ \mbox{ and } ~~~~
            \varphi_{\dimension}'(s)= \begin{cases} 
                -\kappa_{\dimension-1}\Big(1-\frac{s^2}{4}\Big)^{\frac{\dimension-1}{2}}, & 0 \leq s < 2 \\
                0 \, , & s \geq 2
            \end{cases} 
            \, .
        \end{align*}
        In particular, $\varphi_{\dimension}'(s)$ is absolutely continuous in $s$ for all $k\geq 2$, and so 
        \[
            I'(\ell;x_0,x_1) = \frac 1 n \big( \varphi_{\dimension}\big(s(\ell)\big) - \frac{s(\ell)}{\dimension} \,\varphi'_{\dimension}(s(\ell))\big) 
        \]
        is absolutely continuous in $\ell$ for all $k \geq 2$.
        \item[(ii)] For the second derivative, note that
        \begin{align*}
            s''(\ell)=\frac{k+1}{k^2}\frac{s(\ell)}{\ell^2} \, 
            ~~~~ \mbox{ and } ~~~~
            \varphi_{\dimension}''(s)
            =
            \begin{cases}
                \kappa_{\dimension-1}\,\frac{\dimension-1}{4}\,s\,\Big(1-\frac{s^2}{4}\Big)^{\frac{\dimension-3}{2}} \, & 0 \leq s < 2
                \\
                0 \, , & s \geq 2
            \end{cases}
            \,
        \end{align*}
        and so we have
        \begin{align} \label{eq:second-diff}
            I''(\ell; x_0,x_1) = 
            \frac{1}{n}\cdot \frac{s(\ell)}{\dimension^2\,\ell}\Big(
            s(\ell)\,\varphi_\dimension''(s(\ell))
            -(\dimension-1)\,\varphi_\dimension'(s(\ell))
            \Big) \, .
    \end{align}
    Substituting the expressions for $\varphi_{\dimension}'$ and $\varphi_{\dimension}''$ in~\eqref{eq:second-diff} and using $s(\ell) = d/(\ell/n)^{1/\dimension}$ yields
    \begin{align}\label{eq:I-second-deriv-proof}
        I''(\ell;x_0,x_1)
        = 
        \begin{cases}
            \frac{\kappa_{\dimension-1}(\dimension-1)}{\dimension^2}\cdot \frac{1}{n}\cdot \frac{s(\ell)}{\ell}\cdot
            \Big(1-\frac{s(\ell)^2}{4}\Big)^{\frac{\dimension-3}{2}} \, , & \ell \geq n \big( \frac{d}{2} \big)^{\dimension} \\
            0 \, , & 0 \leq \ell < n\big( d/2 \big)^{\dimension} 
        \end{cases}
        \,.
    \end{align}
    In particular, $I''(\ell;x_0,x_1) \geq 0$ almost everywhere. 
    \end{itemize}
    This establishes that $I(\ell;x_0,x_1)$ is a convex function of $\ell$.

    \medskip
    Next, we lower bound
    \(
        I(\base_2;x_0,x_1)-2I(\base_2-\shift;x_0,x_1)+I(\base_2-2\shift;x_0,x_1).
    \)
    Since $I'(\ell;x_0,x_1)$ is continuous and $I''(\ell;x_0,x_1)$ is defined almost everywhere, we have
    \[
        I(\base_2)-2I(\base_2-\shift)+I(\base_2-2\shift)
        =\int_0^{\shift}\int_0^{\shift} I''(\base_2-u-v)\,\diff u\,\diff v,
    \]
    where $I''(\ell; x_0,x_1)$ is $0$ whenever the two balls are disjoint (i.e. $d\ge 2r(\ell)$).
    Restricting the integral to $u,v\in[0,\shift/4]$ (so that $\base_2-u-v\in[\base_2-\shift/2,\base_2]$), we have
    \begin{align}
        I(\base_2)-2I(\base_2-\shift)+I(\base_2-2\shift) \nonumber
        &\geq \int_0^{\shift/4}\int_0^{\shift/4} I''(\base_2-u-v)\,\diff u\,\diff v\\
        &\ge \frac{\shift^2}{16}\cdot \inf_{\ell\in[\base_2-\shift/2,\base_2]} I''(\ell;x_0,x_1) \, . \label{eq:inf}
    \end{align}
    It remains to lower bound this infimum. To that end, we claim the following.
    \begin{claim} \label{clm:inf}
        There exists a constant $C_{\dimension, \MPar} > 0$ depending only on $\dimension$ and $\MPar$ such that
        \begin{align}
        \inf_{\ell\in[\base_2-\shift/2,\base_2]} I''(\ell;x_0,x_1)
        \geq C_{\dimension, \MPar}/n \,. \label{eq:Ipp-lower-proof}
    \end{align}
    \end{claim}
     Substituting~\eqref{eq:Ipp-lower-proof} in~\eqref{eq:inf}, we have
    \begin{align*}
        I(\base_2)-2I(\base_2-\shift)+I(\base_2-2\shift)
        \ge \frac{\alphagainpattern}{n}\,\shift^{\,2},
    \end{align*}
    for a constant $\alphagainpattern>0$ depending only on $\MPar$ and $\dimension$. This concludes the proof.

    \item \textbf{Proof of~\ref{prop:insidecell}.} 
    We first show that
    \(
        B\left(x_A,\,R'' + (\MPar/n)^{1/\dimension}\right)\subseteq A \, .
    \)
    Recall that the pattern cell \(A\) is a hypercube of side length
    \(
        \frac{\patternside}{n^{1/\dimension}},
    \)
    centered at \(x_A\). It thus suffices to show that
    \begin{align}\label{eq:P3-suff}
        R'' + (\MPar/n)^{1/\dimension} \le \frac{\patternside}{2n^{1/\dimension}}.
    \end{align}
    By definition,
    \[
        R''=\left(\frac{r_2}{n}\right)^{1/\dimension} \le \left(\frac{\gamma}{n}\right)^{1/\dimension}
        \qquad \text{and}\qquad
        \patternside=6\,\gamma^{1/\dimension} \, ,
    \]
    so~\eqref{eq:P3-suff} holds, which proves
    \(
        B\left(x_A,\,R'' + (\MPar/n)^{1/\dimension}\right)\subseteq A \, .
    \)
    
    For the second inclusion, for any \( a\in A \) we have
    \[
        \|a-x_A\|_2 \le \frac{\sqrt{\dimension}}{2}\,\frac{\patternside}{n^{1/\dimension}} \, .
    \]
    By the definition of \( R''' \) (chosen larger than the half-diagonal of \(A\)),
    this implies \( \|a-x_A\|_2 < R''' \), and hence \( A\subseteq B(x_A,R''') \).
    \item \textbf{Proof of~\ref{prop:distance}.} This property holds by definition of \(R'''\). 
\end{itemize}
This completes the proof of all four properties.

\begin{proof}[Proof of~\texorpdfstring{\Cref{clm:inf}}{}]
Recall the expression~\eqref{eq:I-second-deriv-proof} for \( I''(\cdot) \),
\begin{align}\label{eq:I-second-deriv-proof-repeat}
        I''(\ell;x_0,x_1)
        =
        \frac{\kappa_{\dimension-1}(\dimension-1)}{\dimension^2}\cdot \frac{1}{n}\cdot \frac{s(\ell)}{\ell}\cdot
        \Big(1-\frac{s(\ell)^2}{4}\Big)^{\frac{\dimension-3}{2}},
        \qquad\text{for }s(\ell) \in (0,2).
\end{align}
To lower bound~\eqref{eq:I-second-deriv-proof-repeat}, we derive upper and lower bounds for \( s(\ell) \) below.

\medskip 
\noindent \textit{(Lower bounding \( s(\ell) \)).~} First, by construction, we have
\[
        a \,\ge\, R'-R \,=\,\Big(\Big(1-\frac{1}{10^{\dimension}}\Big)^{1/k}-\frac{1}{10}\Big)(\base_2/n)^{1/\dimension} \, .
\]
On the other hand, \( r(\ell) \le (\base_2/n)^{1/\dimension} \), hence,
\begin{align}\label{eq:s-lower-proof}
    s(\ell)=\frac{a}{r(\ell)} \geq \,\Big(\Big(1-\frac{1}{10^{\dimension}}\Big)^{1/\dimension}-\frac{1}{10}\Big)\geq 0.8 \, .
\end{align}

\noindent \textit{(Upper bounding \( s(\ell) \) ).~} Again by construction, we have
\[
    a \le R''+R \leq \Big(1+\frac{1}{10}\Big)(\base_2/n)^{1/\dimension} \, .
\]
On the other hand, $r(\ell) \geq \left(\left(\base_2-\tau/2\right)/n\right)^{1/\dimension}$, hence,
\begin{align}\label{eq:s-upper-proof}
    s(\ell)=\frac{a}{r(\ell)} \leq \,\frac{\left(1+\frac{1}{10}\right)r_2^{1/\dimension}}{\left(\base_2-\tau/2\right)^{1/\dimension}}\leq \frac{\left(1+\frac{1}{10}\right)}{\left(3/4\right)^{1/\dimension}} \leq \frac{\left(1+\frac{1}{10}\right)}{\left(3/4\right)^{1/2}}\leq 1.96 .
\end{align}
Combining those bounds, for any \( \ell \in [r_2-\tau/2, \, r_2] \) and \( k\geq 3 \):
\[
    \inf_{\ell\in[\base_2-\shift/2,\base_2]} I''(\ell;x_0,x_1)
    \geq 
    \frac{\kappa_{\dimension-1}(\dimension-1)}{\dimension^2}\cdot \frac{1}{n}\cdot \frac{0.8}{\gamma}\cdot
    \Big(1-\frac{1.96^2}{4}\Big)^{\frac{\dimension-3}{2}} \, .  
\]
For any \( \ell \in [r_2-\tau/2, \, r_2] \) and \( k=2 \):
\[
 \inf_{\ell\in[\base_2-\shift/2,\base_2]} I''(\ell;x_0,x_1)
        \geq 
        \frac{\kappa_{\dimension-1}(\dimension-1)}{\dimension^2}\cdot \frac{1}{n}\cdot \frac{0.8}{\gamma}\cdot
        \Big(1-\frac{0.8^2}{4}\Big)^{\frac{-1}{2}}. 
\]
In both cases, there is a constant \( C_{\dimension,\MPar} > 0 \) such that \( \inf_{\ell\in[\base_2-\shift/2,\base_2]} I''(\ell;x_0,x_1) \geq C_{\dimension,\MPar}/n \) .
\end{proof}

\section{Supplementary proofs for the dual service range model} \label{apx-deferred-proofs-for-dual-service}

In this section, we present the proofs for various lemmas used in the analysis of the dual service range model. 

\subsection{Supplementary proofs for \prettyref{sec:dual_embedding}}

\subsubsection{Proof of~\texorpdfstring{\Cref{lem: greedy-is-optimal}}{}}
Optimality of the greedy algorithm for interval graphs is a classical result~\cite{glover1967convex, lipskipreparata1981}. For completeness we present a simple proof specialized to our random geometric graph. For any matching $\calN$ and non-negative integer $t$, let $\calN_t \triangleq \{ (\rider_{(i)}, \driver_{i^*}) \in \calN : i \leq t\}$ be the restriction of $\calN$ to the first $t$ demand nodes. We show via induction that for all $t$, there exists a maximum matching $\calM^*$ such that $\calM_t = \calM^*_t$. The base case is true, since $\calM_0 = \calM^*_0 = \emptyset$ for all maximum matchings $\calM^*$. Thus, assume $t \geq 1$ and $\calM_{t-1} = \calM^*_{t-1}$ for a maximum matching $\calM^*$. 

\medskip

\noindent \textit{Case $1$.} $\rider_{(t)}$ is unmatched by $\calM$. This happens only if $\rider_{(t)}$ has no neighbors in $\drivervec'_t$, the set of unmatched supply nodes in iteration $t$ of Algorithm~\ref{alg: greedy}. In this case, $\calM^*$ cannot match $\rider_{(t)}$, so $\calM_t = \calM^*_t$.

\medskip

\noindent \textit{Case $2$.} $\rider_{(t)}$ is matched to $\driver_{t^*}$ by $\calM$. There are two subcases to consider.
\begin{itemize}
    \item $\rider_{(t)}$ is unmatched by $\calM^*$. It must be that for some $\ell > t$,  $\calM^*$ matches $\rider_{(\ell)}$ to $\driver_{t^*}$, otherwise $\calM^* + (\rider_{(t)}, \driver_{t^*})$ would be a larger matching. Then, the matching 
    \[
        \widetilde{\calM} \triangleq \calM^* - (\rider_{(\ell)}, \driver_{t^*}) + (\rider_{(t)}, \driver_{t^*})
    \]
    is maximum and satisfies $\calM_t = \widetilde{\calM}_t$.
    
    \item $\rider_{(t)}$ is matched by $\calM^*$. If it is matched to $\driver_{t^*}$, then $\calM_t = \calM^*_t$ is trivially satisfied. Therefore, assume that $\rider_{(t)}$ is matched by $\calM^*$ to some $\driver_{(\ell)} \neq \driver_{t^*}$. If $\driver_{t^*}$ is unmatched by $\calM^*$, let $\widetilde{\calM}$ be the matching
    \[
        \widetilde{\calM} \triangleq \calM^* - (\rider_{(t)},\driver_{(\ell)}) + (\rider_{(t)},\driver_{t^*}).
    \]
    Otherwise, if $\driver_{(\ell)}$ is matched to some $\rider_{(j)}$ with $j > t$, let $\widetilde{\calM}$ be the matching
    \[ 
        \widetilde{\calM} \triangleq \calM^* - (\rider_{(t)},\driver_{(\ell)}) - (\rider_{(j), }\driver_{t^*}) + (\rider_{(t)}, \driver_{t^*}) + (\rider_{(j)}, \driver_{(\ell)}).
    \]
    To see why $(\rider_{(j)}, \driver_{(\ell)})$ is a valid match, note that $\rider_{(t)}$ connects to both $\driver_{t^*}$ and $\driver_{(\ell)}$. Since $\rider_{(j)}$ comes after $\rider_{(t)}$ and connects to $\driver_{t^*}$, it must also connect to $\driver_{(\ell)}$ because $\driver_{t^*} + \service_{t^*} \leq \driver_{(\ell)} + \service_{(\ell)}$ per the matching rule. Thus,  $\widetilde{\calM}$ is a maximum matching satisfying $\calM_t = \widetilde{\calM}_t$. 
\end{itemize}
This completes the proof.

\subsubsection{Proof of Lemma~\ref{lem:agreement}}
Assume that $\calM(\widehat G_n)$ and $\widehat \calM(\widehat G_n)$ disagree, and let $t\ge 1$ be the smallest index at which $\rider_{(t)}$ is resolved differently between the two procedures. By minimality of $t$, the two runs make identical decisions for $\rider_{(1)},\ldots,\rider_{(t-1)}$; hence the sets of \emph{unmatched} flexible and non‑flexible supply nodes lying to the left of $\rider_{(t)}$ coincide in both runs.

At the moment, the generative procedure is ready to decide on $\rider_{(t)}$, each supply node type $T\in\{\mathrm{F},\mathrm{NF}\}$ has been advanced until it is either in range or ahead of the demand node. Formally, letting $v^T$ be that type's active supply node and $s_T/n$ its radius, the type status is one of:
\[
\text{(in range)}\quad \rider_{(t)}\in\bigl[v^T{-}s_T/n,\;v^T{+}s_T/n\bigr],
\qquad \mbox{or} \qquad 
\text{(ahead)}\quad v^T{-}s_T/n>\rider_{(t)}.
\]
Because we are revealing the same PPP points left‑to‑right, the \emph{active} supply node of each type in the generative run is exactly the \emph{leftmost unmatched} supply node of that type in the greedy run, and its status (in‑range vs.\ ahead) with respect to $\rider_{(t)}$ is identical in both runs. There are three possibilities:

\medskip
\begin{itemize}
    \item[(i)] \textit{Both supply types ahead}. No supply node can reach $\rider_{(t)}$, so both procedures leave $\rider_{(t)}$ unmatched.
    \item[(ii)] \textit{Exactly one supply type $T$ in range.} The other type is ahead and cannot serve $\rider_{(t)}$. Within type $T$, all supply nodes have the same range, and the leftmost unmatched feasible supply node (the active $T$ just identified) has the earliest deadline $v^T+s_T/n$ among feasible $T$-supply nodes. Hence both procedures must match $\rider_{(t)}$ to that same supply node.
    \item[(iii)] \textit{Both supply types in range.} Each procedure compares the same two deadlines
    $D^{\mathrm{F}}=v^{\mathrm{F}}+(\base+\extra)/{n}$ 
    and $D^{\mathrm{NF}}=v^{\mathrm{NF}}+\base/{n},
    $
    and, by the greedy rule, matches $\rider_{(t)}$ to the earlier deadline.
\end{itemize}
In all cases, the decisions coincide, which contradicts the definition of $t$. Therefore, no disagreement occurs, concluding the proof.

\subsubsection{Proof of~\texorpdfstring{\Cref{lem:small}}{}}
Throughout, $u_t \stackrel{\mathrm{iid}}{\sim}\Exp(p)$, $v_t\stackrel{\mathrm{iid}}{\sim}\Exp(1-p)$ and $w_t\stackrel{\mathrm{iid}}{\sim}\Exp(1)$ are independent, and the one–step increment $\nabla\potential(t)=(\Delta_x,\Delta_y)$ is given by the five cases (A)–(E). We write $x_+=\max\{x,0\}$ and $x_-=\max\{-x,0\}$. We denote the Lebesgue measure on $\reals^2$ by $\mathrm{Vol}$.

\medskip 

\noindent (i) Define the rectangle $K_0 \triangleq [d, 2d] \times [-d, 0]$. Since $K_0 \subseteq \calE$, the one step kernel from $(x,y) \in K_0$ to any $(x_1,y_1) \in \reals^2$ is
\[ 
f_{x,y}(x_1,y_1) = p \, \exp\left( - (y_1-y) -p(x+y_1-y-x_1)\right) \, \mathbf{1}_{ \{y_1 > y \}} \mathbf{1}_{ \{x_1 - x < y_1 - y \}}.
\]
$K_0$ and $R$ are located such that there is a positive constant $c_0$ for which $f_{x,y}(x_1,y_1) \geq c_0$, for any $(x,y)\in K_0$ and $(x_1,y_1) \in R$. So we have that for any $(x,y)\in K_0$ and any measurable set $A$,
\begin{align} \label{eq: small-set-A}
P((x,y),A)  = \int_{(x_1,y_1)\in A} f_{x,y}(x_1,y_1) ~\diff \mathrm{Vol} \geq \int_{(x_1,y_1)\in A \cap R} f_{x,y}(x_1,y_1) ~\diff \mathrm{Vol}  \geq c_0 \,\varphi(A).
\end{align}
We will show that we can reach $K_0$ from any point in $K_\rs$, in at most 5 steps with probability at least $c_{\mathrm{in}}$, i.e. for each $(x,y) \in K_\rs$, there is some $m_0 \leq 5$ such that
\begin{align} \label{eq: small-set-B}
 P^{m_0}\bigl( (x,y), K_0\bigr) \geq c_{\mathrm{in}}.
\end{align}
Together,~\eqref{eq: small-set-A} and~\eqref{eq: small-set-B} imply that for all $(x,y) \in K_\rs$ and measurable $A$, there is some $m_0 \leq 5$ such that
\[
P^{m_0+1}((x,y),A) \geq c_{\mathrm{in}}c_0 \, \varphi(A),
\]
verifying that $K_\rs$ is a petite set.

\bigskip
\noindent \textit{{Verifying~\eqref{eq: small-set-B}.}} Since $K_r\subset\calB \cup \calD \cup \calE$, we treat the three cases separately. We identify two rectangular sets $K_A \subseteq \calA$ and $K_C \subseteq \calC$ that play a role in the proof:
\[ 
K_A \triangleq [-d, \, 0] \times [2\base, \, 2\base+d], 
~~~~ ~~~~
K_C \triangleq [2(\base+\extra), \, 2(\base+\extra)+d] \times [-d, \, 0 ].
\]

Throughout the proof, $u\sim\Exp(p)$, $v\sim\Exp(1-p)$, $w\sim\Exp(1)$ are independent. Starting from $(x,y)$ at time $0$, we denote $(x_t,y_t)$ as the position of the Markov chain at time $t$.

\begin{itemize}
\item[--] \emph{Case $1$.} $(x,y) \in K_\rs \cap \calB$. Here, $(x_1,y_1) = (x + w, y + w -v)$. We move to $K_C$ if:
\begin{align} \label{eq: event-b-to-c}
w\in\Bigl[2(\base+\extra) - x, 2(\base+\extra)+d -x\Bigr],\qquad v\in\bigl[y+w,\ y+w+d\bigr].
\end{align}
This event has probability at least
\[
c_{\calB \to K_C}\ \triangleq\  \frac{(1-e^{-(1-p)d}) \, (1-e^{-(2-p)d})}{2-p} \, \exp(-(1-p) 2\base - (2-p)[2(\base+\extra)+r])>0,
\]
uniformly over $x,y\in\calB \cap K_r$. This is because $x \in [-\rs, 2(\base+\extra)]$ and $y \in [0,2\base]$, so the event in~\eqref{eq: event-b-to-c} has a uniform lower bound on its probability. On this event, $(x_1,y_1)\in K_C$. Since $K_C\subseteq \calC$, we have in the next time step that $(x_2, y_2) = (x_1 - u,y_1)$. Choose 
\( 
u \in [x_1 - 2d, x_1 -d],
\)
which has probability at least 
\[ \int_{x_1 - 2d}^{x_1  - d} p e^{-p u} \, \diff u = \left(e^{d p}-1\right) e^{d p }\,e^{-p x_1} \geq  \left(e^{d p}-1\right) e^{d p } \,e^{-p (2(\base+\extra ) + d)} 
\triangleq c_{K_C \to K_0}  \triangleq x_{\calB} > 0.
\]
Therefore,
\[
\inf_{(x,y)\in K_\rs\cap\calB}P^2\bigl((x,y),K_0\bigr)\ \ge\ c_{\calB \to K_C} \cdot c_{K_C \to K_0} > 0,
\]
so we can reach $K_0$ from any point in $\calB$ in two steps with strictly positive probability. 

\item[--] \textit{Case $2$.} $(x,y) \in K_\rs \cap \calE$. Here, $(x_1,y_1) = (x+w-u,y+w)$. We move to $K_A$ if:
\begin{align} \label{eq: event-e-to-a}
w\in\bigl[2\base - y, \, 2\base + d -y ],\qquad u\in\bigl[x+w,\ x+w+d\bigr],
\end{align}
which has probability at least
\[
c_{\calE\to K_A}\ \triangleq\ \frac{(1-e^{-pd}) \, (1-e^{-(1+p)d})}{1+p} \, \exp(-2p(\base+\extra) - (1+p)(2\base + \rs)) >0,
\]
uniformly over $(x,y)\in K_\rs\cap\calE$. This is because $y\leq 2\base$ and $x\leq 2(\base+\extra)$, so the event in~\eqref{eq: event-e-to-a} has a uniform lower bound on its probability. On this event, $(x_1,y_1) \in K_A$. Since $K_A \subseteq \calA$, we have in the next time step that $(x_2, y_2) = (x_1, y_1 - v)$. Choose $v \in [d,2d]$ so that $(x_2,y_2)  \in \cal B$, which can be done with probability 
\[ 
\int_d^{2d} (1-p) \,e^{-(1-p)v} \, \diff v \triangleq c_{K_A \to \calB} > 0.
\]

This choice yields $(x_2,y_2)\in K_\rs \cap \calB$, so the argument from Case 1 ensures that we can reach $K_0$ in at most 2 more steps. Thus
\[
\inf_{(x,y)\in K_\rs\cap\calE}P^{\,4}\bigl((x,y),K_0\bigr)\ \ge\ c_{\calE\to K_A} \cdot c_{K_A \to \calB} \cdot c_{\calB \to K_C} \cdot c_{K_C \to K_0} \triangleq c_{\calE}> 0.
\]

\item[--] \textit{Case $3$.} $(x,y) \in \calD$. Here, $(x_1,y_1) = (x+w,y+w)$. Take one $\calD$–step with
\[
w\in\bigl[y,\, y + d\bigr],
\]
which has probability at least 
\[
(1- e^{-d})\, e^{-y} \geq (1-e^{-d})\, e^{-\rs} \triangleq c_{\calD \to \calB \cup \calE} > 0
\] 
uniformly. This yields $(x_1,y_1)\in (\calB \cup \calE) \cap K_\rs$, so the arguments from cases 1 and 2 apply. Hence, uniformly on $K_\rs\cap\calD$,
\[
\inf_{(x,y)}P^{\,5}\bigl((x,y),K_0\bigr)\ \geq c_{\calD \to \calB \cup \calE} \cdot c_{\calE \to K_A} \cdot c_{K_A \to \calB} \cdot c_{\calB \to K_C} \cdot c_{K_C \to K_0} \triangleq c_{\calD} > 0
\]

Combining the three cases, \eqref{eq: small-set-B} holds with $m_0=5$ and $c_{\mathrm{in}}=\min\{c_{\calB},\,c_{\calE},\,c_{\calD}\}>0$. 
\end{itemize}

\medskip 
\noindent (ii) Irreducibility is immediate. From any $(x,y) \in \reals^2$ we can reach $K_0$ in at most $5$ steps with positive probability. Then, from $K_0$, a single step dominates $\mathrm{Vol}$ on $R$, hence for any $A \subseteq R$ with $\mathrm{Vol}(A) > 0$, there exists $t \leq 6$ with $P^t((x,y),A) > 0$.

\subsubsection{Proof of~\texorpdfstring{\Cref{lem:drift}}{}}
Throughout, $u_t \stackrel{\mathrm{iid}}{\sim}\Exp(p)$, $v_t\stackrel{\mathrm{iid}}{\sim}\Exp(1-p)$ and $w_t\stackrel{\mathrm{iid}}{\sim}\Exp(1)$ are independent, and the one–step increment $\nabla\potential(t)=(\Delta_x,\Delta_y)$ is given by the five cases (A)–(E). We write $x_+=\max\{x,0\}$ and $x_-=\max\{-x,0\}$. 
We use the identities, for $\lambda > 0$, $Z\sim\Exp(\lambda)$ and $s\ge0$,
\begin{align} \label{eq: expectations}
  \mathbb{E}[(s-Z)_+] = s - \mathbb E[Z\wedge s]= s -\frac{1-e^{-\lambda s}}{\lambda},
  \qquad
  \mathbb E\!\bigl[(Z-s)_+^2\bigr]=\frac{2\,e^{-\lambda s}}{\lambda^2},
\end{align}
and, for $W\sim\Exp(1)$ and $a\ge0$,
\begin{equation}\label{eq:sqdrop}
  \mathbb E\!\bigl[(a-W)_+^2-a^2\bigr]\ =\ -2a\ +\ 2(1-e^{-a})\ \le\ -a\quad\text{for }a\ge 2.
\end{equation}
Conditioned on $\potential(t) = [x,\, y]$, we have $\potential(t+1) = [x+ \Delta_x, \, y+\Delta_y]$. Let In all cases (A)--(E), we have that the expected drift $\Delta V(x,y) \triangleq \mathbb{E}[V(\potential(t+1)) | \potential(t) = (x,y)] - V(x,y)$ is 
\begin{align*}
    \Delta V(x,y) = \alpha \,\mathbb{E}[ (x \!+\! \Delta_x)_+ - x_+] + \beta \,\mathbb{E}[(y \!+\! \Delta_y)_+ - y_+] + \delta \, \mathbb{E} \left[(x\!+\!\Delta_x)_-^2 - x_-^2 +  (y \!+ \! \Delta_y)_-^2 - y_-^2\right].
\end{align*}
We bound the drift region–wise. In what follows, all expectations are conditional on $\potential(t)=(x,y)$.

\begin{itemize}
    \item[--] \emph{Region $\calA$}. Here, $\Delta_x = 0$ and $\Delta_y = -v$ with $v\sim\Exp(1-p)$. Further, $y\ge 2\base$, so $y_+ = y$ and $y_-=0$. Therefore, using~\eqref{eq: expectations} gives
    \[
        \Delta V(x,y)
        = \beta\bigl(\mathbb E[(y-v)_+]-y\bigr) +  \delta\,\mathbb E[(y-v)_-^2]
        \leq  -\,\frac{\beta}{1-p}\bigl(1-e^{-(1-p)y}\bigr) + \frac{2\delta}{(1-p)^2}e^{-(1-p)y},
    \]
    which is maximized when $y = 2\base$ in region $\calA$. Thus, we may choose
    \begin{align} \label{eq: beta-bound}
        \beta\ >\ \frac{2\delta}{1-p}\cdot\frac{e^{-2(1-p)\base}}{1-e^{-2(1-p)\base}}
    \end{align}
    to make $\Delta V \le -\eta_A<0$, uniformly in $\calA$.

    \item[--] \emph{Region $\calB$.} Here $\Delta_x=w$ and $\Delta_y=w-v$ with $w\sim\Exp(1)$ and $v\sim\Exp(1-p)$ independent, and $0\le y\le 2\base$, $x\le y+2\extra$. The only way to be outside $K_\rs$ in $\calB$ is $x\le -\rs$. In this case $x_+=0$, $x_-=-x\ge \rs$, and
    \begin{align*}
      \mathbb E\big[(x+w)_+ - x_+\big] &=\mathbb E[(w-x_-)_+]=e^{-x_-}\le e^{-\rs},
        \\
      \mathbb E\big[(x+w)_-^2 - x_-^2\big] &= -2x_- + 2(1-e^{-x_-})\ \le\ -2\rs+2.
    \end{align*}
    For the $y$–coordinate, since $y\ge 0$ we have $y_+=y$, $y_-=0$, and
    \begin{align} \label{eq: region-B-chk-1}
      \mathbb E\big[(y+w-v)_+ - y_+\big]
      &= \mathbb E\big[(y+w) - (v\wedge (y+w))\big] - y
       = \mathbb E[w] - \mathbb E\big[v\wedge (y+w)\big].
    \end{align}
    
    Since $y+w\ge w$ pointwise,
    \(
      \mathbb E[v\wedge (y+w)]\ge \mathbb E[v\wedge w]=\frac{1}{2-p},
    \)
    where the last step used that $v \wedge w \sim \mathrm{Exp}(2-p)$.
    Hence, continuing from~\eqref{eq: region-B-chk-1} we have
    \[
      \mathbb E\big[(y+w-v)_+ - y_+\big]\ \le\ 1-\frac{1}{2-p}\ =\ \frac{1-p}{2-p}.
    \]
    Moreover,
    \[
      \mathbb E\big[(y+w-v)_-^2\big]
      = \mathbb E\big[(v-(y+w))_+^2\big]\ \le\ \mathbb E[v^2]=\frac{2}{(1-p)^2}.
    \]
    Collecting terms, for $(x,y)\in \calB$ with $x\le -\rs$ we obtain
    \[
      \Delta V(x,y)\ \le\ \alpha\, e^{-\rs}\ +\ \delta \, (-2\rs+2)\ +\ \beta\,\frac{1-p}{2-p}\ +\ \delta\,\frac{2}{(1-p)^2}.
    \]
    Thus, for all sufficiently large $\rs$ (given $\alpha,\beta,\delta$), there exists $\eta_B>0$ such that
    \[
      \Delta V(x,y)\ \le\ -\,\eta_B\qquad\text{for all }(x,y)\in \calB\cap\{x\le -\rs\}.
    \]

    \item[--] \emph{Region $\calC$}. Here, $\Delta_x = -u$ with $u\sim\mathrm{Exp}(p)$ and $\Delta_y=0$. Further, $x\geq 2(\base+\extra)$, so $x_+ = x$ and $x_-=0$. Therefore, using~\eqref{eq: expectations} gives
    \[ 
        \Delta V(x,y) = \alpha \bigl( \mathbb{E}[(x-u)_+ - x]\bigr) + \delta \, \mathbb{E}[(x-u)_-^2] \leq - \frac \alpha p \left( 1 - e^{-px} \right) + \frac{2\delta}{p^2} e^{-px},
    \]
    which is maximized when $x = 2(\base+\extra)$ in region $\calC$. Thus, we may choose
    \begin{align} \label{eq: alpha-bound}
        \alpha\ >\ \frac{2\delta}{p}\cdot\frac{e^{-2p(\base+\extra)}}{1-e^{-2p(\base+\extra)}}
    \end{align}
    to make $\Delta V \le -\eta_C<0$, uniformly in $\calC$.

    \item[--] \emph{Region $\calD$.} Here $\Delta_x=\Delta_y=w$ with $w\sim\Exp(1)$. Further, $x\le 0$ and $y\le 0$, so $x_+=y_+=0$ and $x_-=-x=:a\ge 0$, $y_-=-y=:b\ge 0$. Therefore,
    \begin{align*}
      \Delta V(x,y)
      = \alpha\,\mathbb E\big[(x+w)_+\big]
       + \beta\,\mathbb E\big[(y+w)_+ \big] + \delta\,\mathbb E\Big[\big((x+w)_-^2 - x_-^2\big)+\big((y+w)_-^2 - y_-^2\big)\Big].
    \end{align*}
    Since $(x+w)_+=(w-a)_+$ and $(x+w)_-^2=(a-w)_+^2$, a direct computation gives
    \[
      \mathbb E\big[(x+w)_+\big]=\mathbb E\big[(w-a)_+\big]=\int_a^\infty (w-a)e^{-w}\,dw=e^{-a}.
    \]
    Using \eqref{eq:sqdrop} with $a$ and $b$ we obtain
    \[
      \mathbb E\big[(x+w)_-^2 - x_-^2\big]= -2a + 2(1-e^{-a}),
      \qquad
      \mathbb E\big[(y+w)_-^2 - y_-^2\big]= -2b + 2(1-e^{-b}).
    \]
    Hence
    \[
      \Delta V(x,y)=\alpha e^{-a}+\beta e^{-b}
      +\delta\Big(-2a+2(1-e^{-a})-2b+2(1-e^{-b})\Big).
    \]
    On $\calD\cap\{x\le -\rs\ \text{or}\ y\le -\rs\}$ at least one of $a,b$ is $\ge \rs$. Without loss of generality assume $a\ge \rs$. Then $e^{-a}\le e^{-\rs}$ and $-2a+2(1-e^{-a})\le -2\rs+2$, while for $b\ge 0$ we have $e^{-b}\le 1$ and $-2b+2(1-e^{-b})\le 2$. Therefore
    \[
      \Delta V(x,y)\ \le\ \alpha e^{-\rs}+\beta+\delta(-2\rs+2)+2\delta.
    \]
    Choosing $\rs$ sufficiently large (given $\alpha,\beta,\delta$), we see there exists $\eta_D>0$ such that
    \[
      \Delta V(x,y)\ \le\ -\,\eta_D\qquad\text{for all }(x,y)\in \calD\cap\{x\le -\rs\ \text{or}\ y\le -\rs\}.
    \]

    \item[--] \emph{Region $\calE$.} Here $\Delta_x=w-u$ and $\Delta_y=w$ with $u\sim\Exp(p)$ and $w\sim\Exp(1)$ independent, and $0\le x\le 2(\base+\extra)$, $y\le (x-2\extra)_+\le 2\base$. The only way to be outside $K_\rs$ in $\calE$ is $y\le -\rs$. Then $y_+=0$, $y_-=-y\ge \rs$, and
    \begin{align*}
      \mathbb E\big[(y+w)_+ - y_+\big] & =\mathbb E[(w-y_-)_+]=e^{-y_-}\le e^{-\rs},
      \\
      \mathbb E\big[(y+w)_-^2 - y_-^2\big] &= -2y_- + 2(1-e^{-y_-})\ \le\ -2\rs+2.
    \end{align*}
    For the $x$–coordinate, since $x\ge 0$ we use that the function $g(z) = z_+$ is 1–Lipschitz, so
    \[
      (x+w-u)_+ - x_+\ \le\ (w-u)_+.
    \]
    Therefore, we have
    \[
      \mathbb E\big[(x+w-u)_+ - x_+\big]\ \le\ \mathbb E[(w-u)_+]\ =\ \frac{p}{1+p}.
    \]
    Also, $(x+w-u)_-^2 \le (u-(x+w))_+^2\le u^2$, hence
    \(
      \mathbb E\big[(x+w-u)_-^2\big]\le \mathbb E[u^2]=2/p^2.
    \)
    Collecting terms, for $(x,y)\in \calE$ with $y\leq -\rs$,
    \[
      \Delta V(x,y)\ \leq \ \alpha\,\frac{p}{1+p} + \beta\, e^{-\rs} + \delta\,(-2\rs+2) + \delta\,\frac{2}{p^2}.
    \]
    Thus, for $\rs$ large (given $\alpha,\beta,\delta$), there exists $\eta_E>0$ such that
    \[
      \Delta V(x,y)\ \le\ -\,\eta_E\qquad\text{for all }(x,y)\in \calE\cap\{y\le -\rs\}.
    \]
\end{itemize}

Now, pick $\alpha,\beta$ satisfying~\eqref{eq: beta-bound} and~\eqref{eq: alpha-bound}, so that the uniform bounds in Regions $\calA$ and $\calC$ are strictly negative. Then choose $\rs$ sufficiently large so that the bounds in Regions $\calD$, $\calB$, and $\calE$ are also strictly negative outside $K_\rs=[-\rs,\,2(\base+\extra)]\times[-\rs,\,2\base]$. 

Inside the rectangle $K_\rs$, it is easy to see that $\Delta V \leq b_0$ for some finite constant $b_0$. This is because $K_\rs$ is a compact set, and the one-step increments $(\Delta_x,\Delta_y)$ are affine in exponential random variables with finite first and second moments, so the conditional expectation $\mathbb{E}[V(\potential(t+1)) | \potential(t) = (x,y)]$ is uniformly finite over $K_\rs$. Therefore, there exist $\eta>0$ and $b_0 < \infty$ such that
\[
  \mathbb E\!\left[V\bigl(\potential(t+1)\bigr)-V\bigl(\potential(t)\bigr)\,\middle|\,\potential(t)=(x,y)\right]
  \ \le\ -\,\eta\,\mathbf 1_{K_\rs^c}(x,y)\ +\ b_0\,\mathbf 1_{K_\rs}(x,y).
\]
This concludes the proof of~\Cref{lem:drift}.

\subsection{Supplementary proofs for \prettyref{sec:dual_exact}}

\subsubsection{Proof of~\texorpdfstring{\Cref{lem-stationary-density-c-equal-0}}{}}
Recall that 
\[
    \pi(x,y)=
    \begin{cases}
        C\,e^{p x-(1+p)y}, & (x,y)\in \calA_1,\\
        C\,e^{-(1-p)x-py}, & (x,y)\in \calA_2,\\
        C\,e^{2\extra}\,e^{-(2-p)x+(1-p)y}, & (x,y)\in \calC,\\
        C\,e^{p x+(1-p)y}, & (x,y)\in \calD,\\
        C\,e^{-(1-p)x+(1-p)y}, & (x,y)\in \calE_2,
    \end{cases}
    \qquad
    \mbox{ where }C \triangleq \frac{e^{2\extra}\,p(1-p)^2}{(2-p)\,e^{2\extra}-p\,e^{2p\extra}}.
\]
With respect to Lebesgue measure (and using Dirac deltas), the regionwise kernels are
\begin{align*}
    K_{\calA}((x_1,y_1),(x_2,y_2))&=(1-p)e^{-(1-p)(y_1-y_2)}\,
    \delta(x_2-x_1)\,\mathbf{1}_{\{y_2<y_1\}},\\
    K_{\calC}((x_1,y_1),(x_2,y_2))&=p\,e^{-p(x_1-x_2)}\,
    \delta(y_2-y_1)\,\mathbf{1}_{\{x_2<x_1\}},\\
    K_{\calD}((x_1,y_1),(x_2,y_2))&=e^{-(x_2-x_1)}\,
    \delta\big((y_2-y_1)-(x_2-x_1)\big)\,\mathbf{1}_{\{x_2\ge x_1\}},\\
    K_{\calE_2}((x_1,y_1),(x_2,y_2))&=p\,e^{-(1+p)(y_2-y_1)}\,e^{p(x_2-x_1)}\,
    \mathbf{1}_{\{y_2\ge y_1\}}\mathbf{1}_{\{x_2-x_1\le y_2-y_1\}}.
\end{align*}
The global balance equations are verified below, i.e. we show that for any $(x,y)\in \reals^2$:
\[ 
    \pi(x,y)
    = \!\! \sum_{R\in\{\calA_1, \calA_2,\calC,\calD,\calE_2\}}\!\!\!\!\!\!  T_R(x,y), ~~~~ \mbox{where } T_R(x,y) \triangleq 
    \iint_{(x_1,y_1)\in R} \!\!\!\! \pi(x_1,y_1)\,K_{R}\big((x_1,y_1),(x,y)\big)\,\diff x_1\,\diff y_1,
\]
and $\iint_{\reals^2} \pi(x,y) \, \diff x \, \diff y = 1$.

\noindent \textbf{Target $(x,y)\in\calA_1$.} We compute $T_R(x,y)$ in each region $R$. 
\begin{align*}
    T_{\calA_1}(x,y) \! &= \int_{y}^{\infty} C \, e^{p x-(1+p)y_1}\,(1-p)e^{-(1-p)(y_1-y)}\,\mathrm{d}y_1
    =\tfrac{C(1-p)}{2}\,e^{p x-(1+p)y}
    \\
    T_{\calA_2}(x,y) \! &= \int_0^x C\,  e^{-(1-p) x - p y_1}\, \times 0 = 0
    \\
    T_{\calC}(x,y) &= \int_{y+2\extra}^{\infty} C \, e^{2\extra}e^{-(2-p)x_1+(1-p)y}\,p e^{-p(x_1-x)}\,\mathrm{d}x_1
    =\tfrac{C p}{2}e^{-2\extra}\,e^{p x-(1+p)y}
    \\
    T_{\calD}(x,y) & = \int_{-\infty}^{\,x-y} C \, e^{p x_1+(1-p)(y-(x-x_1))}\,e^{-(x-x_1)}\,\mathrm{d}x_1
    =\tfrac{C}{2}\,e^{p x-(1+p)y}
    \\
    T_{\calE_2}(x,y) & = \int_{0}^{2\extra}\!\!\int_{-\infty}^{\,y}
    C \, e^{-(1-p)x_1+(1-p)y_1}\,p e^{-(1+p)(y-y_1)} e^{p(x-x_1)}\,\mathrm{d}y_1\,\mathrm{d}x_1
    =\tfrac{C p}{2}(1-e^{-2\extra})\,e^{p x-(1+p)y}
\end{align*}  
Adding the coefficients gives $ C \big( \frac12+\frac{(1-p)}{2}+\frac{p}{2}e^{-2\extra}+\frac{p}{2}(1-e^{-2\extra}) \big)=C$, as desired.

\noindent \textbf{Target $(x,y)\in\calA_2$.} We compute $T_R(x,y)$ in each region $R$. 
\begin{align*}
    T_{\calA_1}(x,y) \! &= \int_{x}^{\infty} C \, e^{p x-(1+p)y_1}\,(1-p)e^{-(1-p)(y_1-y)}\,\mathrm{d}y_1
    =\tfrac{C(1-p)}{2}\,e^{-(2-p)x+(1-p)y}
    \\
    T_{\calA_2}(x,y) \! &= \int_{y}^{x} C\,  e^{-(1-p)x-py_1}\,(1-p)e^{-(1-p)(y_1-y)}\,\mathrm{d}y_1
    = C(1-p) \big( e^{-(1-p)x-py}- e^{-(2-p)x+(1-p)y} \big)
    \\
    T_{\calC}(x,y) &= \int_{y+2\extra}^{\infty} C e^{2\extra}e^{-(2-p)x_1+(1-p)y}\,p e^{-p(x_1-x)}\,\mathrm{d}x_1
    =\tfrac{C p}{2}e^{p x-2\extra}\,e^{-(1+p)y}
    \\
    T_{\calD}(x,y) & = \int_{-\infty}^{\,x-y} C e^{p x_1+(1-p)(y-(x-x_1))}\,e^{-(x-x_1)}\,\mathrm{d}x_1
    =\tfrac{C}{2}\,e^{-(2-p)x+(1-p)y}
    \\
    T_{\calE_2}(x,y) & = \int_{0}^{x-y}\int_{-\infty}^{y-(x-x_1)} C e^{-(1-p)x_1+(1-p)y_1} \, p e^{-(1+p)(y-y_1)} e^{p(x-x_1)} \, \diff y_1 \, \diff x_1 \\
    & ~~~~ +\int_{x-y}^{2\extra}\int_{-\infty}^{0} C e^{-(1-p)x_1+(1-p)y_1} \, p e^{-(1+p)(y-y_1)} e^{p(x-x_1)}\, \diff y_1 \, \diff x_1
    \\ 
     & ~~~~ = C p \left[e^{-(1-p)x-py}-\tfrac12 e^{-(2-p)x+(1-p)y}-\tfrac12 e^{p x-2\extra} \, e^{-(1+p)y}\right]
\end{align*}
Adding the terms, only the $e^{-(1-p)x -py}$ terms survive and their coefficients add up to $C$, as desired.

\noindent \textbf{Target $(x,y)\in\calC$.} We compute $T_R(x,y)$ in each region $R$. 
\begin{align*}
    T_{\calA_1}(x,y) \! &= \int_{x}^{\infty} C e^{p x-(1+p)y_1}\,(1-p)e^{-(1-p)(y_1-y)}\,\mathrm{d}y_1
    =\tfrac{C(1-p)}{2}\,e^{-(2-p)x+(1-p)y}
    \\
    T_{\calA_2}(x,y) \! &= \int_{y}^{x} C e^{-(1-p)x-py_1}\,(1-p)e^{-(1-p)(y_1-y)}\,\mathrm{d}y_1
    =C(1-p)(e^{2\extra}-1)\,e^{-(2-p)x+(1-p)y}
    \\
    T_{\calC}(x,y) &= \int_{x}^{\infty} C e^{2\extra}e^{-(2-p)x_1+(1-p)y}\,p e^{-p(x_1-x)}\,\mathrm{d}x_1
    =\tfrac{C p}{2}e^{2\extra}\,e^{-(2-p)x+(1-p)y}
    \\
    T_{\calD}(x,y) & = \int_{-\infty}^{0} C e^{p x_1+(1-p)(y-(x-x_1))}\,e^{-(x-x_1)}\,\mathrm{d}x_1
    =\tfrac{C}{2}\,e^{-(2-p)x+(1-p)y}
    \\
    T_{\calE_2}(x,y) & = \int_{0}^{2\extra}\!\!\int_{-\infty}^{\,y-(x-x_1)}
    C e^{-(1-p)x_1+(1-p)y_1}\,p e^{-(1+p)(y-y_1)} e^{p(x-x_1)}\,\mathrm{d}y_1\,\mathrm{d}x_1
    \\
    &= \tfrac{C p}{2}(e^{2\extra}-1)\,e^{-(2-p)x+(1-p)y}
\end{align*}
Adding the coefficients gives $ C \big( \frac{1-p}{2} + (1-p)(e^{2\extra} - 1) +  \frac{p}{2} e^{2\extra} + \frac 1 2 + \frac{p}{2}(e^{2\extra} - 1)  \big)= C e^{2\extra}$ as desired.

\noindent \textbf{Target $(x,y)\in\calD$.} We compute $T_R(x,y)$ in each region $R$. 
\begin{align*}
    T_{\calA_1}(x,y) \! &= \int_{0}^{\infty} C\,  e^{p x-(1+p)y_1}\,(1-p)e^{-(1-p)(y_1-y)}\,\diff y_1
    =\tfrac{C(1-p)}{2}\,e^{p x+(1-p)y} 
    \\
    T_{\calA_2}(x,y) \! &= \int_0^x C\,  e^{p x-(1+p)y_1}\, \times 0 = 0 \\
    T_{\calC}(x,y) &= \int_{2\extra}^{\infty} C\, e^{2\extra}e^{-(2-p)x_1+(1-p)y}\,p e^{-p(x_1-x)}\,\diff x_1
    =\tfrac{C p}{2}e^{-2\extra}\,e^{p x+(1-p)y} 
    \\
    T_{\calD}(x,y) & = \int_{-\infty}^{x} C \, e^{p x_1+(1-p)(y-(x-x_1))}\,e^{-(x-x_1)}\,\diff x_1
    =\tfrac{C}{2}\,e^{p x+(1-p)y} 
    \\
    T_{\calE_2}(x,y) & = \int_{0}^{2\extra}\!\!\int_{-\infty}^{0 \wedge (y-(x-x_1))}
    C\,  e^{-(1-p)x_1+(1-p)y_1}\,p e^{-(1+p)(y-y_1)} e^{p(x-x_1)}\,\mathrm{d}y_1\,\mathrm{d}x_1
    \\ 
    & =\tfrac{C p}{2}(1-e^{-2\extra})\,e^{p x+(1-p)y}.
\end{align*}
Adding the coefficients gives $C\big( \frac12+\frac{1-p}{2}+\frac{p}{2}e^{-2\extra}+\frac{p}{2}(1-e^{-2\extra}) \big)= C$, as desired.

\bigskip

\noindent \textbf{Target $(x,y)\in\calE_2$.} We compute $T_R(x,y)$ in each region $R$. 
\begin{align*}
    T_{\calA_1}(x,y) \! &= \int_{x}^{\infty} C e^{p x-(1+p)y_1}\,(1-p)e^{-(1-p)(y_1-y)}\,\mathrm{d}y_1
    =\tfrac{C(1-p)}{2}\,e^{-(2-p)x+(1-p)y}
    \\
    T_{\calA_2}(x,y) \! &= \int_{0}^{x} C e^{-(1-p)x-py_1}\,(1-p)e^{-(1-p)(y_1-y)}\,\mathrm{d}y_1
    =C(1-p)(1-e^{-x})\,e^{-(1-p)x+(1-p)y}
    \\
    T_{\calC}(x,y) &= \int_{2\extra}^{\infty} C e^{2\extra}e^{-(2-p)x_1+(1-p)y}\,p e^{-p(x_1-x)}\,\mathrm{d}x_1
    =\tfrac{C p}{2}e^{(1-p)y}\,e^{p x-2\extra}
    \\
    T_{\calD}(x,y) & = \int_{-\infty}^{0} C e^{p x_1+(1-p)(y-(x-x_1))}\,e^{-(x-x_1)}\,\mathrm{d}x_1
    =\tfrac{C}{2}\,e^{-(2-p)x+(1-p)y}
    \\
    T_{\calE_2}(x,y) & = \int_{0}^{x} \int_{-\infty}^{y-(x-x_1)} C \, e^{-(1-p)x_1+(1-p)y_1} \, p e^{-(1+p)(y-y_1)} e^{p(x-x_1)} \,\diff y_1 \, \diff x_1
   \\ 
    & ~~~~ + \int_{x}^{2\extra} \int_{-\infty}^{0} C \, e^{-(1-p)x_1+(1-p)y_1} \, p e^{-(1+p)(y-y_1)} e^{p(x-x_1)} \,\diff y_1 \, \diff x_1 \\
    & ~~~~ = \tfrac{C p}{2}e^{(1-p)y} \left(2e^{-(1-p)x}-e^{-(2-p)x}-e^{p x-2\extra}\right)
\end{align*}
Adding the terms, only the $e^{-(1-p)x + (1-p)y}$ terms survive and their coefficients add up to $C$, as desired.

\bigskip

\noindent  Finally, we verify normalization. Indeed
\begin{align*}
    1
    &= \int_{(x,y) \in \reals^2} \pi(x,y) \, \diff x \,\diff y = \sum_{R \in \{ \calA_1,\calA_2,\calC,\calD,\calE_2\}} \iint_{(x,y) \in R} \pi(x,y) \, \diff x\, \diff y \\
    &= 
    \int_{0}^{\infty}\!\!\int_{-\infty}^{y} C\,e^{p x-(1+p)y}\,\diff x\,\diff y
    +\int_{0}^{\infty}\!\!\int_{y}^{\,y+2\extra} C\,e^{-(1-p)x-py}\,\diff x\,\diff y
    +\int_{-\infty}^{0}\!\!\int_{-\infty}^{0} C\,e^{p x+(1-p)y}\,\diff y\,\diff x\\
    & ~~~~~~
    +\int_{2\extra}^{\infty}\!\!\int_{-\infty}^{\,x-2\extra} C\,e^{2\extra}\,e^{-(2-p)x+(1-p)y}\,\diff y\,\diff x
    +\int_{0}^{2\extra}\!\!\int_{-\infty}^{0} C\,e^{-(1-p)x+(1-p)y}\,\diff y\,\diff x\\
    &= C\left[
    \frac{1}{p}
    +\frac{1-e^{-2(1-p)\extra}}{1-p}
    +\frac{e^{-2(1-p)\extra}}{1-p}
    +\frac{1}{p(1-p)}
    +\frac{1-e^{-2(1-p)\extra}}{(1-p)^2}
    \right] \\
    &= \frac{C}{p(1-p)^2}\Big(2 - p - p\,e^{-2(1-p)\extra}\Big),
\end{align*}
for our choice of $C$. This concludes the proof.

\subsubsection{Proof of~\texorpdfstring{\Cref{lem-stationary-density-eps-equal-0}}{}} 
Recall that
\[
    \pi(x,y)=
    \begin{cases}
        C e^{2\base}\,e^{p x-(1+p)y}, & (x,y)\in \calA_1,\\[2pt]
        C e^{p x-p y}, & (x,y)\in \calB_1,\\[2pt]
        C e^{2\base}\,e^{-(2-p)x+(1-p)y}, & (x,y)\in \calC,\\[2pt]
        C e^{p x+(1-p)y}, & (x,y)\in \calD,\\[2pt]
        C e^{-(1-p)x+(1-p)y}, & (x,y)\in \calE_1.
    \end{cases}
    \qquad \mbox{ where } C = \frac{p(1-p)}{2(1+\base)}.
\]     
With respect to Lebesgue measure (and using Dirac deltas), the regionwise kernels are
\begin{align*}
    K_{\calA_1}((x_1,y_1),(x_2,y_2))
    &=(1-p)e^{-(1-p)(y_1-y_2)}\,\delta(x_2-x_1)\,\mathbf{1}_{\{y_2<y_1\}},\\
    K_{\calB_1}((x_1,y_1),(x_2,y_2))
    &=(1-p)\,e^{-(2-p)(x_2-x_1)}\,e^{(1-p)(y_2-y_1)}\,
    \mathbf{1}_{\{x_2\ge x_1\}}\mathbf{1}_{\{y_2-y_1\le x_2-x_1\}}, \\
    K_{\calC}((x_1,y_1),(x_2,y_2))
    &=p\,e^{-p(x_1-x_2)}\,\delta(y_2-y_1)\,\mathbf{1}_{\{x_2<x_1\}},\\
    K_{\calD}((x_1,y_1),(x_2,y_2))
    &=e^{-(x_2-x_1)}\,\delta\big((y_2-y_1)-(x_2-x_1)\big)\,\mathbf{1}_{\{x_2\ge x_1\}},\\
    K_{\calE_1}((x_1,y_1),(x_2,y_2))
    &=p\,e^{-(1+p)(y_2-y_1)}\,e^{p(x_2-x_1)}\,
    \mathbf{1}_{\{y_2\ge y_1\}}\mathbf{1}_{\{x_2-x_1\le y_2-y_1\}}.
\end{align*}  

The global balance equations are verified below, i.e. we show that for any $(x,y) \in \reals^2$:
\[
\pi(x,y)
=\!\!\! \sum_{R\in\{A_1,B_1,C,D,E_1\}} \!\!\!\!\!\!\!\!\! T_R(x,y), ~~~~\mbox{where }
T_R(x,y) \triangleq  \iint_{(x_1,y_1)\in R} \!\!\!\! \pi(x_1,y_1)\,K_R\big((x_1,y_1),(x,y)\big)\, \diff x_1\, \diff y_1,
\]
and $\iint_{\reals^2} \pi(x,y) \, \diff x \, \diff y = 1$.

\noindent \textbf{Target $(x,y)\in\calA_1$.} We compute $T_R(x,y)$ in each region $R$:
\begin{align*}
    T_{\calA_1}(x,y)
    &=\int_{y}^{\infty} C e^{2\base} e^{p x-(1+p)y_1}\,(1-p)e^{-(1-p)(y_1-y)}\,\diff y_1
    =\tfrac{C(1-p)}{2}\,e^{2\base}\,e^{p x-(1+p)y},\\[2pt]
    T_{\calB_1}(x,y)
    &=\int_{0}^{2\base}\!\!\int_{-\infty}^{\,y_1+(x-y)}
    C e^{p x_1-p y_1}\,(1-p)e^{-(2-p)(x-x_1)}e^{(1-p)(y-y_1)}\,\diff x_1\,\diff y_1\\
    &=\tfrac{C(1-p)}{2}(e^{2\base}-1)\,e^{p x-(1+p)y},\\[2pt]
    T_{\calC}(x,y)
    &=\int_{y}^{\infty} C e^{2\base}e^{-(2-p)x_1+(1-p)y}\,p e^{-p(x_1-x)}\,\diff x_1
    =\tfrac{C p}{2}\,e^{2\base}\,e^{p x-(1+p)y},\\[2pt]
    T_{\calD}(x,y)
    &=\int_{-\infty}^{\,x-y} C e^{p x_1+(1-p)(y-(x-x_1))}\,e^{-(x-x_1)}\,\diff x_1
    =\tfrac{C}{2}\,e^{p x-(1+p)y},\\[2pt]
    T_{\calE_1}(x,y)
    &=\int_{0}^{2\base}\!\!\int_{-\infty}^{\,y}
    C e^{-(1-p)x_1+(1-p)y_1}\,p e^{-(1+p)(y-y_1)}e^{p(x-x_1)}\,\diff y_1\,\diff x_1
    =\tfrac{C p}{2}(e^{2\base}-1)\,e^{p x-(1+p)y}.
\end{align*}
Adding coefficients gives
\[
    C\Big(\tfrac{1-p}{2}e^{2\base}+\tfrac{1-p}{2}(e^{2\base}-1)+\tfrac{p}{2}e^{2\base}+\tfrac12+\tfrac{p}{2}(e^{2\base}-1)\Big)
    = C e^{2\base},
\]
hence $\sum_R T_R(x,y)= C e^{2\base} e^{p x-(1+p)y}=\pi(x,y)$.

\noindent \textbf{Target $(x,y)\in\calB_1$.} We compute $T_R(x,y)$ in each region $R$:
\begin{align*}
    T_{\calA_1}(x,y)
    &=\int_{2\base}^{\infty} C e^{2\base} e^{p x-(1+p)y_1}\,(1-p)e^{-(1-p)(y_1-y)}\,\diff y_1
    =\tfrac{C(1-p)}{2}\,e^{-2\base}\,e^{p x+(1-p)y},\\[2pt]
    T_{\calB_1}(x,y)
    &=\int_{0}^{y}\!\!\int_{-\infty}^{\,y_1+(x-y)}\!\!+\int_{y}^{2\base}\!\!\int_{-\infty}^{\,x}
    C e^{p x_1-p y_1}\,(1-p)e^{-(2-p)(x-x_1)}e^{(1-p)(y-y_1)}\,\diff x_1\,\diff y_1\\
    &=\tfrac{C(1-p)}{2}\Big(2 e^{p x-p y}-e^{p x-(1+p)y}-e^{p x+(1-p)y-2\base}\Big),\\[2pt]
    T_{\calC}(x,y)
    &=\int_{2\base}^{\infty} C e^{2\base}e^{-(2-p)x_1+(1-p)y}\,p e^{-p(x_1-x)}\,\diff x_1
    =\tfrac{C p}{2}\,e^{-2\base}\,e^{p x+(1-p)y},\\[2pt]
    T_{\calD}(x,y)
    &=\int_{-\infty}^{\,x-y} C e^{p x_1+(1-p)(y-(x-x_1))}\,e^{-(x-x_1)}\,\diff x_1
    =\tfrac{C}{2}\,e^{p x-(1+p)y},\\[2pt]
    T_{\calE_1}(x,y)
    &=\int_{0}^{y}\!\!\int_{-\infty}^{\,x_1}\!\!+\int_{y}^{2\base}\!\!\int_{-\infty}^{\,y}
    C e^{-(1-p)x_1+(1-p)y_1}\,p e^{-(1+p)(y-y_1)}e^{p(x-x_1)}\,\diff y_1\,\diff x_1\\
    &=\tfrac{C p}{2}\Big(2 e^{p x-p y}-e^{p x-(1+p)y}-e^{p x+(1-p)y-2\base}\Big).
\end{align*}
Summing, the $e^{p x-(1+p)y}$ and $e^{p x+(1-p)y-2\base}$ terms cancel, leaving
\(\,C e^{p x-p y}=\pi(x,y)\).

\noindent \textbf{Target $(x,y)\in\calC$.} We compute $T_R(x,y)$ in each region $R$:
\begin{align*}
    T_{\calA_1}(x,y)
    &=\int_{x}^{\infty} C e^{2\base} e^{p x-(1+p)y_1}\,(1-p)e^{-(1-p)(y_1-y)}\,\diff y_1
    =\tfrac{C(1-p)}{2}\,e^{2\base}\,e^{-(2-p)x+(1-p)y},\\[2pt]
    T_{\calB_1}(x,y)
    &=\int_{0}^{2\base}\!\!\int_{-\infty}^{\,y_1}
    C e^{p x_1-p y_1}\,(1-p)e^{-(2-p)(x-x_1)}e^{(1-p)(y-y_1)}\,\diff x_1\,\diff y_1\\
    &=\tfrac{C(1-p)}{2}(e^{2\base}-1)\,e^{-(2-p)x+(1-p)y},\\[2pt]
    T_{\calC}(x,y)
    &=\int_{x}^{\infty} C e^{2\base}e^{-(2-p)x_1+(1-p)y}\,p e^{-p(x_1-x)}\,\diff x_1
    =\tfrac{C p}{2}\,e^{2\base}\,e^{-(2-p)x+(1-p)y},\\[2pt]
    T_{\calD}(x,y)
    &=\int_{-\infty}^{\,0} C e^{p x_1+(1-p)(y-(x-x_1))}\,e^{-(x-x_1)}\,\diff x_1
    =\tfrac{C}{2}\,e^{-(2-p)x+(1-p)y},\\[2pt]
    T_{\calE_1}(x,y)
    &=\int_{0}^{2\base}\!\!\int_{-\infty}^{\,x_1+(y-x)}
    C e^{-(1-p)x_1+(1-p)y_1}\,p e^{-(1+p)(y-y_1)}e^{p(x-x_1)}\,\diff y_1\,\diff x_1\\
    &=\tfrac{C p}{2}(e^{2\base}-1)\,e^{-(2-p)x+(1-p)y}.
\end{align*}
Adding coefficients yields \(C e^{2\base}\), hence \(\sum_R T_R(x,y)=C e^{2\base}e^{-(2-p)x+(1-p)y}=\pi(x,y)\).

\noindent  \textbf{Target $(x,y)\in\calD$.} We compute $T_R(x,y)$ in each region $R$:
\begin{align*}
    T_{\calA_1}(x,y)
    &=\int_{2\base}^{\infty} C e^{2\base} e^{p x-(1+p)y_1}\,(1-p)e^{-(1-p)(y_1-y)}\,\diff y_1
    =\tfrac{C(1-p)}{2}\,e^{-2\base}\,e^{p x+(1-p)y},\\[2pt]
    T_{\calB_1}(x,y)
    &=\int_{0}^{2\base}\!\!\int_{-\infty}^{\,x}
    C e^{p x_1-p y_1}\,(1-p)e^{-(2-p)(x-x_1)}e^{(1-p)(y-y_1)}\,\diff x_1\,\diff y_1\\
    &=\tfrac{C(1-p)}{2}(1-e^{-2\base})\,e^{p x+(1-p)y},\\[2pt]
    T_{\calC}(x,y)
    &=\int_{2\base}^{\infty} C e^{2\base}e^{-(2-p)x_1+(1-p)y}\,p e^{-p(x_1-x)}\,\diff x_1
    =\tfrac{C p}{2}\,e^{-2\base}\,e^{p x+(1-p)y},\\[2pt]
    T_{\calD}(x,y)
    &=\int_{-\infty}^{\,x} C e^{p x_1+(1-p)(y-(x-x_1))}\,e^{-(x-x_1)}\,\diff x_1
    =\tfrac{C}{2}\,e^{p x+(1-p)y},\\[2pt]
    T_{\calE_1}(x,y)
    &=\int_{0}^{2\base}\!\!\int_{-\infty}^{\,y}
    C e^{-(1-p)x_1+(1-p)y_1}\,p e^{-(1+p)(y-y_1)}e^{p(x-x_1)}\,\diff y_1\,\diff x_1
    =\tfrac{C p}{2}(1-e^{-2\base})\,e^{p x+(1-p)y}.
\end{align*}
The coefficients add to \(C\), so \(\sum_R T_R(x,y)=C e^{p x+(1-p)y}=\pi(x,y)\).

\bigskip

\noindent \textbf{Target $(x,y)\in\calE_1$.} We compute $T_R(x,y)$ in each region $R$:
\begin{align*}
    T_{\calA_1}(x,y)
    &=\int_{2\base}^{\infty} C e^{2\base} e^{p x-(1+p)y_1}\,(1-p)e^{-(1-p)(y_1-y)}\,\diff y_1
    =\tfrac{C(1-p)}{2}\,e^{-2\base}\,e^{p x+(1-p)y},\\[2pt]
    T_{\calB_1}(x,y)
    &=\int_{0}^{x}\!\!\int_{-\infty}^{\,y_1}\!\!+\int_{x}^{2\base}\!\!\int_{-\infty}^{\,x}
    C e^{p x_1-p y_1}\,(1-p)e^{-(2-p)(x-x_1)}e^{(1-p)(y-y_1)}\,\diff x_1\,\diff y_1\\
    &=\tfrac{C(1-p)}{2}\Big(2 e^{-(1-p)x+(1-p)y}-e^{-(2-p)x+(1-p)y}-e^{p x+(1-p)y-2\base}\Big),\\[2pt]
    T_{\calC}(x,y)
    &=\int_{2\base}^{\infty} C e^{2\base}e^{-(2-p)x_1+(1-p)y}\,p e^{-p(x_1-x)}\,\diff x_1
    =\tfrac{C p}{2}\,e^{-2\base}\,e^{p x+(1-p)y},\\[2pt]
    T_{\calD}(x,y)
    &=\int_{-\infty}^{\,0} C e^{p x_1+(1-p)(y-(x-x_1))}\,e^{-(x-x_1)}\,\diff x_1
    =\tfrac{C}{2}\,e^{-(2-p)x+(1-p)y},\\[2pt]
    T_{\calE_1}(x,y)
    &=\int_{0}^{x}\!\!\int_{-\infty}^{\,x_1-(x-y)}\!\!+\int_{x}^{2\base}\!\!\int_{-\infty}^{\,y}
    C e^{-(1-p)x_1+(1-p)y_1}\,p e^{-(1+p)(y-y_1)}e^{p(x-x_1)}\,\diff y_1\,\diff x_1\\
    &=\tfrac{C p}{2}\Big(2 e^{-(1-p)x+(1-p)y}-e^{-(2-p)x+(1-p)y}-e^{p x+(1-p)y-2\base}\Big).
\end{align*}
The $e^{-(2-p)x+(1-p)y}$ and $e^{p x+(1-p)y-2\base}$ terms cancel, so
\(\sum_R T_R(x,y)= C e^{-(1-p)x+(1-p)y}=\pi(x,y)\).

\noindent Finally, we verify normalization. Indeed,
\begin{align*}
    1 &=\iint_{\reals^2}\pi(x,y)\,\diff x\,\diff y
    =\sum_{R\in\{\calA_1,\calB_1,\calC,\calD,\calE_1\}}
    \iint_{(x,y)\in R}\pi(x,y)\,\diff x\,\diff y\\
    &=
    \int_{2\base}^{\infty}\!\!\int_{-\infty}^{\,y}
    C e^{2\base}e^{p x-(1+p)y}\,\diff x\,\diff y
    +\int_{0}^{2\base}\!\!\int_{-\infty}^{\,y}
    C e^{p x-p y}\,\diff x\,\diff y
    +\int_{2\base}^{\infty}\!\!\int_{-\infty}^{\,x}
    C e^{2\base}e^{-(2-p)x+(1-p)y}\,\diff y\,\diff x \\
    & ~~~~~~
    +\int_{-\infty}^{0}\!\!\int_{-\infty}^{0}
    C e^{p x+(1-p)y}\,\diff y\,\diff x
    +\int_{0}^{2\base}\!\!\int_{-\infty}^{\,x}
    C e^{-(1-p)x+(1-p)y}\,\diff y\,\diff x \\
    &= C\Big[\tfrac{1}{p}+\tfrac{2\base}{p}+\tfrac{1}{1-p}+\tfrac{1}{p(1-p)}+\tfrac{2\base}{1-p}\Big]
    = \frac{C}{p(1-p)}\,2(1+\base)
    =1,
\end{align*}
for our choice \(C=\dfrac{p(1-p)}{2(1+\base)}\). This concludes the proof.

\subsection{Proof of~\texorpdfstring{\Cref{rmk: bounds-for-dual-service range}}{}} \label{apx-remark}

\begin{enumerate}
    \item[(i)] The service range vector $\servicevec_p$ has $p$ fraction of entries equal to $\base + \extra$, and the rest equal to $\base$. Therefore, $\servicevec_p$ majorizes $\servicevec'$, where $\servicevec' = (\base + p\extra)\mathbf{1}$ and $\mathbf{1}$ is the all-ones vector in $\mathbb{R}^n$.~\Cref{thm:uniformity} establishes that the matching is larger for $\servicevec'$ than $\servicevec$. Equivalently, $\nu_n(\base,\extra,p) \leq \nu_n(\base+ p\extra,0,1)$. Applying~\Cref{thm: extreme-cases}~(ii), we have
    \[ 
        \frac 1 n \, \nu_n(\base,\extra,p) \leq \frac 1n \, \nu_n(\base+ p \extra,0,1) = \frac{\base+ p\extra}{\base + p \extra + 1/2}  + o(1) \,  .
    \]
    \item[(ii)] Split the set $\driverset$ of supply nodes as $\driverset = \driverset^{\mathrm{F}} \cup \driverset^{\mathrm{NF}}$, according to their flexibility. Let $q \in (p,1]$ be fixed, and construct the set $\riderset^{\mathrm{F}}$ by assigning each demand node in $\riderset$ to $\riderset^{\mathrm{F}}$ independently with probability $q$. Let $\riderset^\mathrm{NF} \triangleq \riderset \setminus \riderset^{\mathrm{F}}$.

    Let $\widetilde G^{\mathrm{F}}$ (resp. $\widetilde G^{\mathrm{NF}}$) denote the induced subgraph of $G$ on the vertex set $\driverset^{\mathrm{F}} \cup \riderset^{\mathrm{F}}$, (resp. $\driverset^{\mathrm{NF}} \cup \riderset^{\mathrm{NF}}$). Further, let $G^{\mathrm{F}}$ (resp. $G^{\mathrm{NF}}$) denote the graph obtained by adding isolated nodes to $\widetilde G^{\mathrm{F}}$ (resp. $\widetilde G^{\mathrm{NF}}$), uniformly at random to the side with fewer nodes so that $G^{\mathrm{F}}$ (resp. $G^{\mathrm{NF}}$) is balanced, i.e. it has the same number $n$ of demand and supply nodes. It follows that $G^\mathrm{F}\sim \mathbb{G}(q n,\, \servicevec^{\mathrm{F}} )$ and $G^\mathrm{NF}\sim \mathbb{G}((1-q) n, \,\servicevec^{\mathrm{NF}} )$, where 
    \[ 
        \servicevec_i^{\mathrm{F}} = 
        \begin{cases} 
            \base+\extra, & \mbox{with probability  } \tfrac p q  \\
            0, &\mbox{with probability } 1 - \tfrac p q 
        \end{cases},
        ~~~~~~
        \servicevec_i^{\mathrm{NF}} = 
        \begin{cases} 
            \base, & \mbox{with probability  } \tfrac {1-q} {1-p} \\
            0, &\mbox{with probability } 1 - \tfrac {1-q} {1-p}.
        \end{cases}
    \]
    These service ranges satisfy the conditions of~\Cref{thm: extreme-cases}~(i). Therefore, the fraction of matched nodes in a maximum matching of $G^{\mathrm{F}}$ and $G^{\mathrm{NF}}$ are respectively
    \[ 
        \frac{ e^{2\base\frac{1-q}{1-p}} - e^{ 2\base} }{e^{2\base\frac{1-q}{1-p}} \!-\! \frac{1-p}{1-q}\cdot  e^{2\base} }  (1-q) + o(1) ~~~~ \mbox{ and }~~~~  \frac{ e^{2(\base+\extra)\frac{p}{q}} - e^{ 2(\base+\extra)} }{ e^{2(\base+\extra)\frac{p}{q}} \!-\! \frac{q}{p}\cdot e^{ 2(\base+\extra)} } \cdot q   + o(1) \, .
    \]
    Finally, the union of any two matchings in $G^{\mathrm{F}}$ and $G^{\mathrm{NF}}$ is a valid matching in $G$, and has size equal to the sum of the individual matchings. Since this argument holds for all $q \in (p,1]$, we obtain~\eqref{eq: lower-bound-with-q}. This concludes the proof. \qedhere
\end{enumerate}

\end{document}